\newlist{condenum}{enumerate}{1} 
\setlist[condenum]{label=\bfseries Condition \arabic*.,  ref=\arabic*, wide}
\numberwithin{equation}{section}
\theoremstyle{plain}
\def\ps@pprintTitle{%
 \let\@oddhead\@empty
 \let\@evenhead\@empty
 \def\@oddfoot{}%
 \let\@evenfoot\@oddfoot}
\newcommand{\R}{\mathbb{R}}
\newcommand{\ess}{\textnormal{ess}}
\newcommand{\rest}{\textnormal{rest}}
\newcommand{\LL}{\mathcal{L}}
\newcommand{\Rm}{\textnormal{Rm}}
\newcommand{\Ric}{\textnormal{Ric}}
\newcommand{\pt}{\partial_t}
\newcommand{\id}{\mathrm{id}}
\newcommand{\Ima}{\textnormal{Im}}
\newcommand{\Spec}{\textnormal{Spec}}
\newcommand{\tr}{\textnormal{tr}}
\newcommand{\eps}{\varepsilon}
\newcommand{\Di}{\textnormal{div}}
\newcommand{\lie}{\textnormal{div}^*}
\newcommand{\Ker}{\textnormal{Ker}}
\newcommand{\ds}{\left.\frac{d}{ds}\right|_{s=0}}
\newcommand{\Sym}{\textnormal{Sym}}
\newcommand{\Lie}{\textnormal{Lie}}
\newcommand{\td}{\widetilde}
\newcommand{\PP}{\mathcal{P}}
\numberwithin{equation}{section}
\newtheorem{theorem}{Theorem}[section]
\newtheorem{lem}[theorem]{Lemma}
\newtheorem{lemma}[theorem]{Lemma}
\newtheorem{remark}[theorem]{Remark}
\newtheorem{prop}[theorem]{Proposition}
\newtheorem{proposition}[theorem]{Proposition}
\newtheorem{claim}[theorem]{Claim}
\newtheorem{cor}[theorem]{Corollary} 
\theoremstyle{definition}
\newtheorem{defn}[theorem]{Definition}
\newtheorem*{theorem*}{Theorem}
\xpatchcmd{\tableofcontents}{\contentsname \@mkboth}{\small\contentsname \@mkboth}{}{}
\xpatchcmd{\listoffigures}{\chapter *{\listfigurename }}{\chapter *{\small\listfigurename }}{}{}
\def\blfootnote{\xdef\@thefnmark{}\@footnotetext}
\begin{document}

\title[Ricci flow and long-time Harmonic map heat flow]{Convergence of Ricci flow and long-time existence of Harmonic map heat flow}

\begin{abstract}
For an ancient Ricci flow asymptotic to a compact integrable shrinker, or a Ricci flow developing a finite-time singularity modelled on the shrinker, we establish the long-time existence of a harmonic map heat flow between the Ricci flow and the shrinker for all times. This provides a global parabolic gauge for the Ricci flow and implies the uniqueness of the tangent flow without modulo any diffeomorphisms.

We present two main applications: First, we construct and classify all ancient Ricci flows asymptotic to any compact integrable shrinker, showing that they converge exponentially.
Second, 
we obtain the optimal convergence rate at singularities modelled on the shrinker, characterized {by the first negative eigenvalue} of the stability operator {for the entropy}. 
In particular, we show that any Ricci flow developing a round $\mathbb S^n$ singularity converges at least at the rate $(-t)^{\frac{n+1}{n-1}}$.
\end{abstract}

\author{Kyeongsu Choi}
\address{Kyeongsu Choi\hfill\break School of Mathematics, Korea Institute for Advanced Study, 85 Hoegiro, Dongdaemun-gu, Seoul, 02455, South Korea
}
\email{choiks@kias.re.kr}

\author{Yi Lai}
\address{Yi Lai\hfill\break Department of Mathematics, UC Irvine, CA 92697, USA
}
\email{ylai25@uci.edu}

\maketitle

\tableofcontents

\section{Introduction}

\subsection{Long-time existence of Harmonic map heat flow}
The Ricci flow, introduced by Hamilton in 1982, is a family of Riemannian metrics $g(t)$, $t\in [0,T]$, on a manifold $M$ evolving by the parabolic partial differential equation $$\frac{\partial}{\partial t}g(t)=-2\textnormal{Ric}(g(t)).$$ Singularity analysis plays a crucial role in the Ricci flow and its topological applications. For example, Perelman resolved the Poincar\'e Conjecture and Geometrization Conjecture \cite{Pel1,Pel2}, by performing surgeries at the singularities and continuing the flow.
For a blow-up sequence \textit{towards} the singularity, one can obtain an ancient solution (defined for all negative times) as a subsequential limit, called a singularity model.
Perelman's 3D surgery is based on a good understanding of the geometry of all singularity models.

Ricci solitons are self-similar Ricci flows, which shrink, remain steady, or expand up to diffeomorphisms.
In particular, a gradient shrinking soliton (shrinker) is a Riemannian manifold $(M,\bar g,f)$ together with a function $f$ on $M$, such that
\[\Ric+\nabla^2 f=\tfrac12\bar g.\]
By Bamler's recent breakthrough work \cite{bamler2020entropy, bamler2023compactness, bamler2020structure},
the blow-up limit \textit{at} the singularity is a shrinking Ricci soliton (possibly with a singular set), called the \textbf{tangent flow} at the singularity; and the blow-down limit of an ancient non-collapsed flow is also a shrinking Ricci soliton, called its \textbf{tangent flow at infinity}.
We also say that these Ricci flows are \textit{asymptotic to} the shrinker.
For compact smooth shrinkers, Chan-Ma-Zhang \cite{CMZ1} showed that the tangent flow and tangent flow at infinity on them are unique \textit{modulo diffeomorphisms}, see also Sun-Wang's work in the K\"ahler case \cite{SW15}.

However, the uniqueness of tangent flow \textit{without modulo any diffeomorphisms} is needed to apply PDE methods effectively, such as to classify ancient Ricci flows and estimate the convergence rate to singularities. It requires to find a global parabolic gauge for all times under which the solution remains a small perturbation of the tangent flow.
In mean curvature flow (MCF), one can employ the ambient space as the global parabolic gauge, so this suffices to show that the shrinker is the unique tangent flow \textit{without modulo any Euclidean motions}. Schulze \cite{schulze2014} showed that if a mean curvature flow rescaled at a singularity has a sequence of time-slices converging to a compact shrinker then the shrinker is the limit of the rescaled flow, namely the tangent flow is unique \textit{without modulo any Euclidean motion}. Then, Colding-Minicozzi \cite{colding2015uniqueness} proved the uniqueness of tangent flow at cylindrical singularities. In other words, the rescaled MCF at a cylindrical singularity converges to only one cylinder as $\tau\to \infty$.
{In minimal surface, Allard-Almgren \cite{Allard1981OnTR} proved the uniqueness of tangent cone at an isolated singularity satisfying 
integrability condition for the cross section. Simon \cite{Simon1983AsymptoticsFA} extended the result for smooth cross sections. See also White \cite{White1983TangentCT} for two-dimensional minimizing currents. Note that in this paper we employ the integrability condition of Allard-Almgren for the Ricci flow as in Sesum \cite{sesum2006convergence}. See also Colding-Minicozzi \cite{Colding2013OnUO} for the unique blow-down of Ricci-flat metrics with Euclidean volume growth.}

Colding-Minicozzi's uniqueness of tangent flow is an important ingredient in the classification of asymptotically cylindrical {ancient MCFs}, see works of Angenent-Daskalopoulos-Sesum \cite{ADS_JDG,ADS_Annals}, Brendle-Choi \cite{brendle2019uniqueness}, Choi-Haslhofer-Hershkovits and Choi-Haslhofer-Hershkovits-White \cite{choi2022ancient,CHH_nowing_GT, CHH_Camb, CHHW}, Du-Haslhofer \cite{du2021hearing}, Choi-Daskalopoulos-Du-Sesum-Haslhofer \cite{CDD2022}, and Choi-Haslhofer \cite{CH2024_bubble}.
There are many other classification results for ancient MCFs whose tangent flow at infinity is unique but not cylindrical; see for example Choi-Mantoulidis \cite{CM22} and Choi-Sun \cite{CS20} for compact tangent flows, and Choi-Chodosh-Mantoulidis-Schulze \cite{chodosh2024mean} and Choi-Chodosh-Schulze \cite{chodosh2023mean} for noncompact tangent flows. See also the uniqueness results for tangent flows (at infinity) which are compact \cite{schulze2014} by Schulze, asymptotically conical \cite{CS2021_uniqueness, LZ2024_uniqueness} by Chodosh-Schulze and Lee-Zhao, singular \cite{lotay2022neck, stolarski2023structure} by Lotay-Schulze-Sz{\'e}kelyhidi and Stolarski, and with multiplicity \cite{neves2007, CDSZ2025} by Neves and Choi-Seo-Su-Zhao. 
In Ricci flow, 
notable examples of asymptotically cylindrical flows are the Bryant soliton, Perelman's oval, and the flying wings by the second-named author \cite{Lai2020_flying_wing,Lai2022_angles,Lai2022_O(2)}. However, the uniqueness of tangent flows remains open even for the compact integrable shrinkers. 

A key challenge in studying the uniqueness of the tangent flow in Ricci flow, as opposed to MCF, is the lack of strong parabolicity. 
The presence of diffeomorphism invariance means that the flow is only weakly parabolic, requiring an additional choice of gauge to control the evolution.
The parabolic gauge can be given by a harmonic map heat flow by the DeTurck’s trick: If there is a \textbf{harmonic map heat flow} (HMHF) $\chi_t$ between the Ricci flow $g(t)$ and a background metric $\bar g$, which satisfies
\[\partial_t \chi_t=\Delta_{g(t),\bar g}\chi_t,\]
then the push-forward of the Ricci flow by the HMHF to a background metric becomes a parabolic  \textbf{Ricci DeTurck flow} (RDTF). For an ancient or immortal Ricci flow, we need a long-time HMHF that exists for all times to provide the global parabolic gauge.

The short-time existence of a HMHF is well-known by the standard parabolic theory.
However, the long-time existence of a HMHF is a geometric problem rather than purely an analytical one, as it depends on the closeness between the Ricci flow $g(t)$ and the background shrinker $\bar g$.
In this paper, for any ancient Ricci flow (or a Ricci flow at a finite-time singularity) asymptotic to a compact integrable shrinker, we establish the long-time existence of a HMHF between the Ricci flow and the shrinker.

\begin{theorem}\label{t:uniqueness_of_tangent_flow}
Let $(M,\bar g)$ be a compact integrable shrinker. Let $g_\tau$ be an ancient (resp. immortal) rescaled Ricci flow asymptotic to $\bar g$.
Then there exists an ancient (resp. immortal) HMHF between $g_{\tau}$ and $\bar{g}$ that converges exponentially to $\id$ as $\tau$ goes to negative (resp. positive) infinity.
\end{theorem}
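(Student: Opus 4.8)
The plan is to apply DeTurck's trick to recast the long-time existence of the harmonic map heat flow as a long-time a priori estimate for the associated Ricci--DeTurck flow near $\bar g$, and to produce the desired solution by a limiting construction started from the time-slices where $g_\tau$ is already known to be $C^{m,\alpha}$-close to $\bar g$ modulo diffeomorphisms. I describe the ancient case; the immortal case is parallel with all time directions reversed and in fact needs no limiting step, since a single far time-slice suffices. For the reduction, work in the rescaled time variable, with the $\nabla f$-drift that makes $\bar g$ stationary: a solution $\chi_\tau$ of $\partial_\tau\chi_\tau=\Delta_{g_\tau,\bar g}\chi_\tau$ corresponds, as long as $\chi_\tau$ is a diffeomorphism, to a solution $h_\tau=(\chi_\tau)_*g_\tau$ of the strictly parabolic rescaled Ricci--DeTurck flow relative to the fixed background $\bar g$, and conversely $\chi_\tau$ solves an ODE driven by the DeTurck vector field of $h_\tau$. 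On the compact $M$ this ODE integrates as long as $h_\tau$ stays in a fixed $C^{m,\alpha}$-bounded, uniformly positive set of metrics; equivalently, by $\eps$-regularity for the HMHF together with the (almost) monotonicity of the weighted Dirichlet energy of $\chi_\tau$, the flow $\chi_\tau$ exists and remains a diffeomorphism as long as that energy stays below the $\eps$-regularity threshold. Thus everything reduces to a $C^{m,\alpha}$-smallness estimate for $u_\tau:=h_\tau-\bar g$.

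\emph{Linear structure and the core a priori estimate.} Linearizing the rescaled Ricci--DeTurck flow at $\bar g$ gives $\partial_\tau u=\LL u+\mathcal Q(u)$, where $\LL$ is the stability operator of the entropy --- a Lichnerowicz-type operator, self-adjoint on $L^2(e^{-f}\,d\bar g)$, with discrete spectrum --- and $\mathcal Q(u)=O(|u|^2+|u|\,|\nabla u|)$. Decompose $u=u_++u_0+u_-$ along the positive, zero and negative eigenspaces; the first two are finite-dimensional, $\LL\le-\gamma<0$ on the complement of $E_+\oplus E_0$, and by the integrability hypothesis $E_0=\Ker\LL$ is tangent to a smooth finite-dimensional family of shrinkers through $\bar g$, so that the neutral directions carry no intrinsic dynamics and $u_0$ is slaved to $u_\pm$. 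Now set $T_i=iT_0$; by the uniqueness of the tangent flow modulo diffeomorphisms (Chan--Ma--Zhang, on Sesum's integrability argument) there are diffeomorphisms $\psi_i$ with $\|\psi_i^*g_{-T_i}-\bar g\|_{C^{m,\alpha}}\le\delta_i\to0$. Let $\chi^{(i)}_\tau$ solve the HMHF forward from $-T_i$ with initial datum a small perturbation of $\psi_i^{-1}$, the perturbation ranging over $E_+$. The heart of the argument is a bootstrap showing $\|u^{(i)}_\tau\|_{C^{m,\alpha}}\le\eps$ on $[-T_i,\tau_0]$ for any fixed $\tau_0$ and $\eps$ small: weighted energy estimates and parabolic regularity give $\|u^{(i)}_-\|\lesssim e^{-\gamma(\tau+T_i)}\delta_i+\eps^2$ (decaying forward); the otherwise-growing $u^{(i)}_+$ is tamed by a shooting/degree argument --- choose the perturbation of $\psi_i^{-1}$ so that $u^{(i)}_+$ vanishes at $\tau_0$, whence the backward Duhamel formula, with the unstable semigroup contracting backward in time, bounds $\|u^{(i)}_+\|\lesssim\eps^2$ on $[-T_i,\tau_0]$ --- and then $u^{(i)}_0$, being slaved to the exponentially small $u^{(i)}_\pm$, also satisfies $\|u^{(i)}_0\|\lesssim\eps^2$. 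This closes the bootstrap, so $\chi^{(i)}_\tau$ exists on $[-T_i,\tau_0]$ and stays within a fixed small ball of $\mathrm{Isom}(\bar g)$.

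\emph{Passage to the limit, rate, and the immortal case.} By the uniform estimates and parabolic regularity, a subsequence of the $\chi^{(i)}$, normalized by isometries of $\bar g$, converges in $C^\infty_{\mathrm{loc}}$ on $(-\infty,\tau_0]$ to an ancient HMHF $\chi_\tau$ between $g_\tau$ and $\bar g$ with $u_\tau=h_\tau-\bar g$ uniformly small. Since $\chi_\tau$ genuinely emanates from $\bar g$ at $\tau=-\infty$, a Merle--Zaag-type lemma --- a bounded ancient solution of $\partial_\tau u=\LL u+\mathcal Q(u)$ whose neutral part is slaved as above must decay exponentially --- upgrades this to $\|h_\tau-\bar g\|_{C^{m,\alpha}}\lesssim e^{\gamma'\tau}$ as $\tau\to-\infty$, hence $\|\chi_\tau-\id\|_{C^{m,\alpha}}\lesssim e^{\gamma'\tau}$ after the normalization. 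The immortal case runs the same machine with all time directions reversed: solve the HMHF forward from a single large time-slice $T_i$, using the shooting argument to land on the center-stable manifold of $\bar g$, and obtain an immortal HMHF on $[T_i,\infty)$ converging exponentially to $\id$ as $\tau\to\infty$.

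The main obstacle is the a priori estimate above, and within it the two non-stable spectral subspaces. The unstable modes $E_+$ force the approximating gauges $\psi_i$ to be chosen \emph{adaptively} --- this is the shooting argument, in effect the construction of solutions lying on the center-stable manifold of the shrinker --- and the neutral modes $E_0$ are precisely where the \emph{integrability} of $\bar g$ is indispensable: without it the neutral component could decay only polynomially, destroying both the exponential rate and, through the reduction step, the long-time existence itself. A second genuine difficulty, reflected in the paper's title, is the coupling built into the reduction: a priori the HMHF can blow up in finite time even while $g_\tau$ stays smooth, so the $\eps$-regularity and the a priori estimate must be run in tandem, each feeding the other.
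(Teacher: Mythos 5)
Your reduction to a uniform $C^{m,\alpha}$-smallness estimate for the Ricci--DeTurck perturbation $u_\tau = h_\tau - \bar g$ is sound, and your identification of the three spectral blocks and of integrability as the key hypothesis for the neutral block are both correct. However, there is a genuine gap in the treatment of $E_+$. You propose to kill $u_+^{(i)}(\tau_0)$ by shooting over perturbations of the initial diffeomorphism $\psi_i^{-1}$, ``the perturbation ranging over $E_+$.'' But perturbing a diffeomorphism $\psi_i^{-1}$ by a vector field $X$ adds, to leading order, the Lie derivative $\LL_X\bar g$ to the initial datum, and $\LL_X\bar g\in\Ima\,\Di^*_f$. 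Thus the diffeomorphism freedom parametrizes only the \emph{generic} part $V_{\Lie}^+ = E_+\cap\Ima\,\Di^*_f$ of $E_+$; it cannot reach the \emph{essential} unstable directions $V_{\ess}^+=E_+\cap\Ker\,\Di_f$ (which are nonempty on every compact shrinker, since $\Ric$ itself is an essential unstable eigentensor of eigenvalue $1$). Your degree argument is therefore under-determined: you have $\dim V_{\Lie}^+$ parameters and $\dim E_+ > \dim V_{\Lie}^+$ constraints, and the essential unstable components of $u^{(i)}$ grow forward at rate $e^{\lambda_+\tau}$ with nothing to tame them. This is precisely the point where the paper invokes the $\mu$-entropy, which your proof never uses: for an ancient flow asymptotic to $\bar g$ the entropy monotonicity gives $\mu(\bar g+h_\tau,1)\ge\mu(\bar g,1)$, and together with the generalized slice theorem (which arranges that the generic and neutral projections vanish at a chosen base time) this forces the essential \emph{unstable} modes to \emph{dominate} the perturbation (Lemma \ref{l: entropy controls unstable}); then the uniform $\eps$-smallness bound at the terminal time converts exponential growth into backward exponential decay. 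That entropy input is indispensable and cannot be recovered by shooting on the gauge. The same issue arises in the immortal case, where moreover one needs to maintain an additional condition that the perturbation not decay faster than the longest stable mode, so that the neutral and generic modes remain subordinate on each step (cf.~Proposition \ref{prop: unstable mode has certain portion, forward}); your outline does not address this.

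The paper's actual route is also structurally different from yours and it is worth noting why: it first proves Theorem \ref{thm:RF_exp} (exponential convergence of the rescaled modified flow to $\bar g$, established by the entropy-plus-slice-theorem iteration just described), and only \emph{then} deduces the HMHF by a straightforward fixed-point argument (Theorem \ref{t:existence_theorem_HMHF}, resting on Lemma \ref{lem:existence_modified_HMHF}), because once $g_\tau-\bar g$ is known to decay exponentially the HMHF perturbation equation \eqref{eq:modified_HMHF_perturbation} is an inhomogeneous parabolic equation with exponentially decaying forcing, and short-time existence plus a contraction argument suffice. In particular the delicate part is not the HMHF construction per se, but obtaining the exponential decay of the Ricci flow itself, and that is where your proposal is missing the essential mechanism.
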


Note that for a Ricci flow $g(t)$, the
\textbf{rescaled Ricci flow} $\hat g(\tau)=e^{\tau}g({-e^{-\tau}})$ obtained by the change of time variable $t=-e^{-\tau}$ satisfies:
\[\frac{\partial}{\partial \tau}\hat g(\tau)=-2\textnormal{Ric}(\hat g(\tau))+\hat g(\tau).\] 
So the rescaled flow of a Ricci flow that forms a singularity at $t=0$ is an immortal flow.

We prove Theorem \ref{t:uniqueness_of_tangent_flow} based on the following convergence rate estimate of the rescaled Ricci flow to the shrinker. 
\begin{theorem}\label{thm:RF_exp}
    Let $(M,\bar g)$ be a compact integrable shrinker, and $g_\tau$ an ancient or immortal rescaled Ricci flow asymptotic to $\bar g$.
Then $g_{\tau}$ converges exponentially fast to $\bar{g}$ as $\tau$ goes to infinity.
\end{theorem}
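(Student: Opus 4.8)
The plan is to run a \L ojasiewicz--Simon type argument on the rescaled Ricci flow, using Perelman's $\mu$-entropy (or $\lambda$-functional) as the monotone quantity whose critical points are precisely the shrinkers. Write $g_\tau = \bar g + h_\tau$ near the shrinker, so that the rescaled flow becomes, in a suitable DeTurck-type gauge, a gradient-like flow for the entropy functional $\mathcal W$ restricted to a slice transverse to the diffeomorphism action. Let me first recall the setup of Sun--Wang and Chan--Ma--Zhang: because Bamler's theory guarantees that $g_\tau$ sub-converges (in a possibly-pointed Cheeger--Gromov sense, but here the shrinker is compact so convergence is smooth modulo diffeomorphisms) to $\bar g$ along a sequence $\tau_j \to \infty$, one knows a priori that $g_\tau$ enters and eventually stays in any fixed $C^{m,\alpha}$-neighborhood of the $\sym$... more precisely of the orbit $\{\phi^*\bar g\}$ of $\bar g$ under $\mathrm{Diff}(M)$. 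The first real step is therefore to upgrade ``sub-converges'' to ``converges'': fix the gauge by solving a harmonic-map / DeTurck problem at each late time so that $h_\tau$ lies in the kernel-complement of the linearized gauge operator, i.e. $h_\tau \perp \Lie(\mathfrak X(M))$ in the appropriate weighted $L^2(\bar g, e^{-f})$ inner product, and reduce to studying the projected flow in this slice.

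Next I would set up the \L ojasiewicz--Simon inequality for the entropy at $\bar g$. The linearization of the entropy's gradient at a shrinker is the stability operator $L$ (the ``Lichnerowicz-type'' operator appearing in Cao--Hamilton--Ilmanen / Colding--Minicozzi's second-variation analysis), acting on the slice of transverse-traceless-type variations; integrability of the shrinker means that the degenerate (zero-eigenvalue) directions of $L$ are tangent to an actual finite-dimensional manifold of shrinkers. Under this integrability hypothesis, the standard finite-dimensional reduction (Lyapunov--Schmidt) promotes the naive \L ojasiewicz inequality with exponent $1/2$: there are constants $C, \sigma > 0$ and a neighborhood $U$ of $\bar g$ in the slice such that
\[
\bigl| \mathcal W(g) - \mathcal W(\bar g) \bigr|^{1/2} \le C \, \bigl\| \nabla \mathcal W(g) \bigr\|_{L^2}
\qquad \text{for all } g \in U .
\]
Combined with the gradient-flow structure $\partial_\tau g_\tau = -\nabla \mathcal W(g_\tau) + (\text{gauge terms})$ and the monotonicity $\tfrac{d}{d\tau}\mathcal W(g_\tau) = -\|\nabla\mathcal W(g_\tau)\|^2 \le 0$ with $\mathcal W(g_\tau)\to \mathcal W(\bar g)$, the usual \L ojasiewicz--Simon ODE argument gives that the $L^2$-length $\int_{\tau_0}^\infty \|\partial_\tau g_\tau\|_{L^2}\,d\tau$ is finite, hence $g_\tau$ converges in $L^2$ to a limit shrinker, which the prior sub-convergence forces to be $\bar g$ itself; moreover one gets a polynomial-in-$\tau$... and then, exploiting integrability once more (no genuinely degenerate directions survive on the slice transverse to the shrinker family), the \L ojasiewicz exponent is in fact the ``generic'' value that yields \emph{exponential} rather than merely polynomial decay, namely $\|g_\tau - \bar g\|_{L^2} \le C e^{-\delta \tau}$ for some $\delta > 0$ controlled by the first nonzero eigenvalue of $L$ on the slice.

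Finally, I would upgrade the $L^2$-exponential decay to exponential decay in every $C^{m,\alpha}$ norm by interpolation with a priori higher-order bounds: Shi-type / Bamler-type estimates give uniform $C^{m,\alpha}$ bounds on $h_\tau$ on the relevant time intervals, so interpolation inequalities of the form $\|h_\tau\|_{C^{m,\alpha}} \le C \|h_\tau\|_{L^2}^{\theta}\|h_\tau\|_{C^{m+1,\alpha}}^{1-\theta}$ convert the $L^2$ rate into a (slightly weaker but still exponential) $C^{m,\alpha}$ rate; a parabolic smoothing step on the rescaled flow over unit time-intervals then recovers the full exponential rate in $C^{m,\alpha}$. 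The main obstacle I anticipate is the ``sub-convergence $\Rightarrow$ convergence'' gauge-fixing step together with a genuinely uniform \L ojasiewicz--Simon inequality: one must show that the gauge-fixing diffeomorphisms can be chosen continuously (in $\tau$) and with uniformly bounded norm so that the flow never leaves the \L ojasiewicz neighborhood $U$, and that the entropy functional is analytic (or at least satisfies the requisite Fredholm/analyticity hypotheses) in the chosen Banach space of metrics — this is precisely where the compactness of $M$ and the integrability hypothesis are indispensable, and where one must be careful that the diffeomorphism-orbit directions (which are genuine zero modes of the un-gauged operator) have been correctly quotiented out before invoking integrability. Once Theorem~\ref{thm:RF_exp} is in hand, the long-time HMHF of Theorem~\ref{t:uniqueness_of_tangent_flow} follows by the DeTurck correspondence, solving the harmonic-map heat flow backward/forward from a late/early time and bootstrapping its convergence to $\id$ from the exponential closeness of $g_\tau$ to $\bar g$.
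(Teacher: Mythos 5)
Your proposal takes a genuinely different route from the paper's proof. The paper's argument is a hands-on, iterative spectral analysis: at a discrete sequence of times one invokes a generalized slice theorem (Theorem~\ref{t: slice theorem}) to re-gauge so that the perturbation has no generic or neutral modes, then one tracks each eigenmode of $L$ directly over unit time intervals using the entropy estimates of Lemmas~\ref{l: entropy controls unstable}, \ref{l:entropy_forward}, \ref{l: Upper bound by entropy} to show that the essential unstable modes (in the ancient case) or the stable modes (in the immortal case) dominate and grow/decay exponentially; crucially, this iteration simultaneously shows that the re-gauging diffeomorphisms form a Cauchy sequence, which is what produces the statement about the actual flow with a single fixed pullback $\psi$ in Theorem~\ref{p: g_t converge to g_Sigma exponentially}. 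Your proposal instead appeals to a \L ojasiewicz--Simon inequality for the $\mu$-entropy, in the spirit of Sun--Wang and Chan--Ma--Zhang whom the paper cites, with integrability pinning the \L ojasiewicz exponent at $1/2$ to yield the exponential rate. Both approaches exploit the same structural input (monotonicity of the entropy and a spectral gap/integrability for $L$), but the paper avoids invoking analyticity of the entropy or any \L ojasiewicz inequality.

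The gap is precisely in the step you flag as an obstacle but do not resolve: converting exponential decay of the slice-projected perturbation $\phi_\tau^*g_\tau-\bar g$ into exponential convergence of $g_\tau$ itself. The \L ojasiewicz argument lives entirely on the slice and by itself says nothing about how the gauge-fixing diffeomorphisms $\phi_\tau$ behave in $\tau$; they could wander, in which case $g_\tau=(\phi_\tau^{-1})^*(\phi_\tau^*g_\tau)$ would fail to converge to a fixed pullback of $\bar g$ even though the slice representative does. This is exactly why Chan--Ma--Zhang, whose argument yours most resembles, only obtain uniqueness of the tangent flow \emph{modulo diffeomorphisms}, and why the paper stresses that removing that qualifier is the new content of Theorem~\ref{t:uniqueness_of_tangent_flow} and Theorem~\ref{thm:RF_exp}. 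The paper closes the gap concretely: each re-gauging step (Lemma~\ref{l: cancel Lie by diffeomorphisms}, Proposition~\ref{prop: unstable mode has certain portion}\eqref{asser4}) changes the diffeomorphism by an amount controlled by the size of the generic modes, which decays exponentially, so the diffeomorphisms are Cauchy with an explicit rate. To complete your approach you would need a comparable Cauchy estimate on $\phi_\tau$ --- for instance, Lipschitz continuity of the slice map applied to $g_\tau$ and $g_{\tau+1}$ together with the \L ojasiewicz decay --- and you would also need to verify the analyticity/Fredholm hypotheses for the $\mu$-entropy (which is defined through an implicit minimization over the potential $f$) in the chosen Banach setting. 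So the route is plausible and well precedented, but it is incomplete at the decisive step that the paper's more elementary iteration handles head-on.
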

We outline both theorems at the end of this section.
We remark that
Theorem \ref{t:uniqueness_of_tangent_flow} and \ref{thm:RF_exp} imply the {uniqueness of tangent flow}: Since the HMHF converges to $\id$ exponentially, and the rescaled Ricci flow converges to the shrinker exponentially, together they imply that the parabolic rescaled RDTF converges exponentially to the shrinker without modulo any diffeomorpshisms.

As applications of Theorem \ref{t:uniqueness_of_tangent_flow}, we use the ancient HMHF to classify ancient Ricci flows asymptotic to compact integrable shrinkers, and use the immortal HMHF to derive an optimal convergence rate estimate for Ricci flow singularities modelled on such shrinkers.

\subsection{Classification of ancient Ricci flows}
Let $(M,\bar g,f)$ be a compact shrinker. Let
$$L=\Delta_f+2\Rm*$$ 
be the elliptic operator in the linearized rescaled RDTF perturbation equation with background metric being the shrinker, which acts on symmetric 2-tensors.  
We say the eigentensors with positive, zero, and negative eigenvalue of $L$ are \textit{unstable, neutral, and stable}, respectively.
A shrinker is a critical point of Perelman's $\mu$-entropy.
By the $L^2_f$-orthogonal decomposition of $\textnormal{Sym}^2(M)=\Ima\,\Di^*_f\perp \Ker\,\Di_f$, we call the eigentensors in $\Ima\,\Di^*_f$ (Lie derivatives) to be \textit{generic}, and those in $\Ker\,\Di_f$ to be \textit{essential}.
Then the second variation of the $\mu$-entropy is positive at any unstable essential eigentensors, is zero at all generic eigentensors and neutral essential eigentensors, and is negative at any stable essential eigentensors.
Since the $\mu$-entropy is monotone non-decreasing under the Ricci flow, an ancient flow has larger $\mu$-entropy than its asymptotic shrinker. 

In fact, for any fixed compact shrinker, let $\textnormal{ess-index}(L)$ be the dimension of space of unstable essential eigentensors,
we construct an $\textnormal{ess-index}(L)$-parameter family of ancient Ricci flows. These flows converge exponentially to the shrinker in the rescaled flow. The construction is to first find ancient RDTFs by a fixed-point argument, then convert them to ancient Ricci flows. See works in MCF by Choi-Mantoulidis \cite{CM22} and Chodosh-Choi-Mantoulidis-Schulze \cite{chodosh2024mean}, and also \cite{CJL2024,mramor2021ancient,han2023ancient}.

\begin{theorem}[Existence of ancient Ricci flows]\label{t:existence_theorem_intro}
    Let $(M,\bar g,f)$ be a compact shrinker, and $\lambda>0$ be the smallest positive essential eigenvalue.
    There exists an $\textnormal{ess-index}(L)$-parameter family of pairwise different ancient rescaled Ricci flows asymptotic to $\bar g$, such that for each flow $g_\tau$, there exists a family of diffeomorphisms $\chi_\tau$ such that each $k\in \mathbb{N}$ satisfies
    \[\|\chi_\tau^*g_\tau-\bar g\|_{C^{k}(\bar g)}\le C_ke^{\lambda\tau},\]
    for some $C_k>0$.
    In particular, $\chi_\tau\equiv\id$ if $(M,\bar g)$ is positive Einstein.
\end{theorem}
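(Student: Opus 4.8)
The plan is to follow the outline sketched in the introduction: first produce the ancient flows as solutions of the rescaled Ricci--DeTurck flow (RDTF) with background $\bar g$, then convert them to genuine Ricci flows by the inverse DeTurck trick. Writing a perturbation as $g_\tau=\bar g+h(\tau)$, the rescaled RDTF takes the strictly parabolic form $\partial_\tau h=Lh+\mathcal Q(h)$, where $L=\Delta_f+2\Rm*$ is the linearized operator on symmetric $2$-tensors and $\mathcal Q$ is an analytic nonlinearity with $\mathcal Q(0)=0$, $D\mathcal Q(0)=0$, and a schematic bound $|\mathcal Q(h)|\lesssim|h|\bigl(|\bar\nabla^2 h|+|h|\bigr)+|\bar\nabla h|^2$ together with the corresponding Lipschitz estimate. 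The operator $L$ is self-adjoint on the weighted space $L^2_f(\Sym^2(M))$ with discrete spectrum bounded above, hence has only finitely many nonnegative eigenvalues; let $E_+\subset\Ker\Di_f$ be the span of the unstable \emph{essential} eigentensors, $N:=\textnormal{ess-index}(L)=\dim E_+$, and let $\lambda>0$ be the smallest positive essential eigenvalue.

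For each $\mathbf a\in E_+$ of small norm I would construct an ancient solution $h^{\mathbf a}$ on $(-\infty,0]$ with prescribed leading asymptotics along $E_+$ by a contraction-mapping argument. Using the spectral projections of $L$, rewrite the equation as a Duhamel integral equation in which the component along $E_+$ carries the homogeneous term $e^{L\tau}\mathbf a$ plus a Duhamel correction, while the components along all neutral and stable modes, and along the generic (Lie-derivative) unstable modes, have vanishing homogeneous term and are recovered by integrating $\mathcal Q$ from $-\infty$ (from $\tau=0$ for the finitely many unstable eigenvalues exceeding $2\lambda$, so that the correction stays of size $O(e^{2\lambda\tau})$). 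Working in the weighted parabolic space $\|h\|_X=\sup_{\tau\le 0}e^{-\lambda\tau}\|h(\cdot,\tau)\|_{C^{2,\alpha}(\bar g)}$, and using parabolic smoothing for $e^{L\tau}$, the spectral gap on $(0,\lambda)$ among essential modes, and $\|\mathcal Q(h)\|\lesssim\|h\|_X^2$, the Duhamel map is a contraction on a small ball; its fixed point $h^{\mathbf a}$ satisfies $\|h^{\mathbf a}(\tau)\|_{C^k(\bar g)}\le C_ke^{\lambda\tau}$ for all $k$ after parabolic bootstrapping, and extends forward by short-time existence. Because only the $N$-dimensional $E_+$-data is prescribed and the neutral-essential coefficients are set to zero, the family is exactly $N$-parameter and no integrability hypothesis on $\bar g$ is needed; thus $\bar g+h^{\mathbf a}$ is an ancient rescaled RDTF converging exponentially to $\bar g$.

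Next I would convert each $\bar g+h^{\mathbf a}$ into an ancient rescaled Ricci flow by flowing the DeTurck vector field $V_\tau=W(\bar g+h^{\mathbf a}(\tau),\bar g)$: since $|V_\tau|\le Ce^{\lambda\tau}$ is integrable as $\tau\to-\infty$, the ODE $\partial_\tau\chi_\tau=V_\tau\circ\chi_\tau$ has a unique solution with $\chi_\tau\to\id$ exponentially as $\tau\to-\infty$, and $g_\tau:=\chi_\tau^{*}\bigl(\bar g+h^{\mathbf a}(\tau)\bigr)$ is an ancient rescaled Ricci flow with $\|\chi_\tau^{*}g_\tau-\bar g\|_{C^k(\bar g)}\le C_ke^{\lambda\tau}$; it is asymptotic to $\bar g$ since $g_\tau\to\bar g$ as $\tau\to-\infty$. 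In the positive Einstein case $f$ is constant and $L$ preserves the $L^2_f$-orthogonal splitting of $\Sym^2(M)$ into transverse--traceless, pure-conformal, and Lie-derivative parts; running the fixed point inside the essential slice $\Ker\Di_f$—on which, for an Einstein background, the gauge-fixed rescaled Ricci flow acts and is parabolic—makes the DeTurck vector field vanish, so one can take $\chi_\tau\equiv\id$. Finally, for pairwise distinctness: modifying a Ricci flow by a diffeomorphism changes the leading term of the associated RDTF representative, to first order, only by an element of $\Ima\Di^*_f$, so the $\Ker\Di_f$-component of the leading asymptotic profile—which for $g_\tau$ equals $\mathbf a$—is an invariant of the Ricci flow (up to the action of $\textnormal{Isom}(\bar g)$ on $E_+$); hence distinct $\mathbf a$ give pairwise different ancient flows, and $\mathbf a\ne0$ rules out the static flow $\bar g$, whose perturbations are pure gauge.

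The main obstacle, absent in the MCF analogue, is the weak parabolicity and diffeomorphism invariance of Ricci flow, which enters at two points. First, one must cleanly separate the genuinely geometric unstable directions—the essential eigentensors, which lie in $\Ker\Di_f$—from the gauge directions spanned by Lie derivatives, so that the constructed family is parametrized exactly by $E_+$ rather than by the generally larger full unstable eigenspace of $L$, and so that distinctness survives passing to the diffeomorphism quotient; this is why the fixed point must prescribe data only on $E_+$ and why the $\Ker\Di_f$-invariance of the leading profile is needed. Second, one must control—or, for Einstein backgrounds, eliminate—the reparametrizing diffeomorphisms $\chi_\tau$ when passing from the RDTF back to the Ricci flow. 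A secondary technical point is obtaining the sharp rate $e^{\lambda\tau}$: this exploits the spectral gap of $L$ above $0$ together with the fact that $\mathcal Q(h)=O(e^{2\lambda\tau})$ decays strictly faster than the linear rate and so does not resonate with the unstable modes, which forces the few unstable eigenvalues above $2\lambda$ to be integrated from a finite time rather than from $-\infty$.
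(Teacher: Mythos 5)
Your overall strategy is the same as the paper's: construct ancient solutions of the modified rescaled Ricci--DeTurck perturbation equation $\partial_\tau h = Lh + Q(h)$ via a weighted Duhamel/fixed-point argument with prescribed data on the essential unstable eigenspace $E_+$, then convert back to Ricci flows with the DeTurck trick. (The paper does the fixed point iteratively, one eigenvalue at a time, to get the stratified asymptotics needed later for the classification; your one-shot contraction is fine for the existence statement alone.) The contraction space, the use of the spectral gap, the $O(e^{2\lambda\tau})$ control of the nonlinearity, and the distinctness argument via the $\Ker\Di_f$-component of the leading profile are all sound and in line with the paper.

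There are, however, two concrete problems with the conversion step. First, for a general (non-Einstein) shrinker, the vector field that turns the \emph{modified} RDTF $\bar g + h^{\mathbf a}(\tau)$ into a genuine Ricci flow is not $W(\bar g+h^{\mathbf a},\bar g)$ but (schematically) $W(\bar g+h^{\mathbf a},\bar g)+\nabla f$, because $L=\Delta_f+2\Rm*$ already contains the soliton drift $-\nabla_{\nabla f}$. The extra $\nabla f$ term does \emph{not} decay, so your assertion ``$|V_\tau|\le Ce^{\lambda\tau}$, hence $\chi_\tau\to\id$ exponentially'' fails in general; the actual diffeomorphism conjugating $\bar g + h^{\mathbf a}$ to the Ricci flow factors through the soliton flow $\xi_\tau$, which the paper makes explicit and which you omit. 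The theorem only asks for \emph{some} family of diffeomorphisms for general shrinkers, so this can be repaired, but the claim as written is wrong.

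Second, the justification for $\chi_\tau\equiv\id$ in the Einstein case does not work. You say one can ``run the fixed point inside the essential slice $\Ker\Di_f$ \ldots\ makes the DeTurck vector field vanish,'' but this contradicts your own construction a paragraph earlier, where you explicitly recover nonzero Lie-derivative components of $h^{\mathbf a}$ from the Duhamel integral of $Q$. And it cannot be patched: the RDTF nonlinearity $Q(h)$ does not map $\Ker\Di_f$ to itself, so the perturbation cannot stay purely essential, and the DeTurck vector field is genuinely nonzero. The correct reason $\chi_\tau\equiv\id$ works for positive Einstein is the one you already have in hand and should have used: $\nabla f=0$ kills the $\xi_\tau$ contribution, the DeTurck field satisfies $|V_\tau|\le Ce^{\lambda\tau}$, the ODE $\partial_\tau\chi_\tau=V_\tau\circ\chi_\tau$ then admits a solution with $\|\chi_\tau-\id\|_{C^{k+1}}\le Ce^{\lambda\tau}$ as $\tau\to-\infty$, and therefore $\|g_\tau-\bar g\|_{C^k}\le\|\chi_\tau^*\bar g-\bar g\|_{C^k}+\|\chi_\tau^*h^{\mathbf a}\|_{C^k}\le C_ke^{\lambda\tau}$ holds without applying any further diffeomorphism.
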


Note that a Ricci flow remains a Ricci flow under dilations and rescalings.
If the shrinker is linearly stable in the entropy sense, meaning that $\Ric$ is the only essential unstable eigentensor of $L$, the Ricci flows in Theorem \ref{t:existence_theorem_intro} only differ from the Ricci flow of the shrinker by time-shiftings. On the other hand, if the shrinker is linearly unstable, then the essential index is at least two, i.e., $I=\textnormal{ess-index}(L)\ge2$. In this case, the $I$-parameter family of rescaled Ricci flows in Theorem \ref{t:existence_theorem_intro} gives a $(I-2)$-parameter family of pairwise non-isometric Ricci flows modulo dilations and time-shiftings.

By Theorem \ref{t:uniqueness_of_tangent_flow}, we classify all ancient flows coming out of a compact integrable shrinker: 
\begin{theorem}[Classification]\label{t:uniqueness}
   Let $(M,\bar g)$ be a compact integrable shrinker. Then any ancient rescaled Ricci flow asymptotic to $(M,\bar g)$ must be one of the flows in Theorem \ref{t:existence_theorem_intro}. 
\end{theorem}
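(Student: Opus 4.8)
\noindent\emph{Proof idea.} The plan is to use the ancient harmonic map heat flow of Theorem \ref{t:uniqueness_of_tangent_flow} to pass from the given flow to an ancient Ricci DeTurck flow that is exponentially close to $\bar g$, and then to recognize this Ricci DeTurck flow as one member of the fixed-point family underlying Theorem \ref{t:existence_theorem_intro}, by proving that an exponentially-decaying ancient Ricci DeTurck flow is determined by its unstable essential data. Concretely, let $g_\tau$ be an ancient rescaled Ricci flow asymptotic to $(M,\bar g)$. By Theorem \ref{t:uniqueness_of_tangent_flow} there is an ancient HMHF $\chi_\tau$ between $g_\tau$ and $\bar g$ with $\chi_\tau\to\id$ exponentially as $\tau\to-\infty$; letting $h_\tau$ be the Ricci DeTurck flow associated to $g_\tau$ through the DeTurck gauge $\chi_\tau$, combining Theorem \ref{thm:RF_exp} with the exponential convergence $\chi_\tau\to\id$ gives $\|h_\tau-\bar g\|_{C^k(\bar g)}\le C_ke^{\delta\tau}$ for some $\delta>0$ and every $k$. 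Since the flows of Theorem \ref{t:existence_theorem_intro} are themselves de-DeTurcked versions of ancient Ricci DeTurck flows constructed near $\bar g$, it suffices to identify $h_\tau$ with one of those Ricci DeTurck flows.

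Next I would set up the linearized analysis for $w_\tau:=h_\tau-\bar g$, which solves $\partial_\tau w=Lw+\mathcal{Q}(w)$ with $L=\Delta_f+2\Rm*$ and $\mathcal{Q}$ quadratic in $w$ and its covariant derivatives, subject to the exponential bound above. Using the $L^2_f$-orthogonal splitting $\Sym^2(M)=\Ima\,\Di^*_f\perp\Ker\,\Di_f$ together with the eigenspace decomposition of $L$ on $\Ker\,\Di_f$ into stable, neutral, and unstable essential parts, denote by $\pi_-,\pi_0,\pi_+$ the corresponding spectral projections. The component of $w_\tau$ in $\Ima\,\Di^*_f$ is pinned down by the DeTurck gauge and introduces no new parameters; the neutral essential component is, by the integrability hypothesis, tangent to a smooth finite-dimensional family of shrinkers through $\bar g$, so that there is no slow center-manifold dynamics and, after a time-independent normalization, $\pi_0 w_\tau$ is of strictly higher order than $\pi_+ w_\tau$. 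The goal is thus reduced to the claim that an ancient Ricci DeTurck flow in this exponentially-decaying class is the unique one with prescribed backward asymptotics of $\pi_+ w_\tau$ in the unstable essential eigenspace.

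I would prove this uniqueness by a Merle--Zaag-type differential-inequality argument. Tracking $\|\pi_\pm w_\tau\|$ and $\|\pi_0 w_\tau\|$ and using the smallness of $w_\tau$, one shows that the unstable essential projection dominates, that $\pi_+ w_\tau$ satisfies an approximate finite-dimensional ODE whose rescaled backward limit exists and defines the \emph{unstable data} $a$, and that $\pi_- w_\tau$ and $\pi_0 w_\tau$ are slaved to $\pi_+ w_\tau$ (the neutral term absorbed via integrability). If two exponentially-decaying ancient Ricci DeTurck flows have the same unstable data, their difference solves a linear equation with small coefficients and unstable projection of strictly lower order, and the same Merle--Zaag analysis forces it to decay faster than every eigenrate, hence to vanish identically. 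On the other hand, the fixed-point construction in Theorem \ref{t:existence_theorem_intro} produces, for each sufficiently small $a$, an ancient Ricci DeTurck flow $H^a_\tau$ with unstable data $a$; after a harmless time translation to make the unstable data of $h_\tau$ small, the uniqueness just established gives $h_\tau=H^a_\tau$, and undoing the DeTurck gauge $\chi_\tau$ exhibits $g_\tau$ as the corresponding flow of Theorem \ref{t:existence_theorem_intro}.

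The step I expect to be the main obstacle is the Merle--Zaag spectral-gap analysis for the Ricci DeTurck perturbation equation carried out simultaneously with quotienting out the generic (gauge) directions in $\Ima\,\Di^*_f$ and absorbing the neutral directions by integrability, so that the effective number of parameters is exactly $\textnormal{ess-index}(L)$; closely related is the need to check that the passage from the Ricci flow $g_\tau$ to the Ricci DeTurck flow $h_\tau$ is compatible with the normalization used in Theorem \ref{t:existence_theorem_intro}, so that equality of Ricci DeTurck flows genuinely yields that $g_\tau$ is one of the constructed ancient Ricci flows.
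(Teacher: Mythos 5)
Your broad outline---pass to a Ricci DeTurck flow through an exponentially converging HMHF gauge (Theorems \ref{t:uniqueness_of_tangent_flow}, \ref{thm:RF_exp}), then identify the resulting perturbation with a member of the fixed-point family $S(\mathbf{a})$ by a Merle--Zaag spectral analysis of the difference---does match the paper's strategy in Section \ref{sec: uniqueness}. However, there is a genuine gap in your treatment of the generic (Lie-derivative) directions. You assert that ``the component of $w_\tau$ in $\Ima\,\Di^*_f$ is pinned down by the DeTurck gauge and introduces no new parameters,'' but this is false. The HMHF between $g_\tau$ and $\bar g$ is far from unique: by Theorem \ref{t:existence_theorem_HMHF} there is an $\textnormal{index}(\mathfrak L)$-parameter family of ancient modified HMHFs converging exponentially to $\id$, and different choices produce Ricci DeTurck perturbations $h_\tau$ whose generic parts differ at leading exponential order. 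In particular, the dominating backward asymptotic of $w_\tau = h_\tau - \bar g$ (or of the difference $h_\tau - S(\mathbf{a})$) may well be a \emph{purely generic} unstable eigentensor, in which case your unstable essential data $a$ does not control the difference and the intended uniqueness argument collapses: two distinct HMHF gauges for the same $g_\tau$ yield two exponentially-decaying ancient RDTFs with identical essential unstable data but differing by a generic mode.

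The paper resolves this precisely with a two-pronged iteration that your argument does not supply. Proposition \ref{prop:diff.asymp} gives the Merle--Zaag dichotomy you describe: if $w_{\chi,\mathbf{a}} := (\chi_\tau^{-1})^*g_\tau - \bar g - S(\mathbf{a}) \not\equiv 0$, then $e^{-\lambda\tau}w_{\chi,\mathbf{a}}$ converges to a positive-eigenvalue eigentensor $\varphi$. If $\varphi$ has a nonzero essential component, Lemma \ref{l: limit is not only generic mode} adjusts $\mathbf{a}$ to absorb it, thereby improving the rate or making the limit purely generic. If $\varphi$ is purely generic, the key step you are missing is Lemma \ref{l: change of base point}: using Lemma \ref{t: unstable Lie derivative implies unstable vector_shrinker} (the commutator $L\circ\Di^*_f=\Di^*_f\circ\mathfrak L$ from Section \ref{sec:commutator}), $\varphi$ is written as $\LL_{\bar X}\bar g$ for an eigenvector $\bar X$ of $\mathfrak L$, and one solves the HMHF perturbation equation \eqref{eq:HMHF perturbation sec 7_2} via Theorem \ref{l: contraction mapping RDTF} to \emph{change the HMHF gauge} $\chi\rightsquigarrow\psi$ so that $w_{\psi,\mathbf{a}}$ decays strictly faster. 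Iterating these two moves at most $I$ times produces $\chi_k, \mathbf{a}_k$ with $w_{\chi_k,\mathbf{a}_k}\equiv 0$. Without the realization that generic unstable modes are eliminated by \emph{reparametrizing the gauge} (not by the gauge condition itself) and without the $L$--$\mathfrak L$ correspondence that makes this possible, the proof does not close. The final paragraph of your proposal flags this as ``the main obstacle'' but treats it as a technical refinement; in fact it is the central new idea required for the classification.
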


The \textit{integrability} means that every essential neutral eigentensor arises as the derivative of a path of shrinking solitons. 
Note that integrability is a necessary condition in Theorem \ref{t:uniqueness}; for example, there exists an ancient Ricci flow asymptotic to the non-integrable shrinker $\mathbb{CP}^n$ with a slower-than-exponential convergence rate \cite{KS19,Kroencke2013StabilityOE}, thus it is not in Theorem \ref{t:existence_theorem_intro}.
The class of integrable shrinkers includes many well-known examples, such as $\mathbb S^n, \mathbb{HP}^n, Ca\mathbb{P}^2$.
{For ancient flows with a slower-than-exponential convergence rate, see the work of Choi–Hung \cite{choi2024thom} for equations with elliptic gauges.}

Ricci flows asymptotic to compact integrable \textit{Ricci-flat} manifolds are first considered by Sesum \cite{ses06}, under the assumption that the Ricci-flat manifold is linearly stable. Our method extends Sesum's result to shrinkers and removes the stability assumption. So first, this allows the presence of \textit{essential} unstable eigentensors beyond just the Ricci tensor.
See \cite{HM14,Has12,li2010stability,dai2005stability,dai2006stability,bamler2014stability,bamler2015stability,kroncke2015stability,guenther2002stability,schnurer2007stability,schnurer2010stability,Kroencke2013StabilityOE,CS05,hall2011linear} for discussions of stability for Ricci-flat manifolds and many more.

Moreover, while Ricci-flat manifolds have no generic unstable eigentensors (see e.g. \cite{ses06}), this is not true for shrinkers—even as simple as 
the round sphere $(\mathbb S^n(\sqrt{2(n-1)}), g_{\mathbb S^n})\subset \mathbb R^{n+1}$. If $n\ge3$,
there is a unique positive generic eigenvalue $\lambda_n=1-\tfrac{n}{2(n-1)}>0$, and the corresponding unstable generic eigenspace is spanned by $\{x^{i} g_{\mathbb S^n}\}_{i=1}^{n+1}$, where $\{x^{i}\}_{i=1}^{n+1}$ are the coordinates functions in $\mathbb R^{n+1}$, and $x^ig_{\mathbb S^n}=\LL_{\nabla x^i}g_{\mathbb S^n}$. These vector fields $\{\nabla x^{i}\}_{i=1}^{n+1}$ form a basis of the eigenspace of the operator $\Delta+\Ric$ with eigenvalue $\lambda_n$ acting on vector fields.

In fact, we discover that the \textit{generic stability} of 
$L$ are closely related to the stability of
\[\mathfrak L=\Delta_f+\tfrac12,\]
which reduces to $\Delta+\Ric$ when the shrinker is positive Einstein.
Note $\mathfrak{L}$ is the linearized operator of the HMHF equation. 
We find that for any eigentensor $\LL_X \bar g$ of $L$ with a non-zero eigenvalue, the vector field $X$ can be chosen as an eigenvector of $\mathfrak L$ with the same eigenvalue. This suggests an analogy between the generic eigentensors with the Euclidean motions in MCF, whose actions do not affect the entropy of the MCF solution.

Moreover, 
if $(M,\bar g)$ is positive Einstein, we establish the equivalence between the generic stability of $L$ and the stability of $\mathfrak L$.

\begin{theorem}\label{thm:generic_index}
    If $(M,\bar g)$ is positive Einstein, then for any $\lambda\neq0$, it is the eigenvalue of a generic eigentensor of $L$ if and only if it is an eigenvalue of $\mathfrak L$, and 
    the generic eigenspace $E_{\lambda,L}\cap\Ima\,\Di^*$ is isomorphic to the eigenspace $E_{\lambda,\mathfrak L}$. In particular, we have
    \[\textnormal{gen-index}(L)=\textnormal{index}(\mathfrak L).\]
\end{theorem}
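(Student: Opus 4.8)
The plan is to set up the explicit correspondence $\mathcal L_X \bar g \leftrightarrow X$ between generic eigentensors of $L$ and vector fields, and show it becomes an isomorphism onto eigenspaces of $\mathfrak L$ when $\bar g$ is Einstein. First I would recall that on a compact manifold one has the $L^2_f$-orthogonal splitting $\Sym^2(M) = \Ima\,\Di^*_f \oplus \Ker\,\Di_f$, so a generic eigentensor is precisely one of the form $h = \Di^*_f X = \tfrac12 \mathcal L_X \bar g$ for some vector field $X$ (in the Einstein case $f$ is constant, so $\Di_f = \Di$ and the weight drops out). The key computational input is the commutation identity between the Lichnerowicz-type operator $L = \Delta_f + 2\Rm\,*$ and the operator $\Di^*_f$: one shows
\[
L\,\Di^*_f X = \Di^*_f\bigl(\mathfrak L X\bigr),
\qquad \mathfrak L = \Delta_f + \tfrac12 .
\]
This is the heart of the matter — it is the tensorial shadow of the fact that $\mathfrak L$ is the linearized HMHF operator and that the DeTurck vector field intertwines the two flows. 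On a positive Einstein manifold $\Ric = \tfrac12 \bar g$, so the zeroth-order terms match up: the curvature term $2\Rm\,*$ acting on a Lie derivative produces exactly the $\tfrac12$ shift plus a Ricci contraction that assembles into $\Di^*$ of the $\tfrac12 X$ term. I would derive this by a direct Bochner-type computation, using $\nabla \Ric = 0$ to discard the terms that would otherwise obstruct the commutation (this is where the Einstein hypothesis, not merely the shrinker equation, is used).

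Granting the intertwining identity, the proof proceeds as follows. If $\mathfrak L X = \lambda X$ then $L(\Di^*_f X) = \Di^*_f(\lambda X) = \lambda\,\Di^*_f X$, so $\Di^*_f X$ is a (generic) $\lambda$-eigentensor of $L$, provided $\Di^*_f X \neq 0$; and for $\lambda \neq 0$ this is automatic, since $\Di^*_f X = 0$ would force $X$ to be a Killing field, hence $\mathfrak L X = \Delta_f X = $ (after the Bochner formula and $\Ric = \tfrac12\bar g$) a multiple of $X$ that one checks is incompatible with $\lambda \neq 0$ unless $X = 0$. Conversely, given a generic $\lambda$-eigentensor $h = \Di^*_f X$ of $L$ with $\lambda \neq 0$, apply the identity to get $\Di^*_f(\mathfrak L X) = L h = \lambda h = \Di^*_f(\lambda X)$, i.e. $\mathfrak L X - \lambda X \in \Ker\,\Di^*_f$ = Killing fields; projecting $X$ $L^2$-orthogonally to the Killing fields (which are themselves eigen-objects of $\mathfrak L$, with eigenvalue $0$ in the Einstein normalization, by the Bochner formula) yields a genuine $\mathfrak L$-eigenvector $X'$ with $\Di^*_f X' = h$. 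This gives mutually inverse linear maps $X \mapsto \Di^*_f X$ and $h \mapsto (\text{de-Killing'd preimage})$ between $E_{\lambda,\mathfrak L}$ and $E_{\lambda,L}\cap \Ima\,\Di^*$, hence the claimed isomorphism; summing over $\lambda > 0$ gives the index equality $\textnormal{gen-index}(L) = \textnormal{index}(\mathfrak L)$.

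The main obstacle I anticipate is establishing the commutation identity $L\,\Di^*_f = \Di^*_f\,\mathfrak L$ cleanly, and in particular correctly accounting for the curvature term: one must verify that for a Lie derivative $\mathcal L_X\bar g$ the rough-Laplacian/Lichnerowicz discrepancy and the action of $\Rm\,*$ conspire — using the second Bianchi identity and $\nabla\Ric = 0$ — to collapse into the first-order operator $\Di^*$ applied to the vector-field operator $\Delta_f + \tfrac12$, with no leftover zeroth-order error. A secondary subtlety is the handling of the kernel $\Ker\,\Di^*_f$ (the Killing fields): I would record separately that on a compact positive Einstein manifold every Killing field $X$ satisfies $\Delta X + \Ric(X) = 0$, i.e. $\mathfrak L X = \Delta_f X + \tfrac12 X$, and that the Lichnerowicz–Obata-type lower bound forces $\lambda = 0$ to be the only eigenvalue attained on this subspace — so that these fields do not contribute any spurious nonzero eigenvalues on either side and the restriction "$\lambda \neq 0$" makes the correspondence exact. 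Everything else is bookkeeping with the orthogonal decomposition and standard elliptic spectral theory on the compact manifold $(M,\bar g)$.
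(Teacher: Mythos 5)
Your proposal is correct and reaches the stated conclusion, but it organizes the argument differently from the paper. You work directly from the commutation identity $L\circ\Di^*_f=\Di^*_f\circ\mathfrak L$: the forward map $X\mapsto\Di^*_fX$ is injective on $E_{\lambda,\mathfrak L}$ because a Killing field is annihilated by $\mathfrak L$, and surjective onto $E_{\lambda,L}\cap\Ima\,\Di^*$ by the ``de-Killing'' argument (writing $h=\Di^*_fX$, observing $\mathfrak L X-\lambda X\in\Ker\,\Di^*_f$, and projecting away the Killing component). The paper instead proves a \emph{factorization} $L=-2\Di^*\circ\Di_0$ on $\Ima\,\Di^*$ (Lemma~\ref{l: -L=2div*div}), which relies on the diffeomorphism invariance of $\Ric$ together with $\Ric=\tfrac12\bar g$, and couples it with $\Di_0\circ(-2\Di^*)=\Delta+\Ric=\mathfrak L$ (Lemma~\ref{l: Delta Z = div div^* Z}). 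This produces the explicit mutually-inverse-up-to-$\lambda$ pair $\Di_0$ and $-2\Di^*$ of Theorem~\ref{t: unstable Lie derivative implies unstable vector}, so the isomorphism is a single linear operator rather than a choice-of-lift-then-project construction. One consequence worth noting: the commutator you invoke holds for \emph{all} compact shrinkers (the paper uses it in exactly this generality in Lemma~\ref{t: unstable Lie derivative implies unstable vector_shrinker}, citing \cite{colding2021singularities}), so your remark that ``this is where the Einstein hypothesis is used'' mislocates the role of the Einstein condition — it enters the paper's proof through the factorization of Lemma~\ref{l: -L=2div*div}, not through the commutator. Indeed, your argument as written would seem to carry over verbatim to general compact shrinkers. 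Finally, a small slip in the Killing-field step: you write $\mathfrak L X=\Delta_f X$ for a Killing field, omitting the $+\tfrac12 X$ from $\mathfrak L=\Delta_f+\tfrac12$; the correct statement is that the Bochner identity $\Delta X=-\Ric(X)=-\tfrac12 X$ for a Killing field on an Einstein manifold with $\Ric=\tfrac12\bar g$ gives $\mathfrak L X=-\tfrac12X+\tfrac12X=0$, which is what the injectivity argument actually needs.
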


The existence of \textit{generic} unstable eigentensors creates  
a new challenge in Theorem \ref{t:uniqueness}, in contrast to Ricci-flat manifolds and MCF.
In MCF, if an unstable eigenfunction given by an Euclidean translation dominates the rescaled MCF, then one can eliminate this term by shifting to a new base point. See Sesum \cite{sesum2008rate} and \cite[Theorem 4.15]{choi2022ancient}. 
In Ricci flow, this phenomenon means an unstable generic eigentensor of $L$ dominates the rescaled RDTF perturbation.
Based on our observation of the stability of $L$ and $\mathfrak L$, the dominant generic unstable eigentensor is the Lie derivative of an unstable eigenvector $X$ of $\mathfrak L$.
Thus, we can eliminate the dominant generic unstable eigentensor in the RDTF perturbation by eliminating $X$ in the {HMHF perturbation}. The HMHF perturbation equation is a PDE for vector fields, which is satisfied by the \textit{difference} between two HMHFs between a fixed Ricci flow and the shrinker (see Section \ref{sec: uniqueness}).

In fact, for a fixed ancient rescaled Ricci flow that converges exponentially, by solving the HMHF perturbation equation,
we construct $\textnormal{index}(\mathfrak L)$-parameter family of ancient HMHFs. 
For example, there are $(n+1)$-parameter family of ancient HMHFs between the static flow by $\mathbb S^n$ and itself, which corresponds to one non-trivial ancient HMHF between the $\mathbb S^n$ and itself modulo time-shiftings and isometries.

\subsection{Optimal convergence rate for Ricci flow singularities}
In this subsection, we study Ricci flows that develop a singularity at 
$t=0$ modelled on a compact shrinker. Our goal is to analyze the convergence rate of these Ricci flows to the singularity.
First, we show that for any essential eigenvalue $-\lambda<0$, there exists a Ricci flow that converges to the singularity at the rate $(-t)^{1+\lambda}$. 
\begin{theorem}\label{t:different_convergence}
    Let $(M,\bar g,f)$ be a compact shrinker. Let {$-\lambda<0$ be any negative essential eigenvalue of $L$}. Then there exists a Ricci flow $g_t$ developing a singularity modelled on $\bar g$ as $t\to0$ such that there are diffemorphisms $\chi_{t}$ such that
    \begin{equation*}
       C_k^{-1}(-t)^{1+\lambda}\le \|\chi_{t}^*g_t-(-t)\bar g\|_{C^{k }(\bar g)}\le C_k(-t)^{1+\lambda},
    \end{equation*}
    for each $k\in\mathbb N$ and some $C_k>0$.
    In particular, $\chi_t\equiv\id$ if $(M,\bar g)$ is positive Einstein.
\end{theorem}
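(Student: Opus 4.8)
**Proof proposal for Theorem 1.7 (existence of a Ricci flow with prescribed convergence rate $(-t)^{1+\lambda}$ at the singularity).**

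The plan is to run the construction in the rescaled picture and then undo the rescaling. Fix an essential stable eigentensor $h_\lambda \in \Ker\,\Di_f$ of $L$ with $L h_\lambda = -\lambda h_\lambda$, $\lambda>0$. In the rescaled Ricci DeTurck flow with the shrinker $\bar g$ as background, the linearized perturbation equation is $\partial_\tau h = L h$, so $e^{-\lambda\tau}h_\lambda$ is an explicit immortal solution of the linearization decaying like $e^{-\lambda\tau}$ as $\tau\to+\infty$. The goal is to produce a genuine immortal rescaled RDTF $g_\tau$ whose perturbation $g_\tau-\bar g$ is asymptotic to $e^{-\lambda\tau}h_\lambda$ in this precise sense, both from above and below. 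First I would set up the standard fixed-point/contraction-mapping scheme for the nonlinear RDTF perturbation equation $\partial_\tau h = Lh + Q(h)$ on a time interval $[\tau_0,\infty)$, working in a weighted parabolic H\"older (or weighted $L^2$ then bootstrap) space with weight $e^{\lambda\tau}$, prescribing the projection of $h(\tau_0)$ onto the $-\lambda$-eigenspace to be a small multiple of $h_\lambda$ and letting all faster-decaying modes be determined by the fixed point; the quadratic error $Q(h)$ is $O(\|h\|^2)=O(e^{-2\lambda\tau})$ and is therefore negligible relative to the linear term, which both closes the contraction and yields the matching lower bound $\|h(\tau)\|_{C^k}\ge C_k^{-1}e^{-\lambda\tau}$. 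This is exactly the "unstable manifold" type construction used for ancient flows in Theorem 1.3, run forward in $\tau$ instead of backward; the essentiality ($h_\lambda\in\Ker\,\Di_f$) is what lets us ignore the generic directions for the purpose of the gauge-fixed equation.

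Next I would convert the rescaled RDTF $g_\tau$ back to an honest rescaled Ricci flow by composing with the DeTurck diffeomorphisms $\chi_\tau$ solving the associated harmonic-map-heat-flow ODE in $\tau$; since $g_\tau-\bar g$ decays like $e^{-\lambda\tau}$ and its spatial derivatives are controlled, $\chi_\tau$ converges to a limiting diffeomorphism and one may normalize so that $\chi_\tau^* g_\tau - \bar g$ still satisfies the two-sided bound $C_k^{-1}e^{-\lambda\tau}\le \|\chi_\tau^* g_\tau-\bar g\|_{C^k(\bar g)}\le C_k e^{-\lambda\tau}$ — here when $(M,\bar g)$ is positive Einstein and $h_\lambda$ is essential one checks the DeTurck vector field vanishes identically, so $\chi_\tau\equiv\id$, giving the last sentence of the theorem. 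Finally, unrescaling via $g(t)=(-t)\,\hat g(\tau)$ with $\tau=-\log(-t)$, the estimate $e^{-\lambda\tau}=(-t)^\lambda$ turns the bound on $\hat g(\tau)-\bar g$ into
\[
C_k^{-1}(-t)^{1+\lambda}\le \|\chi_t^* g_t-(-t)\bar g\|_{C^k(\bar g)}\le C_k(-t)^{1+\lambda},
\]
and the immortal rescaled flow becomes a Ricci flow on $[t_0,0)$ developing a singularity at $t=0$ modelled on $\bar g$.

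The main obstacle I expect is not the fixed-point argument itself but guaranteeing the \emph{sharp lower bound} and, relatedly, that the prescribed mode $h_\lambda$ genuinely survives rather than being overwhelmed by other modes. One has to arrange the function spaces so that (i) all eigenvalues of $L$ on $\Ker\,\Di_f$ that are $\le -\lambda$ (i.e. at least as stable) other than the chosen one are handled — if there are several eigentensors with eigenvalue exactly $-\lambda$ one simply picks one and projects the rest away, but faster-decaying modes must be allowed to be free parameters in the fixed point — and (ii) the nonlinear remainder really is $o(e^{-\lambda\tau})$ in the relevant norm, which requires the quadratic (and higher) structure of $Q$ together with the a priori smallness from starting at large $\tau_0$; a Merle–Zaag / ODE-lemma type argument comparing the sizes of the projections of $h(\tau)$ onto the eigenspaces then pins down the leading asymptotics to be exactly $e^{-\lambda\tau}h_\lambda$ up to a constant, yielding both inequalities. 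Controlling higher $C^k$ norms is then routine parabolic bootstrapping off the established decay, and the positive-Einstein case is a direct verification that the gauge correction is trivial.
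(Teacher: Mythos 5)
Your proposal coincides with the paper's proof: Theorem~\ref{t:different_convergence} is deduced from Theorem~\ref{t: existence theorem forward}, which constructs the immortal modified rescaled RDTF via the contraction-mapping Lemma~\ref{l: contraction mapping RDTF} run forward in $\tau$ and seeded by a stable essential eigentensor with eigenvalue $-\lambda<0$, and then converts to a Ricci flow by solving the HMHF ODE and unrescales, exactly as in Theorem~\ref{t:existence_theorem_intro} and as you describe. The only inessential difference is that the sharp two-sided rate falls out directly from the contraction scheme---the seeded mode $a\,e^{-\lambda\tau}h_\lambda$ appears as an explicit additive term with all corrections shown to be $O(e^{-(\lambda+\delta')\tau})$---so the separate Merle--Zaag comparison of projections you anticipate is not needed.
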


In the rescaled flow, this means the convergence rate is $e^{-\lambda\tau}$. Since the absolute value of a negative essential eigenvalue can be arbitrarily large, the theorem implies that Ricci flow singularities can form at any prescribed exponential rate in the rescaled flow.

Then, a natural question is whether a flow can converge to the shrinker faster than $(-t)^{1+\lambda}$ for any $\lambda>0$. Kotschwar \cite{kotschwar2022maximal} showed that it is impossible unless it is self-similarly shrinking flow up to diffeomorphisms. We provide an alternative proof. See also \cite{sesum2008rate,strehlke2020unique,martin2023rate,DHH25rate} for analogous results in MCF and \cite{choi2024thom} for a very large class of elliptic and parabolic equations.

{Note that this result is related to the unique continuation in elliptic PDE theory}, which states that if a solution nearly vanishes in a region, it must vanish entirely. The unique continuation was also used to establish the uniqueness of MCF shrinking solitons by Wang \cite{Wang_JAMS} and also shrinking Ricci solitons by Kotschwar-Wang \cite{Kotschwar-Wang}.
\begin{theorem}[Kotschwar \cite{kotschwar2022maximal}]\label{thm:Carleman}
Let $(M,\bar g,f)$ be a compact shrinker, and $\alpha\in(0,1)$.
Let $g_t$, $t\in(-T,0)$ for some $T>0$,
    be a Ricci flow developing a singularity modelled on $\bar g$ as $t\to0$. If for any $\lambda>0$ there exists $C_{\lambda}>0$ such that
    \begin{equation*}
        \|(\xi_t^{-1})^*g_t-(-t)\bar g\|_{C^{2,\alpha}(\bar g)}\le C_{\lambda}(-t)^{1+\lambda}.
    \end{equation*}
Then there is a smooth family of diffeomorphisms $\chi_t$ such that $\chi_t^*g_t\equiv\bar g_t$. Moreover, if $\bar g$ is positive Einstein, then $g_t\equiv(-t)\bar g$.
\end{theorem}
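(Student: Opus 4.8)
\textbf{Proof proposal for Theorem \ref{thm:Carleman}.}

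The plan is to convert the statement into a backward uniqueness (unique continuation) assertion for a parabolic system and apply a Carleman inequality in the rescaled picture. First I would pass to the rescaled Ricci flow $g_\tau$ on $\tau\in(\tau_0,\infty)$, where the hypothesis becomes: for every $\lambda>0$ there is $C_\lambda$ with $\|(\xi^{-1}_\tau)^* g_\tau-\bar g\|_{C^{2,\alpha}(\bar g)}\le C_\lambda e^{-\lambda\tau}$, i.e.\ the rescaled flow converges to the shrinker \emph{faster than any exponential}. By Theorem \ref{thm:RF_exp} we already know $g_\tau$ converges exponentially, and by Theorem \ref{t:uniqueness_of_tangent_flow} there is a global immortal HMHF $\chi_\tau$ between $g_\tau$ and $\bar g$ converging exponentially to $\id$; pushing forward by $\chi_\tau$, I obtain a solution $\hat g_\tau=(\chi_\tau)_*g_\tau$ of the rescaled Ricci--DeTurck flow with background $\bar g$, and the super-exponential decay hypothesis is preserved (the diffeomorphism discrepancy between $\xi_\tau$ and $\chi_\tau$ is itself controlled by the HMHF perturbation equation, which is a parabolic equation for vector fields whose solutions decay at a definite exponential rate, hence cannot conspire to destroy super-exponential decay of the metric difference). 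Set $h_\tau=\hat g_\tau-\bar g$. Then $h_\tau$ solves a quasilinear strictly parabolic system $\partial_\tau h=\Delta_{\bar g}h + L h + Q(h,\nabla h,\nabla^2 h)$ where $L=\Delta_f+2\Rm*$ has discrete spectrum bounded above, and $\|h_\tau\|_{C^{2,\alpha}}\to 0$ super-exponentially.

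The core step is a Carleman-type estimate forcing $h\equiv 0$. I would argue as follows: suppose $h\not\equiv 0$. Since the linearization $L$ has discrete spectrum, standard ODE-lemma arguments (à la Leon Simon / Merle--Zaag, or the spectral decomposition used by Colding--Minicozzi and Kotschwar) show that $\tau\mapsto \|h_\tau\|_{L^2_f}$, if not identically zero, satisfies $\|h_\tau\|_{L^2_f}\ge c\, e^{-\mu\tau}$ for some finite $\mu$ determined by an eigenvalue of $L$ — because the Rayleigh-quotient/frequency function
\[
N(\tau)=\frac{\int_M \langle L h_\tau,h_\tau\rangle\, e^{-f}\dvol}{\int_M |h_\tau|^2\, e^{-f}\dvol}
\]
is monotone along the flow up to lower-order errors, and the quadratic term $Q$ is an $o(1)$-perturbation because $h_\tau\to 0$ in $C^{2,\alpha}$. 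This contradicts the super-exponential decay $\|h_\tau\|_{L^2_f}\le C_\lambda e^{-\lambda\tau}$ for all $\lambda$. Hence $h\equiv 0$, i.e.\ $\hat g_\tau\equiv\bar g$, which pulls back to $\chi_\tau^* g_\tau\equiv\bar g_\tau$ after unwinding the rescaling (here $\chi_t$ is the un-rescaled HMHF composed with the Ricci--DeTurck diffeomorphism). For the final sentence: if $\bar g$ is positive Einstein, then by Theorem \ref{t:existence_theorem_intro} / Theorem \ref{thm:generic_index} the relevant generic eigentensors of $L$ already come from genuine Killing-type vector fields, so the gauge diffeomorphism can be taken to be the identity, giving $g_t\equiv(-t)\bar g$ outright.

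I expect two main obstacles. The first is making the frequency-monotonicity argument rigorous for the \emph{quasilinear} system rather than a linear one: one needs the error term $Q(h,\nabla h,\nabla^2 h)$ to be controlled by $\|h\|_{C^{2,\alpha}}\cdot(\text{linear terms})$ uniformly, and to ensure the weighted integrations by parts on the (compact) shrinker produce no boundary or decay issues — this is where compactness of $M$ and the exponential decay from Theorem \ref{thm:RF_exp} are essential. The second, more serious obstacle is the bookkeeping that the hypothesized diffeomorphisms $\xi_t$ (which are given, with no rate) versus the canonical HMHF gauge $\chi_t$ (which is controlled) do not interfere: one must show that replacing $\xi_t$ by $\chi_t$ does not lose the super-exponential decay. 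This follows because the vector field $X_\tau$ generating $\chi_\tau\circ\xi_\tau^{-1}$ satisfies the HMHF perturbation equation $\partial_\tau X=\mathfrak L X + (\text{error})$ with $\mathfrak L=\Delta_f+\tfrac12$ of discrete spectrum; since $\|X_\tau\|\to 0$ and the source term inherits the super-exponential decay of the metric differences, the same ODE-lemma shows $X_\tau$ decays super-exponentially (indeed must vanish), so the two gauges agree asymptotically to all exponential orders. Alternatively, one can bypass this by running the Carleman estimate directly on the DeTurck-gauged equation and only at the end reconstructing $\chi_t$; I would present the latter as the cleaner route.
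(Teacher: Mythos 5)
Your overall strategy matches the paper's: pass to the rescaled Ricci--DeTurck picture via a harmonic map heat flow gauge, then apply a spectral/unique-continuation argument to the resulting super-exponentially decaying perturbation. The core step you sketch (frequency-function monotonicity $\Rightarrow$ $\|h_\tau\|$ cannot decay faster than $e^{\lambda^*\tau}$ for some eigenvalue $\lambda^*$, hence super-exponential decay forces $h\equiv 0$) is exactly what the paper outsources to \cite[Proposition~4.1]{choi2024thom}, so that part is fine in spirit. However, two specific steps are handled either awkwardly or incorrectly.

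First, the gauge change. You first fix the HMHF $\chi_\tau$ from Theorem~\ref{t:uniqueness_of_tangent_flow}, which only decays at a \emph{fixed} exponential rate $\delta$, and then try to argue a posteriori that the discrepancy between $\chi_\tau$ and $\xi_\tau$ is super-exponentially small. That argument is circular as written: the super-exponential decay of the source in the HMHF perturbation equation for the discrepancy vector field is precisely what you need to establish, not something you already have. The paper avoids this by never fixing a HMHF in advance; instead it applies Lemma~\ref{l: existence to time-dependent, inhomogeneous equation-2} directly, which \emph{constructs} a modified HMHF $\chi_\tau$ so that $h_\tau=(\chi_\tau^{-1})^*g_\tau-\bar g$ decays faster than any exponential. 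The lemma is specifically designed to produce solutions with the same arbitrarily fast exponential decay as the inhomogeneous term (by re-running the fixed-point construction for each $\lambda$). Your remark that ``one can bypass this by running the Carleman estimate directly on the DeTurck-gauged equation'' is pointing at the right idea, but the nontrivial content is exactly the existence of a gauge in which the perturbation is super-exponentially small, which is what Lemma~\ref{l: existence to time-dependent, inhomogeneous equation-2} provides.

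Second, the Einstein-case conclusion $g_t\equiv(-t)\bar g$ is derived incorrectly. It has nothing to do with Theorem~\ref{t:existence_theorem_intro} or Theorem~\ref{thm:generic_index}, and the claim that ``generic eigentensors come from Killing-type vector fields'' is false (generic eigentensors with nonzero eigenvalue are Lie derivatives along eigenvectors of $\mathfrak L$, not Killing fields; Killing fields lie in $\Ker\,\mathfrak L$). The correct argument is elementary: once unique continuation gives $h_\tau\equiv 0$, we have $(\chi_\tau^{-1})^*g_\tau=\bar g$, i.e.\ $\chi_\tau:(M,g_\tau)\to(M,\bar g)$ is an isometry for each $\tau$. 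Isometries are harmonic, so the HMHF ODE $\partial_\tau\chi_\tau=\Delta_{g_\tau,\bar g}\chi_\tau$ (recall $\xi_\tau=\id$ when $f\equiv\mathrm{const}$) gives $\partial_\tau\chi_\tau\equiv 0$; combined with $\chi_\tau\to\id$, this forces $\chi_\tau\equiv\id$ and hence $g_\tau\equiv\bar g$.
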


Then, a natural question is whether the first {negative essential eigenvalue} determines the slowest possible convergence rate of the Ricci flow to the shrinker. Using the immortal HMHF gauge from Theorem \ref{t:uniqueness_of_tangent_flow}, we obtain the optimal convergence rate is indeed given by the eigenvalue of the first essential stable eigentensor for compact integrable shrinkers. Consequently, by Theorem \ref{t:different_convergence}, this provides an optimal lower bound for the convergence rate.
Note that the integrability cannot be removed, as the exponential convergence may fail at cylindrical singularities.
\begin{theorem}[{Rate of convergence at singularity}]\label{thm:gap}
Let $(M,\bar g,f)$ be a compact integrable shrinker.
Let $-\lambda<0$ be the largest {negative essential eigenvalue.}
Let $g_t$ be a Ricci flow developing a singularity modelled on $\bar g$ as $t\to0$, then there exist a smooth family of diffeomorphisms $\chi_t$ such that 
    \begin{equation}\label{eq:gap}
        \|(\chi_t^{-1})^*g_t-(-t)\bar g \|_{C^{k}(\bar g)}\le C_k(-t)^{1+\lambda},
    \end{equation}
for each $k\in\mathbb N$ and some $C_k>0$.
In particular, if $\bar g$ is positive Einstein, then $\chi_t=\id$ and
\begin{equation}\label{eq:gap_2}
        \|g_t-(-t)\bar g \|_{C^{k}(\bar g)}\le C_k(-t)^{1+\lambda}.
    \end{equation}
\end{theorem}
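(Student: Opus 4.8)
The plan is to reduce the statement to the convergence-rate estimate for the immortal rescaled Ricci flow, which is exactly the setting of Theorems \ref{t:uniqueness_of_tangent_flow}, \ref{thm:RF_exp}, and then to upgrade the exponential rate to the sharp exponent $-\lambda$ by a spectral analysis of the linearized Ricci DeTurck operator. Concretely, pass to the rescaled flow $\hat g(\tau)=e^{\tau}g(-e^{-\tau})$, which is an immortal rescaled Ricci flow asymptotic to $\bar g$ as $\tau\to+\infty$. By Theorem \ref{t:uniqueness_of_tangent_flow} there is an immortal HMHF $\chi_\tau$ between $\hat g(\tau)$ and $\bar g$ converging exponentially to $\mathrm{id}$, so $h(\tau):=(\chi_\tau)_*\hat g(\tau)-\bar g$ is a symmetric 2-tensor on $M$ solving the (nonlinear) rescaled Ricci DeTurck flow perturbation equation $\partial_\tau h = L h + Q(h)$, where $L=\Delta_f+2\,\mathrm{Rm}*$ and $Q$ is quadratic, with $\|h(\tau)\|_{C^k(\bar g)}\to 0$ exponentially. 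Rescaling back (using $t=-e^{-\tau}$) and composing the HMHF with a suitable family of diffeomorphisms, an exponential bound $\|h(\tau)\|_{C^k}\le C_k e^{-\mu\tau}$ for some $\mu>0$ translates into $\|(\chi_t^{-1})^*g_t-(-t)\bar g\|_{C^k(\bar g)}\le C_k(-t)^{1+\mu}$; thus the theorem amounts to showing that one may take $\mu=\lambda$, where $-\lambda$ is the largest negative essential eigenvalue of $L$.

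The decisive step is therefore a \L ojasiewicz–Simon / spectral-gap argument for the perturbation equation. First I would use the integrability hypothesis: since every essential neutral eigentensor of $L$ is tangent to a path of shrinkers, the kernel of $L$ restricted to $\mathrm{Ker}\,\mathrm{div}_f$ is integrable, so after subtracting off a nearby shrinker in the family (which is isometric to $\bar g$ by the uniqueness of compact integrable shrinkers up to isometry, or more precisely absorbed into $\chi_t$) the solution has no neutral component — this is the standard device that eliminates the slowly-decaying mode and gives exponential decay in the first place, as in Sesum's argument for Ricci-flat metrics \cite{ses06}. Next, the generic unstable modes: by the analogy with $\mathfrak L=\Delta_f+\tfrac12$ developed before Theorem \ref{thm:generic_index}, every nonzero-eigenvalue eigentensor $\mathcal L_X\bar g$ of $L$ has $X$ an eigenvector of $\mathfrak L$ with the same eigenvalue, so the generic unstable part of $h$ can be killed by replacing $\chi_\tau$ with $\psi_\tau\circ\chi_\tau$ for an appropriate immortal solution $\psi_\tau$ of the HMHF perturbation equation (the vector-field PDE in Section \ref{sec: uniqueness}); this is the Ricci-flow analogue of re-centering a mean curvature flow to remove the translational mode. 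Finally, the essential unstable modes (eigenvalue $>0$): for an immortal flow these cannot be present at all, since a positive eigenvalue forces exponential growth, contradicting that $h(\tau)\to 0$; equivalently $\mu$-entropy monotonicity rules them out. After these reductions, $h$ lies (to leading order) in the sum of the stable eigenspaces of $L$, all of whose eigenvalues are $\le -\lambda$, and a Merle–Zaag-type ODE-lemma argument on the spectral projections of $h$, combined with the exponential smallness already known, forces $\|h(\tau)\|\le C e^{-\lambda\tau}$; interpolation with parabolic estimates then upgrades this to all $C^k$ norms.

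I expect the main obstacle to be the last reduction — showing that no stable eigentensor with eigenvalue strictly between $-\lambda$ and $0$ survives, i.e. genuinely extracting the exponent $-\lambda$ rather than merely "some $\mu>0$". This requires a careful Merle–Zaag alternative for the system of projections $h_j(\tau)=\Pi_{E_{\mu_j,L}}h(\tau)$: one shows that if the projection onto eigenvalues in $(-\lambda,0)$ were not identically negligible, then that slowest stable mode would dominate and the flow would converge exactly at that slower rate, but such a flow is one of those constructed in Theorem \ref{t:different_convergence} for a different eigenvalue and in particular is \emph{not} a translate of the shrinker — which is fine, so the real content is that \emph{for the given} $g_t$ one wants the \emph{upper} bound $(-t)^{1+\lambda}$ to hold with the specific largest negative eigenvalue; this needs that the faster stable modes decay at least as fast as $e^{-\lambda\tau}$ and that the quadratic error $Q(h)$, which is $O(\|h\|^2)=O(e^{-2\mu\tau})$, does not feed energy back into a slower rate — a bootstrap that closes precisely because $2\mu>\lambda$ once the preliminary exponential rate $\mu$ is known to satisfy $\mu\ge\lambda/2$ plus a short additional iteration. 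Handling the diffeomorphism bookkeeping so that $\chi_t$ is smooth in $t$ down to $t=0^-$, and transferring the estimate faithfully between the HMHF gauge and the stated gauge $(\chi_t^{-1})^*g_t$, is the remaining technical point; in the positive Einstein case all generic and neutral reductions are either vacuous or explicit (via Theorem \ref{thm:generic_index}), so $\chi_t=\mathrm{id}$ and \eqref{eq:gap_2} follows directly.
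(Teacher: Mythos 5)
Your proposal follows the paper's approach: pass to the rescaled gauge via the immortal modified HMHF (Theorem \ref{t:existence_theorem_HMHF}, which rests on Theorem \ref{p: g_t converge to g_Sigma exponentially_forward}), apply a Merle--Zaag-type spectral alternative (the immortal version of Proposition \ref{prop:diff.asymp}) to extract a dominant eigentensor, use the $L\leftrightarrow\mathfrak L$ correspondence (Lemma \ref{t: unstable Lie derivative implies unstable vector_shrinker}) to gauge away generic modes via a new HMHF as in Lemma \ref{l: change of base point}, and iterate until the dominant mode is essential. That is exactly the paper's argument.

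Two points in your last paragraph are slightly off and worth fixing. First, on the immortal side the obstructing modes you must gauge away are the \emph{stable} generic eigentensors with eigenvalue in $(-\lambda,0)$, not ``generic unstable'' ones; unstable modes are already subdominant because $h\to0$. Second, the termination of the iteration is not a ``$2\mu>\lambda$ bootstrap'' on the quadratic error; the quadratic term is automatically higher order regardless of where $\mu$ sits relative to $\lambda/2$. The paper's mechanism is cleaner: after each gauge change the new dominant eigenvalue $\Lambda_{i+1}$ is strictly smaller than $\Lambda_i$, there are only finitely many eigenvalues of $L$ in $(-\lambda,0)$ (all of which are necessarily generic by the definition of $-\lambda$), and so after finitely many steps the dominant eigentensor has a nonzero essential component, whereupon its eigenvalue is automatically $\le-\lambda$. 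Relatedly, the claim that compact integrable shrinkers are unique up to isometry is not needed (nor is it true in general); what is used is that the immortal rescaled flow converges to a shrinker isometric to $\bar g$, which is the content of Theorem \ref{p: g_t converge to g_Sigma exponentially_forward}.
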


For the round sphere $\mathbb \mathbb S^n(\sqrt{2(n-1)})$, the {first negative essential eigenvalue} is $-\frac{2}{n-1}$, so our theorem implies that any spherical singularity of Ricci flow forms at least at the exponential rate of $\frac{2}{n-1}$ in the rescaled flow. Equivalently, for the unrescaled flow, we have

\begin{cor}
    Let $(M,g_t)$ be an n-dimensional Ricci flow which develops a singularity modelled on $\mathbb S^n$ as $t\to0$, then for each $k\in\mathbb N$ there is $C_k>0$ such that
\begin{equation*}
        \|g_t-(-t)\bar g \|_{C^{k}(\bar g)}\le C_k(-t)^{\frac{n+1}{n-1}}.
    \end{equation*}
\end{cor}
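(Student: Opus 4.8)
The plan is to read the corollary off from Theorem~\ref{thm:gap} by specializing to the round sphere, so the only real content is identifying the largest negative essential eigenvalue of the stability operator $L$. Concretely, take $(\mathbb S^n,\bar g,f)$ to be the round sphere of radius $\sqrt{2(n-1)}$ with constant potential $f$, normalized so that $\Ric(\bar g)=\tfrac12\bar g$; this is the unique scaling making it a shrinker in our sense, and $(M,\bar g)$ in the statement is exactly this $\bar g$. It is a compact shrinker, it is positive Einstein, and it is integrable --- indeed, as the eigenvalue analysis below will show, $L$ has no essential neutral eigentensor, so integrability holds vacuously. Hence Theorem~\ref{thm:gap} applies; since $\bar g$ is positive Einstein, the diffeomorphisms $\chi_t$ may be taken to be $\id$ and we are in the situation of \eqref{eq:gap_2}. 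Thus, writing $-\lambda<0$ for the largest negative essential eigenvalue of $L=\Delta_f+2\Rm*$ on $(\mathbb S^n,\bar g)$, the claimed bound follows once we verify $1+\lambda=\tfrac{n+1}{n-1}$, i.e.\ $\lambda=\tfrac2{n-1}$.

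To compute $\lambda$, note that since $f$ is constant we have $\Delta_f=\Delta$, the connection Laplacian, and ``essential'' means ``divergence-free''. I would use the $SO(n+1)$-decomposition of symmetric $2$-tensors on the round sphere: the divergence-free ones are spanned by (i) the transverse--traceless tensors, and (ii) for each scalar eigenfunction $u$ with $\Delta u=-\mu_k u$, $\mu_k=\tfrac{k(k+n-1)}{2(n-1)}$, the unique divergence-free linear combination $h_u$ of $u\bar g$ and $\nabla^2u$ --- which is nonzero precisely when $k\ne1$, the case $k=0$ being the line $\mathbb R\,\bar g$ (the $\Ric$-direction). A standard Weitzenb\"ock computation on the space form gives $L=\Delta-\tfrac1{n-1}$ on family (i), whose spectrum begins at $k=2$ with eigenvalue $-\tfrac{n+1}{n-1}$. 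Using e.g.\ the Einstein identity $\Delta_L\circ\Di^*=\Di^*\circ\Delta_H$ (or again a direct computation), both $u\bar g$ and $\nabla^2u$, hence $h_u$, are $L$-eigentensors of eigenvalue $1-\mu_k$: here $k=0$ gives the unstable eigenvalue $+1$ (consistently with $\bar g=2\Ric$), $k=1$ degenerates, $k=2$ gives $1-\tfrac{n+1}{n-1}=-\tfrac2{n-1}$, and every $k\ge3$ gives a strictly more negative value. Since family (i) contributes only eigenvalues $\le-\tfrac{n+1}{n-1}<-\tfrac2{n-1}$, the largest negative essential eigenvalue is $-\tfrac2{n-1}$, attained on $h_u$ for $u$ a second spherical harmonic. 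Plugging $\lambda=\tfrac2{n-1}$ into \eqref{eq:gap_2} gives the estimate.

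The only genuine work here is the eigenvalue bookkeeping in the second paragraph --- in particular, verifying that no essential eigentensor of $L$ lies in the open interval $\big(-\tfrac2{n-1},0\big)$ --- and this is entirely classical, relying only on the standard decomposition of $\Sym^2(\mathbb S^n)$ and the known spectra of the scalar, Hodge, and Lichnerowicz Laplacians on the round sphere. (As a consistency check, the $k=1$ ``would-be essential'' value $1-\mu_1=1-\tfrac{n}{2(n-1)}$ coincides with the generic unstable eigenvalue $\lambda_n$ discussed above, which is expected since for $k=1$ the combination $h_u$ degenerates and the tensors $u\bar g$, $\nabla^2 u$ that remain are generic rather than essential.)
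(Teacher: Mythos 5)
Your proof is correct and follows exactly the route the paper intends: since $\mathbb S^n$ is positive Einstein and has no essential neutral eigentensors (so integrability holds vacuously), you apply Theorem~\ref{thm:gap} in the form \eqref{eq:gap_2} and plug in $\lambda=\tfrac{2}{n-1}$. The paper simply asserts the value of the first negative essential eigenvalue in the paragraph preceding the corollary; you supply the (standard and correct) spectral bookkeeping on $\mathbb S^n(\sqrt{2(n-1)})$ showing that the scalar-type divergence-free tensors $h_u$ give $L$-eigenvalues $1-\mu_k$ maximized at $k=2$ by $-\tfrac{2}{n-1}$, while the TT modes sit strictly below that.
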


In MCF, the convergence rate to a singularity has been extensively studied. Sesum \cite{sesum2008rate} established optimal convergence rate estimates for MCF at spherical singularities by using the first essential eigenvalue so that she showed that the arrival time (the function whose graph is the level set flow of the MCF) has optimal regularity of $C^{2,\frac{2}{n}}$. Notably, the cylinder is not integrable, and MCF cylindrical singularities can converge slower than any exponential rate.
As a result, the arrival time is not necessarily $C^2$ as shown by Ilmanen \cite{ilmanen1992generalized}, but is still twice differentiable as shown by Colding and Minicozzi \cite{colding2016differentiability, colding2018regularity}.

\textit{Challenges of Theorem \ref{thm:RF_exp}}: 
We discuss the challenges of Theorem \ref{thm:RF_exp} for ancient flows. 

Since the standard PDE analysis fails for Ricci flow due to the lack of strong parabolicity, we want to work with RDTFs.
One approach to obtaining a global RDTF is to solve initial value problems for RDTFs at a sequence of times approaching infinity and then take a limit. However, even if the initial values tend to zero as the flow converges to the shrinker, it is unclear whether the RDTFs can be extended uniformly up to a fixed time \( T_0 < 0 \). 

In fact, the RDTFs can be dominated by unstable generic {modes}, leading to exponential growth forward in time.  
Without precise control over the rate at which the initial values decay to zero as the initial times tend to \( -\infty \), the RDTFs may quickly become large and cease to exist before reaching \( T_0 \). So constructing a global RDTF at this stage is not feasible.  

Instead, our strategy is to work with a sequence of short-time RDTFs. Starting at an arbitrary time, we evolve a RDTF for a short time, adjust the diffeomorphism, and then start the next RDTF, ensuring that unstable generic {modes} remain small throughout the process. {Then, these RDTFs increases together with unstable essential modes, which has exponential growth rate. Moreover, this process continues until time $T_0$ where the $\mu$-entropy gap from the shrinker reaches some $\eps_0>0$. }

\textit{Outline of Theorem \ref{thm:RF_exp} $($ancient flow$)$}:
For an ancient rescaled flow $g(\tau)$ asymptotic to the shrinker $\bar g$, choose a sequence of times $\tau_k\to-\infty$. At each $\tau_k$, we find the \textit{best} gauge by diffemorphism $\phi_k$, such that the perturbation $\phi_k^*g(\tau_k)-\bar g$ consist of only essential modes. So the unstable essential modes dominate
by the entropy monotonicity. 
We start a short-time HMHF from $\phi_k$, and a rescaled RDTF from  $\phi_k^*g(\tau_k)-\bar g$ for some time $T>0$.
At $\tau_k+T$, we eliminate any unstable generic modes by changing a diffeomorphism.
Then, we start a new rescaled RDTF from $\tau_k+T$.

Iterating this by induction, we obtain a sequence of rescaled RDTFs on $[\tau_k+iT,\tau_k+(i+1)T]$, until we reach $\tau_k+iT>T_0$.
Crucially, the unstable essential modes remain dominant at all times on $[\tau_k,T_0]$.
Furthermore, the entropy upper bound put on uniform upper bounds on the unstable essential modes at all times.
Therefore, the exponential growth of the unstable essential modes implies the backward exponential decay from $T_0$ all the way back to $\tau_k$ under the gauge by $\phi_k$. This implies the desired backward exponential decay from $T_0$ to $-\infty$ as $\tau_k\to-\infty$ and $\phi_k$ converges to a fixed gauge.

\textit{Outline of Theorem \ref{thm:RF_exp} $($immortal flow$)$}: For an immortal rescaled flow $g(\tau)$ asymptotic to the shrinker $\bar g$, 
we first fix some negative eigenvalue $\lambda<0$, and find a diffeomorphism $\phi_0$ at $\tau=0$ such that two initial conditions hold in the perturbation $\phi_0^*g(0)-\bar g$: 
\begin{enumerate}
    \item the unstable generic modes and neutral modes are sufficiently small;
    \item the modes with eigenvalues larger than $\lambda$ dominate the perturbation.
\end{enumerate}
To achieve this, we first find the \textit{best} diffeomorphism as in the ancient case, so that all generic and neutral modes vanish. Then we introduce a perturbation via a diffeomorphism corresponding to a stable generic eigentensor, ensuring that this mode dominates the perturbation, thereby satisfying the second condition.

Then, we start a short-time HMHF from $\phi_0$ for some time $T>0$, which gives a rescaled RDTF from  $\phi_0^*g(0)-\bar g$.
Condition (2) ensures that the perturbation does not decay too rapidly, so that the unstable generic modes and neutral modes (which can only increase at a bounded rate) remain relatively small on $[0,T]$. Consequently, we can apply the entropy estimate at 
$T$ and conclude that the {stable modes dominate} on $[0,T]$.
At $T$, we refine the perturbation by adjusting the diffeomorphism to eliminate unstable generic modes and modifying the background shrinker to eliminate neutral modes.
This guarantees that condition (1) remains valid at 
$T$, while condition (2) also holds at 
$T$ as a consequence of spectral analysis.

So we can iterate this by induction, and obtain a sequence of rescaled RDFT on $[iT,(i+1)T]$,
such that the two conditions hold at every $iT$.
The exponential decay of the dominating stable modes thus implies the same for the flow, finishing the proof.

\textit{Outline of Theorem \ref{t:uniqueness_of_tangent_flow}}: We observe that for a family of HMHFs between the flow $g_\tau$ and $\bar g$ by diffeomorphisms $\chi_\tau$ that are close to $\id$, the vector field $X_\tau:=\exp^{-1}_{\id}\chi_\tau$ satisfies the \textit{harmonic map heat flow perturbation equation} of the form
\[\partial_\tau X_\tau=(\Delta+\Ric) X_\tau+Q(X_\tau)+E(X_\tau)+G_\tau,\]
where $Q(X_\tau)$ denotes quadratic terms of $X_\tau$, $E(X_\tau)$ denotes products of $X_\tau$ and $g_\tau-\bar g$, and $G_\tau$ denotes the inhomogenous terms of $g_\tau-\bar g$. Thanks to Theorem \ref{thm:RF_exp}, we know $g_\tau-\bar g$ decays exponentially. So the existence of an ancient or immortal solution $X_\tau$ follows by a standard PDE fixed point argument.

\subsection{Acknowledgements}  
K.C.~is supported by the KIAS Individual Grant MG078902, an Asian Young Scientist Fellowship, and the National Research Foundation (NRF) grants RS-2023-00219980 and RS-2024-00345403 funded by the Korea government (MSIT).
Y.L.~is supported by NSF grant DMS-2203310. Part of this work was completed during Y.L.'s visit at UC Berkeley and SLmath, and she is grateful for the hospitality of Richard Bamler, Catherine Searle, and Guofang Wei.
The authors also thank Pak-Yeung Chan, Pei-Ken Hung, Klaus Kr\"oncke, and John Lott for comments.

\section{Commutators and eigentensors} \label{sec:commutator}
Let $(M,g,f)$ be a compact shrinker $\Ric+\nabla^2 f=\tfrac12 g$. Consider the two elliptic operators $L$ on $ \Gamma(S_2(T^*M))$ and $\mathfrak L$ on $ \Gamma(TM)$ given by
\begin{align*}
    L&=\Delta_f+2\Rm*,\\
    \mathfrak L&=\Delta_f+\tfrac12=\Delta+\Ric-[\nabla f,\cdot],
\end{align*}
which are self-adjoint with respect to the $L^2$-norm measured by $e^{-f}\,dg$.
Note that $L$ and $\mathfrak L$ are the linearization of the modified rescaled Ricci DeTurck flow Perturbation equation and the modified HMHF equation (see Appendix \ref{sec:RRF} \ref{sec:HMHF}), and $L$ is also equal to the operator of the second variation of the $\mu$-entropy restricted on $\Ker\,\Di_f$ (see Appendix \ref{subsec:stability}).
The main result in this section (Lemma \ref{t: unstable Lie derivative implies unstable vector_shrinker}) shows that any generic eigentensor of $L$ with non-zero eigenvalue $\lambda$, is the Lie derivative of an eigenvector of $\mathfrak L$ with the same eigenvalue $\lambda$.

Furthermore, for positive Einstein manifolds, we show the converse is also true (Theorem \ref{t: unstable Lie derivative implies unstable vector}). In particular, $\Spec(L)\setminus\{0\}=\Spec(\mathfrak L)\setminus\{0\}$, and there is a one-to-one correspondence between the eigenvectors.

First, we recall the notions of several differential operators.
The divergence operator is $\Di:\Gamma(S_2(T^*M))\to \Gamma(T^*M)$ defined as $$\Di \,h=C_1^1\nabla h.$$
The weighted divergence is defined as
\[\Di_f \,h=\Di \,h-h(\nabla f,\cdot).\]
The Lie operator $\lie:\Gamma(T^*M)\to \Gamma(S_2(T^*M))$ is defined as 
$$\Di^*W=-\tfrac12\mathcal{L}_{W^{\#}}g.$$ 
We also define $\Di_0:\Gamma(S_2(T^*M))\to \Gamma(T^*M)$ by
$$\Di_0(h)=\textnormal{div}(h)-\frac{1}{2}\nabla(\textnormal{tr}(h)).$$
Note $\Di,\Di^*,\Di_0$ are all first-order differential operators.
By identifying a vector field to its dual 1-form, these operators act between $\Gamma(S_2(T^*M))$ and $ \Gamma(TM)$.

\begin{lemma}\label{l: Delta Z = div div^* Z}
   Let $(M,g)$ be a Riemannian manifold. For any smooth 1-form $Z$, we have
    $$\Di_{0}(-2\Di^*)Z=\Di_{0}(\mathcal{L}_Zg)=(\Delta+\Ric) Z.$$
    In particular, for any Killing field $Y$, we have $(\Delta+\Ric)Y=0$.
\end{lemma}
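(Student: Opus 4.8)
The identity is a standard Bochner-type computation, and the most efficient route is to verify it pointwise in a local frame, ideally normal coordinates at a point $p$. The strategy is: (1) compute $\mathcal{L}_Z g$ in coordinates; (2) apply $\mathrm{div}$ to it; (3) apply $-\tfrac12 \nabla \mathrm{tr}$ to it; (4) subtract, commute one pair of covariant derivatives, and recognize the Ricci term. The final clause about Killing fields is then immediate, since $\mathcal{L}_Y g = 0$ forces the left-hand side to vanish.

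Concretely, writing $h = \mathcal{L}_Z g$, we have in index notation $h_{ij} = \nabla_i Z_j + \nabla_j Z_i$. Then $(\mathrm{div}\, h)_j = g^{ik}\nabla_i h_{kj} = \nabla^i\nabla_i Z_j + \nabla^i\nabla_j Z_i = \Delta Z_j + \nabla^i \nabla_j Z_i$, and $\mathrm{tr}(h) = 2\,\mathrm{div}\, Z = 2\nabla^i Z_i$, so $\big(\tfrac12\nabla \mathrm{tr}(h)\big)_j = \nabla_j \nabla^i Z_i$. Subtracting,
\[
\big(\mathrm{div}_0(\mathcal{L}_Z g)\big)_j = \Delta Z_j + \nabla^i\nabla_j Z_i - \nabla_j\nabla^i Z_i = \Delta Z_j + (\nabla^i\nabla_j - \nabla_j\nabla^i) Z_i.
\]
The commutator of covariant derivatives acting on the 1-form $Z$ introduces exactly one Riemann curvature contraction, $(\nabla^i\nabla_j - \nabla_j\nabla^i) Z_i = g^{ik}R_{ikj}{}^{l} Z_l$, which by the definition of Ricci and the symmetries of the curvature tensor equals $\mathrm{Ric}_j{}^l Z_l = (\mathrm{Ric}\, Z)_j$. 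Hence $\mathrm{div}_0(\mathcal{L}_Z g) = \Delta Z + \mathrm{Ric}\, Z = (\Delta + \mathrm{Ric})Z$. The first equality $\mathrm{div}_0(-2\mathrm{div}^* Z) = \mathrm{div}_0(\mathcal{L}_Z g)$ is just the definition $\mathrm{div}^* Z = -\tfrac12 \mathcal{L}_{Z^\#} g$ together with the identification of a vector field with its dual $1$-form.

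The only real subtlety — and the step to be careful about — is the sign and index-placement convention in the curvature commutator, since the paper's conventions for $R$, $\mathrm{Ric}$, and the Laplacian (rough vs. Hodge) must be matched so that the Ricci term appears with a plus sign; a wrong convention would flip it to $-\mathrm{Ric}$. I would pin this down by checking the formula on a model case, e.g. the round sphere with $Z$ a gradient of a first spherical harmonic (a conformal Killing field), where both sides can be computed explicitly, or simply by citing the Weitzenböck formula in the form already used elsewhere in the Ricci flow literature. Once the convention is fixed the computation is routine. Finally, for a Killing field $Y$ we have $\mathcal{L}_Y g = 0$, so $\mathrm{div}_0(\mathcal{L}_Y g) = 0$, giving $(\Delta + \mathrm{Ric})Y = 0$ directly.
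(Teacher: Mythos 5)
Your proof is correct and takes the same route as the paper: a direct index computation of $\Di_0(\mathcal L_Z g)$ followed by commuting one pair of covariant derivatives via the Ricci identity to produce the $\Ric(Z)$ term. The only cosmetic difference is that you compute $\Di$ and $\tfrac12\nabla\tr$ separately before subtracting, whereas the paper writes the full $\Di_0$ expression at once; the curvature-convention caveat you flag is the right thing to watch, and the paper's displayed Ricci identity $g^{ij}(Z_{i,kj}-Z_{i,jk})=Z_l R^l_k$ settles it in the same way you suggest.
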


\begin{proof}
    First, we have $$\mathcal{L}_Zg=\nabla_iZ_j+\nabla_jZ_i,$$
    and thus
    \begin{equation*}
        \begin{split}
            (\Di_0(\mathcal{L}_Zg))_k&=g^{ij}(\nabla_j\nabla_iZ_k+\nabla_j\nabla_kZ_i-\tfrac12\nabla_k\nabla_iZ_j-\tfrac12\nabla_k\nabla_jZ_i)\\
            &=g^{ij}(\nabla_j\nabla_iZ_k+\nabla_j\nabla_kZ_i-\nabla_k\nabla_jZ_i)\\
            &=\Delta Z_k+\Ric(Z)_k,
        \end{split}
    \end{equation*}
    where in the last equality we used the Ricci identity 
    $$g^{ij}(Z_{i,kj}-Z_{i,jk})=g^{ij}Z_lR^l_{kji}=Z_lR^l_k=\Ric(Z)_k.$$
\end{proof}

Recall that all compact shrinking solitons are necessarily \textit{gradient} solitons \cite[Prop 12.18]{KL}, which means there is a function $f$ on $M$ such that $X=\nabla f$.

\begin{lem}\label{lem:compact_Killing}
    Let $(M,g,f)$ be a compact shrinking soliton. Then $f\circ\phi=f$ for any isometry $\phi$. In particular, we have $[\nabla f,Y]=0$ for all Killing field $Y$.
\end{lem}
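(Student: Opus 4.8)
The plan is to prove Lemma \ref{lem:compact_Killing} in two steps: first that $f$ is invariant under every isometry of $(M,\bar g)$, and then deduce the Lie-bracket statement by differentiating along a one-parameter group of isometries.

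\textbf{Step 1: $f\circ\phi = f$ for any isometry $\phi$.}
The key point is that $f$ is determined by the soliton equation $\Ric + \nabla^2 f = \tfrac12 g$ up to an additive constant once we also impose the standard normalization. Given an isometry $\phi$, the metric $\phi^* g = g$, so $\Ric(\phi^*g) = \phi^*\Ric(g)$ and $\nabla^2_{\phi^*g} = \phi^*\nabla^2_g\phi^{-1*}$; applying $\phi^*$ to the soliton equation shows that $f\circ\phi$ also satisfies $\Ric + \nabla^2(f\circ\phi) = \tfrac12 g$. Subtracting the two equations gives $\nabla^2(f\circ\phi - f) = 0$ on the compact manifold $M$, hence $f\circ\phi - f$ is a constant $c$. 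To see $c=0$, I would use that on a compact shrinker the potential satisfies the traced/Hamilton identity $\Delta f = \tfrac{n}{2} - \scal = |\nabla f|^2 + f - \mathrm{const}$ (Hamilton's identity $\scal + |\nabla f|^2 - f = \mathrm{const}$); alternatively and more robustly, integrate: since $\phi$ is a diffeomorphism preserving the volume form $dg$, $\int_M f\circ\phi\, e^{-f\circ\phi}\,dg$-type quantities are preserved, but the cleanest route is to note $\int_M \Delta f\, dg = 0$ forces the additive constant in Hamilton's identity to be pinned down the same way for $f$ and $f\circ\phi$, giving $c=0$. I expect this normalization-of-the-constant point to be the main (minor) obstacle, and I would resolve it via Hamilton's identity $\scal_g + |\nabla f|^2_g - f = \text{const}$, which is isometry-covariant, so that $f\circ\phi$ and $f$ satisfy the \emph{same} identity; combined with $\nabla^2(f\circ\phi-f)=0$ (so $\nabla(f\circ\phi-f)$ is parallel, hence zero by compactness, so $f\circ\phi-f$ is constant) this forces $|\nabla(f\circ\phi)|^2 = |\nabla f\circ\phi|^2$ and hence the constant is $0$.

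\textbf{Step 2: $[\nabla f, Y] = 0$ for every Killing field $Y$.}
Let $Y$ be a Killing field and $\{\phi_t\}$ its flow, a one-parameter family of isometries. By Step 1, $f\circ\phi_t = f$ for all $t$. Differentiating at $t=0$ gives $Y(f) = df(Y) = 0$, i.e. $\langle \nabla f, Y\rangle = 0$. More is true: differentiating the identity $\phi_t^* g = g$ shows $\mathcal L_Y g = 0$, and differentiating $f\circ\phi_t = f$ together with naturality of $\nabla^2$ shows $\mathcal L_Y(\nabla^2 f) = 0$ as well; but for the bracket we argue directly. Since $\phi_t$ is an isometry fixing $f$, it preserves the gradient vector field: $(\phi_t)_*\nabla f = \nabla(f\circ\phi_t^{-1}) = \nabla f$. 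Differentiating $(\phi_t)_*\nabla f = \nabla f$ at $t=0$ yields $-\mathcal L_Y \nabla f = 0$, that is $[Y,\nabla f] = 0$, equivalently $[\nabla f, Y] = 0$. This completes the proof.

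The only genuinely delicate point is pinning down the additive constant in Step 1; everything else is a formal consequence of naturality of the curvature and Hessian operators under isometries together with compactness (a parallel $1$-form of constant norm that is exact must vanish).
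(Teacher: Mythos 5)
Your proof is correct, and it uses the same two basic ingredients as the paper's: pulling back the soliton equation $\Ric+\nabla^2 f=\tfrac12 g$ by $\phi$ to get $\nabla^2(f\circ\phi-f)=0$ (hence, by compactness, $f\circ\phi-f$ is a constant $c$), and then Hamilton's identity $R+|\nabla f|^2-f=\mathrm{const}$ to pin down $c=0$. Where you diverge is in how you close the constant: the paper evaluates Hamilton's identity at a maximum point $p$ of $f$, where $\nabla f(p)=0$, and deduces $f(\phi(p))=f(p)$ and $|\nabla f|^2(\phi(p))=0$ from the sign constraints $|\nabla f|^2\geq0$, $f(\phi(p))\leq f(p)$; you instead observe that $f\circ\phi=f+c$ forces $\nabla(f\circ\phi)=\nabla f$, while the isometry gives $|\nabla(f\circ\phi)|^2=|\nabla f|^2\circ\phi$, and plugging both into the pulled-back Hamilton identity yields $c=0$ directly. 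Your route is a little cleaner in that it avoids introducing a max point and using a pointwise inequality; the paper's is slightly more concrete. The exposition of your constant-pinning step is somewhat muddled (the clause ``this forces $|\nabla(f\circ\phi)|^2 = |\nabla f\circ\phi|^2$'' reads as if the norm identity were a consequence of the Hessian vanishing, when it is a separate general isometry fact; and the parenthetical $\Delta f = |\nabla f|^2 + f - \mathrm{const}$ has a sign error, though you do not use it), but the underlying argument is sound. Your Step 2 — differentiating $f\circ\phi_t=f$ along the flow $\phi_t$ of a Killing field $Y$ to get $(\phi_t)_*\nabla f=\nabla f$, hence $[Y,\nabla f]=0$ — is the standard argument the paper leaves implicit, and it is correct.
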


\begin{proof}
    By the Ricci soliton equation $\Ric+\nabla^2 f=\tfrac{1}{2}g$, we have $\nabla^2 f\circ\phi=\nabla^2 f$. Since $M$ is compact, $f$ has a maximal point $p\in M$. Composing $\phi$ with the soliton identity $R+|\nabla f|^2-f=C$ for some $C\in\R$, and note $0=|\nabla f|^2(p)\le |\nabla f|^2(\phi(p))$ and $-f(p)\le -f(\phi(p))$, it follows $0=|\nabla f|^2(p)=|\nabla f|^2(\phi(p))$ and $f(p)=f(\phi(p))$, and hence $f\circ \phi=f$.
\end{proof}

\begin{lem}\label{t: unstable Lie derivative implies unstable vector_shrinker}
   Let $(M,g,f)$ be a compact shrinking soliton. If $h\in \Ker(L-\lambda)\cap \Ima\,\Di^*$ for some $0\neq\lambda\in\R$, then there exists a vector field $X$ such that 
   \[X\in \Ker(\mathfrak L-\lambda)\quad\textnormal{and}\quad h=\mathcal L_Xg.\]
\end{lem}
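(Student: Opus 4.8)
The plan is to represent $h$ as a Lie derivative $\mathcal L_W g$, to decompose the vector field $W$ along the spectrum of $\mathfrak L$, and to use that the linear map $Z\mapsto\mathcal L_Z g$ intertwines $\mathfrak L$ with $L$ on the shrinker, so that only the $\lambda$-component of $W$ can survive. \emph{Step 1.} Since $h\in\Ima\,\Di^*$, choose a $1$-form $W$ (identified with a vector field via the metric) with $h=\Di^*W=-\tfrac12\mathcal L_W g$. \emph{Step 2 (key identity).} The crux is the commutator identity
\[
L(\mathcal L_Z g)=\mathcal L_{\mathfrak L Z}\,g\qquad\text{for every vector field }Z\text{ on the shrinker.}
\]
Granting it, \emph{Steps 3--5} are soft spectral theory, carried out below.

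To prove the identity in Step 2 I would argue as follows. First, the diffeomorphism invariance of the Ricci tensor, $D\Ric_g(\mathcal L_Z g)=\mathcal L_Z\Ric(g)$, combined with the standard formula for the linearization of $\Ric$ and with Lemma \ref{l: Delta Z = div div^* Z} (which gives $\Di_0(\mathcal L_Z g)=(\Delta+\Ric)Z$), yields the universal identity
\[
\Delta_L(\mathcal L_Z g)=\mathcal L_{(\Delta+\Ric)Z}\,g-2\,\mathcal L_Z\Ric ,
\]
where $\Delta_L$ is the Lichnerowicz Laplacian. Next, rewrite $L=\Delta_f+2\Rm*$ in terms of $\Delta_L$: using the shrinker equation $\Ric=\tfrac12 g-\nabla^2 f$ to convert the zeroth-order curvature terms of $\Delta_L$, one gets $L k=\Delta_L k+k-(\nabla^2 f)\cdot k-k\cdot(\nabla^2 f)-\nabla_{\nabla f}k$ for any symmetric $2$-tensor $k$ (here $\cdot$ denotes contraction of one symmetric $2$-tensor against another). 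Applying this with $k=\mathcal L_Z g$, the last three terms reassemble, by the elementary formula $\mathcal L_{\nabla f}(\mathcal L_Z g)=\nabla_{\nabla f}(\mathcal L_Z g)+(\nabla^2 f)\cdot\mathcal L_Z g+\mathcal L_Z g\cdot(\nabla^2 f)$, into $\mathcal L_{\nabla f}(\mathcal L_Z g)$; substituting the universal identity above and rewriting $-2\mathcal L_Z\Ric=-\mathcal L_Z g+\mathcal L_Z\mathcal L_{\nabla f}g$ (again via $\nabla^2 f=\tfrac12\mathcal L_{\nabla f}g$), all curvature and quadratic-$\nabla^2 f$ terms cancel and one is left with
\[
L(\mathcal L_Z g)=\mathcal L_{(\Delta+\Ric)Z}\,g+\mathcal L_Z\mathcal L_{\nabla f}g-\mathcal L_{\nabla f}\mathcal L_Z g=\mathcal L_{(\Delta+\Ric)Z}\,g-\mathcal L_{[\nabla f,Z]}\,g=\mathcal L_{\mathfrak L Z}\,g,
\]
using $[\mathcal L_Z,\mathcal L_{\nabla f}]=\mathcal L_{[Z,\nabla f]}$ and $\mathfrak L=\Delta+\Ric-[\nabla f,\cdot]$. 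As a consistency check, on a Killing field $Y$ both sides vanish: the left because $\mathcal L_Y g=0$, the right because $\mathfrak L Y=0$ by Lemmas \ref{lem:compact_Killing} and \ref{l: Delta Z = div div^* Z}. (Alternatively, Step 2 may be read off from the fact that $L$ and $\mathfrak L$ are the linearizations at $g$ of the modified rescaled Ricci--DeTurck flow and of the modified harmonic map heat flow, which are intertwined by push-forward along the latter.)

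With Step 2 in hand: \emph{Step 3.} Since $M$ is compact and $\mathfrak L$ is self-adjoint and elliptic with respect to $e^{-f}\,dg$, the space of vector fields is the $C^\infty$-closure of the algebraic span of its (smooth, finite-dimensional) eigenspaces; write $W=\sum_\mu W_\mu$ with $\mathfrak L W_\mu=\mu W_\mu$, the sum converging in $C^\infty$. By Step 2, $L(\mathcal L_{W_\mu}g)=\mu\,\mathcal L_{W_\mu}g$, so $-\tfrac12\mathcal L_{W_\mu}g$ lies in the $\mu$-eigenspace of $L$. \emph{Step 4.} Since $L$ is also self-adjoint and elliptic with respect to $e^{-f}\,dg$, distinct eigenspaces of $L$ are $L^2$-orthogonal, so $h=\sum_\mu\bigl(-\tfrac12\mathcal L_{W_\mu}g\bigr)$ is exactly the spectral decomposition of $h$; as $h\in\Ker(L-\lambda)$, uniqueness of this decomposition forces $\mathcal L_{W_\mu}g=0$ for every $\mu\neq\lambda$, hence $h=-\tfrac12\mathcal L_{W_\lambda}g$. \emph{Step 5.} Set $X:=-\tfrac12 W_\lambda$; then $h=\mathcal L_X g$ and $\mathfrak L X=-\tfrac12\mathfrak L W_\lambda=\lambda X$, i.e. $X\in\Ker(\mathfrak L-\lambda)$. (If $h\neq0$ then $W_\lambda\neq0$, so $X\neq0$ and $\lambda\in\Spec(\mathfrak L)$; if $h=0$ take $X=0$.)

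\emph{Main obstacle.} The only genuine work is the commutator identity of Step 2: one must verify that converting $L=\Delta_f+2\Rm*$ into $\Delta_L$ produces curvature and soliton terms that reassemble, with no slack, into precisely the zeroth-order $\tfrac12$ and the first-order drift $-[\nabla f,\cdot]$ distinguishing $\mathfrak L$ from $\Delta+\Ric$; this forces the shrinker equation to be used in full, and is where Lemmas \ref{l: Delta Z = div div^* Z} and \ref{lem:compact_Killing} enter. Steps 3--5 are routine, the only care being elliptic regularity and $C^\infty$-convergence of the eigen-expansion, together with the observation — harmless because $\lambda\neq0$ — that any elements of $\Ker\mathfrak L$ which are $\Di^*$-null, such as the Killing fields, simply drop out of $h$.
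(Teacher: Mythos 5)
Your proof is correct, and it hinges on the same key fact as the paper's, namely the commutator identity $L\circ\Di^{*} = \Di^{*}\circ\mathfrak L$, equivalently $L(\mathcal L_Z g)=\mathcal L_{\mathfrak L Z}\,g$. The two places where your route diverges are worth noting.

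First, the paper simply cites this commutator from Colding--Minicozzi, whereas you derive it from scratch by converting $L=\Delta_f+2\Rm*$ into the Lichnerowicz Laplacian, invoking the shrinker equation $\Ric=\tfrac12 g-\nabla^2 f$ to absorb the zeroth-order curvature terms into $\mathcal L_{\nabla f}$, and then cancelling against the diffeomorphism-invariance identity for $D\Ric$. I checked the algebra: the signs in your universal identity $\Delta_L(\mathcal L_Z g)=\mathcal L_{(\Delta+\Ric)Z}g-2\mathcal L_Z\Ric$ and in the re-expression $L k=\Delta_L k+k-\mathcal L_{\nabla f}k$ are right, and the final cancellation via $[\mathcal L_Z,\mathcal L_{\nabla f}]=\mathcal L_{[Z,\nabla f]}$ does produce exactly $\mathcal L_{\mathfrak L Z}g$. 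This is a genuine addition relative to the paper, which takes the commutator as a black box.

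Second, the endgame is different. Having $\lambda\mathcal L_X g=\mathcal L_{\mathfrak L X}g$, the paper observes directly that $Y:=\mathfrak L X-\lambda X$ is Killing, shows $\mathfrak L Y=0$ (by Lemmas \ref{lem:compact_Killing} and \ref{l: Delta Z = div div^* Z}), and then replaces $X$ by the single corrected field $X+\tfrac{1}{\lambda}Y$; this is a three-line finish. You instead expand the generating vector field $W$ in $\mathfrak L$-eigenmodes, push the commutator through term by term, and invoke orthogonality of the $L$-eigenspaces to kill every component except the $\lambda$-one. Both arguments are sound; the paper's is leaner because it never needs the full eigen-expansion (and in particular avoids the small but real care you take about $C^{\infty}$-convergence of the eigenseries), while yours is arguably more transparent about where the Killing-field ambiguity in $W$ goes (it lands entirely in the $\mu=0$ component, which is irrelevant since $\lambda\neq 0$). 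Either can be used; the paper's single-correction trick is the standard shortcut, and it is worth internalizing since the same pattern recurs in Theorem \ref{t: unstable Lie derivative implies unstable vector}.
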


\begin{proof}
   First, by \cite[Theorem 1.32]{colding2021singularities}, for any vector field, there is the commutator
\begin{equation*}
    L\circ\Di^*_f=\Di^*_f\circ\mathfrak L.
\end{equation*}
If $h\in \Ker(L-\lambda)\cap \Ima\,\Di^*$, assume $h=\mathcal L_Xg$. Then by the commutator we have
\[\lambda\mathcal L_X g=\lambda h=Lh=\mathcal L_{\mathfrak L X}g,\]
and hence $Y:=\mathfrak L X-\lambda X$ is a Killing field. Using Lemma \ref{lem:compact_Killing} and \ref{l: Delta Z = div div^* Z}, we have $[\nabla f,Y]=0$ and $(\Delta+\Ric)Y=0$, and thus $\mathfrak L Y=0$.
It follows that
\[\mathfrak L (X+\tfrac{1}{\lambda}Y)=\mathfrak L X=\lambda X+Y,\]
and thus $X+\tfrac{1}{\lambda}Y$ satisfies the assertions of the lemma.
\end{proof}

In the following, we assume $(M,g)$ is a positive Einstein manifold.

\begin{lem}\label{l: -L=2div*div}
    Let $(M,g)$ be positive Einstein with $\Ric=\frac12 g$. Then for all $h\in\Ima\,\Di^*$ that 
\begin{equation*}
    Lh=\Delta h +2\Rm*h=-2\Di^*\circ\Di_0 h.
\end{equation*}
\end{lem}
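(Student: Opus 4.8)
The plan is to compute $Lh$ for $h = \Di^* W$ using the Weitzenböck-type identities available on a positive Einstein manifold, and to match the result against $-2\Di^*\circ\Di_0 h$ computed the same way. First I would recall that on a positive Einstein manifold with $\Ric = \tfrac12 g$, the potential function can be taken constant, so $f$ is constant, $\nabla f = 0$, and hence $\Di_f = \Di$, $\Delta_f = \Delta$, and $\mathfrak L = \Delta + \Ric = \Delta + \tfrac12$. Thus the commutator from \cite[Theorem 1.32]{colding2021singularities} used in Lemma \ref{t: unstable Lie derivative implies unstable vector_shrinker} specializes to $L\circ\Di^* = \Di^*\circ(\Delta + \tfrac12)$, and Lemma \ref{l: Delta Z = div div^* Z} gives $\Di_0\circ(-2\Di^*) = \Delta + \Ric = \Delta + \tfrac12$ acting on $1$-forms.

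Given these two facts the computation is almost immediate: for $h = \Di^* W = -\tfrac12\mathcal L_{W^\#} g$ we have, on one hand,
\begin{equation*}
    Lh = L\Di^* W = \Di^*\bigl((\Delta + \tfrac12)W\bigr),
\end{equation*}
and on the other hand, applying Lemma \ref{l: Delta Z = div div^* Z} to the $1$-form $W$,
\begin{equation*}
    -2\Di^*\circ\Di_0 h = -2\Di^*\circ\Di_0(\Di^* W) = \Di^*\bigl(\Di_0(-2\Di^*)W\bigr) = \Di^*\bigl((\Delta + \tfrac12)W\bigr).
\end{equation*}
Comparing the two displays yields $Lh = -2\Di^*\circ\Di_0 h$ for every $h \in \Ima\,\Di^*$, which is the claim. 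The only subtlety is that the representation $h = \Di^* W$ of an element of $\Ima\,\Di^*$ is not unique (one may add a Killing field to $W$), so I would note that both sides of the asserted identity depend only on $h$ and not on the choice of $W$; alternatively, since a Killing field $Y$ satisfies $\Di^* Y = 0$ and, by Lemma \ref{l: Delta Z = div div^* Z}, $(\Delta + \Ric)Y = 0$, the expression $\Di^*((\Delta + \tfrac12)W)$ is unchanged under $W \mapsto W + Y$, so the identity is well-posed.

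The main obstacle—such as it is—is bookkeeping rather than conceptual: one must be careful that the commutator identity $L\circ\Di^*_f = \Di^*_f\circ\mathfrak L$ is being invoked in exactly the normalization stated earlier (the factor conventions in $\Di^* W = -\tfrac12\mathcal L_{W^\#} g$ and in $\Di_0$), and that the Einstein constant $\tfrac12$ is consistent with the shrinker normalization $\Ric + \nabla^2 f = \tfrac12 g$. Once the conventions are pinned down, no further geometric input is needed; in particular we do not need to re-derive any curvature commutator, since Lemma \ref{l: Delta Z = div div^* Z} already packages the relevant Ricci identity. I would therefore present the proof as the two-line chain of equalities above, preceded by the remark that $f$ is constant so $\Delta_f = \Delta$ and $\mathfrak L = \Delta + \tfrac12$, and followed by the one-sentence well-definedness check.
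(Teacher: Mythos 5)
Your proof is correct, but it takes a genuinely different route from the paper's. The paper's proof is more direct and self-contained: it writes down the linearization formula
\begin{equation*}
D(-2\Ric)[h]=\Delta h +2\Rm*h-2\Ric(h)+2\Di^*\circ\Di_{0}h=Lh-h+2\Di^*\circ\Di_0h,
\end{equation*}
(using $\Ric=\tfrac12 g$ so $-2\Ric(h)=-h$), and then exploits diffeomorphism equivariance of $\Ric$ to get $D(-2\Ric)[\LL_Xg]=\LL_X(-2\Ric)=\LL_X(-g)=-h$; comparing the two expressions immediately yields $Lh=-2\Di^*\circ\Di_0h$. Your argument instead invokes the Colding--Minicozzi commutator $L\circ\Di^*_f=\Di^*_f\circ\mathfrak L$ (which the paper cites in the proof of Lemma~\ref{t: unstable Lie derivative implies unstable vector_shrinker}, so there is no circularity, even though the paper's Corollary~\ref{l: commute} later re-derives this commutator \emph{from} the present lemma) together with Lemma~\ref{l: Delta Z = div div^* Z}, and matches the two sides at the level of a chosen vector-field potential $W$. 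Both proofs are valid. The paper's approach has the advantage of not relying on an external commutator identity, and it makes the role of the Einstein condition transparent in a single line ($\LL_X(-2\Ric)=-\LL_Xg$), whereas your proof is cleaner to state if one is willing to quote the shrinker-level commutator as a black box; your observation that $f$ is necessarily constant on a compact Einstein manifold, and your well-posedness check that adding a Killing field to $W$ does not change either side, are both correct and worth keeping.
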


\begin{proof}
    Using the linearization of $-2\Ric$ and $\Ric=\frac12 g$ we obtain
\begin{equation}\label{e: decompose Ric}
\begin{split}
    D(-2\Ric)[h]&=\Delta h +2\Rm*h-2\Ric(h)+2\Di^*\circ\Di_{0}h,\\
    &=L h-h+2\Di^*\circ\Di_{0}h.
\end{split}
\end{equation}
Since $-2\Ric$ is invariant under diffeomorphisms, assuming $h=\LL_X g$ we have
\begin{equation*}
    D(-2\Ric)[h]=D(-2\Ric)[\mathcal{L}_Xg]=\mathcal{L}_X(-2\Ric)=\mathcal{L}_X(-g)=-h.
\end{equation*}
So by \eqref{e: decompose Ric} we obtain the assertion of the lemma. 
\end{proof}

\begin{cor}\label{l: commute}
 Let $(M,g)$ be positive Einstein, then we have the commutators
\begin{equation}\label{e: commute}
\begin{split}
    \Di_0\circ L&=\mathfrak L\circ\Di_0,\quad\quad\quad\textnormal{on }\Ima\,\Di^*,\\
    -2\Di^*\circ \mathfrak L&= L\circ(-2\Di^*),\quad\textnormal{on }\Gamma(TM).
\end{split}
\end{equation} 
\end{cor}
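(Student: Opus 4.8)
\textbf{Proof proposal for Corollary \ref{l: commute}.}

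The plan is to derive both commutators directly from Lemma \ref{l: -L=2div*div}, which identifies $L$ with $-2\Di^*\circ\Di_0$ on $\Ima\,\Di^*$, together with Lemma \ref{l: Delta Z = div div^* Z}, which identifies $\Di_0\circ(-2\Di^*)$ with $\Delta+\Ric$ on $1$-forms; note that for positive Einstein $\Ric=\tfrac12 g$, so $\Delta+\Ric=\Delta+\tfrac12=\mathfrak L$ since $[\nabla f,\cdot]$ is absent (the shrinker structure function $f$ is constant in the Einstein case). So the two basic building blocks are $L=-2\Di^*\circ\Di_0$ on $\Ima\,\Di^*$ and $\Di_0\circ(-2\Di^*)=\mathfrak L$ on $\Gamma(TM)$.

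For the first commutator, I would start with a tensor $h\in\Ima\,\Di^*$ and compute $\Di_0\circ L(h)=\Di_0\circ(-2\Di^*\circ\Di_0)(h)=(\Di_0\circ(-2\Di^*))(\Di_0 h)=\mathfrak L(\Di_0 h)$, using Lemma \ref{l: -L=2div*div} in the first step and Lemma \ref{l: Delta Z = div div^* Z} (with $\Ric=\tfrac12 g$) in the last step. This immediately gives $\Di_0\circ L=\mathfrak L\circ\Di_0$ on $\Ima\,\Di^*$. For the second commutator, I would take an arbitrary vector field $X\in\Gamma(TM)$ and compute $L\circ(-2\Di^*)(X)$. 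Since $-2\Di^*X=\mathcal L_X g\in\Ima\,\Di^*$, Lemma \ref{l: -L=2div*div} applies: $L(-2\Di^* X)=-2\Di^*\circ\Di_0(-2\Di^* X)=-2\Di^*\big(\Di_0(-2\Di^* X)\big)=-2\Di^*(\mathfrak L X)$, where the last step again uses Lemma \ref{l: Delta Z = div div^* Z}. This yields $L\circ(-2\Di^*)=-2\Di^*\circ\mathfrak L$ on $\Gamma(TM)$, as claimed.

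There is essentially no hard step here: the corollary is a formal consequence of the two preceding lemmas, and both identities follow by composing them in the two possible orders. The only point requiring a word of care is the identification $\Delta+\Ric=\mathfrak L$ in the Einstein setting, i.e.\ observing that $[\nabla f,\cdot]=0$ because a positive Einstein metric is a shrinker with constant potential $f$ (equivalently, one may simply take $\mathfrak L=\Delta+\tfrac12$ as its definition specializes, and invoke $\Ric=\tfrac12 g$); this is consistent with the remark following the definition of $\mathfrak L$ in Section \ref{sec:commutator}. One should also note that $\Di_0$ maps $\Ima\,\Di^*$ into $\Gamma(T^*M)\cong\Gamma(TM)$ so that the compositions are well-defined on the stated domains, and that Lemma \ref{l: Delta Z = div div^* Z} is stated for all $1$-forms hence applies to $\Di_0 h$ and to $X$ without restriction.
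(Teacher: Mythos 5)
Your proof is correct and is exactly the paper's argument: the paper's proof of Corollary \ref{l: commute} consists of the single sentence ``These follow from Lemma \ref{l: Delta Z = div div^* Z} and \ref{l: -L=2div*div},'' and your write-up simply spells out the two compositions and the (correct) observation that $\mathfrak L=\Delta+\Ric$ in the positive Einstein case because the potential $f$ is constant.
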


\begin{proof}
    These follow from Lemma \ref{l: Delta Z = div div^* Z} and \ref{l: -L=2div*div}.
\end{proof}

\begin{theorem}\label{t: unstable Lie derivative implies unstable vector}
   Let $(M,g)$ be positive Einstein, then for any $\lambda\in\R$, we have $\Di_0\big(\Ker(L-\lambda)\cap\Ima\,\Di^*\big)\subset \Ker(\mathfrak L-\lambda)$ and $-2\Di^*\big(\Ker(\mathfrak L-\lambda)\big)\subset \Ker(L-\lambda)$, and
   \begin{equation}\label{e: invertible}
    \begin{split}
        (-2\Di^*)\circ\Di_0&=\lambda\cdot\id \quad\textnormal{ on }\quad \Ker(L-\lambda)\cap\Ima\,\Di^*;\\
        \Di_0\circ(-2\Di^*)&=\lambda\cdot\id \quad\textnormal{ on }\quad \Ker(\mathfrak L-\lambda).
    \end{split}
\end{equation}
In particular, for any $\lambda\neq0$, we have $\lambda\in \Spec(-L)$ if and only if $\lambda\in \Spec(-\mathfrak L)$. In this case, $\Di_0$ and $-2\Di^*$ are isomorphisms between $\Ker(L-\lambda)\cap \Ima\,\Di^*$ and  $\Ker(\mathfrak L-\lambda)$.
\end{theorem}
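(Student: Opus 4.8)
\textbf{Proof proposal for Theorem \ref{t: unstable Lie derivative implies unstable vector}.}

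The plan is to exploit the two commutator identities from Corollary \ref{l: commute} together with the factorizations of $L$ and $\mathfrak L$ into compositions of the first-order operators $\Di_0$ and $-2\Di^*$. First I would establish the two inclusions. If $h\in\Ker(L-\lambda)\cap\Ima\,\Di^*$, then applying the first commutator $\Di_0\circ L=\mathfrak L\circ\Di_0$ (valid on $\Ima\,\Di^*$) gives $\mathfrak L(\Di_0 h)=\Di_0(Lh)=\lambda\,\Di_0 h$, so $\Di_0 h\in\Ker(\mathfrak L-\lambda)$; similarly, for $X\in\Ker(\mathfrak L-\lambda)$, the second commutator $-2\Di^*\circ\mathfrak L=L\circ(-2\Di^*)$ gives $L(-2\Di^* X)=-2\Di^*(\mathfrak L X)=\lambda(-2\Di^* X)$, and since $-2\Di^* X=\mathcal L_{X^\#}g\in\Ima\,\Di^*$, we conclude $-2\Di^* X\in\Ker(L-\lambda)\cap\Ima\,\Di^*$.

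Next I would verify the two scalar identities in \eqref{e: invertible}. On $\Ker(L-\lambda)\cap\Ima\,\Di^*$, Lemma \ref{l: -L=2div*div} says $Lh=-2\Di^*\circ\Di_0 h$, so $(-2\Di^*)\circ\Di_0 h=Lh=\lambda h$. On $\Ker(\mathfrak L-\lambda)$, Lemma \ref{l: Delta Z = div div^* Z} says $\Di_0\circ(-2\Di^*)X=\Di_0(\mathcal L_{X^\#}g)=(\Delta+\Ric)X=\mathfrak L X+[\nabla f,X]$; but on a positive Einstein manifold $f$ is constant (the soliton equation $\Ric+\nabla^2 f=\tfrac12 g$ with $\Ric=\tfrac12 g$ forces $\nabla^2 f=0$, hence $\nabla f=0$ since $M$ is compact), so $[\nabla f,X]=0$ and $\Di_0\circ(-2\Di^*)X=\mathfrak L X=\lambda X$. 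Here I should be a little careful to record that for positive Einstein manifolds the weighted and unweighted operators coincide: $\Delta_f=\Delta$ and $\mathfrak L=\Delta+\Ric$, so all of Section \ref{sec:commutator}'s formulas involving $f$ simplify correctly.

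Finally, for $\lambda\neq 0$, the identities in \eqref{e: invertible} show that $\tfrac1\lambda\Di_0$ is a two-sided inverse of $-2\Di^*$ when restricted to these eigenspaces: $(-2\Di^*)\circ(\tfrac1\lambda\Di_0)=\id$ on $\Ker(L-\lambda)\cap\Ima\,\Di^*$ and $(\tfrac1\lambda\Di_0)\circ(-2\Di^*)=\id$ on $\Ker(\mathfrak L-\lambda)$. Hence both maps are injective with the stated ranges, so they are mutually inverse isomorphisms between $\Ker(L-\lambda)\cap\Ima\,\Di^*$ and $\Ker(\mathfrak L-\lambda)$; in particular one space is nonzero iff the other is, i.e. $\lambda\in\Spec(-L)$ iff $\lambda\in\Spec(-\mathfrak L)$ (restricted, on the $L$ side, to the generic part $\Ima\,\Di^*$). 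I do not anticipate a serious obstacle here — everything reduces to bookkeeping with the already-proven commutators and factorizations; the only point requiring genuine care is the vanishing of $\nabla f$ in the Einstein case, which makes the $f$-twisted operators of the general shrinker setting collapse to the untwisted ones, and checking that $-2\Di^*$ genuinely lands in $\Ima\,\Di^*$ (immediate from its definition) so that the first commutator, which only holds on $\Ima\,\Di^*$, may legitimately be applied.
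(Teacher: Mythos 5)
Your proposal is correct and takes essentially the same route as the paper: establish the two inclusions via the commutator identities of Corollary \ref{l: commute}, and verify the scalar identities \eqref{e: invertible} from the factorizations in Lemma \ref{l: -L=2div*div} and Lemma \ref{l: Delta Z = div div^* Z}. The paper's proof is terser — it derives \eqref{e: invertible} first and then reads off the inclusions by combining it once more with the two lemmas — but since Corollary \ref{l: commute} is itself an immediate consequence of those lemmas, the two arguments are the same computation in a slightly different order; your observation that $\nabla f=0$ in the Einstein case, collapsing the $f$-twisted operators to their untwisted forms, is the correct justification for the simplification the paper uses implicitly.
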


\begin{proof}
First, \eqref{e: invertible} holds by Lemma \ref{l: -L=2div*div} and \ref{l: Delta Z = div div^* Z}. Next, for any $h\in \Ker(L-\lambda)\cap\Ima\,\Di^*$ and $X\in \Ker(\mathfrak L-\lambda)$, using \eqref{e: invertible} we have $L(\Di_0h)=\lambda\, \Di_0h$ and $L(-2\Di^*X)=\lambda\cdot(-2\Di^*X)$. This shows all assertions.
\end{proof}

\begin{proof}[Proof of Theorem \ref{thm:generic_index}]
The theorem follows directly from Theorem \ref{t: unstable Lie derivative implies unstable vector}. In particular, the isomorphisms between $E_{\lambda,L}\cap\Ima\,\Di^*$ and $E_{\lambda,\mathfrak L}$ are given by $\Di_0$ and $-2\Di^*$.
\end{proof}

\section{Existence of ancient and immortal Ricci flows}\label{sec: existence}
In this section, we construct ancient and immortal Ricci flows asymptotic to any given compact shrinker as stated in Theorem \ref{t:existence_theorem_intro} and \ref{t:different_convergence} with prescribed asymptotic behaviors.  

First, we define a Ricci flow asymptotic to a compact shrinker.

\begin{defn}[Ricci flows asymptotic to a compact shrinker]\label{def:asymptotic}
We say an ancient (resp. immortal) Ricci flow $(M,g_t)$, $t\in(-\infty,-T]$, is \textit{asymptotic to} a compact shrinker $(M,\bar g)$,
if one of the following equivalent conditions holds \cite{CMZ1}:
\begin{enumerate}
    \item For some $\lambda_i\to0$, the flows $\lambda_i g_{\lambda_i^{-1}t}$ converge to the metric flow of $\bar g$ under the weak notion of $\mathbb F$-limits introduced by Bamler \cite{bamler2023compactness};
    \item There
are $t_i\to-\infty$, $Q_i>0$, and diffeomorphisms $\phi_{i}$ such that the metrics $Q_i\phi^*_{i}g_{t_i}$ converge smoothly to $\bar g$.
\item Let $\tilde g_\tau$ be the rescaled Ricci flow of $g_t$, then there exist diffeomorphisms $\phi_\tau$ such that $\phi^*_\tau\tilde g_\tau\to\bar g$ smoothly as $\tau\to-\infty$.
\end{enumerate}
Similarly, we can define a Ricci flow $(M,g_t)$, $t\in[-T,0)$, which is \textit{asymptotic to} $(M,\bar g)$ at $t=0$. We also say $(M,\bar g)$ is the singularity model of the Ricci flow at $t=0$.
\end{defn}

For a given compact shrinker $(M,\bar g,f)$, the stability operator $L=\Delta_{f}+2\Rm*:\textnormal{Sym}^2(M)\to \textnormal{Sym}^2(M)$ is symmetric and uniformly elliptic, and the eigenvalues are
\begin{equation}\label{eq:eigenvalues of L}
\lambda_1\ge\lambda_2\ge\cdots\ge\lambda_I>0=\lambda_{I+1}=\cdots=\lambda_{I+K}>\lambda_{I+K+1}\ge\cdots,
\end{equation}
repeated according to their multiplicity; note that $\lim_{j\to\infty}\lambda_j=-\infty$.
Also, the eigentensors with the eigenvalues $\lambda_1,...,\lambda_I$ are called \textit{unstable}, $\lambda_{I+1},...,\lambda_{I+K}$ are called \textit{neutral}, $\lambda_{I+K+1},...$ are called \textit{stable}.
We also fix once and for all an $L^2_f$-orthonormal sequence of corresponding smooth eigentensors $\{h_j\}_{j=1}^{\infty}$. Note $L\Ric=\Ric$, $\Ric$ is an unstable eigentensor of eigenvalue $1$.
Let  
\begin{equation}\label{eq:eigenvalues of L min}
    \lambda_+=\lambda_I>0\quad\textnormal{and}\quad \lambda_-=-\lambda_{I+K+1}>0.
\end{equation}

By the $L^2_f$-decomposition of the symmetric 2-tensors $\textnormal{Sym}^2(M)=\Ima\,\Di^*_f\perp \Ker\,\Di_f$, the subspaces $\Ima\,\Di^*_f, \Ker\,\Di_{f}$ are $L$-invariant.
We call eigentensors in 
$\Ima\,\Di^*_f$ (Lie derivatives) \textit{generic}, and eigentensors in $\Ker\,\Di_f$ \textit{essential}.
We write $V^{+},V^-,V^0,V_{\Lie},V_{\ess}$ to be the subspace spanned by all unstable, stable, neutral, generic, essential eigentensors, respectively; and let $\PP^{+},\PP^-,\PP^0,\PP_{\Lie},\PP_{\ess}$ be the projections.
Moreover, we denote $V_{\ess}^+=V_{\ess}\cap V^+$ and $P_{\Lie}=P_{\Lie}^+ +P_{\Lie}^0+P_{\Lie}^-$ and similarly for other subspace intersections.

For easy of notation, we denote by $\{ \bar h_1,\cdots, \bar h_{I_{\ess}}\}\subset \textnormal{Sym}^2(M)$ the essential unstable eigentensors which span the space $V_{\ess}^+$ and satisfy
\begin{align}
   & \langle  \bar  h_i,\bar  h_j\rangle_{L^2_f}=\delta_{ij}, && L\bar h_i= \bar\lambda_i \bar h_i, && \bar\lambda_1 \geq \bar\lambda_2\geq \cdots \geq \bar\lambda_{I_{\ess}}>0,
\end{align}
where ${I_{\ess}}=\dim(V_{\ess}^+)$ is the \textit{essential index} of $L$.
Now, we let $m_1$ denote the multiplicity of $\bar\lambda_1$, namely $\bar\lambda_1=\cdots=\bar\lambda_{m_1}>\bar\lambda_{m_1+1}$.
Then, for $k\in \mathbb{N}$, we denote by $m_{k+1}$ the multiplicity of $\bar\lambda_{m_k+1}$. Let $I_{\ess}=m_1+\cdots+m_{I'}$ and $\lambda_i'=\bar  \lambda_{\bar m_i}$ for $i \leq I'$, where $\bar m_i=m_1+\cdots+m_i$. Then there are $I'$ distinct positive eigenvalues $\lambda'_1,\cdots,\lambda'_{I'}$.

Given $\mathbf{a}=(a_1,\cdots,a_{I_{\ess}}) \in \mathbb{R}^{I_{\ess}}$, we denote $\mathbf{a}=(\vec{a}_1,\cdots,\vec{a}_{I'})\in \mathbb{R}^{m_1}\times \cdots \times \mathbb{R}^{m_{I'}}$ with abuse of notation. Similarly, we denote $\vec{ h}_i=( h_{\bar m_{i-1}+1},\cdots, h_{\bar m_i})$.  Moreover, we define the projection $\pi_i:\mathbb{R}^{I_{\ess}}\to \mathbb{R}^{I_{\ess}}$ by 
\begin{equation}
  \pi_i((\vec{a}_1,\cdots,\vec{a}_{I'}))=(0,\cdots,0,\vec{a}_i,\cdots,\vec{a}_{I'}).  
\end{equation}

\begin{theorem}\label{t: existence theorem}
Let $(M,\bar g,f)$ be a compact shrinking soliton. Then, there exist a map $S$ and some constant $\delta>0$ such that for each $\mathbf{a}\in \mathbb R^{I_{\ess}}$, $S(\mathbf{a})$ is an ancient rescaled modified RDTF with background $(M,\bar g,f)$ and
\begin{enumerate}
    \item $S(\mathbf{0})$ is the shrinking soliton $\bar{g}$.
    \item Suppose $\mathbf{a},\mathbf{b}\in \mathbb{R}^{I_{\ess}}$ satisfy $\vec{a}_j\neq \vec{b}_j$ for some $j\leq I'$ and $\vec{a}_i = \vec{b}_i$ for all $i > j$. Then,
    \begin{equation}\label{eq:construction strata}
 e^{\lambda_j'\tau}   [S(\mathbf{a})-S(\mathbf{b})]= (\vec{a}_j-\vec{b}_j)\cdot\vec{ h}_j+ O(e^{\delta \tau}),
    \end{equation}
  as $\tau\to -\infty$.
\end{enumerate}
\end{theorem}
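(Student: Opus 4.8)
The plan is to construct $S$ by a fixed-point argument for the rescaled modified Ricci--DeTurck flow perturbation equation, viewed as an ancient solution problem with prescribed leading behavior along the unstable essential directions. Write the RDTF perturbation $h_\tau := g_\tau - \bar g$, so that $h$ solves $\partial_\tau h = L h + \mathcal Q(h)$ where $\mathcal Q$ collects the quadratic-and-higher terms (smooth, vanishing to second order at $h=0$, with the usual parabolic smoothing estimates). Decompose $h = h^{++} + h^{\rm rest}$ where $h^{++} = \PP_{\ess}^+ h$ is the projection onto $V_{\ess}^+$ and $h^{\rm rest}$ contains the generic, neutral, and stable parts together with the non-essential unstable part; note $\PP^+_{\rm gen}$ components still grow, so it is cleaner to run the contraction mapping in a weighted space that separates the modes of $V_{\ess}^+$ from everything else. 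Concretely, for $\mathbf a\in\mathbb R^{I_{\ess}}$ fix a small $\mu>0$ with $\mu < \min_i(\lambda_i' - \lambda_{i+1}')$ and $\mu<\lambda_+$ below the next spectral gaps, and seek $h$ in the Banach space of ancient solutions with $\|e^{-\lambda_{I_{\ess}}\tau + \mu\tau} h^{++}\|$ and $\|e^{(\lambda_+ +\delta)(-\tau)} h^{\rm rest}\|$ controlled as $\tau\to-\infty$ — so the essential unstable modes are allowed to decay like the slowest $e^{\lambda_i'\tau}$ while the remainder decays strictly faster.

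The construction itself: define the map $\Phi$ that sends a candidate $h$ to the unique ancient solution of the \emph{linear} problem $\partial_\tau u = L u + \mathcal Q(h)$ whose $V_{\ess}^+$-projection at $\tau\to-\infty$ has prescribed coefficients $\mathbf a$ against $\{\bar h_i\}$, built by the variation-of-constants (Duhamel) formula with the usual split: integrate the stable/neutral/generic/non-essential-unstable parts \emph{forward} from $-\infty$ (they decay, so this converges) and the essential unstable part by matching the prescribed asymptotics and integrating the source \emph{backward} from $\tau$. The Duhamel integrals converge in the weighted norm precisely because $\mathcal Q(h)$ is quadratic and hence decays at twice the rate; choosing the ball of radius $\sim|\mathbf a|$ small, $\Phi$ is a contraction, giving a unique fixed point $S(\mathbf a)$. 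Setting $\mathbf a = \mathbf 0$ forces $h\equiv 0$ (the zero solution is in the ball and is a fixed point, hence by uniqueness the only one), which is item (1). Parabolic regularity upgrades $S(\mathbf a)$ to a smooth ancient RDTF, and that it is a \emph{modified} RDTF (i.e.\ in the right gauge slice, orthogonal to $\Ima\,\Di^*_f$ or whatever normalization the paper fixes) is arranged by building the gauge condition into the function space — the generic modes are part of $h^{\rm rest}$ and one checks they are actually killed by the construction, or one post-composes with the gauge-fixing diffeomorphism as in the standard DeTurck setup.

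For item (2), the refined asymptotics: given $\mathbf a,\mathbf b$ agreeing in all strata above $j$ and differing in stratum $j$, set $w := S(\mathbf a) - S(\mathbf b)$, which solves the linearized-about-a-decaying-solution equation $\partial_\tau w = L w + E_\tau(w)$ with $E_\tau(w) = \mathcal Q(S(\mathbf a)) - \mathcal Q(S(\mathbf b))$ a term that is $O(|S(\mathbf a)|+|S(\mathbf b)|)\cdot|w|$, hence decaying \emph{strictly faster} than $e^{\lambda_j'\tau}$ relative to $w$ once we know $w = O(e^{\lambda_j'\tau})$. By construction the leading coefficient of $w$ in stratum $j$ is $\vec a_j - \vec b_j \neq 0$ and all coefficients in strata $>j$ vanish, so $w$ is dominated by the $\lambda_j'$-eigenspace; feeding this back into the Duhamel formula for $w$ and estimating the error term (which decays like $e^{(\lambda_j' + \delta)\tau}$ for a definite $\delta>0$ coming from the spectral gap below $\lambda_j'$ and the rate of $S(\mathbf a), S(\mathbf b)$) yields $e^{\lambda_j'\tau} w = (\vec a_j - \vec b_j)\cdot \vec h_j + O(e^{\delta\tau})$. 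This is a standard "leading-order asymptotics from a spectral gap" argument; the one point requiring care is that $S(\mathbf a)$ and $S(\mathbf b)$ individually decay at least like $e^{\lambda_j'\tau}$ (not slower), which holds because by hypothesis both have vanishing coefficients in all strata with eigenvalue $>\lambda_j'$ — this is exactly why the hierarchical/strata formulation is used.

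\textbf{Main obstacle.} The delicate step is setting up the weighted Banach space and the Duhamel splitting so that the essential unstable modes are handled with \emph{prescribed} asymptotics while simultaneously the generic unstable modes in $h^{\rm rest}$ do not spoil the contraction: generic unstable eigentensors of $L$ also have positive eigenvalue, so naively they would need to be "prescribed" too, yet the theorem fixes only the $I_{\ess}$ essential coefficients. The resolution — and this is where Section \ref{sec:commutator} enters — is that the generic unstable modes are Lie derivatives and can be gauged away (eliminated by an appropriate diffeomorphism / by restricting to $\Ker\,\Di_f$ via the modified flow), so in the correctly-gauged modified RDTF there simply are no free generic unstable parameters; making this gauge reduction compatible with the ancient fixed-point construction (rather than just at a single time) is the technical heart of the argument.
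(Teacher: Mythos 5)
Your overall strategy---a fixed-point construction in a weighted ancient function space followed by a spectral-gap asymptotics argument for $w = S(\mathbf a) - S(\mathbf b)$---is sound in spirit, but it differs from the paper's proof in a way worth noting. The paper does not prescribe all the coefficients $\vec a_1,\ldots,\vec a_{I'}$ in a single contraction; it builds $S(\mathbf a)$ stratum by stratum, from slowest-decaying to fastest, by iterating Theorem~\ref{l: contraction mapping RDTF}: start from $X^0 \equiv 0$, add the $\lambda_{I'}'$-mode $\vec a_{I'}\cdot\vec h_{I'}e^{\lambda_{I'}'\tau}$ to get $S(\pi_{I'}(\mathbf a))$, then add the $\lambda_{I'-1}'$-mode on top of that, and so on. The payoff of the hierarchical scheme is that assertion~(2) drops out directly from~\eqref{eq:contraction strata on base}: when the $\lambda_j'$-stratum is added, the two constructions with $\vec a_j$ and $\vec b_j$ differ by $(\vec a_j - \vec b_j)\cdot\vec h_j e^{\lambda_j'\tau}$ plus an error decaying at rate $\lambda_j'+\delta'$, and all subsequent strata feed only into the error. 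Your one-shot contraction can certainly be carried out, but to get~(2) you would need to first verify (rather than assert) that $w$ has vanishing coefficients in strata $>j$, and then run an argument in the style of Proposition~\ref{prop:diff.asymp}; the paper's iterative route makes that bookkeeping automatic.

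The more substantive problem is the \emph{Main obstacle} paragraph, which mis-identifies both the difficulty and its resolution. In the Duhamel construction underlying Theorem~\ref{l: contraction mapping RDTF} (namely Lemma~\ref{bl: inhomogeneous solution existence with asymptotic}), every unstable mode with eigenvalue above the target decay rate---generic or essential alike---is integrated backward from $\tau=0$, which determines its value and supplies decay automatically. The generic unstable modes are therefore not free parameters that need to be fixed or gauged away; they are simply pinned down by the iteration. No gauge reduction enters the existence proof, and Section~\ref{sec:commutator} plays no role here: those commutator identities are used later, in Lemma~\ref{l: change of base point}, to gauge away a dominant generic unstable mode in the \emph{classification} argument, not in the construction. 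Moreover your specific suggestion of ``restricting to $\Ker\,\Di_f$ via the modified flow'' does not work directly: $L$ preserves $\Ker\,\Di_f$ but the quadratic nonlinearity $Q(h)$ does not, so divergence-freeness is not propagated by the flow. (This is precisely why the paper later develops the generalized slice theorem, Theorem~\ref{t: slice theorem}; but that machinery belongs to the uniqueness arguments, not to the existence construction.)
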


\begin{proof}
Note the rescaled modified RDTF Perturbation equation \eqref{eq:most important eq} satisfies \eqref{eq:both_RDTF_HMHF}.
So for $\mathbf{a}=(\vec{a}_1,\cdots,\vec{a}_{I'})$, we first apply Theorem \ref{l: contraction mapping RDTF} with $X^0_\tau=S(\mathbf{0})-\bar g\equiv 0$, $\lambda=\lambda_{I'}$, and $aY_i=\vec{a}_{I'}\cdot \vec{ h}_{I'}$, then the solution to \eqref{eq:both_RDTF_HMHF} gives us $S(\pi_{I'}(\mathbf{a}))-\bar g$. Next, we apply Theorem \ref{l: contraction mapping RDTF} again with $X^0_\tau=S(\pi_{I'}(\mathbf{a}))-\bar g$, $\lambda=\lambda_{I'-1}$, and $aY_i=\vec{a}_{I'-1}\cdot \vec{ h}_{I'-1}$, then the solution to \eqref{eq:both_RDTF_HMHF} gives us $S(\pi_{I'-1}(\mathbf{a}))-\bar g$. By repeating this process $I'$ times, we obtain $S(\pi_1(\mathbf{a})):=S(\mathbf{a})$, and \eqref{eq:construction strata} follows from \eqref{eq:contraction strata on base}.
\end{proof}

\begin{remark}
We can see $S(0,\cdots,0,\vec{a}_{I'})(\cdot,t)=S(0,\cdots,0,e^{\lambda_{I'}s}\vec{a}_{I'})(\cdot,t-s)$ from the proof of Theorem \ref{l: contraction mapping RDTF}. Then, we can inductively show 
\begin{equation}
    S(\vec{a}_1,\cdots,\vec{a}_{I'})(\cdot,t)=S(e^{\lambda_1's}\vec{a}_1,\cdots,e^{\lambda_{I'}s}\vec{a}_{I'})(\cdot,t-s).
\end{equation}
\end{remark}

Now we prove Theorem \ref{t:existence_theorem_intro}:

\begin{proof}[Proof of Theorem \ref{t:existence_theorem_intro}]
   Let $\xi_\tau$ be the flow of diffeomorphisms such that $\partial_\tau\xi_\tau=\nabla f\circ\xi_\tau$ with $\xi_0=\id$. For each $S(\mathbf a)$, the flow $\widetilde g_\tau(\mathbf a):=\xi_{\tau}^*(\bar g+ S(\mathbf a))$ is a rescaled RDTF with background metric being the rescaled Ricci flow $\bar g_\tau$ of $\bar g$ (see Appendix \ref{sec:HMHF}). Then by solving the ODE $\partial_\tau\chi_\tau=\Delta_{\widetilde g_\tau(\mathbf a),\bar g_\tau}\id\circ\chi_\tau$ with $\chi_0=\id$ on $\tau\in(-\infty,0]$, we obtain a family of smooth diffeomorphisms $\chi_{\tau}$ such that $\chi_\tau^*(\xi_{\tau}^*(\bar g+ S(\mathbf a)))$ is a rescaled Ricci flow. 
In particular, for a positive Einstein manifold, we have $\xi_\tau=\id$, and we can choose $\chi_0$ so that $\chi_\tau\to\id$ exponentially as $\tau\to-\infty$, so the rescaled Ricci flow 
$\chi_\tau^*(\bar g+ S(\mathbf a))\to\bar g$ exponentially.
So this proves Theorem \ref{t:existence_theorem_intro} by Theorem \ref{t: existence theorem}. 
\end{proof}

Now, let us establish its analogue for immortal flows. To this end, we consider the notion of $\vec{ h}_j,\vec{a}_j$ even for neutral and stable eigentensors. We recall the \textit{space of eventually zero sequences}
\[c_{00} = \{ \mathbf a \in \mathbb{R}^{\infty} \mid \text{there is } N\in\mathbb N \text{ such that } \vec{a}_i = 0 \text{ for all } i \geq N \}.
\]
We let $k'=1$ if $V_{\ess}^0\neq 0$ and let $k'=0$ otherwise. Then, $\lambda_j'>0$ for $j> I'+k'$. 
Let 
\[\widehat c_{00} = \{ \mathbf a \in c_{00} \mid \text{{ $\vec{a}_i=0$ for all $i\leq I'+k'$}}\}.
\]

\begin{theorem}\label{t: existence theorem forward}
Let $(M,\bar g,f)$ be a compact shrinking soliton. Then there exist a map $S$ and a sequence $\delta_j>0$ such that for any {$\mathbf a\in \widehat c_{00}$}, each $S(\mathbf{a})$ is an immortal rescaled modified RDTF with background $(M,\bar g,f)$ with the following significance:
\begin{enumerate}
    \item $S(\mathbf{0})$ is the shrinking soliton $\bar{g}$.
    \item If $\mathbf{a},\mathbf{b}\in \widehat c_{00}$ satisfy $\vec{a}_j\neq \vec{b}_j$ for some $j> I'+k'$, and $\vec{a}_i = \vec{b}_i$ for all $i < j$, then
    \begin{equation}
 e^{\lambda_j'\tau}   [S(\mathbf{a})-S(\mathbf{b})]= (\vec{a}_j-\vec{b}_j)\cdot\vec{ h}_j+ O(e^{-\delta_j \tau}),
    \end{equation}
  as $\tau\to \infty$.
\end{enumerate}
\end{theorem}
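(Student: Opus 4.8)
The plan is to mirror the construction of Theorem \ref{t: existence theorem}, replacing the backward-in-time fixed-point argument by its forward-in-time analogue, and to build the immortal flow one eigenspace at a time, now in order of \emph{increasing} eigenvalue $\lambda_j'$ rather than decreasing. Concretely, given $\mathbf a\in\widehat c_{00}$, let $N$ be such that $\vec a_i=0$ for $i\ge N$; we will define $S(\mathbf a)$ by applying the forward-time contraction mapping theorem (the immortal analogue of Theorem \ref{l: contraction mapping RDTF}) a finite number of times. First I would invoke that theorem with $X^0_\tau\equiv 0$, the stable/neutral eigenvalue $\lambda=\lambda_{I'+k'+1}$, and prescribed leading coefficient $aY=\vec a_{I'+k'+1}\cdot\vec h_{I'+k'+1}$, producing a solution of the rescaled modified RDTF perturbation equation \eqref{eq:both_RDTF_HMHF} which decays like $e^{-\lambda_{I'+k'+1}'\tau}$ as $\tau\to\infty$ and whose leading-order term is the prescribed one; this is $S(\pi_{\le I'+k'+1}^c\mathbf a)-\bar g$ in the obvious notation. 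Then I would feed this as the new background $X^0_\tau$ into a second application with $\lambda=\lambda_{I'+k'+2}$ and leading coefficient $\vec a_{I'+k'+2}\cdot\vec h_{I'+k'+2}$, and so on, finitely many times up to index $N-1$, finally setting $S(\mathbf a)$ to be the last output plus $\bar g$. Item (1), $S(\mathbf 0)=\bar g$, is immediate since every contraction is applied with zero data. Item (2) then follows exactly as in Theorem \ref{t: existence theorem}: the estimate on the base \eqref{eq:contraction strata on base} (in its forward-time form) shows that the solutions differ, to leading order, by $(\vec a_j-\vec b_j)\cdot\vec h_j e^{-\lambda_j'\tau}$ with an error $O(e^{-(\lambda_j'+\delta_j)\tau})$, where $\delta_j>0$ is the spectral gap below the next distinct eigenvalue; after multiplying by $e^{\lambda_j'\tau}$ this is exactly the claimed asymptotic, with the roles of "$i>j$" and "$i<j$" swapped relative to the ancient case because the iteration order is reversed.

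The key structural point that makes the forward iteration work — and the step I expect to be the main obstacle — is that, unlike in the ancient case where one only ever handles finitely many \emph{positive} (unstable) eigenvalues, here one must control the \emph{stable} directions, whose eigenvalues $-\lambda_{I+K+1}\ge-\lambda_{I+K+2}\ge\cdots$ accumulate at $-\infty$, so that the "decay rates" $\lambda_j'$ one is prescribing grow without bound as $j\to\infty$. This is why the hypothesis $\mathbf a\in\widehat c_{00}$ (eventually zero, and vanishing on the unstable and neutral essential blocks) is essential: it guarantees the iteration terminates after finitely many steps, and it rules out the unstable/neutral modes that cannot be switched off forward in time. The forward-time contraction mapping theorem itself requires care: one works in a weighted parabolic Hölder space with weight $e^{\lambda\tau}$ as $\tau\to\infty$, and one needs the prescribed leading eigentensor $\vec h_j$ to be \emph{essential} (so that no diffeomorphism correction is forced) and needs the quadratic and error terms $Q(X)$, $E(X)$ to be genuinely subcritical with respect to that weight — this uses that the eigenvalue directly below $\lambda_j'$ is strictly smaller, producing the gap $\delta_j$. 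One also has to check that at each stage the new background $X^0_\tau$ coming out of the previous stage lies in the function space the theorem demands and has the right decay, so that the errors genuinely compound as $O(e^{-\delta_j\tau})$ and not worse; this is a bookkeeping argument identical in spirit to the ancient case but must be done with the accumulating-spectrum caveat in mind.

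Finally, I would remark — as in the comment following Theorem \ref{t: existence theorem} — that the scaling/time-translation symmetry of the rescaled modified RDTF forces the relation
\[
S(\vec a_{I'+k'+1},\vec a_{I'+k'+2},\dots)(\cdot,\tau)=S(e^{\lambda'_{I'+k'+1}s}\vec a_{I'+k'+1},e^{\lambda'_{I'+k'+2}s}\vec a_{I'+k'+2},\dots)(\cdot,\tau-s),
\]
which both serves as an internal consistency check and will be used later when converting these immortal RDTFs into genuine immortal Ricci flows (via an immortal HMHF gauge as in the proof of Theorem \ref{t:existence_theorem_intro}). I do not anticipate difficulty there beyond what is already present in the ancient construction; the genuinely new analytic content is entirely in setting up the forward-time weighted fixed-point argument with unbounded prescribed decay rates, which is why I flag it as the crux.
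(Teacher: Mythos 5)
Your proof is correct and follows exactly the paper's intended route: the paper's entire argument for this statement is the single sentence ``The proof is the same to Theorem \ref{t: existence theorem}, using the forward case of Theorem \ref{l: contraction mapping RDTF},'' and your proposal simply unpacks that sentence, iterating the forward-in-time contraction mapping (Theorem \ref{l: contraction mapping RDTF}) from the slowest-decaying stable eigenblock $\lambda'_{I'+k'+1}$ inward, in mirror image of the backward iteration in Theorem \ref{t: existence theorem}. Your remarks on why $\widehat c_{00}$ forces termination of the iteration, on the swap of the ``$i>j$'' versus ``$i<j$'' condition, and on why $\delta_j$ must depend on $j$ (the gap to the next distinct eigenvalue shrinks) are all correct points the paper leaves implicit.
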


\begin{proof}
The proof is the same to Theorem \ref{t: existence theorem}, using the forward case of Theorem \ref{l: contraction mapping RDTF}.
\end{proof}

\begin{proof}[Proof of Theorem \ref{t:different_convergence}]
   The rescaled immortal flows can be obtained from Theorem \ref{t: existence theorem forward}, in the same way as the rescaled ancient flows in
Theorem \ref{t:existence_theorem_intro}. 
\end{proof}

\begin{defn}[Fast convergence of harmonic map heat flow]
  Let $g_\tau$ be a rescaled modified ancient (resp. immortal) Ricci flow asymptotic to $\bar g$. We say an ancient (resp. immortal) modified HMHF $\chi_\tau$ between $g_\tau$ and $\bar{g}$ \textit{converges exponentially to $\id$} if 
there exist $\delta>0$ and $\tau_0\in(-\infty,0]$ (resp. $[0,\infty)$)
such that for all $\tau\le\tau_0$ (resp. $\tau\ge\tau_0$), the vector field $X_\tau:=\exp_{\id}^{-1}(\chi_\tau)$ satisfies
\begin{equation*}
    \|X_\tau\|_{C^k}\le  C_ke^{-\delta|\tau-\tau_0|},
\end{equation*} 
for each $k\in\mathbb N$ with some $C_k>0$, and the modified RDTFP $h_\tau:=(\chi^{-1}_\tau)^{*}g_\tau-{\bar{g}}$ satisfies
\begin{equation*}
    \|h_\tau\|_{C^k}\le C_ke^{-\delta|\tau-\tau_0|}.
\end{equation*}
\end{defn}

For any rescaled modified Ricci flow converging exponentially to the shrinker,
the following theorem shows the existence of modified HMHFs between the flow and the shrinker, which converges exponentially to $\id$.

\begin{theorem}
\label{t:existence_theorem_HMHF}
    Let $(M,\bar g,f)$ be a compact shrinker, and $(M,g_\tau)$ be a rescaled modified ancient $($resp. immortal$)$ Ricci flow converging exponentially to $(M,\bar g)$ as $\tau\to-\infty$. Then there exists an ancient $($resp. immortal$)$ modified HMHF between $g_\tau$ and $\bar g$, which converge exponentially to $\id$ as $\tau\to\mp\infty$.

    In particular, if $g_\tau$ is ancient, then there is an $\textnormal{index}(\mathfrak L)$-parameter family of pairwise different such ancient modified HMHFs, where $\mathfrak L=\Delta_f+\tfrac12$.
\end{theorem}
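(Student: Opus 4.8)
\textbf{Proof proposal for Theorem \ref{t:existence_theorem_HMHF}.}

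The plan is to set up the \emph{harmonic map heat flow perturbation equation} and solve it by a fixed-point argument, mirroring the structure of Theorem \ref{t: existence theorem} (and its forward analogue) but now for a PDE on vector fields rather than symmetric $2$-tensors. First I would write a modified HMHF $\chi_\tau$ between $g_\tau$ and $\bar g$ in the form $\chi_\tau=\exp_{\id}(X_\tau)$ for a vector field $X_\tau$ on $M$, and derive the evolution equation satisfied by $X_\tau$. As indicated in the outline of Theorem \ref{t:uniqueness_of_tangent_flow}, this equation takes the schematic form
\[
\partial_\tau X_\tau=\mathfrak L X_\tau+Q(X_\tau)+E(X_\tau,h_\tau)+G_\tau,
\]
where $\mathfrak L=\Delta_f+\tfrac12$ is the linearization, $Q$ collects the terms quadratic (and higher) in $X_\tau$, $E$ collects terms bilinear in $X_\tau$ and the RDTFP error $h_\tau=(\chi_\tau^{-1})^*g_\tau-\bar g$ (here one uses the flow $g_\tau$ already put in exponentially-decaying gauge, or absorbs the gauge into $G$), and $G_\tau$ is the inhomogeneous term, which is built entirely from $g_\tau-\bar g$ in a fixed gauge and therefore decays like $e^{-\delta_0|\tau|}$ for some $\delta_0>0$ by the hypothesis that $g_\tau$ converges exponentially to $\bar g$. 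The key structural input from Section \ref{sec:commutator}, specifically Lemma \ref{t: unstable Lie derivative implies unstable vector_shrinker} and Theorem \ref{t: unstable Lie derivative implies unstable vector}, is that the unstable spectrum of $\mathfrak L$ corresponds exactly to the generic unstable spectrum of $L$; in particular $\dim$ of the unstable eigenspace of $\mathfrak L$ is $\textnormal{index}(\mathfrak L)$.

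The second step is the linear-estimates-plus-contraction-mapping machinery. For the ancient case I would work on $(-\infty,\tau_0]$ with $\tau_0\ll 0$ and, exactly as in the analytic engine behind Theorem \ref{l: contraction mapping RDTF}, split $X_\tau$ into its $\mathfrak L$-unstable component (projected onto the finite-dimensional unstable eigenspace), its neutral component, and its stable component. The stable and neutral parts of the inhomogeneous linear equation are solved by backward integration (Duhamel) against the semigroup $e^{(\tau-s)\mathfrak L}$, which contracts backward in time on those subspaces; the unstable part is where one \emph{prescribes} data — choosing the $\textnormal{index}(\mathfrak L)$ free coefficients $\mathbf c\in\mathbb R^{\textnormal{index}(\mathfrak L)}$ of the leading unstable eigenmodes. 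One sets up a weighted Banach space of vector fields with norm $\sup_{\tau\le\tau_0}e^{\mu|\tau|}\|X_\tau\|_{C^{k,\alpha}}$ for a suitable exponential weight $\mu>0$ chosen below the relevant spectral gaps of $\mathfrak L$ and below $\delta_0$; the map $X\mapsto$ (Duhamel solution with prescribed unstable data $\mathbf c$ and forcing $Q(X)+E(X,h)+G$) is a contraction on a small ball, because $Q$ and $E$ are at least quadratic/bilinear and $G$ is exponentially small. This produces, for each $\mathbf c$, an ancient solution $X_\tau^{\mathbf c}$ with $\|X_\tau^{\mathbf c}\|\le C e^{-\mu|\tau|}$, hence $\chi_\tau^{\mathbf c}=\exp_{\id}(X_\tau^{\mathbf c})$ is an ancient modified HMHF converging exponentially to $\id$. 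Distinctness of the flows for distinct $\mathbf c$ follows from the asymptotic expansion of $X_\tau^{\mathbf c}$: the leading term as $\tau\to-\infty$ is $\mathbf c\cdot(\text{unstable eigenfields of }\mathfrak L)e^{\lambda\tau}$ up to lower order, so different $\mathbf c$ give different flows, giving the $\textnormal{index}(\mathfrak L)$-parameter family. The immortal case is the same argument run on $[\tau_0,\infty)$: forward integration now contracts on the \emph{unstable and neutral} subspaces and the stable modes are the prescribed data — but since for an immortal flow we only claim \emph{one} such family, the simplest statement takes zero prescribed data (trivial unstable/neutral components), solving uniquely forward in time and obtaining $\|X_\tau\|\le Ce^{-\mu|\tau-\tau_0|}$.

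The step I expect to be the main obstacle is \textbf{closing the estimate for the error term $E(X_\tau,h_\tau)$ in the right function spaces uniformly in time}. Because $h_\tau=(\chi_\tau^{-1})^*g_\tau-\bar g$ itself depends on $\chi_\tau$, hence on $X_\tau$, one must either (i) first invoke Theorem \ref{thm:RF_exp} to place the Ricci flow in an a priori exponentially-decaying gauge so that $h_\tau$ is a fixed known quantity with $\|h_\tau\|_{C^{k,\alpha}}\le C_k e^{-\delta_0|\tau|}$ and only the coupling through the reference connection of $g_\tau$ enters $E$, or (ii) treat $(X_\tau,h_\tau)$ as a coupled pair and run the contraction on the product space — the former is cleaner and is what the outline suggests. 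Either way one needs: uniform parabolic Schauder estimates for the (time-dependent, but uniformly parabolic because $g_\tau\to\bar g$) operator $\Delta_{g_\tau,\bar g}$; control of the passage between $\chi_\tau$ and $X_\tau=\exp_{\id}^{-1}(\chi_\tau)$ which is smooth and Lipschitz near $\id$ so long as $\|X_\tau\|_{C^0}$ stays below the injectivity radius (guaranteed by the smallness of the ball); and a check that the quadratic/bilinear structure of $Q,E$ survives composition with $\exp_{\id}$ — i.e. that $\|Q(X)-Q(X')\|$ and $\|E(X,h)-E(X',h')\|$ are bounded by $\big(\|X\|+\|X'\|+\|h\|+\|h'\|\big)\big(\|X-X'\|+\|h-h'\|\big)$ in the weighted norm. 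These are standard once the equation is written out, but organizing them so that the weight $\mu$ can be taken in the nonempty interval between $0$ and $\min(\delta_0,\text{spectral gaps of }\mathfrak L)$ — and, in the ancient case, so that the prescribed unstable data genuinely parametrizes distinct flows rather than being absorbable by reparametrization — is the delicate bookkeeping of the proof.
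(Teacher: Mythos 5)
Your proposal follows essentially the same route the paper takes: derive the modified HMHF perturbation equation for $X_\tau=\exp_{\id}^{-1}\chi_\tau$ with linearization $\mathfrak L=\Delta_f+\tfrac12$, and solve it by a weighted contraction-mapping/Duhamel argument with the unstable data of $\mathfrak L$ prescribed to produce the $\textnormal{index}(\mathfrak L)$-parameter family. The paper packages exactly this into Lemma \ref{lem:existence_modified_HMHF} (one solution via iteration, for both the ancient and immortal cases) followed by Theorem \ref{l: contraction mapping RDTF} applied to the perturbation equation at $\chi_\tau$ \eqref{eq:HMHF perturbation sec 7_2} (the parameter family, ancient case), so your two stages correspond to the paper's two invoked lemmas.

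One point in your writeup is a non-issue you invented and then correctly resolved: you write the error term as $E(X_\tau,h_\tau)$ with $h_\tau=(\chi_\tau^{-1})^*g_\tau-\bar g$ (the RDTFP, which depends on the unknown $\chi_\tau$), and then worry about the resulting coupling. But in the actual modified HMHF perturbation equation \eqref{eq:modified_HMHF_perturbation}, the bilinear error is $E_\tau\bigl(X_\tau,(\xi_\tau^{-1})^*g_\tau-\bar g\bigr)$ — the coefficient is the \emph{given} metric difference, which is already exponentially small by the hypothesis of the theorem, not the RDTFP. So there is no coupled system to begin with and one never needs to re-invoke Theorem \ref{thm:RF_exp} (exponential convergence is a hypothesis here, not something to derive). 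Similarly, the commutator results (Lemma \ref{t: unstable Lie derivative implies unstable vector_shrinker}, Theorem \ref{t: unstable Lie derivative implies unstable vector}) are not inputs to this theorem: the parameter count $\textnormal{index}(\mathfrak L)$ is read off directly from the spectrum of $\mathfrak L$, and the $L$-vs-$\mathfrak L$ correspondence is only used later, in Section \ref{sec: uniqueness}, to match HMHF gauge freedom against generic modes of the RDTFP.
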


\begin{proof}
Since $g_\tau$ satisfies the assumptions in Lemma \ref{lem:existence_modified_HMHF}, applying which we obtain a solution $X_\tau=\exp_{\id}^{-1}\chi_\tau$ to the modified HMHF Perturbation at $\id$ equation \eqref{eq:modified_HMHF_perturbation}, and $\chi_\tau$ is a modified HMHF between $g_\tau$ and $\bar g$.
If $g_\tau$ is ancient, then by Theorem \ref{l: contraction mapping RDTF}, we can obtain $\textnormal{index}(\mathfrak L)$-parameter family of solutions to the modified HMHF Perturbation equation at $\chi_\tau$ \eqref{eq:HMHF perturbation sec 7_2}. Any such solution $Y_\tau=\exp_{\chi_\tau}^{-1}\mathcal Y_\tau$ gives a modified HMHF $\mathcal Y_\tau$ between $g_\tau$ and $\bar g$. 
\end{proof}

\section{A generalized slice theorem}
\label{sec:slice}

The main result in this section is Theorem \ref{t: slice theorem}. For any fixed compact integrable shrinker $\bar g$, and any nearby metric $g$, the theorem finds a nearby shrinker $\bar g'$ such that $g$ is divergence-free, and has zero neutral projections with respect to $\bar g'$.
If $\bar g$ has no essential neutral mode, then this follows from Ebin's slice theorem.
So Theorem \ref{t: slice theorem} is a generalized slice theorem for compact integrable shrinkers.

We first prove two lemmas that hold for all Riemannian manifolds.
For any small vector field $X$, the following lemma gives a Taylor expansion of the diffeomorphism by the exponential map of $X$, up to the optimal regularity.  

\begin{lemma}\label{l: key lemma for regularity}
    Let $(M,g)$ be a compact Riemannian manifold. Then for any $m\in\mathbb N$, $\alpha\in[0,1)$, there exist $\delta(m,\alpha,\|(M,g)\|_{C^{m+2,\alpha}}),C(m,\alpha,\|(M,g)\|_{C^{m+2,\alpha}})>0$ such that the following holds: For any vector field $X$, we define a smooth map $\phi_X$ by $\phi_X(p):=\exp_p(X(p))$ for all $p\in M$. Suppose $|X|,|\nabla X|\le\delta$, then $\phi_X$ is a diffeomorphism, and under any normal coordinates $\{x^i\}_{i=1}^n$, we have
\begin{align}
    \|\phi^i_X-x^i\|_{C^{m,\alpha}}&\le C \|X\|_{C^{m,\alpha}}\label{eq:first}\\
    \|\phi^i_X-x^i-X^i\|_{C^{m,\alpha}}&\le C \|X\|_{C^{m,\alpha}}^2\label{eq:second}\\
    \|\phi_X^*g-g-\LL_X g\|_{C^{m-1,\alpha}}&\le C\|X\|_{C^{m,\alpha}}^2.\label{eq:third}
\end{align}
Moreover, for any smooth function $F:M\to M$, we have
\begin{align}
    \|F\circ\phi_X-F-X(F)\|_{C^{m,\alpha}}&\le C\|F\|_{C^{m+2,\alpha}}\|X\|_{C^{m,\alpha}}^2.\label{eq:F function}
\end{align}
\end{lemma}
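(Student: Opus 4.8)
The plan is to prove all four estimates by treating $\phi_X$ as a perturbation of the identity and carefully tracking the dependence on $X$ in normal coordinates. The key analytic input is the smooth dependence of the exponential map on its arguments: the map $(p,v)\mapsto \exp_p(v)$ is smooth on a neighborhood of the zero section of $TM$, with explicit Taylor expansion $\exp_p(v)^i = p^i + v^i - \tfrac12 \Gamma^i_{jk}(p)v^jv^k + O(|v|^3)$ in normal coordinates, where the error and its derivatives are controlled by the $C^{m+2,\alpha}$-norm of the metric (since the Christoffel symbols involve one derivative of $g$, and we lose one more derivative in the remainder analysis). First I would fix normal coordinates $\{x^i\}$ on a fixed finite cover of $M$ by coordinate balls, and note that for $\delta$ small enough the bound $|X|, |\nabla X|\le \delta$ forces $\phi_X$ to be $C^1$-close to the identity in every chart, hence a local diffeomorphism; a standard degree/injectivity argument (or the inverse function theorem together with the fact that $\phi_X$ is homotopic to $\id$ and proper) upgrades this to a global diffeomorphism.

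For \eqref{eq:first} and \eqref{eq:second}: writing $\phi_X^i(p) = E^i(p, X(p))$ where $E^i(p,v) = \exp_p(v)^i$ is smooth with $E^i(p,0) = x^i(p)$ and $\partial_{v^j}E^i(p,0) = \delta^i_j$, the estimate \eqref{eq:first} follows from the chain rule and Schauder-type product estimates in Hölder spaces: $\|\phi_X^i - x^i\|_{C^{m,\alpha}} = \|E^i(\cdot, X(\cdot)) - E^i(\cdot,0)\|_{C^{m,\alpha}} \le C\|X\|_{C^{m,\alpha}}$, using that composition with the fixed smooth function $E^i$ (and the uniform bound on $X$) is bounded on $C^{m,\alpha}$. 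For \eqref{eq:second} one subtracts off the first-order term: $\phi_X^i - x^i - X^i = E^i(\cdot, X) - E^i(\cdot, 0) - \partial_{v^j}E^i(\cdot,0)X^j$, which by Taylor's theorem with integral remainder is a sum of terms each carrying two factors of $X$ (or its components), hence bounded by $C\|X\|_{C^{m,\alpha}}^2$ after the same product estimates. Estimate \eqref{eq:F function} is entirely analogous: $F\circ\phi_X - F - X(F) = F(\phi_X) - F(x) - dF\cdot X$, Taylor-expand $F$ to second order, and the Hessian-type remainder contributes the factor $\|F\|_{C^{m+2,\alpha}}$ (two derivatives of $F$, with the loss absorbed by working at level $C^{m,\alpha}$ for the output while $\phi_X-x$ is controlled in $C^{m,\alpha}$) times $\|X\|_{C^{m,\alpha}}^2$.

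For \eqref{eq:third}: the pullback metric has components $(\phi_X^*g)_{ij} = g_{kl}(\phi_X)\,\partial_i\phi_X^k\,\partial_j\phi_X^l$. I would expand each factor using \eqref{eq:second} and its coordinate derivative, writing $\partial_i\phi_X^k = \delta_i^k + \partial_i X^k + (\text{remainder bounded by }C\|X\|_{C^{m,\alpha}}^2 \text{ in } C^{m-1,\alpha})$ and $g_{kl}(\phi_X) = g_{kl} + X(g_{kl}) + O(\|X\|^2)$. Multiplying out and collecting the linear-in-$X$ terms reproduces exactly $(\LL_X g)_{ij} = X(g_{ij}) + g_{kj}\partial_i X^k + g_{ik}\partial_j X^k$ (in normal coordinates at the base point, or in general coordinates with the Christoffel contributions, which is the coordinate formula for the Lie derivative); everything else is quadratic in $X$ and bounded in $C^{m-1,\alpha}$ by $C\|X\|_{C^{m,\alpha}}^2$. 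The loss of one derivative (output in $C^{m-1,\alpha}$ rather than $C^{m,\alpha}$) is forced because $\LL_X g$ already involves $\partial X$, so the quadratic remainder involves products like $\partial X \cdot \partial X$ and can only be bounded in $C^{m-1,\alpha}$ when $X\in C^{m,\alpha}$.

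The main obstacle is purely bookkeeping: ensuring all constants depend only on $m$, $\alpha$, and $\|(M,g)\|_{C^{m+2,\alpha}}$, not on $X$. This requires (i) a careful statement of the product and composition estimates in Hölder spaces with explicit constant dependence — in particular the estimate $\|u\circ\psi\|_{C^{m,\alpha}} \le C(\|\psi\|_{C^{m,\alpha}})\|u\|_{C^{m,\alpha}}$ when $\psi$ is a near-identity map, and the bilinear bound $\|uv\|_{C^{m,\alpha}}\le C\|u\|_{C^{m,\alpha}}\|v\|_{C^{m,\alpha}}$ — and (ii) verifying that the remainder terms in the Taylor expansions of $E^i$ and $F$, which involve the exponential map and hence the metric up to the stated number of derivatives, are uniformly controlled. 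Once the right functional-analytic lemmas are in place (they are standard, e.g. via interpolation inequalities for Hölder norms), the four estimates follow by the expansions sketched above with no further difficulty.
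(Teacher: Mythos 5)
Your proposal is correct and, for the four estimates \eqref{eq:first}--\eqref{eq:F function}, uses essentially the same mechanism as the paper: Taylor-expand the exponential map (respectively $F$) in normal coordinates via the fundamental theorem of calculus in its integral-remainder form, then bound the remainder in Hölder norms using the uniform $C^k$-regularity of $\exp^i(x,v)$ on a fixed small neighborhood of the zero section together with standard product and composition estimates. The paper simply writes the identities $\phi_X^i-x^i=\int_0^1 \partial_{v^j}\exp^i(x,sX)X^j\,ds$ and its second-order analogue explicitly, feeds $F=g_{kh}$ into the version \eqref{eq:F function} to handle \eqref{eq:third}, and keeps track of constants exactly as you describe.

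The one genuine point of divergence is the proof that $\phi_X$ is a bijection. You invoke a global topological argument: a local diffeomorphism of a compact connected manifold homotopic to $\id$ (and automatically proper) is a covering map of degree one, hence a diffeomorphism. The paper instead gives a direct quantitative injectivity argument using the uniform lower bound on the injectivity radius: if $\phi_X(p_1)=\phi_X(p_2)$ then $|X|\le\delta$ forces $p_2$ into the normal coordinate chart of $p_1$, and subtracting the two integral identities for $\phi_X^i$ at $p_1,p_2$ shows the difference $x_2-x_1$ is simultaneously $x_2-x_1+\delta\cdot O(|x_2-x_1|)$ and $\delta^2\cdot O(|x_2-x_1|)$, which for small $\delta$ forces $x_1=x_2$. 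Both arguments are valid; yours is softer and appeals to degree theory, while the paper's is elementary, self-contained, and makes the smallness of $\delta$ play a visibly quantitative role.
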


\begin{proof}
By compactness of the manifold there exists $\delta>0$ such that $\textnormal{inj}_p\ge10\delta$ for any $p\in M$.
Fix some $p\in M$ and
let $x=(x^1,...,x^n)\in U\subset M$ be a normal coordinates centered at $p$ in $B_g(p,10\delta)$. By abuse of notation, we identify a vector field $v=v^i\partial_i$ with its components 
$v^i$ under the coordinates. So the components of the exponential map  $\exp^i(x,v)$ are well-defined maps on $B_g(p,\delta)\times B_{\mathbb R^n}(\Vec 0,\delta)$ and maps into $ B_g(p,10\delta)$.
Suppose $|X|\le\delta$, then $\phi_X$ maps $B_g(p,\delta)$ into $B_g(p,2\delta)$ and thus the components $\phi_X^i(x)=\exp^i(x,X(x))$ are well-defined.
Then by the fundamental theorem of calculus, we have
\begin{equation}\label{eq:phi-x}
    \phi_X^i(x)-x^i=\int_0^1 \frac{\partial \exp^{i}}{\partial v^j}(x,sX(x))X^j(x)\,ds,
\end{equation}
and also
\begin{equation}\label{eq:phi_X diffeo}
    \phi_X^i(x)-x^i- X^i(x)=\int_0^1 \frac{\partial^2 \exp^{i}}{\partial v^j \partial v^k}(x,sX(x))X^j(x)X^k(x)\,ds.
\end{equation}
Note that for any $k\in\mathbb N$, there are constants $C_k>0$ uniform for all $p\in M$, such that $\|\exp^i\|_{C^k}\le C_k$ on $ B_g(p,\delta)\times B_{\mathbb R^n}(\Vec 0,\delta)$.
So we obtain the assertions \eqref{eq:first} and \eqref{eq:second}. 

Next, we show that $\phi_X$ is a diffeomorphism.
First, it is easy to see by \eqref{eq:phi-x} that $\phi_X$ has non-degenerate Jacobi at all points. It remains to show it is injective. Suppose there are $p_1,p_2\in M$ such that $\phi_X(p_1)=\phi_X(p_2)$.
Since $|X|\le\delta$, we have $\phi_X(p_j)\in B_g(p_j,2\delta)$ for $j=1,2$, and thus
$p_2\in B_g(p_1,4\delta)$ and $p_2$ is in the normal coordinate $\{U,x=(x^1,...,x^n)\}$ at $p_1$. Denote $x(p_j)=x_j$ and $x^i(p_j)=x^i_j$, $j=1,2$, then using \eqref{eq:phi_X diffeo} and $\phi_X^i(x_1)=\phi_X^i(x_2)$ we obtain
\begin{align*}
    &x_2^i-x_1^i+X^i(x_2)-X^i(x_1)\\
    =&\int_0^1 \left(\frac{\partial^2 \exp^{i}}{\partial v^j \partial v^k}(x_1,sX(x_1))X^j(x_1)X^k(x_1)-\frac{\partial^2 \exp^{i}}{\partial v^j \partial v^k}(x_2,sX(x_2))X^j(x_2)X^k(x_2)\right)\,ds.
\end{align*}
Since $|X|,|\nabla X|\le\delta$, we can see that the left hand side is $x_2^i-x_1^i+\delta \cdot O(|x_2-x_1|)$, and the right hand side is $\delta^2 \cdot O(|x_2-x_1|)$. So the equality of them for all $i$ when $\delta$ is sufficiently small implies $x_1=x_2$ and hence $p_1=p_2$. So $\phi_X$ is a diffeomorphism.

Next, we show the assertion \eqref{eq:F function} holds: First, note that
   \begin{equation}\label{eq:F_general}
   \begin{split}
       F\circ\phi_X
       &=F+ X(F)+\int_0^1\nabla^2 F(X_s,X_s)(\exp_x(sX(x)))\,ds
\end{split}
   \end{equation}
   where $X_s(\exp_x(sX(x)))=\exp_{x*sX(x)}(X(x))$. Since under local coordinates we have
\[\big\|X^i_s(\exp_x(sX(x)))\big\|_{C^{m,\alpha}}=\bigg\|\frac{\partial \exp^{i}}{\partial v^j}(x,sX(x))X^j(x)\bigg\|_{C^{m,\alpha}}\le C\|X^j\|_{C^{m,\alpha}},\] 
for some $C>0$,
the assertion \eqref{eq:F function} follows immediately from this and \eqref{eq:F_general}.

It remains to verify \eqref{eq:third}. 
Note that \eqref{eq:second} implies $\left\|\frac{\partial \phi_X^k}{\partial x^i}-\frac{\partial X^k}{\partial x^i}-\frac{\partial x^k}{\partial x^i}\right\|_{C^{m-1,\alpha}}\le C\|X\|_{C^{m,\alpha}}^2$,
and by choosing $F=g_{kh}$ in \eqref{eq:F function} we have $\left\|g_{kh}\circ\phi_X-g_{kh}-\frac{\partial g_{kh}}{\partial x^i}X^i\right\|_{C^{m,\alpha}}\le C\|X\|_{C^{m,\alpha}}^2$.
Thus, seeing the following local coordinate expressions we obtain \eqref{eq:third}:
\begin{align*}
(\phi_X^*g-g)_{ij}&=\frac{\partial \phi_X^k}{\partial x^i}\frac{\partial \phi_X^h}{\partial x^j}g_{kh}\circ\phi_X-g_{ij},\\    (\LL_Xg)_{ij}&=\frac{\partial X^p}{\partial x^i}g_{pj}+\frac{\partial X^p}{\partial x^j}g_{pi}-X^p\frac{\partial g_{ij}}{\partial x^p}.
\end{align*}
\end{proof}

Recall that for a compact Riemannian manifold $(M,g)$ and a smooth function $f$, we have the $L^2_f$-orthogonal decomposition 
$L^2_f(\textnormal{Sym}^2(M))=\Ker\,\Di_{f}\perp \Ima\,\Di^*_f$. 
Let $\PP_{\Lie}$ be the projection onto $\Ima\,\Di^*_f$, and denote by $S^2_{k,\alpha}(M)$ the space of all $C^{k,\alpha}$-smooth symmetric 2-tensors. The following lemma shows $\|\PP_{\Lie}h\|_{C^{k,\alpha}}$ is controlled by $\|h\|_{C^{k,\alpha}}$.

\begin{lem}
\label{l: standard elliptic estimates} 
Let $1\le k\in\mathbb N$, $\alpha\in(0,1)$.
Let $(M,g,f)$ be a compact Riemannian metric with a smooth function $f$.
Then there exists $C(k,\alpha,\|(M,g,f)\|_{C^{k+2,\alpha}})>0$
such that
    \begin{align*}
        \|\mathcal{P}_{\Lie}h\|_{C^{k,\alpha}}\le C\|\Di_{f}h\|_{C^{k-1,\alpha}}\le C\|h\|_{C^{k,\alpha}}.
    \end{align*}
In particular, we have 
\[S^2_{k,\alpha}(M)=\big(\Ker\,\Di_{f}\cap S^2_{k,\alpha}(M)\big)\oplus \big(\Ima\,\Di^*_f\cap S^2_{k,\alpha}(M)\big),\]
and the map $\Phi:h\longmapsto (h-\mathcal{P}_{\Lie}h,\mathcal{P}_{\Lie}h)$ is continuous.
\end{lem}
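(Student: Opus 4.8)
The statement is a standard Hodge-type decomposition result combined with elliptic regularity; the strategy is to build the projection $\mathcal P_{\Lie}$ via the inverse of the (weighted) divergence of a Lie derivative acting on vector fields, and then track the regularity through the construction. First I would introduce the fourth-order-reduced second-order operator $\Di_f\circ(-2\Di^*): \Gamma(TM)\to\Gamma(TM)$, which is elliptic (its symbol is, up to sign, $|\xi|^2\,\id + \xi\otimes\xi$, positive definite) and self-adjoint with respect to $e^{-f}\,dg$. On a compact manifold its kernel consists precisely of the $f$-Killing fields, i.e.\ vector fields $W$ with $\Di^* W=0$; by standard elliptic theory it is Fredholm of index zero, so on the $L^2_f$-orthogonal complement of its kernel it is invertible with a bounded inverse $G$ that gains two derivatives (Schauder estimates in H\"older spaces): $\|Gu\|_{C^{k+1,\alpha}}\le C\|u\|_{C^{k-1,\alpha}}$ for data orthogonal to the kernel.

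Next I would \emph{define} the projection by the explicit formula $\mathcal P_{\Lie}h = -2\Di^*\,G\,(\Di_f h)$, after checking that $\Di_f h$ is orthogonal to the kernel of $\Di_f\circ(-2\Di^*)$: indeed for an $f$-Killing field $W$, $\langle \Di_f h, W\rangle_{L^2_f} = \langle h, -2\Di^*W\rangle_{L^2_f} = 0$ by the adjointness of $\Di_f$ and $-2\Di^*$ with respect to $e^{-f}\,dg$. One then verifies $\mathcal P_{\Lie}$ is idempotent and is exactly the $L^2_f$-orthogonal projection onto $\Ima\,\Di^*_f$ (this uses that $\Di^* W = \Di^* W'$ iff $W-W'$ is $f$-Killing, so the image of $-2\Di^*$ equals $\Ima\,\Di^*_f$, and the formula inverts correctly there). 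The chain of estimates is then immediate: $\|\Di_f h\|_{C^{k-1,\alpha}}\le C\|h\|_{C^{k,\alpha}}$ since $\Di_f$ is first order with $C^{k+2,\alpha}$ coefficients; $\|G(\Di_f h)\|_{C^{k+1,\alpha}}\le C\|\Di_f h\|_{C^{k-1,\alpha}}$ by Schauder; and $\|{-2\Di^*}\,G(\Di_f h)\|_{C^{k,\alpha}}\le C\|G(\Di_f h)\|_{C^{k+1,\alpha}}$ since $-2\Di^*=\LL_{(\cdot)^\#}g$ is first order. Chaining these gives $\|\mathcal P_{\Lie}h\|_{C^{k,\alpha}}\le C\|\Di_f h\|_{C^{k-1,\alpha}}\le C\|h\|_{C^{k,\alpha}}$, which is exactly the claimed inequality. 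The topological direct sum decomposition $S^2_{k,\alpha}(M)=(\Ker\,\Di_f\cap S^2_{k,\alpha})\oplus(\Ima\,\Di^*_f\cap S^2_{k,\alpha})$ follows because $h-\mathcal P_{\Lie}h$ lies in $\Ker\,\Di_f$ (since $\Di_f(-2\Di^*)G(\Di_f h)=\Di_f h$ by definition of $G$ on the orthogonal complement), $\mathcal P_{\Lie}h$ lies in $\Ima\,\Di^*_f\cap S^2_{k,\alpha}$ by construction and the regularity estimate, and the two summands intersect trivially already at the $L^2_f$ level; continuity of $\Phi$ is then the estimate just proved together with the trivial bound $\|h-\mathcal P_{\Lie}h\|_{C^{k,\alpha}}\le \|h\|_{C^{k,\alpha}}+\|\mathcal P_{\Lie}h\|_{C^{k,\alpha}}$.

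\textbf{Main obstacle.} The one point requiring genuine care is the \emph{elliptic regularity with $f$-weight and with nontrivial kernel}: one must confirm that $\Di_f\circ(-2\Di^*)$ is a self-adjoint elliptic operator of Fredholm index zero on the $e^{-f}\,dg$-weighted $L^2$ space, identify its kernel with the finite-dimensional space of $f$-Killing fields, and obtain the Schauder estimate on the orthogonal complement with a constant depending only on $\|(M,g,f)\|_{C^{k+2,\alpha}}$ (in particular uniform closeness of the lowest nonzero eigenvalue away from $0$). This is classical — it is the weighted analogue of the Berger–Ebin splitting $\Sym^2 = \Ker\,\Di \oplus \Ima\,\Di^*$ — but the bookkeeping of which H\"older norm of the metric controls the constant, and the fact that the weight $e^{-f}$ only affects lower-order terms so ellipticity and the symbol computation are unchanged, should be spelled out. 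Everything else is routine first-order differential-operator mapping properties and composition of estimates.
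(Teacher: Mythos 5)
Your proof is correct and follows essentially the same route as the paper: both solve the elliptic equation $\Di_f\Di^* X = -\tfrac12\Di_f h$ for the generating vector field $X$ orthogonal to the (finite-dimensional) space of Killing fields, identify $\Ker(\Di_f\Di^*)=\Ker\Di^*$ by integration by parts, and chain first-order operator bounds with the Schauder estimate for the self-adjoint elliptic operator $\Di_f\Di^*$. The only cosmetic difference is that you build the projection from scratch via an explicit Green operator $G$ and verify it agrees with the $L^2_f$-orthogonal projection, whereas the paper starts from the projection and derives the equation for $X$; the analytic content and the regularity bookkeeping are the same.
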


\begin{proof}
Assume $\mathcal{P}_{\Lie}h=\LL_X g=-2\,\Di^*_fX$ for some vector field $X$,
then we have
\[\Di_{f}\Di^*_fX=-\tfrac12\Di_{f}\LL_X g=-\tfrac12\Di_{f} h.\]
Since the space of Killing fields $\Ker\,\Di^*_f$ is finite-dimensional, we can assume $\langle X,Y\rangle_{L^2_f}=0$ for all vector field $Y\in \Ker\,\Di^*_f$.
Since $\Di_{f}$ is adjoint to $\Di^*_f$ with respect to $\langle\cdot,\cdot\rangle_{L^2_f}$, we have for any $Y\in \Di_{f}\Di^*_f$ that $0=\langle \Di_{f}\Di^*_fY,Y\rangle_{L^2_f}=\|\Di^*_fY\|^2_{L^2_f}$. So we have $\Ker\, (\Di_{f}\Di^*_f)=\Ker\,\Di^*_f$, and thus 
$\langle X,Y\rangle_{L^2_f}=0$ for all $Y\in \Ker\, (\Di_{f}\Di^*_f)$.
Since $\Di_{f}\Di^*_f$ is an elliptic operator (see for example \cite{eells1964harmonic}), and the $C^{k-1,\alpha}$-norm of the coefficients are bounded by $C$, it follows by the parabolic Schauder estimate \cite{krylov} and the above equation that
$$\|\PP_{\Lie}h\|_{C^{k,\alpha}}=\|\LL_Xg\|_{C^{k,\alpha}}\le C\|X\|_{C^{k+1,\alpha}}\le C\|\Di_{f} h\|_{C^{k-1,\alpha}}\le C\|h\|_{C^{k,\alpha}},$$
which implies the first assertion. The assertion of $S^2_{k,\alpha}(M)$ follows immediately.
\end{proof}

Next, we discuss the local moduli space and the integrability of a compact shrinker.
\begin{defn}[Integrability of shrinkers]\label{def:integrable}
    We say a compact shrinker $(M,\bar g,f)$ is integrable, if for any essential neutral mode $h$,
    there exists a family of shrinkers $\{g_s\}_{s\in[-\epsilon,\epsilon]}$ with $g_0=\bar g$ such that
    \[\lim_{s\to0}\Big\|\frac{g_s-\bar g}{s}-h\Big\|_{C^k}=0,\]
    for each $k\in \mathbb N$.
\end{defn}

Using
a modified version of Ebin’s slice theorem, Podest\`a and Spiro \cite{podesta2015moduli} constructed a $f$-twisted slice $\mathcal S_f$, which is a smooth manifold with tangent space $\Ker\,\Di_{\bar g,f}$, such that every metric near $\bar g$ is isometric to a unique metric in $\mathcal S_f$. 
Let 
$$\mathcal{S}ol_f=\{\widetilde g\in\mathcal S_f: \Ric_{\widetilde g}+\nabla^2 f_{\widetilde g}=\tfrac12 \widetilde g\}$$ 
be the set of all shrinking soliton metrics in $\mathcal S_f$.
If the shrinker $(M,\bar g,f)$ is integrable, $\mathcal{S}ol_f\subset \mathcal S_f$ is a finite-dimensional submanifold \cite[Theorem 3.4]{podesta2015moduli}, and the tangent space of $\mathcal{S}ol_f$ is $\Ker\, L\,\cap\,\Ker\,\Di_{\bar g,f}$, which consists of all essential neutral modes (equivalently infinitesimal solitonic deformations as in \cite{kroncke2016rigidity}).

In the proof of Theorem \ref{t: slice theorem}, we need the $C^{k,\alpha,*}_{\bar{g}}$-norm defined below. In Lemma \ref{lem:norms are comparable}-\ref{l: changing shrinker gauge}, we discuss several properties of this norm.

\begin{defn}
  Let $(M,\bar g,f)$ be a compact integrable shrinker.  For any $h\in L^2_{f}(\textnormal{Sym}^2(M))$, let
\[\|h\|_{C^{k,\alpha,*}_{\bar{g}}}:=\|\mathcal{P}_{\Lie}h\|_{C^{k,\alpha}_{\bar{g}}}+\|\mathcal{P}_{\ess}^0h\|_{C^{k,\alpha}_{\bar{g}}}+\|\mathcal{P}_{\textnormal{rest}}h\|_{C^{k,\alpha}_{\bar{g}}},\]
with respect to the $L^2_{f}$-orthogonal decomposition $L^2_{f}(\textnormal{Sym}^2(M))=V_{\Lie}\perp V^0_{\ess}\perp V_{\textnormal{rest}}$. 
\end{defn}

It is clear the $C^{k,\alpha,*}_{\bar g}$-norm is positive definite and satisfies the triangle inequality. The following lemma implies that $C^{k,\alpha,*}_{\bar g}$-norm is comparable to the standard $C^{k,\alpha}_{\bar g}$-norm:
\begin{lem}\label{lem:norms are comparable}
Let $1\le k\in\mathbb N,\alpha\in(0,1)$, and $(M,\bar g,f)$ be a compact integrable shrinker. There is a constant $C(k,\alpha,|\Rm_{\bar g}|)>0$ such that
\begin{align}\label{eq:norms are comparable}
    \|h\|_{C^{k,\alpha}_{\bar{g}}}\le\|h\|_{C^{k,\alpha,*}_{\bar{g}}}\le C\|h\|_{C^{k,\alpha}_{\bar{g}}}.
\end{align}
\end{lem}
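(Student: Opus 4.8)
The plan is to establish the two inequalities in \eqref{eq:norms are comparable} separately; the integrability hypothesis will play no role here, only the fact that $(M,\bar g,f)$ is a compact shrinker. For the left-hand inequality I would simply write $h=\PP_{\Lie}h+\PP^0_{\ess}h+\PP_{\rest}h$ according to the $L^2_f$-orthogonal decomposition $L^2_f(\Sym^2(M))=V_{\Lie}\perp V^0_{\ess}\perp V_{\rest}$ defining the $*$-norm, and apply the triangle inequality for $\|\cdot\|_{C^{k,\alpha}_{\bar g}}$; this gives $\|h\|_{C^{k,\alpha}_{\bar g}}\le \|\PP_{\Lie}h\|_{C^{k,\alpha}_{\bar g}}+\|\PP^0_{\ess}h\|_{C^{k,\alpha}_{\bar g}}+\|\PP_{\rest}h\|_{C^{k,\alpha}_{\bar g}}=\|h\|_{C^{k,\alpha,*}_{\bar g}}$ at once.

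For the right-hand inequality it suffices to bound each of the three summands of the $*$-norm by $C\|h\|_{C^{k,\alpha}_{\bar g}}$. The generic term is immediate: Lemma \ref{l: standard elliptic estimates} gives $\|\PP_{\Lie}h\|_{C^{k,\alpha}_{\bar g}}\le C\|h\|_{C^{k,\alpha}_{\bar g}}$, with $C$ depending only on $k$, $\alpha$ and the $C^{k+2,\alpha}_{\bar g}$-geometry of $(\bar g,f)$. For the essential neutral term I would use that $V^0_{\ess}=\Ker L\cap\Ker\Di_{\bar g,f}$ is finite-dimensional with a smooth $L^2_f$-orthonormal basis $\{\bar h_1,\dots,\bar h_N\}$ (elliptic regularity for the self-adjoint elliptic operator $L$ on the compact manifold $M$), so that $\PP^0_{\ess}h=\sum_{i=1}^N\langle h,\bar h_i\rangle_{L^2_f}\,\bar h_i$; since $M$ is compact, $|\langle h,\bar h_i\rangle_{L^2_f}|\le \|h\|_{L^2_f}\le C\|h\|_{C^0_{\bar g}}$, whence $\|\PP^0_{\ess}h\|_{C^{k,\alpha}_{\bar g}}\le \big(\sum_{i=1}^N\|\bar h_i\|_{C^{k,\alpha}_{\bar g}}\big)\,C\|h\|_{C^0_{\bar g}}\le C'\|h\|_{C^{k,\alpha}_{\bar g}}$. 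Finally, since $\PP_{\rest}h=h-\PP_{\Lie}h-\PP^0_{\ess}h$, combining the previous two estimates with one more triangle inequality bounds $\|\PP_{\rest}h\|_{C^{k,\alpha}_{\bar g}}$ by $C\|h\|_{C^{k,\alpha}_{\bar g}}$, and adding the three bounds yields $\|h\|_{C^{k,\alpha,*}_{\bar g}}\le C\|h\|_{C^{k,\alpha}_{\bar g}}$.

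I do not expect a genuine obstacle; the statement is a bookkeeping consequence of Lemma \ref{l: standard elliptic estimates} together with the finite-dimensionality and smoothness of the neutral essential eigenspace of $L$. The only point needing a line of care is the claim that all constants depend only on $k$, $\alpha$ and $|\Rm_{\bar g}|$: for $\PP_{\Lie}$ this follows from Lemma \ref{l: standard elliptic estimates} once one records that on a compact shrinker the soliton equation $\Ric+\nabla^2 f=\tfrac12\bar g$ together with Shi-type and elliptic estimates bounds all $C^{m,\alpha}_{\bar g}$-norms of $\bar g$ and $f$ in terms of $|\Rm_{\bar g}|$; and the same bounds control $\|\bar h_i\|_{C^{k,\alpha}_{\bar g}}$ for a fixed $L^2_f$-orthonormal basis of $V^0_{\ess}$ via elliptic estimates for the equation $L\bar h_i=0$, which settles the essential neutral term as well.
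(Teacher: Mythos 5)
Your proof is correct and follows essentially the same route as the paper's: the left inequality is the triangle inequality applied to the decomposition $h=\PP_{\Lie}h+\PP^0_{\ess}h+\PP_{\rest}h$, the generic piece is controlled via Lemma \ref{l: standard elliptic estimates}, the essential neutral piece via finite-dimensionality of $V^0_{\ess}$, and the rest piece by subtraction. The only minor additions you make (an explicit orthonormal-basis expansion for $\PP^0_{\ess}$, a remark on constant dependence, and the observation that integrability is not actually used) are all accurate and consistent with the paper's argument.
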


\begin{proof}
    The first inequality is clear by definition. Since $V^0_{\ess}$ is finite-dimensional, we have \[\|P_{\ess}^0h\|_{C^{k,\alpha}_{\bar{g}}}\le C\|P_{\ess}^0h\|_{L^2_{f}}\le C\|h\|_{L^2_{f}} \le C\|h\|_{C^{k,\alpha}_{\bar{g}}},\]
    and by Lemma \ref{l: standard elliptic estimates} we have  $\|\mathcal{P}_{\Lie}h\|_{C^{k,\alpha}_{\bar{g}}}\le C\|h\|_{C^{k,\alpha}_{\bar{g}}}$. So the second inequality follows.
\end{proof}

The following lemma shows that the $C^{k,\alpha,*}$-norm with respect to a sequence of shrinkers converge if the shrinkers converge.

\begin{lem}\label{c:converge}
Let $M$ be a compact manifold.
Suppose there is a sequence of shrinkers $(M,g_i,f_i)\to(M,g_{\infty},f_{\infty})$ as $i\to\infty$ in the $C^{k,\alpha}_{g_\infty}$-sense, then for any $h\in \textnormal{Sym}^2(M)$ we have 
$$\lim_{i\to\infty}\|\PP_{\Lie,g_i}h- \PP_{\Lie,g_{\infty}}h\|_{C^{k,\alpha}_{g_\infty}}=0,$$ 
Moreover, if $\dim(V^0_{\ess,g_i})=\dim(V^0_{\ess,g_\infty})$, then
\[\lim_{i\to\infty}\|\PP^0_{\ess,g_i}h-\PP^0_{\ess,g_{\infty}}h\|_{C^{k,\alpha}_{g_\infty}}=0,\]
and also $\|h\|_{C^{k,\alpha,*}_{g_i}}\to \|h\|_{C^{k,\alpha,*}_{g_\infty}}$ as $i\to\infty$.
\end{lem}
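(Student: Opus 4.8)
The plan is to establish all three assertions by the same scheme: prove uniform $C^{k,\alpha}$-bounds, extract Arzelà--Ascoli limits, identify each limit with the correct object over $g_\infty$, and conclude for the whole sequence by the subsequence principle. Throughout I would use that, by elliptic regularity for the shrinker equation $\Ric_g+\nabla^2_g f=\tfrac12 g$, the hypothesis upgrades to $C^{m}_{g_\infty}$-convergence $(M,g_i,f_i)\to(M,g_\infty,f_\infty)$ for every $m$; in particular $\|(M,g_i,f_i)\|_{C^{m,\alpha}}$ is uniformly bounded, and $\|\cdot\|_{C^{m,\alpha}_{g_i}}$ is uniformly equivalent to $\|\cdot\|_{C^{m,\alpha}_{g_\infty}}$.

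For the Lie projections, write $\PP_{\Lie,g_i}h=-2\,\Di^*_{g_i,f_i}X_i$ as in the proof of Lemma~\ref{l: standard elliptic estimates}, where $X_i$ solves $\Di_{g_i,f_i}\Di^*_{g_i,f_i}X_i=-\tfrac12\Di_{g_i,f_i}h$ and is $L^2_{f_i}$-orthogonal to the Killing fields of $g_i$. That lemma bounds $\|X_i\|_{C^{k+1,\alpha}_{g_i}}$ by $C\|h\|_{C^{k,\alpha}_{g_i}}$ with $C$ depending only on $\|(M,g_i,f_i)\|_{C^{k+2,\alpha}}$, hence uniformly in $i$; so $\{X_i\}$ is bounded in $C^{k+1,\alpha}_{g_\infty}$, and a subsequence converges in $C^{k+1}_{g_\infty}$ to some $X_\infty$. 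Since the coefficients of $\Di_{g_i,f_i}\Di^*_{g_i,f_i}$ and the right-hand side converge, the limit satisfies $\Di_{g_\infty,f_\infty}\Di^*_{g_\infty,f_\infty}X_\infty=-\tfrac12\Di_{g_\infty,f_\infty}h$; thus $-2\,\Di^*_{g_\infty,f_\infty}X_\infty\in\Ima\,\Di^*_{g_\infty,f_\infty}$ and, applying $\Di_{g_\infty,f_\infty}$, $h+2\,\Di^*_{g_\infty,f_\infty}X_\infty\in\Ker\,\Di_{g_\infty,f_\infty}$, so by uniqueness of the $L^2_{f_\infty}$-orthogonal decomposition $-2\,\Di^*_{g_\infty,f_\infty}X_\infty=\PP_{\Lie,g_\infty}h$. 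Hence $\PP_{\Lie,g_i}h\to\PP_{\Lie,g_\infty}h$ in $C^{k}_{g_\infty}$, and since these are bounded in $C^m_{g_\infty}$ for all $m$, interpolation upgrades this to $C^{k,\alpha}_{g_\infty}$-convergence, of the full sequence by the subsequence principle.

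For the essential neutral projections, assuming $\dim V^0_{\ess,g_i}=\dim V^0_{\ess,g_\infty}=:K$, recall $V^0_{\ess,g}=\Ker L_g\cap\Ker\,\Di_{g,f}$, and fix an $L^2_{f_i}$-orthonormal basis $\{e^i_1,\dots,e^i_K\}$ of it, so that $L_{g_i}e^i_j=0$, $\Di_{g_i,f_i}e^i_j=0$, $\|e^i_j\|_{L^2_{f_i}}=1$. Elliptic estimates for the uniformly elliptic operators $L_{g_i}$ (whose coefficients converge smoothly) give uniform $C^m_{g_\infty}$-bounds on the $e^i_j$; a subsequential limit $e^\infty_j$ then satisfies $L_{g_\infty}e^\infty_j=0$, $\Di_{g_\infty,f_\infty}e^\infty_j=0$, and $\langle e^\infty_j,e^\infty_{j'}\rangle_{L^2_{f_\infty}}=\delta_{jj'}$, so $\{e^\infty_1,\dots,e^\infty_K\}$ is an orthonormal system in $V^0_{\ess,g_\infty}$, and --- by the dimension hypothesis --- a basis. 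Then $\PP^0_{\ess,g_i}h=\sum_j\langle h,e^i_j\rangle_{L^2_{f_i}}e^i_j\to\sum_j\langle h,e^\infty_j\rangle_{L^2_{f_\infty}}e^\infty_j=\PP^0_{\ess,g_\infty}h$ in $C^{k,\alpha}_{g_\infty}$ (the coefficients converge since $g_i,f_i,e^i_j$ converge in $C^0$ on compact $M$), again for the full sequence. The norm convergence then follows: with $\PP_{\rest,g}=\id-\PP_{\Lie,g}-\PP^0_{\ess,g}$, the two steps above give $C^m_{g_\infty}$-convergence of $\PP_{\Lie,g_i}h$, $\PP^0_{\ess,g_i}h$, and hence of $\PP_{\rest,g_i}h$; and since $u_i\to u_\infty$ in $C^{k+1}_{g_\infty}$ with $g_i\to g_\infty$ smoothly implies $\|u_i\|_{C^{k,\alpha}_{g_i}}\to\|u_\infty\|_{C^{k,\alpha}_{g_\infty}}$ (the $C^{k+1}$-convergence controlling the Hölder seminorms together with the metric-dependence of the pointwise norms and of the parallel transport defining them), summing the three pieces yields $\|h\|_{C^{k,\alpha,*}_{g_i}}\to\|h\|_{C^{k,\alpha,*}_{g_\infty}}$.

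I expect the main obstacle to be the \emph{uniformity in $i$} of the elliptic estimates. For the Lie part this is exactly the assertion, built into Lemma~\ref{l: standard elliptic estimates}, that the inverse of $\Di_{g_i,f_i}\Di^*_{g_i,f_i}$ on the orthogonal complement of its kernel is controlled purely by $\|(M,g_i,f_i)\|_{C^{k+2,\alpha}}$; the smoothness of shrinkers and the smooth convergence $g_i\to g_\infty$ (via elliptic regularity of the soliton equation) is what makes the relevant coefficient bounds uniform. For the essential neutral part the subtlety is that the eigenvalues of $L_{g_i}$ clustering near the isolated value $0$ (cf.\ \eqref{eq:eigenvalues of L}) need not all be exactly $0$, so a priori $\dim V^0_{\ess,g_i}$ could drop in the limit; the hypothesis $\dim V^0_{\ess,g_i}=\dim V^0_{\ess,g_\infty}$ is precisely what forces the Arzelà--Ascoli limits $e^\infty_j$ to span all of $V^0_{\ess,g_\infty}$ rather than a proper subspace, making the projections converge.
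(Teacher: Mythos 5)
Your proof is correct and takes essentially the same route as the paper's. For the Lie projections you write $\PP_{\Lie,g_i}h=-2\,\Di^*_{g_i,f_i}X_i$ and solve for $X_i$, invoke Lemma~\ref{l: standard elliptic estimates} for the uniform $C^{k+1,\alpha}_{g_i}$-bound on $X_i$, extract a subsequential limit, and identify the limit with $\PP_{\Lie,g_\infty}h$ via uniqueness of the $L^2_{f_\infty}$-orthogonal decomposition — exactly the paper's argument, which writes $h=\LL_{X_i}g_i+h_i$ with $\Di_{g_i,f_i}h_i=0$ and passes to the limit. Your essential-neutral step with orthonormal bases and the final norm-convergence paragraph fill in what the paper dismisses as "clear seeing $\dim(V^0_{\ess,g_i})<\infty$." The one place where you are actually more careful than the paper is the compactness step: a $C^{k+1,\alpha}$-bound on $X_i$ only gives precompactness in $C^{k+1}$ (or $C^{k+1,\beta}$ for $\beta<\alpha$), and you correctly upgrade via elliptic regularity of the soliton equation and higher-order elliptic estimates, whereas the paper simply asserts $C^{k+1,\alpha}$-subconvergence.
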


\begin{proof}
    The second assertion is clear seeing $\dim(V^0_{\ess,g_i})<\infty$. For the first assertion, assume $h=\LL_{X_i}g_i+h_i$ with respect to the $L^2_{g_i,f_i}$-decomposition  $\textnormal{Sym}^2(M)=\Ima\,\Di^*_{g_i,f_i}\perp \Ker\,\Di_{g_i,f_i}$ for each $i$, where $X_i$ is orthogonal to all $g_i$-Killing fields and $\Di_{g_i,f_i}h_i=0$. 
    Then $\PP_{\Lie,g_i}h=\LL_{X_i}g_i$.
    By Lemma \ref{l: standard elliptic estimates} that $\|X_i\|_{C^{k+1,\alpha}}\le C\|h\|_{C^{k,\alpha}}$, so after passing to a subsequence we may assume $X_i\to X_{\infty}$ in the $C^{k+1,\alpha}$-sense, and $h_i\to h_{\infty}$ in the $C^{k,\alpha}$-sense. This implies $h=\LL_{X_{\infty}}g_{\infty}+h_{\infty}$ and $\Di_{g_\infty,f_\infty}h_\infty=0$. So $\PP_{\Lie,g_\infty}h=\LL_{X_{\infty}}g_{\infty}$, and $\PP_{\Lie,g_i}h\to \PP_{\Lie,g_\infty}h$ in the $C^{k,\alpha}$-sense, which proves the first assertion.
\end{proof}

The following lemma gives a quantitative comparison of the $C^{k,\alpha,*}$-norms with respect to two nearby shrinkers.

\begin{lem}\label{l: changing shrinker gauge}
Let $2\le k\in\mathbb N,\alpha\in(0,1),C_0>0$. 
Let $(M,g_i,f_i)$, $i=1,2$, be two compact shrinkers.
There exist $C(k,\alpha,C_0,|\Rm_{g_1}|),\eps(k,\alpha,C_0,|\Rm_{g_1}|)>0$ such that if $\dim (V^0_{\ess,g_1})=\dim (V^0_{\ess,g_2})$ and $\|f_2-f_1\|_{C^{k,\alpha}_{g_1}}\le C_0\|g_2-g_1\|_{C^{k,\alpha}_{g_1}}\le\eps$, then
for any $h\in \textnormal{Sym}^2(M)$, we have
    \begin{equation}\label{eq:neu}
    \begin{split}
        \bigg|\|\mathcal{P}^0_{\ess,g_1}h\|_{C^{k,\alpha,*}_{g_1}}-\|\mathcal{P}^0_{\ess,g_2}h\|_{C^{k,\alpha,*}_{g_2}}\bigg|
        &\le C\|h\|_{C^{k,\alpha}_{g_1}}\|g_1-g_2\|_{C^{k,\alpha}_{g_1}}.
    \end{split}
\end{equation}
Moreover, if $h\in\Ker\,\Di_{g_1,f_1}$, then
\begin{equation}\label{l: changing shrinker gauge and divergence free}
    \begin{split}
        \Big|\|h\|_{C^{k,\alpha,*}_{g_1}}-\|h\|_{C^{k,\alpha,*}_{g_2}}\Big|
        &\le C\|h\|_{C^{k,\alpha}_{g_1}}\|g_1-g_2\|_{C^{k,\alpha}_{g_1}}.
    \end{split}
\end{equation}

\end{lem}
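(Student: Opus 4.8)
**Proof proposal for Lemma 4.9 (comparison of $C^{k,\alpha,*}$-norms for two nearby shrinkers).**

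The plan is to compare the three pieces in the definition of the $C^{k,\alpha,*}$-norm — the $\PP_{\Lie}$, $\PP^0_{\ess}$, and $\PP_{\rest}$ components — with respect to $g_1$ and $g_2$ separately, and track how each projector changes when the background shrinker moves from $g_1$ to $g_2$. The governing principle is that each projection operator depends (locally) smoothly on the metric–potential pair $(g,f)$ through the elliptic operators $\Di_f$, $\Di^*_f$, $\Di_f\Di^*_f$, and $L$, so a first-order perturbation of $(g,f)$ produces a first-order change in the projector, bounded linearly in $\|g_1-g_2\|_{C^{k,\alpha}_{g_1}}$ (using the hypothesis $\|f_2-f_1\|_{C^{k,\alpha}_{g_1}}\le C_0\|g_2-g_1\|_{C^{k,\alpha}_{g_1}}$ to absorb the change in the potential). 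This is the quantitative version of Lemma~\ref{c:converge}, whose qualitative statement I will upgrade to an $\eps$-estimate.

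First I would establish \eqref{eq:neu}. Since $V^0_{\ess,g_i}$ is finite-dimensional with $\dim V^0_{\ess,g_1}=\dim V^0_{\ess,g_2}$, pick $L^2_{g_1,f_1}$-orthonormal bases of $V^0_{\ess,g_1}$ and of $V^0_{\ess,g_2}$; on this finite-dimensional space all norms are equivalent, so it suffices to bound the difference of the $L^2$-orthogonal projections onto these two subspaces. A neutral essential eigentensor for $g_i$ solves $Lh=0$, $\Di_{g_i,f_i}h=0$; by integrability (Definition~\ref{def:integrable}) and \cite[Theorem 3.4]{podesta2015moduli} these are exactly the infinitesimal solitonic deformations, and the dimension being constant forces the two eigenspaces to be $O(\|g_1-g_2\|)$-close in the Grassmannian. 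Quantitatively: take $h^{(2)}_a$ a $g_2$-neutral essential eigentensor, decompose it in the $g_1$-decomposition, use Lemma~\ref{l: standard elliptic estimates} for $g_1$ to bound its $\PP_{\Lie,g_1}$-part by $\|\Di_{g_1,f_1}h^{(2)}_a\|_{C^{k-1,\alpha}}$, and note $\Di_{g_1,f_1}h^{(2)}_a=(\Di_{g_1,f_1}-\Di_{g_2,f_2})h^{(2)}_a$ which is $O(\|g_1-g_2\|_{C^{k,\alpha}_{g_1}})$ because the coefficients of $\Di_f$ differ by that amount; similarly bound the deviation of $Lh^{(2)}_a$ from $0$ and invert $L$ on the orthogonal complement of $\Ker L$ with a fixed spectral gap to control the non-neutral essential part. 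Combining gives $\|\PP^0_{\ess,g_1}-\PP^0_{\ess,g_2}\|_{op}\le C\|g_1-g_2\|_{C^{k,\alpha}_{g_1}}$, and then $\bigl|\|\PP^0_{\ess,g_1}h\|-\|\PP^0_{\ess,g_2}h\|\bigr|\le\|(\PP^0_{\ess,g_1}-\PP^0_{\ess,g_2})h\|\le C\|g_1-g_2\|_{C^{k,\alpha}_{g_1}}\|h\|_{C^{k,\alpha}_{g_1}}$, using also Lemma~\ref{lem:norms are comparable} to pass between $C^{k,\alpha,*}$ and $C^{k,\alpha}$ on this finite-dimensional piece.

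For \eqref{l: changing shrinker gauge and divergence free}, assume $h\in\Ker\,\Di_{g_1,f_1}$, so with respect to $g_1$ we have $\PP_{\Lie,g_1}h=0$ and $\|h\|_{C^{k,\alpha,*}_{g_1}}=\|\PP^0_{\ess,g_1}h\|_{C^{k,\alpha}_{g_1}}+\|\PP_{\rest,g_1}h\|_{C^{k,\alpha}_{g_1}}$. With respect to $g_2$, decompose $h=\LL_{X}g_2+h'$ with $\Di_{g_2,f_2}h'=0$; since $\Di_{g_2,f_2}h=(\Di_{g_2,f_2}-\Di_{g_1,f_1})h$ is $O(\|g_1-g_2\|_{C^{k,\alpha}_{g_1}}\|h\|_{C^{k,\alpha}_{g_1}})$, Lemma~\ref{l: standard elliptic estimates} applied with background $g_1$ (whose geometry controls the constant, using $\|g_1-g_2\|$ small) gives $\|\PP_{\Lie,g_2}h\|_{C^{k,\alpha}}\le C\|g_1-g_2\|_{C^{k,\alpha}_{g_1}}\|h\|_{C^{k,\alpha}_{g_1}}$. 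This bounds the new Lie part; then $\|h\|_{C^{k,\alpha,*}_{g_2}}=\|\PP_{\Lie,g_2}h\|+\|\PP^0_{\ess,g_2}h\|+\|\PP_{\rest,g_2}h\|$, the first term is $O(\|g_1-g_2\|\|h\|)$, the second is compared to $\|\PP^0_{\ess,g_1}h\|$ via \eqref{eq:neu}, and the third is $\|h-\PP_{\Lie,g_2}h-\PP^0_{\ess,g_2}h\|$ which differs from $\|\PP_{\rest,g_1}h\|=\|h-\PP^0_{\ess,g_1}h\|$ (recall $\PP_{\Lie,g_1}h=0$) by at most the sum of the two already-controlled errors. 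Summing the three comparisons and using the triangle inequality yields \eqref{l: changing shrinker gauge and divergence free}.

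\textbf{Main obstacle.} The delicate point is making the constants genuinely uniform — i.e.\ depending only on $(k,\alpha,C_0,|\Rm_{g_1}|)$ and not on $g_2$ — in the elliptic estimates for the $g_2$-operators. The clean way is to express everything through $g_1$-tensorial quantities: write $\Di_{g_2,f_2}$, $L_{g_2}$ as perturbations $\Di_{g_1,f_1}+\mathcal E_1$, $L_{g_1}+\mathcal E_2$ where $\|\mathcal E_j\|_{C^{k-1,\alpha}_{g_1}}\le C\|g_1-g_2\|_{C^{k,\alpha}_{g_1}}$ (this is where the hypothesis controlling $f_2-f_1$ by $g_2-g_1$ enters), invoke the $g_1$-Schauder estimate with its fixed constant, and a Neumann-series/implicit-function argument to control the $g_2$-projectors once $\eps$ is small enough that $g_1$-ellipticity and the spectral gap of $L_{g_1}$ near $0$ persist under perturbation. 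The constancy of $\dim V^0_{\ess}$ is essential here: without it the kernel of $L$ could jump and the projector $\PP^0_{\ess}$ would be discontinuous; with it, the spectral projection onto the eigenvalue $0$ of $L_{g_2}$ varies continuously (and Lipschitz-ly) in the perturbation, which is precisely what makes \eqref{eq:neu} hold.
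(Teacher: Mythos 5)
Your proposal is correct and follows essentially the same route as the paper: establish \eqref{eq:neu} by showing that a $g_2$-neutral essential eigentensor has small $\PP_{\Lie,g_1}$-part (via Lemma~\ref{l: standard elliptic estimates} applied to $\Di_{g_1,f_1}h=(\Di_{g_1,f_1}-\Di_{g_2,f_2})h$) and small $\PP_{\rest,g_1}$-part (via the Schauder estimate on the complement of $\Ker L_1$ after bounding $L_1h$), then pass to \eqref{l: changing shrinker gauge and divergence free} by bounding the new Lie part $\PP_{\Lie,g_2}h$ the same way and invoking \eqref{eq:neu}. One small remark: your appeal to integrability and the moduli-space result of Podest\`a--Spiro is unnecessary here — the lemma is stated for arbitrary compact shrinkers with matching $\dim V^0_{\ess}$, and the Grassmannian closeness of the neutral eigenspaces comes purely from the perturbation/Schauder argument with the dimension hypothesis ruling out a kernel jump, with no integrability input.
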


\begin{proof}
For simplicity, we abbreviate $C^{k,\alpha}_{g_1}$ as $C^{k,\alpha}$, $L_{g_i,f_i}$ as $L_i$ for $i=1,2$. We first claim
\begin{claim}\label{claim:h-neu}
  For any $h\in V^0_{\ess,g_2}$ with $\|h\|_{L^2_{f_2,g_2}}=1$, we have
    \begin{align}\label{eq:h-neu of h}
        \|h-\mathcal{P}^0_{\ess,g_1}h\|_{C^{k,\alpha}}\le C\|h\|_{C^{k,\alpha}}\|g_1-g_2\|_{C^{k,\alpha}}.
    \end{align}
\end{claim}

\begin{proof}[Proof of Claim \ref{claim:h-neu}]
By the assumption $\|f_2-f_1\|_{C^{k,\alpha}_{g_1}}\le C_0\|g_2-g_1\|_{C^{k,\alpha}_{g_1}}$, $\Di_{g_2,f_2}h=0$, and Lemma \ref{l: standard elliptic estimates} we obtain
    \begin{align}\label{eq:lie of h}
        \big\|L_1(\mathcal{P}_{\Lie,g_1}h)\big\|_{C^{k-2,\alpha}}\le\|\mathcal{P}_{\Lie,g_1}h\|_{C^{k,\alpha}}\le C\|\Di_{g_1,f_1}h\|_{C^{k-1,\alpha}}\le C\|h\|_{C^{k,\alpha}}\|g_1-g_2\|_{C^{k,\alpha}}.
    \end{align}
    This implies by  $L_1(\mathcal{P}^0_{\ess,g_1}h)=0$ that
    \begin{align*}
        \big\|L_1(\mathcal{P}_{\rest,g_1}h)-L_1h\big\|_{C^{k-2,\alpha}}=\big\|L_1(\mathcal{P}_{\Lie,g_1}h)\big\|_{C^{k-2,\alpha}}\le C\|h\|_{C^{k,\alpha}}\|g_1-g_2\|_{C^{k,\alpha}}.
    \end{align*}
    So by $L_2h=0$ and thus $\|L_1h\big\|_{C^{k-2,\alpha}}\le C\|h\|_{C^{k,\alpha}}\|g_1-g_2\|_{C^{k,\alpha}}$, we obtain
    \[\big\|L_1(\mathcal{P}_{\rest,g_1}h)\big\|_{C^{k-2,\alpha}}\le C\|h\|_{C^{k,\alpha}}\|g_1-g_2\|_{C^{k,\alpha}}.\]
    Since $\mathcal{P}_{\rest,g_1}h$ is $L^2_{g_1,f_1}$-orthogonal to $\Ker\,L_1=V^0_{\ess,g_1}\perp V^0_{\Lie,g_1}$, by the parabolic Schauder estimate \cite{krylov} we have  
    \[\|\mathcal{P}_{\rest,g_1}h\|_{C^{k,\alpha}}\le C\|h\|_{C^{k,\alpha}}\|g_1-g_2\|_{C^{k,\alpha}},\]
    which together with \eqref{eq:lie of h} implies the claim.
\end{proof}

Assume $\{\varphi_i\}_{i=1}^{m}$ form an $L^2_{g_2,f_2}$-orthonormal basis of $V^0_{\ess,g_2}$. By Claim \ref{claim:h-neu} we have
  \begin{align*}
        \|\mathcal{P}^0_{\ess,g_1}(\varphi_i)-\varphi_i\|_{C^{k,\alpha}}\le C\|g_1-g_2\|_{C^{k,\alpha}},
    \end{align*}
    and thus by conducting the Gram-Schmidt reduction to $\{\mathcal{P}^0_{\ess,g_1}(\varphi_i)\}_{i=1}^{m}$, and using  $\dim (V^0_{\ess,g_1})=\dim (V^0_{\ess,g_2})$, we obtain an $L^2_{f_1,g_1}$-orthonormal basis $\{\widetilde\varphi_i\}_{i=1}^{m}$ of $V^0_{\ess,g_1}$, satisfying $$\|\tilde{\varphi}_i-\varphi_i\|_{C^{k,\alpha}}\leq C\|g_1-g_2\|_{C^{k,\alpha}}.$$
    The first assertion \eqref{eq:neu} follows immediately from this.

    For the second assertion \eqref{l: changing shrinker gauge and divergence free}, using $\Di_{g_1,f_1}h=0$, $\|f_2-f_1\|_{C^{k,\alpha}_{g_1}}\le C\|g_2-g_1\|_{C^{k,\alpha}_{g_1}}$, and Lemma \ref{l: standard elliptic estimates}, we obtain
$$\|\mathcal{P}_{\Lie,g_2}h\|_{C^{k,\alpha}_{g_1}}\le C\|\Di_{g_2,f_2}h\|_{C^{k-1,\alpha}}\le C\|g_1-g_2\|_{C^{k,\alpha}_{g_1}}\|h\|_{C^{k,\alpha}_{g_1}}.$$
Thus, together with $\mathcal{P}_{\Lie,g_1}h=0$ and the first assertion, we can obtain \eqref{l: changing shrinker gauge and divergence free}.
\end{proof}

\begin{remark}\label{re:re}
Note the assumptions 
in Lemma \ref{l: changing shrinker gauge} hold 
in the following two cases:
\begin{enumerate}
    \item $g_2=\phi_{X}^*g_1$, where $X$ is a vector field with $|X|,|\nabla X|\le\delta(k,\alpha,|\Rm_{g_1}|,\textnormal{inj}_{g_1})$, and $\phi_X$ is the diffeomorphism of $X$ from Lemma \ref{l: key lemma for regularity}.
    \item $g_1$ is an integrable shrinker, and there is a $C^1$-curve of shrinkers $g_s\in \mathcal{S}ol_f$, $s\in[1,2]$ connecting $g_1$ to $g_2$ (see Definition \ref{def:integrable}). 
\end{enumerate}
First, it is clear $\dim(V^0_{\ess,g_1})=\dim(V^0_{\ess,g_2})$. Moreover, since there is a $C^1$-curve of shrinkers $g_s$, $s\in[1,2]$, connecting $g_1,g_2$ with $\|\partial_sg_s\|_{C^{k,\alpha}}\le C(k,\alpha,|\Rm_{g_1}|)\|g_1-g_2\|_{C^{k,\alpha}}$, it follows by \eqref{eq:v_h} that $\|f_1-f_2\|_{C^{k,\alpha}}\le C\|g_1-g_2\|_{C^{k,\alpha}}$.
\end{remark}

Now we prove the main result of this section.
For any metric near a compact integrable shrinker,
we find a shrinker, with respect to which the metric is divergence-free and has zero projection in the essential neutral eigenspace.
We use $C^{k,\alpha,*}_\eps(\bar g)$ (resp. $C^{k,\alpha}_\eps(\bar g)$) to denote the set of all Riemmanian metrics $g$ such that $\|g-\bar g\|_{C^{k,\alpha,*}_{\bar g}}\le\eps$ (resp. $\|g-\bar g\|_{C^{k,\alpha}_{\bar g}}\le\eps$).

\begin{theorem}[A generalized Slice-theorem]\label{t: slice theorem}
Let $1\le k\in\mathbb N, \alpha\in(0,1)$, and $(M,g_0,f_0)$ be an integrable compact shrinker. There exist $C_0,\eps>0$ such that for any metric $g\in C^{k,\alpha}_{\eps}(g_0)$,
there is a compact integrable shrinker metric $\bar g\in C^{k,\alpha}_{2C_0\eps}(g_0)$ 
such that
    \[\mathcal{P}_{\Lie,\bar g}(g-\bar g)=\mathcal{P}^0_{\bar g}(g-\bar g)=0.\]
\end{theorem}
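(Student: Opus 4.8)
The plan is to produce $\bar g$ as a fixed point of a contraction map that simultaneously kills the generic (Lie) and essential-neutral projections of $g-\bar g$. We decompose the modification of the metric into two pieces: a diffeomorphism pullback $\phi_X^*$ to remove the divergence part, and a motion along the solitonic deformation manifold $\mathcal Sol_f$ to remove the essential-neutral part. Concretely, I would look for $\bar g$ of the form $\phi_X^*(g_0 + h)$ where $h$ ranges over a small neighborhood of $0$ in the tangent space $T_{g_0}\mathcal Sol_f = \Ker L \cap \Ker\Di_{g_0,f_0}$ (the essential-neutral modes), parametrized by the integrability assumption, and $X$ is a small vector field orthogonal to the Killing fields. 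So the unknowns are the pair $(X,h)$, living in a finite-plus-infinite dimensional space, and the two equations to solve are $\mathcal P_{\Lie,\bar g}(g-\bar g)=0$ and $\mathcal P^0_{\ess,\bar g}(g-\bar g)=0$.

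The key steps, in order: (1) Set up the nonlinear map $F(X,h) = \big(\mathcal P_{\Lie,\bar g}(g-\bar g),\ \mathcal P^0_{\ess,\bar g}(g-\bar g)\big)$ with $\bar g = \bar g(X,h)$ as above, viewed as a map into $\Ima\Di^*_{g_0,f_0}\times (V^0_{\ess,g_0})$ after identifying the $\bar g$-dependent subspaces with the $g_0$-ones (this identification is legitimate and quantitatively controlled by Lemma \ref{c:converge} and Lemma \ref{l: changing shrinker gauge}, together with Remark \ref{re:re}). (2) Compute the linearization of $F$ at $(0,0)$ with $g=g_0$: varying $X$ moves $\bar g$ by $\LL_X g_0 = -2\Di^*_{f_0}X$ (up to higher order by \eqref{eq:third} of Lemma \ref{l: key lemma for regularity}), so $\mathcal P_{\Lie,\bar g}(g-\bar g)$ changes by $\Di_{f_0}\Di^*_{f_0}X$ up to the sign convention, which is invertible on the orthogonal complement of Killing fields by the argument in Lemma \ref{l: standard elliptic estimates}; varying $h$ along $\mathcal Sol_f$ moves $\bar g$ by $h$ itself (an essential-neutral tensor), so $\mathcal P^0_{\ess}(g-\bar g)$ changes by $-h$; the cross terms vanish at first order because $\Di^*_{f_0}X \perp \Ker\Di_{f_0}$ and the solitonic variation $h$ is in $\Ker\Di_{f_0}$ hence has zero Lie projection. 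Thus $DF_{(0,0)}$ is block-triangular with invertible diagonal blocks, hence an isomorphism. (3) Apply the implicit function theorem (or a direct contraction-mapping argument in the Banach spaces of $C^{k+1,\alpha}$ vector fields and the finite-dimensional $h$-parameter) to solve $F(X,h)=(\mathcal P_{\Lie}(g-g_0),\ \mathcal P^0_{\ess}(g-g_0))$-- wait, rather to solve $F(X,h)=0$ directly, obtaining $(X,h)$ with $\|X\|_{C^{k+1,\alpha}} + \|h\|_{C^{k,\alpha}} \le C\|g-g_0\|_{C^{k,\alpha}}\le C\eps$, so that $\|\bar g - g_0\|_{C^{k,\alpha}}\le 2C_0\eps$ for a suitable $C_0$, and $\bar g$ is a genuine shrinker since it is a diffeomorphic image of a point of $\mathcal Sol_f$. (4) Verify that $\bar g$ is again integrable: this is immediate since integrability is invariant under diffeomorphism and $\mathcal Sol_f$ near $g_0$ consists of integrable shrinkers by \cite{podesta2015moduli}.

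The main obstacle I anticipate is step (2), specifically keeping track of how the projection operators $\mathcal P_{\Lie,\bar g}$ and $\mathcal P^0_{\ess,\bar g}$ themselves depend on the moving base point $\bar g$, since differentiating $F$ hits these projections as well. The point is that this extra dependence only contributes terms proportional to $\|g-\bar g\|$ times $\|\partial_{(X,h)}\bar g\|$, which at the linearization point $g=g_0$, $(X,h)=(0,0)$ means $g-\bar g = 0$, so these terms drop out and do not spoil the invertibility of $DF_{(0,0)}$; away from that point they are absorbed as higher-order errors controlled by Lemma \ref{l: changing shrinker gauge} and Lemma \ref{c:converge}. A secondary technical point is the loss of derivatives inherent in the $\Di_f\Di^*_f$ inversion (gain of two in elliptic regularity versus the $C^{k,\alpha}$ scale of $g$), which is exactly why the statement is phrased with $X\in C^{k+1,\alpha}$ implicitly and uses Lemma \ref{l: standard elliptic estimates} and Lemma \ref{l: key lemma for regularity} with their stated regularity bookkeeping; I would set up the Banach spaces so that $X$ gains one derivative over $h$, matching \eqref{eq:third}. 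Once these bookkeeping issues are handled, the fixed-point/IFT argument is routine.
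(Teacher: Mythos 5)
Your argument is correct in outline but proceeds by a genuinely different method than the paper. The paper's proof is variational: it introduces the additive norm $\|\cdot\|_{C^{k,\alpha,*}_{\bar g'}}$, minimizes $\|g-\bar g'\|_{C^{k,\alpha,*}_{\bar g'}}$ over shrinkers $\bar g'$ near $g_0$, extracts a minimizer $\bar g$ via Lemma \ref{c:converge}, and then shows by contradiction that both projections vanish at the minimizer: if $\mathcal{P}_{\Lie}(g-\bar g)\neq 0$, pull $\bar g$ back by $\phi_X$; if $\mathcal{P}^0_{\ess}(g-\bar g)\neq 0$, slide $\bar g$ along $\mathcal{S}ol_f$; in either case the $*$-norm drops by a definite amount (Lemma \ref{l: key lemma for regularity} for the Taylor control, Lemma \ref{l: changing shrinker gauge} and Remark \ref{re:re} to account for the change of reference shrinker), contradicting minimality. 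You instead parametrize nearby shrinkers as $\bar g=\phi_X^*(\bar g'(h))$ with $\bar g'(h)\in\mathcal{S}ol_f$ and apply the implicit function theorem, using the block-diagonal invertibility of $DF_{(0,0)}$. If carried through, your route is more constructive and would additionally give Lipschitz (indeed $C^1$) dependence of $\bar g$ on $g$, which the minimization argument does not provide; the price is that you must show $F$ is $C^1$ (or set up a genuine contraction), which requires differentiability --- not just continuity --- of $\bar g\mapsto\mathcal{P}_{\Lie,\bar g}$ and $\bar g\mapsto\mathcal{P}^0_{\ess,\bar g}$. The paper's Lemma \ref{c:converge} gives only continuity, and Lemma \ref{l: changing shrinker gauge} gives Lipschitz control only for the neutral projection and for tensors already in $\Ker\,\Di_{g_1,f_1}$; a fully general Lipschitz estimate for $\mathcal{P}_{\Lie,\bar g}$ in $\bar g$ is plausible and standard, but it is more than the paper proves or needs, and avoiding it is precisely what the variational route buys. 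A few small points to tidy in your write-up: parametrize by $\bar g'(h)\in\mathcal{S}ol_f$, not by $g_0+h$, which is only tangent to $\mathcal{S}ol_f$; in the linearization the Lie block is $2\Di^*_{f_0}X$ as a 2-tensor, or $2\Di_{f_0}\Di^*_{f_0}X$ if you compose with $\Di_{f_0}$ and target vector fields --- pick one consistently (the cross-term cancellation you observe holds either way); and, as the paper does at the very start of its proof, shrink $\eps$ so that $\dim V^0_{\ess}$ is constant on $C^{k,\alpha}_{2C_0\eps}(g_0)$, which you implicitly use when you conclude that the output $\bar g$ is again integrable.
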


\begin{proof}  
Let $C_0>0$ be from \eqref{eq:norms are comparable}, and $C=C(k,\alpha,g_0)>0$.
First, since $g_0$ is an integrable shrinker, we may choose $\eps$ small enough so that $\dim(V^0_{\ess})$ are equal for all shrinkers in $C^{k,\alpha}_{2C_0\eps}(g_0)$.
Let $g_i\in C^{k,\alpha}_{2C_0\eps}(g_0)$ be a sequence of shrinker metrics such that 
\[\lim_{i\to\infty}\|g-g_i \|_{C^{k,\alpha,*}_{g_i}}=\inf_{\overline g\in C^{k,\alpha}_{2C_0\eps}(g_0) \textnormal{ is a shrinker}}\|g-\overline g\|_{C^{k,\alpha,*}_{\overline g}}\le C_0\eps.\]
Then after passing to a subsequence and using Lemma \ref{c:converge}, we may assume $g_i\to \bar g$ in the $C^{k,\alpha}$-sense, and 
\begin{equation}
    \|g-\bar g \|_{C^{k,\alpha,*}_{\bar g}}=\inf_{\overline g\in C^{k,\alpha}_{2C_0\eps}(g_0) \textnormal{ is a shrinker}}\|g-\overline g\|_{C^{k,\alpha,*}_{\overline g}}\le C_0\eps.
\end{equation}
In the following, we will show the theorem holds for $\bar g$. 
The key idea is that if $\PP_{\Lie}h\neq0$ or $\PP^0_{\ess}h\neq0$, then we can perturb $\bar g$ by the corresponding diffeomorphism or deform 
$\bar g$ by the integrability to almost cancel these projections.
By the definition of the 
$C^{k,\alpha,*}$-norm, this results in a strictly smaller norm, thus a contradiction. 
For convenience, all norms, derivatives, and projections are taken with respect to $\bar g$ unless specified otherwise. 

Let $h=g-\bar g$.
Suppose by contradiction $\PP_{\Lie}h\neq0$. Let $h_1=h-\mathcal{P}_{\Lie}h$, then we have
    \begin{equation}\label{eq:h1 small0}
        \|h_1\|_{C^{k,\alpha,*}}=\|h\|_{C^{k,\alpha,*}}-\|\mathcal{P}_{\Lie}h\|_{C^{k,\alpha,*}}.
    \end{equation}
    Assume $\mathcal{P}_{\Lie}h=\LL_X\bar g$ for some vector field $X$ orthogonal to all $\bar g$-Killing field. Let $\phi_X$ be the diffeomorphism from Lemma \ref{l: key lemma for regularity} \eqref{eq:third}, then for $h':=g-\phi_X^*\bar g$ we have
    \begin{equation}\label{eq:h'}
    \begin{split}
        \|h'-h_1\|_{C^{k,\alpha,*}}
        =\|\bar g-\phi_X^*\bar g+\LL_X\bar g\|_{C^{k,\alpha,*}}
        \le C\|\mathcal{P}_{\Lie}h\|_{C^{k,\alpha,*}}^2.
    \end{split}
    \end{equation}
Therefore, by \eqref{eq:h1 small0} and taking $\eps$ sufficiently small we have
\begin{equation}\label{eq:in the old norm}
    \|h'\|_{C^{k,\alpha,*}}\le\|h\|_{C^{k,\alpha,*}}-0.9\|\mathcal{P}_{\Lie}h\|_{C^{k,\alpha,*}}. 
\end{equation}
To convert the $C^{k,\alpha,*}$-norm to $C^{k,\alpha,*}_{\phi^*_X\bar g}$-norm in this estimate, using $\Di_{g_1}h_1=0$ by Remark \ref{re:re}, we can apply Lemma \ref{l: changing shrinker gauge} to $h_1,\bar g,\phi^*_X\bar g$ so that we get
\[\bigg|\|h_1\|_{C^{k,\alpha,*}_{\phi^*_X\bar g}}-\|h_1\|_{C^{k,\alpha,*}}\bigg|
        \le C\|h_1\|_{C^{k,\alpha,*}}\|\bar g-\phi^*_X\bar g\|_{C^{k,\alpha}}\le 0.1\|\mathcal{P}_{\Lie}h\|_{C^{k,\alpha,*}}.\]
Together with \eqref{eq:h'} and taking $\eps$ small this implies
\begin{equation}\label{eq:compare g sigma X and g sigma norms}
    \begin{split}
        \bigg|\|h'\|_{C^{k,\alpha,*}_{\phi^*_X\bar g}}-\|h'\|_{C^{k,\alpha,*}}\bigg|
        &\le 0.2\|\mathcal{P}_{\Lie}h\|_{C^{k,\alpha,*}}.
    \end{split}
\end{equation}
Combining \eqref{eq:compare g sigma X and g sigma norms} and \eqref{eq:in the old norm}, we obtain
\[\|h'\|_{C^{k,\alpha,*}_{\phi^*_X\bar g}}\le\|h\|_{C^{k,\alpha,*}}-0.7\|\mathcal{P}_{\Lie}h\|_{C^{k,\alpha,*}}<\|h\|_{C^{k,\alpha,*}}. \]
In particular, this implies $\phi^*_X\bar g\in C^{k,\alpha}_{2C_0\eps}(g_0)$, which contradicts the infimum choice of $\bar g$. Thus, we proved $\mathcal{P}_{\Lie}h=0$.

Therefore, we can assume by contradiction that $\mathcal{P}_{\Lie}^0h=0$ and $\mathcal{P}_{\ess}^0h\neq0$. Assume $h=\mathcal{P}_{\ess}^0h+h_1$, then $\Di_{\bar g}h_1=0$, and
    \begin{equation}\label{eq:h1 small}
        \|h_1\|_{C^{k,\alpha,*}}=\|h\|_{C^{k,\alpha,*}}-\|\mathcal{P}_{\ess}^0h\|_{C^{k,\alpha,*}}.
    \end{equation}
By the integrability of $\bar g$, there exists a $C^1$-curve of shrinkers $\bar g_s\in \mathcal{S}ol_f$, $s\in [0,1]$, connecting $\bar g=\bar g_0$ to $\bar g':=\bar g_1$, with 
 $$\|\bar g-\bar g'+\mathcal{P}_{\ess}^0h\|_{C^{k,\alpha,*}}\le \eta\|\mathcal{P}_{\ess}^0h\|_{C^{k,\alpha,*}},$$
 where $\eta(\eps)\to0$ as $\eps\to0$.
    Hence, letting $h'=g-\bar g'$, we have 
    \begin{equation}\label{eq:h'-h_1}
    \begin{split}
        \|h'-h_1\|_{C^{k,\alpha,*}}
        \le \eta\|\mathcal{P}_{\ess}^0h\|_{C^{k,\alpha,*}}.
    \end{split}
    \end{equation}
Therefore, by  \eqref{eq:h1 small} we have
\begin{equation}\label{eq:h'C}
    \|h'\|_{C^{k,\alpha,*}}\le\|h\|_{C^{k,\alpha,*}}-(1-\eta)\|\mathcal{P}_{\ess}^0h\|_{C^{k,\alpha,*}}. 
\end{equation}
To convert the $C^{k,\alpha,*}$-norm to $C^{k,\alpha,*}_{\bar g'}$-norm in this estimate, using $\Di_{\bar g}h_1=0$ by Remark \ref{re:re}, we can apply \eqref{l: changing shrinker gauge and divergence free} to $h_1$ and obtain
\begin{equation*}
    \begin{split}
        \bigg|\|h_1\|_{C^{k,\alpha,*}_{\bar g'}}-\|h_1\|_{C^{k,\alpha,*}}\bigg|
        &\le C\|h_1\|_{C^{k,\alpha,*}}\|\bar g-\bar g'\|_{C^{k,\alpha}}\le 0.1\|\mathcal{P}_{\ess}^0h\|_{C^{k,\alpha,*}},
    \end{split}
\end{equation*}
where in the last inequality we used that $\eps$ is sufficiently small so that $CC_0\eps<0.05$ and $\|\bar g-\bar g'\|_{C^{k,\alpha}}\le 2\|\mathcal{P}_{\ess}^0h\|_{C^{k,\alpha,*}}$.
Combining this with \eqref{eq:h'-h_1} yields
\begin{equation*}
        \bigg|\|h'\|_{C^{k,\alpha,*}_{\bar g'}}-\|h'\|_{C^{k,\alpha,*}}\bigg| 
        \le (0.1+2C_0\eta) \|\mathcal{P}_{\ess}^0h\|_{C^{k,\alpha,*}}<0.2\|\mathcal{P}_{\ess}^0h\|_{C^{k,\alpha,*}}.
\end{equation*}
Hence, together with \eqref{eq:h'C} we get
\[\|h'\|_{C^{k,\alpha,*}_{\bar g'}}\le\|h\|_{C^{k,\alpha,*}}-0.7\|\mathcal{P}_{\ess}^0h\|_{C^{k,\alpha,*}}<\|h'\|_{C^{k,\alpha,*}},\]
which implies $\bar g'\in C^{k,\alpha}_{2C_0\eps}(g_0)$. This contradicts the infimum choice of $\bar g$. Thus, $\PP^0_{\ess}h=0$.
\end{proof}

\section{Fast convergence of Ricci flows}\label{sec:exp_decay}

In this section, we fix $(M,\bar{g},f)$ to be a compact integrable shrinker.
For any ancient (resp. immortal) flows converging to the shrinker at time $\infty$ (resp. $-\infty$), we show the rescaled flows converge exponentially to the shrinker. This allows us to find ancient (resp. immortal) HMHFs between these Ricci flows and the flow of the shrinker.  
\subsection{Spectral analysis using the Entropy}

In this subsection, we use the $\mu$-entropy to estimate the closeness of a nearby metric to a shrinker metric, and also analyze the projections of the difference to various eigenspaces.
We fix $(M,\bar{g},f)$ to be a compact shrinker, and abbreviate the weighted $L^2_f$-norm $\|\cdot\|_{L^2_f} $ and the corresponding inner production $\langle \;,\;\rangle_{L_f^2}$ as $\|\cdot\|$ and $\langle \;,\;\rangle$, respectively.

\begin{lem}\label{l: expanding mu to the third order}
Let $\alpha\in(0,1)$.
There exist positive $\eps(\bar{g},\alpha),C(\bar{g},\alpha)>0$ such that for any $h$ with $\|h\|_{C^{2,\alpha}}\le \eps$, we have
    \begin{equation*}
        \left|\mu(\bar{g}+h,1)-\mu(\bar{g},1)-D^2\mu(\bar{g},1)[h,h]\right|\le C\|h\|^2_{H^1}\|h\|_{C^{2,\alpha}}.
    \end{equation*}
\end{lem}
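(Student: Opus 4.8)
The plan is to perform a third-order Taylor expansion of $\tau \mapsto \mu(\bar g + \tau h, 1)$ and bound the remainder. Recall that the $\mu$-entropy is defined via the Perelman functional $\mathcal W$ and a minimization: $\mu(g,1) = \inf_w \mathcal W(g,w,1)$ over $w$ satisfying the constraint $\int (4\pi)^{-n/2} e^{-w}\,dg = 1$. At the shrinker $\bar g$ the minimizer is $w = f$ (up to the normalization constant), and $\bar g$ is a critical point, so $D\mu(\bar g,1) = 0$. Thus the Taylor expansion with integral remainder reads
\[
\mu(\bar g + h,1) - \mu(\bar g,1) - D^2\mu(\bar g,1)[h,h]
= \frac{1}{2}\int_0^1 (1-\tau)^2 \, \frac{d^3}{d\tau^3}\Big|_{\tau}\mu(\bar g + \tau h, 1)\, d\tau,
\]
(with the appropriate combinatorial constant), so everything reduces to a uniform bound on the third derivative of $\tau \mapsto \mu(\bar g + \tau h, 1)$ for $\tau \in [0,1]$, of the form $C\|h\|_{H^1}^2 \|h\|_{C^{2,\alpha}}$.

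First I would invoke the smooth dependence of the minimizer: for $\|h\|_{C^{2,\alpha}} \le \eps$ small, the minimizer $w_{\bar g + \tau h}$ exists, is unique, and depends smoothly (in $C^{2,\alpha}$, by elliptic regularity applied to the Euler–Lagrange equation $2\Delta w - |\nabla w|^2 + R + w - n - \mu = 0$) on $\tau$, with $w_{\bar g} = f$ and $\|w_{\bar g + \tau h} - f\|_{C^{2,\alpha}} \le C\|h\|_{C^{2,\alpha}}$. By the envelope theorem, along the path of minimizers the first $w$-derivative of $\mathcal W$ vanishes, which is what kills the cross terms and lets us differentiate $\mu(\bar g + \tau h,1) = \mathcal W(\bar g + \tau h, w_{\bar g+\tau h},1)$ freely. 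Then $\frac{d^3}{d\tau^3}\mu$ is a sum of terms each of which is an integral over $M$ of a polynomial expression in $h$, $\nabla h$, $w - f$, $\nabla(w-f)$ (and $\dot w$, $\ddot w$, with $\dot w, \ddot w = O(\|h\|_{C^{2,\alpha}})$ pointwise since the linearized EL equation is elliptic and its inhomogeneous term is controlled by $h$), weighted against $e^{-f}\,dg$, with coefficients depending only on $\bar g$ and $f$. Crucially every such term is \emph{at least quadratic in $h$ pointwise} (the first-order part being exactly $D^3$ evaluated once, contributing to lower-order Taylor terms which have already been subtracted), so I can estimate it by pulling out $\|h\|_{C^{2,\alpha}}$ in sup-norm on \emph{one} factor and keeping the remaining factor quadratic, controlled by $\int_M (|h|^2 + |\nabla h|^2)\,e^{-f}\,dg \le C\|h\|_{H^1}^2$. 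Terms involving $\dot w$ or $\ddot w$ are handled the same way after bounding $\|\dot w\|_{C^{1}}, \|\ddot w\|_{C^1} \le C\|h\|_{C^{2,\alpha}}$ and, where an $L^2$-type factor is needed, $\|\dot w\|_{H^1} \le C\|h\|_{H^1}$ from the linearized EL equation tested against $\dot w$.

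The main obstacle I expect is the bookkeeping around the minimizer $w$: one must verify that the contributions of $\dot w, \ddot w$ do not destroy the clean "$\|h\|_{H^1}^2 \|h\|_{C^{2,\alpha}}$" structure — in particular that one never needs three $L^2$-factors or, conversely, an $H^1$-bound on something only controlled in $C^{2,\alpha}$ that is not quadratically small. The cleanest route is the one in Sesum \cite{sesum2006convergence} / Colding–Minicozzi: use the constraint to fix the normalization so that $\dot w$ satisfies a homogeneous-constraint linearized equation, derive the a priori bounds $\|\dot w\|_{H^1} + \|\dot w\|_{C^{1,\alpha}} \le C\|h\|_{C^{2,\alpha}}$ (and similarly for $\ddot w$, using the already-controlled $\dot w$), and then note that in the expansion of $\mathcal W$ the quadratic-and-higher part of $D^2\mu$ is captured by the known Jacobi-field/stability operator $L$ restricted to $\Ker\,\Di_f$, while the cubic remainder is manifestly trilinear in $(h, \dot w, \ddot w)$ with at most one factor that we only control in $C^0$ or $C^{2,\alpha}$. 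Holder's inequality (with the weight $e^{-f}$ bounded above and below) then delivers the stated estimate. A short alternative, if available in this paper's conventions, is to cite the second-variation computation already recorded in Appendix \ref{subsec:stability} and only carry out the cubic-remainder bound here.
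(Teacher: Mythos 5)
Your proposal is correct and takes essentially the same approach as the paper: third-order Taylor expansion of $s\mapsto\mu(\bar g+sh,1)$, using $D\mu(\bar g,1)[h]=0$ to kill the linear term, then a uniform bound $\bigl|\tfrac{d^3}{ds^3}\mu(\bar g+sh,1)\bigr|\le C\|h\|_{H^1}^2\|h\|_{C^{2,\alpha}}$ for $s\in[0,1]$. The only difference is that the paper outsources the third-variation estimate by citing Haslhofer's Proposition 2.2 (the analogous bound for the $\lambda$-functional), whereas you reconstruct the content of that argument — smooth dependence and a priori $C^{2,\alpha}$/$H^1$ control of the minimizer $w$ and its derivatives $\dot w,\ddot w$, followed by H\"older on the resulting trilinear expressions.
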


\begin{proof}
    Since for any smooth function $f:[0,1]\to\R$, we have
    \begin{equation*}
        f(1)=f(0)+f'(0)+\tfrac12f''(0)+\int_0^1\int_0^u\int_0^t f'''(s)\,ds\,dt\,du.
    \end{equation*}
    By the same argument in \cite[Proposition 2.2]{Has12} for the $\lambda$-functional, we can see the third variation of the $\mu$-entropy satisfies
    \begin{equation*}
        \left|\frac{d^3}{ds^3}\mu(\bar{g}+sh,1)[h,h,h]\right|\le C\|h\|^2_{H^1}\|h\|_{C^{2,\alpha}},
    \end{equation*}
    when $\eps$ is sufficiently small. 
    So the lemma follows by seeing $D\mu(\bar g,1)[h]=0$ and letting $f(s)=\mu(\bar{g}+sh,1)$ in the above expansion.
\end{proof}

\begin{lem}\label{l: second vari}
    There is $C(\bar g)>0$ such that any symmetric 2-tensor $h$ satisfies
\begin{equation}
    \|h\|_{H^1}^2\leq C(\|\mathcal{P}^0h\|^2 +\langle h,L\mathcal{P}^+h\rangle-\langle h,L\mathcal{P}^-h\rangle).
\end{equation}
\end{lem}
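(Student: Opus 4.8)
The plan is to diagonalize the self-adjoint elliptic operator $L$, exploit the sign pattern of the three terms on the right-hand side to control $\|h\|^2:=\|h\|_{L^2_f}^2$, and then upgrade this to a bound on $\|h\|_{H^1}^2=\|h\|^2+\|\nabla h\|^2$ using the structure $L=\Delta_f+2\Rm*$.

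\emph{The $L^2$-estimate.} Since $L$ has discrete spectrum with an $L^2_f$-orthonormal eigenbasis $\{h_j\}$, $Lh_j=\lambda_j h_j$, write $h=\sum_j c_j h_j$. By \eqref{eq:eigenvalues of L}, \eqref{eq:eigenvalues of L min} we have $\lambda_j\ge\lambda_+>0$ on unstable modes, $\lambda_j=0$ on neutral modes, and $-\lambda_j\ge\lambda_->0$ on stable modes; hence, by orthogonality of the eigenspaces,
\[
\langle h,L\mathcal{P}^+h\rangle=\sum_{\lambda_j>0}\lambda_j c_j^2\ \ge\ \lambda_+\|\mathcal{P}^+h\|^2,\qquad -\langle h,L\mathcal{P}^-h\rangle=\sum_{\lambda_j<0}(-\lambda_j)c_j^2\ \ge\ \lambda_-\|\mathcal{P}^-h\|^2.
\]
Adding $\|\mathcal{P}^0h\|^2$, the right-hand side of the asserted inequality is $\ge c_0\|h\|^2$ with $c_0:=\min\{1,\lambda_+,\lambda_-\}>0$. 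Note the neutral term must be added by hand: since $L\mathcal{P}^0h=0$, the $L^2$-mass of $\mathcal{P}^0h$ is invisible to $\langle h,Lh\rangle$.

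\emph{Upgrading to $H^1$.} Integrating by parts against $e^{-f}$ on the closed manifold gives $\langle h,\Delta_f h\rangle=-\|\nabla h\|^2$; using $\Delta_f=L-2\Rm*$ together with $L\mathcal{P}^0h=0$,
\[
\|\nabla h\|^2=-\langle h,L\mathcal{P}^+h\rangle-\langle h,L\mathcal{P}^-h\rangle+\langle h,2\Rm*h\rangle\ \le\ -\langle h,L\mathcal{P}^-h\rangle+C_1\|h\|^2,
\]
where we discarded the nonpositive term $-\langle h,L\mathcal{P}^+h\rangle$ and estimated $|\langle h,2\Rm*h\rangle|\le C_1\|h\|^2$ with $C_1=C_1(\sup_M|\Rm_{\bar g}|)$. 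Both terms on the right are controlled: $-\langle h,L\mathcal{P}^-h\rangle$ already appears in the asserted bound, and $\|h\|^2$ is at most $c_0^{-1}$ times the right-hand side by the first step. Adding the resulting estimate for $\|\nabla h\|^2$ to that for $\|h\|^2$ proves the lemma with $C=c_0^{-1}(1+C_1)+1$.

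The argument has no real obstacle; the only points of care are the sign bookkeeping in the second step (recognizing $-\langle h,L\mathcal{P}^+h\rangle$ as the harmless wrong-sign term and $-\langle h,L\mathcal{P}^-h\rangle$ as exactly one of the good terms) and the routine justification of the eigenfunction expansion: $\mathcal{P}^+h,\mathcal{P}^0h$ lie in finite-dimensional spaces of smooth tensors, so $\mathcal{P}^-h=h-\mathcal{P}^+h-\mathcal{P}^0h$ is as regular as $h$, the identity $\sum_j\lambda_j c_j^2=\langle h,Lh\rangle$ holds for smooth $h$, and the estimate passes to $H^1$ by density (for merely $L^2$ tensors the inequality is vacuous unless the right-hand side is finite, in which case $h\in H^1$ automatically).
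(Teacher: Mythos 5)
Your proposal is correct and takes essentially the same route as the paper: the crux in both is the integration-by-parts observation $\|h\|_{H^1}^2\le -\langle h,Lh\rangle+C\|h\|^2$, after which the spectral gaps $\lambda_\pm$ and the neutral projection absorb the remaining $L^2$ term into the stated right-hand side. You simply make explicit the sign bookkeeping (discarding $-\langle h,L\mathcal{P}^+h\rangle\le 0$ and bounding $\|h\|^2$ by the right-hand side via the eigenbasis) that the paper's one-line proof leaves implicit.
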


\begin{proof}
It follows by the observation $\|h\|_{H^1}^2\le -\langle h,Lh\rangle+C\|h\|^2 $ by integration by parts. 
\end{proof}

For a nearby metric $g$ with an entropy larger than $\bar g$,
the next lemma shows that under an almost divergence-free and neutral-mode-free gauge, the projection of the perturbation $g-\bar g$ onto the essential unstable eigenspace has a definite portion.

\begin{lem}[Dominance of essential unstable mode]\label{l: entropy controls unstable}
There exist $C(\bar{g}),\eps_0(\bar{g})>0$ such that the following holds for all $\eps\le\eps_0$: 
    Let $h$ be a symmetric 2-tensor with $\|h\|_{C^3}\le \eps$. Suppose
    \begin{enumerate}
        \item $\|\mathcal{P}_{\Lie}h\|_{H^1}^2+\|\mathcal{P}_{\ess}^0h\|^2 \le\eps \|h\|^2_{H^1}  $; 
        \item $\mu(\bar{g}+h,1)\ge \mu(\bar{g},1)$.
    \end{enumerate}
    Then we have 
\begin{align*}
&    \eps^{-1}(\|\mathcal{P}_{\Lie}h\|^2_{H^1}+\|\mathcal{P}^0_{\ess}h\|^2) +\|\mathcal{P}^-h\|^2_{H^1}\leq  C \|\mathcal{P}^+_\ess h\|^2 .
\end{align*}
\end{lem}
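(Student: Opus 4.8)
The plan is to combine the two hypotheses through the expansion of the $\mu$-entropy. Write $h=\mathcal P^+ h+\mathcal P^0 h+\mathcal P^- h$ and recall that $D^2\mu(\bar g,1)[h,h]$ agrees with $-\tfrac12\langle h, Lh\rangle$ on the essential part and vanishes on $V_{\Lie}$ up to lower-order interactions with the Lie part; more precisely, using $D^2\mu(\bar g,1)[h,h]\le C\langle h, L\mathcal P^+ h\rangle - \langle h, L\mathcal P^- h\rangle + C\|\mathcal P^0_{\ess}h\|^2$ together with the splitting of $L$ and the fact that generic eigentensors contribute nonpositively to the second variation of $\mu$. By Lemma \ref{l: expanding mu to the third order} and hypothesis (2),
\[
0\le \mu(\bar g+h,1)-\mu(\bar g,1)\le D^2\mu(\bar g,1)[h,h]+C\|h\|_{H^1}^2\|h\|_{C^{2,\alpha}}\le D^2\mu(\bar g,1)[h,h]+C\eps\|h\|_{H^1}^2,
\]
where the last step uses $\|h\|_{C^{2,\alpha}}\le\|h\|_{C^3}\le\eps$. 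Hence $D^2\mu(\bar g,1)[h,h]\ge -C\eps\|h\|_{H^1}^2$.

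First I would convert this into an inequality purely on spectral projections. Since $D^2\mu(\bar g,1)[h,h]$ is, up to the generic contribution (which is $\le 0$) and a term bounded by $\|\mathcal P^0_{\ess}h\|^2$, equal to $-\tfrac12\langle h,Lh\rangle$ on the essential part, we get
\[
\lambda_- \|\mathcal P^-_{\ess}h\|^2 - \lambda_+ \|\mathcal P^+_{\ess}h\|^2 \;\le\; C\eps\|h\|_{H^1}^2 + C\|\mathcal P^0_{\ess}h\|^2 ,
\]
where $\lambda_\pm>0$ are as in \eqref{eq:eigenvalues of L min} (and $\|\mathcal P^-_{\ess}h\|_{H^1}^2\le C\|\mathcal P^-_{\ess}h\|^2$ because $\langle h,-L\mathcal P^- h\rangle\ge \lambda_-\|\mathcal P^- h\|^2$ and $\|\cdot\|_{H^1}^2\le -\langle\cdot,L\cdot\rangle+C\|\cdot\|^2$). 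Next I would absorb the $\|h\|_{H^1}^2$ term on the right. Apply Lemma \ref{l: second vari}: $\|h\|_{H^1}^2\le C(\|\mathcal P^0 h\|^2+\langle h,L\mathcal P^+ h\rangle-\langle h,L\mathcal P^- h\rangle)$. The generic part of $\mathcal P^0 h$ obeys $\|\mathcal P^0_{\Lie}h\|\le \|\mathcal P_{\Lie}h\|_{H^1}$, so by hypothesis (1), $\|\mathcal P^0 h\|^2\le \|\mathcal P^0_{\ess}h\|^2+\|\mathcal P_{\Lie}h\|_{H^1}^2\le \eps\|h\|_{H^1}^2$; likewise $\langle h,L\mathcal P^+ h\rangle\le \lambda_1\|\mathcal P^+ h\|^2\le \lambda_1(\|\mathcal P^+_{\ess}h\|^2+\|\mathcal P_{\Lie}h\|_{H^1}^2)\le \lambda_1\|\mathcal P^+_{\ess}h\|^2+\lambda_1\eps\|h\|_{H^1}^2$. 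Feeding these back gives, after choosing $\eps_0$ small enough to absorb the $\eps\|h\|_{H^1}^2$ terms into the left side, a bound of the form
\[
\|h\|_{H^1}^2\le C\|\mathcal P^+_{\ess}h\|^2 + C\|\mathcal P^-_{\ess}h\|_{H^1}^2 + C\|\mathcal P^0_{\ess}h\|^2 .
\]

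Finally I would close the loop. Insert the last display into the right-hand side of the spectral inequality $\lambda_-\|\mathcal P^-_{\ess}h\|^2\le \lambda_+\|\mathcal P^+_{\ess}h\|^2+C\eps\|h\|_{H^1}^2+C\|\mathcal P^0_{\ess}h\|^2$; shrinking $\eps_0$ absorbs the $\|\mathcal P^-_{\ess}h\|^2$ piece coming from $\|h\|_{H^1}^2$, yielding $\|\mathcal P^-_{\ess}h\|_{H^1}^2\le C\|\mathcal P^+_{\ess}h\|^2+C\|\mathcal P^0_{\ess}h\|^2$. Then from hypothesis (1), $\|\mathcal P^0_{\ess}h\|^2\le \eps\|h\|_{H^1}^2\le C\eps(\|\mathcal P^+_{\ess}h\|^2+\|\mathcal P^0_{\ess}h\|^2)$, so for $\eps_0$ small, $\|\mathcal P^0_{\ess}h\|^2\le C\eps\|\mathcal P^+_{\ess}h\|^2$, and similarly $\|\mathcal P_{\Lie}h\|_{H^1}^2\le \eps\|h\|_{H^1}^2\le C\eps\|\mathcal P^+_{\ess}h\|^2$. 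Combining the three estimates and writing $\eps^{-1}(\|\mathcal P_{\Lie}h\|_{H^1}^2+\|\mathcal P^0_{\ess}h\|^2)\le C\|\mathcal P^+_{\ess}h\|^2$ gives the claim. The main obstacle I anticipate is the bookkeeping at the spectral level: one must carefully track that the generic (Lie) directions contribute nonpositively to $D^2\mu$ and that the cross terms between $\mathcal P_{\Lie}h$ and the essential modes in both $D^2\mu$ and Lemma \ref{l: second vari} are genuinely controlled by hypothesis (1) rather than creating a circular dependence — this is what forces the two-stage absorption (first bound $\|h\|_{H^1}^2$, then re-insert) and the repeated shrinking of $\eps_0$.
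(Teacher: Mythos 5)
Your proposal follows the paper's proof in all essentials: expand $\mu(\bar g+h,1)-\mu(\bar g,1)$ via Lemma~\ref{l: expanding mu to the third order} and hypothesis~(2) to get $D^2\mu(\bar g,1)[h,h]\ge -C\eps\|h\|_{H^1}^2$, reduce this to a spectral estimate on $\langle\PP_\ess h,L\PP_\ess h\rangle$ via the second variation formula~\eqref{eq:second derivative}, and close by absorbing $\|h\|_{H^1}^2$ through Lemma~\ref{l: second vari} together with hypothesis~(1) — the paper runs this absorption in a single combined inequality whereas you run it in two passes, which is a cosmetic difference only. One caution: your parenthetical $\|\PP^-_\ess h\|_{H^1}^2\le C\|\PP^-_\ess h\|^2$ is false as a standalone estimate (the stable spectrum is unbounded below so no uniform $C$ exists); the two facts you cite actually give $\|\PP^-_\ess h\|_{H^1}^2\le C\bigl(-\langle\PP^-_\ess h,L\PP^-_\ess h\rangle\bigr)$, so you should keep $-\langle h,L\PP^-h\rangle$ on the left of your spectral inequality rather than downgrading it to $\lambda_-\|\PP^-_\ess h\|^2$, and then the final absorption yields the stated $H^1$ bound on $\PP^-h$ directly.
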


\begin{proof}
Let $C_1>0$ be from Lemma \ref{l: expanding mu to the third order} so that by $\|h\|_{C^{2,\alpha}}\le\eps$ and Assumption (2) we have
$$D^2\mu(\bar{g},1)[h,h]\ge -C_1\|h\|_{C^{2,\alpha}} \|h\|_{H^1}^2\ge -C_1\varepsilon\|h\|_{H^1}^2.$$
The second variation formula  \eqref{eq:second derivative} implies $D^2\mu(\bar{g},1)[h,h]=\tfrac12\langle \mathcal{P}_{\ess}h,L\mathcal{P}_{\ess}h\rangle$.
Hence, 
by Assumption (1) and Lemma \ref{l: second vari}, there is $C_2(C_1,C)>0$ such that
\begin{align*}
    D^2\mu(\bar{g},1)[h,h]&\le \tfrac12\langle h,Lh\rangle+C\eps\|h\|_{H^1}  ^2\\
    &\le \big(\tfrac{1}{2}+C_2\eps\big)\langle h,L\mathcal{P}^+h\rangle +\big(\tfrac{1}{2}-C_2\eps\big)\langle h,L\mathcal{P}^-h\rangle-C_1\eps  \|h\|_{H^1}^2.
\end{align*}
Thus, combining these estimates we obtain
\begin{equation*}
    \big(\tfrac{1}{2}+C_2\eps\big)\langle h,L\mathcal{P}^+h\rangle +\big(\tfrac{1}{2}-C_2\eps\big)\langle h,L\mathcal{P}^-h\rangle \ge0.
\end{equation*}
The lemma thus follows by combining this with Assumption (1) and taking $\eps_0$ small.
\end{proof}

\begin{lem}[Dominance of stable mode]\label{l:entropy_forward}
There are $C(\bar{g}),\eps_0(\bar{g})>0$ such that the following significance holds for all $\eps\le\eps_0$. Let $h$ be a symmetric 2-tensor with $\|h\|_{C^3}\le \eps$ satisfying
\begin{enumerate}
    \item $\|\PP^0 h\|^2 +\|\PP^+_\Lie h\|^2 \le \eps\|h\|^2_{H^1} $;
    \item $\mu(\bar{g}+h,1)\le\mu(\bar{g},1)$.
\end{enumerate}
Then we have 
$$\eps^{-1}\big(\|\PP^0 h\|^2 +\|\PP^+_\Lie h\|^2 \big)+\|\mathcal{P}_{\ess}^+h\|^2 \le C\big|\langle h,L\mathcal{P}^-h\rangle\big|.$$
\end{lem}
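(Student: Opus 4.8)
The plan is to mirror the proof of Lemma \ref{l: entropy controls unstable}, with the roles of unstable and stable modes exchanged and the inequality in Assumption (2) reversed. First I would apply Lemma \ref{l: expanding mu to the third order}, which under $\|h\|_{C^{2,\alpha}}\le\eps$ gives
\[
\big|\mu(\bar g+h,1)-\mu(\bar g,1)-D^2\mu(\bar g,1)[h,h]\big|\le C_1\|h\|_{C^{2,\alpha}}\|h\|_{H^1}^2\le C_1\eps\|h\|_{H^1}^2.
\]
Combining this with Assumption (2) ($\mu(\bar g+h,1)\le\mu(\bar g,1)$) yields the upper bound
$D^2\mu(\bar g,1)[h,h]\le C_1\eps\|h\|_{H^1}^2$.

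Next I would use the second variation formula \eqref{eq:second derivative}, namely $D^2\mu(\bar g,1)[h,h]=\tfrac12\langle\mathcal P_{\ess}h,L\mathcal P_{\ess}h\rangle$, and invoke Assumption (1), $\|\mathcal P^0h\|^2+\|\mathcal P_\Lie^+h\|^2\le\eps\|h\|_{H^1}^2$, to compare $\langle\mathcal P_{\ess}h,L\mathcal P_{\ess}h\rangle$ with the full $\langle h,Lh\rangle$: the generic and neutral contributions are controlled by $\eps\|h\|_{H^1}^2$ (the generic stable part $\mathcal P_\Lie^-h$ and neutral generic part $\mathcal P^0_\Lie h$ are lumped into the error using that $L$ is bounded on finite-dimensional pieces or, more carefully, that $\langle h,L\mathcal P^-_\Lie h\rangle$ can be absorbed into the dominant stable term $\langle h,L\mathcal P^-h\rangle$ up to a small multiple). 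This gives
\[
\big(\tfrac12-C_2\eps\big)\langle h,L\mathcal P^+h\rangle+\big(\tfrac12+C_2\eps\big)\langle h,L\mathcal P^-h\rangle\le C_1\eps\|h\|_{H^1}^2,
\]
where now the coefficient of the \emph{stable} (negative) term is the larger one. Since $\langle h,L\mathcal P^+h\rangle\ge0$, this forces $-\langle h,L\mathcal P^-h\rangle$ (which is nonnegative) to dominate: $\big(\tfrac12+C_2\eps\big)\big|\langle h,L\mathcal P^-h\rangle\big|\ge \tfrac12\langle h,L\mathcal P^+h\rangle - C_1\eps\|h\|_{H^1}^2$, and in particular $\langle h,L\mathcal P^+h\rangle\le C\eps\|h\|_{H^1}^2+C|\langle h,L\mathcal P^-h\rangle|$.

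Finally I would feed this back into Lemma \ref{l: second vari}, which gives $\|h\|_{H^1}^2\le C(\|\mathcal P^0h\|^2+\langle h,L\mathcal P^+h\rangle-\langle h,L\mathcal P^-h\rangle)$. Using Assumption (1) to bound $\|\mathcal P^0h\|^2\le\eps\|h\|_{H^1}^2$ and the previous line to bound $\langle h,L\mathcal P^+h\rangle$, one obtains $\|h\|_{H^1}^2\le C|\langle h,L\mathcal P^-h\rangle|$ after absorbing the $\eps\|h\|_{H^1}^2$ terms on the left (taking $\eps_0$ small). Then $\|\mathcal P^0 h\|^2+\|\mathcal P^+_\Lie h\|^2\le\eps\|h\|_{H^1}^2\le C\eps|\langle h,L\mathcal P^-h\rangle|$, which upon dividing the displayed desired inequality appropriately gives the $\eps^{-1}$-weighted bound; and $\|\mathcal P^+_{\ess}h\|^2\le C\langle h,L\mathcal P^+_{\ess}h\rangle\le C\langle h,L\mathcal P^+h\rangle\le C|\langle h,L\mathcal P^-h\rangle|$ since $\lambda_+>0$ bounds the unstable essential eigenvalues below. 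Assembling these proves the claim. The main obstacle I anticipate is the careful bookkeeping in the second paragraph: one must separate the four projections $\mathcal P^+_\ess,\mathcal P^-_\ess,\mathcal P^0,\mathcal P_\Lie$ (noting that $D^2\mu$ only sees the essential part, so the generic modes enter only through the comparison of $\langle h,Lh\rangle$ with $\langle\mathcal P_\ess h,L\mathcal P_\ess h\rangle$), and ensure the generic \emph{stable} eigenvalues do not cause sign trouble when they are absorbed into the dominant stable term — this is where Assumption (1), which now controls $\mathcal P^+_\Lie h$ but \emph{not} $\mathcal P^-_\Lie h$, must be used together with the fact that $\langle h, L\mathcal P^-_\Lie h\rangle\le 0$ has the favorable sign and can simply be discarded or merged into $\langle h,L\mathcal P^-h\rangle$.
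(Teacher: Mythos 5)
Your proposal is correct and is exactly the argument the paper has in mind; the paper's own proof of this lemma is a one-line reference back to Lemma \ref{l: entropy controls unstable}, and you have correctly reconstructed the mirror-image argument. You also rightly flag the one genuine asymmetry between the two lemmas: here Assumption~(1) controls $\mathcal{P}^+_{\Lie}h$ and $\mathcal{P}^0 h$ but \emph{not} $\mathcal{P}^-_{\Lie}h$, and your resolution is the correct one. The cleanest way to see it is to write $D^2\mu(\bar g,1)[h,h]=\tfrac12\langle h,Lh\rangle-\tfrac12\langle\mathcal{P}^+_{\Lie}h,L\mathcal{P}^+_{\Lie}h\rangle-\tfrac12\langle\mathcal{P}^-_{\Lie}h,L\mathcal{P}^-_{\Lie}h\rangle$; the last term is $\ge 0$ and may simply be discarded in the lower bound $D^2\mu\ge\tfrac12\langle h,Lh\rangle-C\eps\|h\|_{H^1}^2$, while the middle term is controlled by Assumption~(1) since $V^+_{\Lie}$ is finite-dimensional. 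Combined with $D^2\mu\le C_1\eps\|h\|_{H^1}^2$ from Lemma \ref{l: expanding mu to the third order} and Assumption~(2), and then Lemma \ref{l: second vari} (exactly as you outline), this yields $\|h\|_{H^1}^2\le C|\langle h,L\mathcal{P}^-h\rangle|$ after absorbing small terms, from which all three pieces of the conclusion follow.
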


\begin{proof}
    This follows in a similar way as Lemma \ref{l: entropy controls unstable}.
\end{proof}
 
\begin{lem}[Upper bound by entropy]\label{l: Upper bound by entropy}
There are $\eps_0(\bar{g}),C(\bar{g})>0$ such that given any symmetric 2-tensor $h$ satisfying $\|h\|_{C^3}\le \eps_0$ and
\begin{equation}\label{eq:assume of dominance}
    \|\mathcal{P}_{\Lie}h\|^2 +\big|\langle h,L\mathcal{P}^-h\rangle\big|  +\|\mathcal{P}_{\ess}^0h\|  ^2\le\eps_0\|h\| ^2,
\end{equation}
the following holds 
$$\|h\| \le C(\mu(\bar{g}+h,1)-\mu(\bar{g},1)).$$
\end{lem}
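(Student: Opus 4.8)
The plan is to prove the coercivity content of Lemma \ref{l: Upper bound by entropy}: under the dominance hypothesis \eqref{eq:assume of dominance}, the entropy gap $\mu(\bar g+h,1)-\mu(\bar g,1)$ is bounded below by a multiple of $\|h\|^{2}$ (recall $\|\cdot\|=\|\cdot\|_{L^2_f}$). First I would expand the entropy: since $\bar g$ is a critical point of $\mu(\cdot,1)$ and $\|h\|_{C^{2,\alpha}}\le C\|h\|_{C^{3}}\le C\eps_0$, choosing $\eps_0$ small enough to invoke Lemma \ref{l: expanding mu to the third order}, together with the second variation formula (as used in Lemma \ref{l: entropy controls unstable}), gives
\[
\mu(\bar g+h,1)-\mu(\bar g,1)=\tfrac12\langle\mathcal{P}_{\ess}h,L\mathcal{P}_{\ess}h\rangle+R,\qquad |R|\le C\|h\|_{H^1}^{2}\,\|h\|_{C^{2,\alpha}} .
\]
Splitting $\mathcal{P}_{\ess}h=\mathcal{P}_{\ess}^{+}h+\mathcal{P}_{\ess}^{0}h+\mathcal{P}_{\ess}^{-}h$ and using $L\mathcal{P}_{\ess}^{0}h=0$ and the $L$-invariance of $V_{\ess}^{\pm}$, I obtain $\langle\mathcal{P}_{\ess}h,L\mathcal{P}_{\ess}h\rangle=\langle\mathcal{P}_{\ess}^{+}h,L\mathcal{P}_{\ess}^{+}h\rangle+\langle\mathcal{P}_{\ess}^{-}h,L\mathcal{P}_{\ess}^{-}h\rangle\ge\lambda_{+}\|\mathcal{P}_{\ess}^{+}h\|^{2}-|\langle h,L\mathcal{P}^{-}h\rangle|$, using at the last step that $\langle\mathcal{P}_{\ess}^{-}h,L\mathcal{P}_{\ess}^{-}h\rangle$ and $\langle\mathcal{P}_{\Lie}^{-}h,L\mathcal{P}_{\Lie}^{-}h\rangle$ are both non-positive, so $|\langle\mathcal{P}_{\ess}^{-}h,L\mathcal{P}_{\ess}^{-}h\rangle|\le|\langle\mathcal{P}^{-}h,L\mathcal{P}^{-}h\rangle|=|\langle h,L\mathcal{P}^{-}h\rangle|$.

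Next I would extract the essential unstable part. From \eqref{eq:assume of dominance}, $\|\mathcal{P}_{\Lie}h\|^{2}\le\eps_0\|h\|^{2}$ and $\|\mathcal{P}_{\ess}^{0}h\|^{2}\le\eps_0\|h\|^{2}$; and since every negative eigenvalue of $L$ has modulus at least $\lambda_-$, we get $\lambda_-\|\mathcal{P}^{-}h\|^{2}\le|\langle h,L\mathcal{P}^{-}h\rangle|\le\eps_0\|h\|^{2}$, hence $\|\mathcal{P}_{\ess}^{-}h\|^{2}\le\lambda_-^{-1}\eps_0\|h\|^{2}$. As $\|h\|^{2}=\|\mathcal{P}_{\Lie}h\|^{2}+\|\mathcal{P}_{\ess}^{+}h\|^{2}+\|\mathcal{P}_{\ess}^{0}h\|^{2}+\|\mathcal{P}_{\ess}^{-}h\|^{2}$, taking $\eps_0$ small forces $\|\mathcal{P}_{\ess}^{+}h\|^{2}\ge\tfrac12\|h\|^{2}$, and therefore $\langle\mathcal{P}_{\ess}h,L\mathcal{P}_{\ess}h\rangle\ge(\tfrac{\lambda_{+}}{2}-\eps_0)\|h\|^{2}$.

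The hard part will be to absorb the cubic remainder $R$, which is a priori controlled only by $\|h\|_{H^1}^{2}$ and not by $\|h\|_{L^2_f}^{2}$, so a naive estimate is useless in the regime where $\|h\|_{L^2_f}$ is much smaller than $\|h\|_{H^1}$. The observation that resolves this is that hypothesis \eqref{eq:assume of dominance} controls exactly the dangerous term $|\langle h,L\mathcal{P}^{-}h\rangle|$: starting from the integration-by-parts bound $\|h\|_{H^1}^{2}\le-\langle h,Lh\rangle+C\|h\|^{2}$ (the estimate behind Lemma \ref{l: second vari}) together with the identity $\langle h,Lh\rangle=\langle\mathcal{P}^{+}h,L\mathcal{P}^{+}h\rangle-|\langle h,L\mathcal{P}^{-}h\rangle|$ and $\langle\mathcal{P}^{+}h,L\mathcal{P}^{+}h\rangle\ge0$, one gets $-\langle h,Lh\rangle\le|\langle h,L\mathcal{P}^{-}h\rangle|\le\eps_0\|h\|^{2}$, whence $\|h\|_{H^1}^{2}\le C\|h\|^{2}$. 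Consequently $|R|\le C\|h\|_{C^{2,\alpha}}\|h\|^{2}\le C\eps_0\|h\|^{2}$, and combining with the previous paragraph,
\[
\mu(\bar g+h,1)-\mu(\bar g,1)\ \ge\ \Big(\tfrac{\lambda_{+}}{4}-C\eps_0\Big)\|h\|^{2}\ \ge\ c(\bar g)\,\|h\|^{2}
\]
once $\eps_0$ is chosen small enough, i.e. $\|h\|^{2}\le c(\bar g)^{-1}\big(\mu(\bar g+h,1)-\mu(\bar g,1)\big)$, the coercivity estimate of the lemma. Apart from this remainder bound, the only point needing care is fixing $\eps_0$ small enough for Lemma \ref{l: expanding mu to the third order} to apply, which is automatic since $\|h\|_{C^{2,\alpha}}\le C\|h\|_{C^3}$; everything else is routine spectral bookkeeping.
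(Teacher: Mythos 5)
The proposal is correct and takes essentially the same route as the paper's own proof: expand $\mu$ to second order with cubic remainder (Lemma \ref{l: expanding mu to the third order}), identify $D^2\mu$ with $\tfrac12\langle\mathcal{P}_{\mathrm{ess}}h,L\mathcal{P}_{\mathrm{ess}}h\rangle$, use the dominance hypothesis \eqref{eq:assume of dominance} to make the unstable essential part carry the $L^2_f$-norm, and use the integration-by-parts estimate behind Lemma \ref{l: second vari} together with \eqref{eq:assume of dominance} to get $\|h\|_{H^1}^2\le C\|h\|^2$ and absorb the cubic error; the only cosmetic difference is that the paper bounds $D^2\mu[h,h]$ below directly by $C^{-1}\|h\|_{H^1}^2$ via Lemma \ref{l: second vari}, whereas you first isolate $\|\mathcal{P}^+_{\mathrm{ess}}h\|^2\ge\tfrac12\|h\|^2$. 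Note that, exactly like the paper's proof, your argument delivers the quadratic coercivity $\|h\|^2\le C\bigl(\mu(\bar g+h,1)-\mu(\bar g,1)\bigr)$ rather than the first-power inequality as literally written in the statement, so on this point you are on the same footing as the paper itself.
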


\begin{proof}
Note that the assumption \eqref{eq:assume of dominance} implies $\|\mathcal{P}_{\Lie}h\|_{H^1}^2\le C\eps_0\|h\|^2$.
Thus, combining with  \eqref{eq:assume of dominance} and Lemma \ref{l: second vari} yields
 \begin{align*}
    D^2\mu(\bar{g},1)[h,h]&
    \ge \tfrac12\langle h,Lh\rangle-C\|\mathcal{P}_{\Lie}h\|_{H^1}^2\geq \tfrac12\langle h,L\mathcal{P}^+h\rangle-C\varepsilon_0 \|h\|^2\ge C^{-1}\|h\|^2_{H^1}.
\end{align*}
Moreover, since $\|h\|_{C^3}\le\eps_0$, by Lemma \ref{l: expanding mu to the third order} we have 
\[ D^2\mu(\bar{g},1)[h,h]-C\eps_0\|h\|_{H^1}^2\le \mu(\bar{g}+h,1)-\mu(\bar{g},1).\]
The lemma thus follows by combining these estimates.
\end{proof}

\subsection{Fast convergence of ancient Ricci flows}

In Theorem \ref{p: g_t converge to g_Sigma exponentially} we establish the exponential convergence of a rescaled ancient Ricci flow to a compact integrable shrinker.

The following lemma allows us to reduce finitely many generic eigentensors from any small $h\in\Sym^2(M)$ by changing a diffeomorphism. 
 
\begin{lem}[Lie derivative reduction by changing gauge]\label{l: cancel Lie by diffeomorphisms}
Let $(M,\bar{g},f)$ be a compact shrinker and fix $\lambda\in\mathbb R$.
   For each $k\in\mathbb N$, there exist $C,\eps_0>0$ such that the following holds:
    Let $h\in\Sym^2(M)$ with $\|h\|_{C^{k+1}}\le \eps_0$.
    There is a diffeomorphism $\phi : M \to M$,
    such that
    \begin{enumerate}
    \item $\big\|\exp_{\id}^{-1}\phi\big\|_{C^{k}}\le C\Big\|\mathcal P_{\Lie}^{\ge\lambda}h\Big\| $,\label{ass:exp-id}
        \item $\mathcal{P}_{\Lie}^{\ge\lambda} (\phi^* (\bar{g}+ h) - \bar{g})=0$,\label{ass:proj=0}
        \item $\Big\|\phi^* (\bar{g}+ h) - (\bar{g}+h)+{\mathcal{P}_{\Lie}^{\ge\lambda}h}\Big\|_{C^{k}}\le C\|h\|_{C^{k+1}}\Big\|\mathcal{P}_{\Lie}^{\ge\lambda} h\Big\|$,\label{eq:smaller conf}
    \end{enumerate}
    where $\PP^{\ge\lambda}_{\Lie}$ is the projection to generic eigentensors of eigenvalues not smaller than $\lambda$.
\end{lem}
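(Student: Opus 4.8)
The statement is essentially an implicit function theorem argument, with the wrinkle that we only want to kill the \emph{finitely many} generic modes with eigenvalue $\ge\lambda$, and that we need the Taylor expansion of the gauge action up to the quadratic error term with the stated structure. Here $\mathcal{P}_{\Lie}^{\ge\lambda}$ projects onto the (finite-dimensional, since $L$ has discrete spectrum tending to $-\infty$ and $\mathrm{Im}\,\mathrm{div}^*_f$ is $L$-invariant) span of generic eigentensors $\LL_{V_1}\bar g,\dots,\LL_{V_N}\bar g$ with eigenvalues $\lambda_{j_1},\dots,\lambda_{j_N}\ge\lambda$, where we may take $\{\LL_{V_i}\bar g\}$ to be $L^2_f$-orthonormal. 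The plan is: given small $h$, look for a diffeomorphism of the form $\phi=\phi_{X}$ (via Lemma \ref{l: key lemma for regularity}) with $X=\sum_i c_i V_i$ in the finite-dimensional space $W:=\mathrm{span}\{V_1,\dots,V_N\}$, chosen so that $\mathcal{P}_{\Lie}^{\ge\lambda}(\phi_X^*(\bar g+h)-\bar g)=0$.

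\textbf{Step 1: Set up the finite-dimensional equation.} By Lemma \ref{l: key lemma for regularity}\,\eqref{eq:third}, for $X\in W$ with $\|X\|_{C^{k+1}}$ small,
\[
\phi_X^*(\bar g+h)-\bar g = h + \LL_X\bar g + \LL_X h + R(X,h),
\]
where the remainder satisfies $\|R(X,h)\|_{C^{k}}\le C(\|X\|_{C^{k+1}}^2 + \|X\|_{C^{k+1}}\|h\|_{C^{k+1}}^2)$ — more precisely, applying \eqref{eq:third} to $\bar g$ gives $\|\phi_X^*\bar g-\bar g-\LL_X\bar g\|_{C^k}\le C\|X\|_{C^{k+1}}^2$, and applying \eqref{eq:F function} componentwise to the coefficients of $h$ controls $\phi_X^*h - h - \LL_X h$ by $C\|h\|_{C^{k+2}}\|X\|_{C^{k+1}}^2$; we absorb lower-order terms into a single error. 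Apply $\mathcal{P}_{\Lie}^{\ge\lambda}$. Writing $X=\sum c_i V_i$ and using $\mathcal{P}_{\Lie}^{\ge\lambda}\LL_{V_i}\bar g=\LL_{V_i}\bar g$, the equation $\mathcal{P}_{\Lie}^{\ge\lambda}(\phi_X^*(\bar g+h)-\bar g)=0$ becomes a map $F:\R^N\times(\text{small }h)\to\R^N$,
\[
F(c,h)_i = c_i + \big\langle \LL_{V_i}\bar g,\ h + \LL_X h + R(X,h)\big\rangle_{L^2_f},
\]
using orthonormality to read off the $i$-th coordinate. We have $F(0,0)=0$ and $\partial_c F(0,0)=\mathrm{Id}$ on $\R^N$ (the $c$-linear part of the bracket terms vanishes to first order since $\LL_X h$ and $R$ are at least quadratic in $(c,h)$ jointly).

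\textbf{Step 2: Solve and estimate.} By the implicit function theorem (or a direct contraction mapping on the ball $\{|c|\le 2\|\mathcal{P}_{\Lie}^{\ge\lambda}h\|\}$, which is cleaner here because it gives the linear-in-$\|\mathcal{P}_{\Lie}^{\ge\lambda}h\|$ bound directly), for $\|h\|_{C^{k+1}}\le\eps_0$ small there is a unique small solution $c=c(h)$ with
\[
|c(h)| \le C\big\|\mathcal{P}_{\Lie}^{\ge\lambda}h\big\|.
\]
Indeed the map $c\mapsto -\big(\langle\LL_{V_i}\bar g, h+\LL_Xh+R(X,h)\rangle\big)_i$ sends this ball to itself and is a contraction when $\eps_0$ is small, since its $c$-derivative is $O(\|h\|_{C^{k+1}}+|c|)$. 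Then set $\phi:=\phi_{X}$ with $X=\sum c_i(h)V_i$. Since all $V_i$ are fixed smooth vector fields, $\|X\|_{C^k}\le C|c(h)|$, and Lemma \ref{l: key lemma for regularity}\,\eqref{eq:second} gives $\|\exp_{\id}^{-1}\phi\|_{C^k}=\|X\|_{C^k}\le C\|\mathcal{P}_{\Lie}^{\ge\lambda}h\|$, which is \eqref{ass:exp-id}; property \eqref{ass:proj=0} is the equation we solved. For \eqref{eq:smaller conf}, start from the expansion in Step 1: $\phi^*(\bar g+h)-(\bar g+h) = \LL_X\bar g + \LL_X h + R(X,h)$. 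Note $\LL_X\bar g = \sum c_i\LL_{V_i}\bar g = \mathcal{P}_{\Lie}^{\ge\lambda}\big(\sum c_i\LL_{V_i}\bar g\big)$ is \emph{not} equal to $\mathcal{P}_{\Lie}^{\ge\lambda}h$ in general; rather, applying $\mathcal{P}_{\Lie}^{\ge\lambda}$ to \eqref{ass:proj=0}'s expansion gives $\mathcal{P}_{\Lie}^{\ge\lambda}h + \LL_X\bar g + \mathcal{P}_{\Lie}^{\ge\lambda}(\LL_X h + R) = 0$, i.e. $\LL_X\bar g = -\mathcal{P}_{\Lie}^{\ge\lambda}h - \mathcal{P}_{\Lie}^{\ge\lambda}(\LL_X h+R)$. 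Substituting,
\[
\phi^*(\bar g+h)-(\bar g+h)+\mathcal{P}_{\Lie}^{\ge\lambda}h = (\mathrm{Id}-\mathcal{P}_{\Lie}^{\ge\lambda})(\LL_X h + R(X,h)),
\]
whose $C^k$-norm is $\le C(\|X\|_{C^{k+1}}\|h\|_{C^{k+1}}+\|R\|_{C^k})\le C\|h\|_{C^{k+1}}|c(h)|\le C\|h\|_{C^{k+1}}\|\mathcal{P}_{\Lie}^{\ge\lambda}h\|$, which is \eqref{eq:smaller conf}. (One must take $k\ge 1$ and use Lemma \ref{l: key lemma for regularity} with $m=k+1$, so that $\bar g, f, h$ are $C^{k+2,\alpha}$ — this is why the hypothesis is phrased with $\|h\|_{C^{k+1}}$; a minor $+\alpha$ bookkeeping of Hölder exponents is routine.)

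\textbf{Main obstacle.} The only real subtlety is the bookkeeping in the quadratic remainder: Lemma \ref{l: key lemma for regularity} controls $\phi_X^*\bar g-\bar g-\LL_X\bar g$ and $F\circ\phi_X-F-X(F)$, but to get $\phi_X^*h$ one must apply it to the coefficient functions of $h$ and reassemble, producing an error like $\|h\|_{C^{k+2}}\|X\|_{C^{k+1}}^2$ plus a genuinely bilinear term $\LL_X h$ of size $\|X\|_{C^{k+1}}\|h\|_{C^{k+1}}$ — and it is precisely the bilinear term, not the purely quadratic-in-$X$ one, that produces the factor $\|h\|_{C^{k+1}}\|\mathcal{P}_{\Lie}^{\ge\lambda}h\|$ in \eqref{eq:smaller conf} rather than $\|\mathcal{P}_{\Lie}^{\ge\lambda}h\|^2$. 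Keeping these orders straight, and noting that the loss of one derivative ($C^{k+1}\to C^k$) is consistent with the hypothesis, is the crux; the existence and the linear bound $|c(h)|\le C\|\mathcal{P}_{\Lie}^{\ge\lambda}h\|$ are then an entirely standard contraction argument in a fixed finite-dimensional space.
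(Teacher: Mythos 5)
Your proof is correct and takes essentially the same route as the paper: both reduce to a fixed-point problem in the finite-dimensional space spanned by $\{V_i\}$, the only cosmetic difference being that you run a contraction-mapping argument on the ball $\{|c|\le 2\|\mathcal{P}^{\ge\lambda}_{\Lie}h\|\}$ while the paper centers a ball at the first-order guess $X_0$ (with $\LL_{-X_0}\bar g=\mathcal{P}^{\ge\lambda}_{\Lie}h$) and invokes Brouwer. Both give the same bound on the fixed point, and your approach additionally yields uniqueness, which is harmless.

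There is one technical wrinkle worth fixing, which you half-notice but do not actually resolve. You expand $\phi_X^*(\bar g+h)-\bar g = h+\LL_X\bar g+\LL_X h+R(X,h)$ and want to bound $\|R\|_{C^k}$; you observe that the piece $\phi_X^*h-h-\LL_Xh$ coming from Lemma \ref{l: key lemma for regularity}\,\eqref{eq:F function} applied to the coefficients of $h$ is controlled by $C\|h\|_{C^{k+2}}\|X\|^2$, and then wave at Hölder bookkeeping to argue this is fine. It is not: the hypothesis is $\|h\|_{C^{k+1}}\le\eps_0$, and the Hölder exponent $\alpha$ does not recover a full derivative of $h$. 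The clean fix, which is implicitly what the paper does with its notation $\eta\,O(\|X\|)$, is simply not to extract the bilinear term $\LL_X h$ at all: bound $\phi_X^*h-h$ directly in $C^k$ by $\int_0^1\|\phi_{sX}^*(\LL_{Y_s}h)\|_{C^k}\,ds\le C\|X\|\,\|h\|_{C^{k+1}}$, which only costs one derivative of $h$ and is exactly the order $\eta\,O(\|X\|)$ you need. Then the expansion becomes $\phi_X^*(\bar g+h)-\bar g=h+\LL_X\bar g+O(\|X\|^2)+\eta\,O(\|X\|)$ in $C^k$ (the $O(\|X\|^2)$ coming from $\phi_X^*\bar g-\bar g-\LL_X\bar g$, which only involves derivatives of the fixed smooth metric $\bar g$), your fixed-point equation and the contraction estimate carry over verbatim, and the substitution identity in your Step 2 goes through with the role of $\LL_X h+R$ replaced by $O(\|X\|^2)+\eta\,O(\|X\|)$, giving exactly the bound $C\|h\|_{C^{k+1}}\|\mathcal{P}^{\ge\lambda}_{\Lie}h\|$ for assertion \eqref{eq:smaller conf}.
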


\begin{proof}
Denote $\| h\|_{C^{k+1}} =\eta\le\eps_0$. 
By abuse of notation, we let $V_{\Lie}^{\ge\lambda}$ denote the space of vector fields $X$ such that $\LL_X \bar{g}\in V^{\ge\lambda}_{\Lie}$ and orthogonal to all Killing fields, and we identify $X$ with $\LL_X \bar{g}$.
For any $X\in V_{\Lie}^{\ge\lambda}$, let $\phi_X$ be the diffeomorphism from Lemma \ref{l: key lemma for regularity}, then 
\begin{equation}\label{eq:defn of X}
   h'_X :=\phi_X^*(\bar g+h)-\bar g= (\phi^*_X \bar g - \bar g) + \phi^*_X h=h+\LL_X\bar g+O(\|X\|^2)+ \eta\, O(\|X\|).
\end{equation}
In this proof, $h=O(A)$, with tensor $h$ and $A>0$, implies $\|h\|_{C^k} \leq CA$ for some $C>0$ depending on $\bar g,k$. Also, we note that $\|X\|_{C^k} \leq C\|X\|$ holds, because $V^{\ge\lambda}_{\Lie}$ has finite dimension. 

We define $X_0\in V_{\Lie}^{\ge\lambda}$ by $\mathcal{P}_{\Lie}^{\ge\lambda}(h)=\LL_{-X_0}\bar g$, and let $h_1:=h+\LL_{X_0}\bar g$. Then, 
\begin{equation}\label{eq:central_vector}
\|X_0\|\le C\eta,
\end{equation}
and $\mathcal{P}_{\Lie}^{\ge\lambda}(h_1)=0$. Thus, combining with \eqref{eq:defn of X} yields $h'_{X_0}=h_1+\eta\cdot O(\|X_0\|)$,
 and therefore we have $\mathcal{P}_{\Lie}^{\ge\lambda}(h'_{X_0})=\eta\cdot O(\|X_0\|)$.
Together with \eqref{eq:defn of X}  and \eqref{eq:central_vector}, this implies
\begin{equation}\label{eq:X perturb}
\begin{split}
    \mathcal{P}_{\Lie}^{\ge\lambda}(h_X')&
    =\mathcal{P}_{\Lie}^{\ge\lambda}(h'_X-h'_{X_0})+\eta\cdot O(\|X_0\|)\\
    &=X-X_0+O(\|X-X_0\|^2)+\eta\cdot O(\|X_0\|+\|X\|).    \end{split}
\end{equation}
We define $\Phi: (V_{\Lie}^{\ge\lambda},\|\cdot\|)\to (V_{\Lie}^{\ge\lambda},\|\cdot\|)$ by $\Phi(X)=-\mathcal{P}_{\Lie}^{\ge\lambda} (h'_{X})+X$. Then,  
\[\Phi(X)-X_0=O(\|X\|^2)+\eta\cdot O(\|X_0\|+\|X\|).\]
So choosing $\eps_0$ sufficiently small, we have $\Phi(B(X_0,\|X_0\|))\subset B(X_0,\|X_0\|)$ {in $V_{\Lie}^{\ge\lambda}$ with respect to the norm $\|\cdot\|$}. Since $\Phi$ is continuous, the Brouwer fixed point theorem implies there exists {$X_1\in V_{\Lie}^{\ge\lambda}$ such that $\|X_1-X_0\|\leq \|X_0\|$,} $\Phi(X_1)=X_1$, and $\mathcal{P}_{\Lie}^{\ge\lambda} (h'_{X_1})=0$. So the diffeomorphism $\phi_{X_1}$ satisfies Assertion \eqref{ass:proj=0}. Lemma \ref{l: key lemma for regularity} implies Assertion \eqref{ass:exp-id} and also
\begin{align*}
   \phi_{X_1}^*(\bar g+ h)-(\bar g+h)+\PP^{\ge\lambda}_\Lie h&=\LL_{X_1}(\bar g+ h)+\PP^{\ge\lambda}_\Lie h+ O(\|X_1\|^2)\\
   &=\LL_{X_1-X_0}\bar g+ O(\eta\|X_0\|),
\end{align*}
where in the second equation we used $\|X_1\|\le 2\|X_0\|\le C\eta$ and $\mathcal{P}_{\Lie}^{\ge\lambda}h=\LL_{-X_0}\bar g$.
{Moreover, by \eqref{eq:X perturb} and $\mathcal{P}_{\Lie}^{\ge\lambda} (h'_{X_1})=0$ we see $\|X_1-X_0\|\le C\eta\|X_0\|$, which implies $\|\LL_{X_1-X_0}\bar g\|\le C\eta\|X_0\|$.} This shows Assertion \eqref{eq:smaller conf}.
\end{proof}

In the next lemma, we recall the short-time existence of HMHF and local derivative estimates of the RDTFP from \cite[Appendix A]{BK}.

\begin{lemma}\label{l:shortHMHF}
For any $A,T>0$ and $k\in\mathbb N$, there exist $C,\bar\eps>0$ such that the following holds:
    Let $M$ be a compact manifold and $g_t,g'_t$ be two Ricci flows on $M$, $t\in[0,T]$, which satisfy $|\nabla^m\Rm_{g_t}|,|\nabla^m\Rm_{g'_t}|\le A$, for $m=0,\cdots,10$. Suppose $h_0=g'_0-g_0$ satisfies $\|h_0\|_{C^{k}}\le \eps\le \bar\eps$, then for all $t\in[0,T]$,
    \begin{enumerate}
        \item The HMHF $\chi_t$ with initial condition $\chi_0=\id$ exists and $\chi_t$ is a diffeomorphism;
        \item The RDTF perturbation $h_t=(\chi_t^{-1})^*g'_t-g_t$ satisfies $\|h_t\|_{C^k}\le C\eps$.
    \end{enumerate}
    
\end{lemma}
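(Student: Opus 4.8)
The plan is to treat the harmonic map heat flow $\chi_t : (M, g_t) \to (M, g_0')$ as a strictly parabolic system for the map components, feeding it through the standard short-time existence and regularity machinery, and then convert the resulting HMHF into a Ricci--DeTurck flow perturbation via DeTurck's trick. The whole argument is ``soft'' once one sets it up correctly: the key is that all the background geometry is uniformly controlled by $A$ on the fixed interval $[0,T]$, so every constant that appears depends only on $A$, $T$, and $k$.

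First I would establish part (1). Fixing normal coordinates on $M$ with respect to the fixed reference metric and writing $\chi_t$ in those coordinates, the HMHF equation $\partial_t \chi_t = \Delta_{g_t, g_0'} \chi_t$ becomes a quasilinear strictly parabolic system whose principal part is $g_t^{ij}\partial_i\partial_j \chi^a$ and whose lower order terms involve the Christoffel symbols of $g_t$ and of $g_0'$ composed with $\chi_t$; these are all bounded in terms of $A$ because $|\nabla^m \Rm_{g_t}|, |\nabla^m\Rm_{g'_t}| \le A$ for $m = 0,\dots,10$ gives uniform $C^{k+2}$ (say) control of the metrics after fixing a time-zero gauge. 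Since $\chi_0 = \id$ is smooth and $\|h_0\|_{C^k} \le \bar\eps$ is small, a standard fixed-point argument (Banach contraction in a parabolic H\"older space, as in \cite[Appendix A]{BK}) produces a solution $\chi_t$ on a time interval whose length depends only on $A$ and $T$; combined with continuation/openness this yields existence on all of $[0,T]$ provided $\bar\eps$ is chosen small depending on $A,T,k$. The map $\chi_t$ stays $C^1$-close to $\id$ on $[0,T]$ by integrating $\partial_t\chi_t$ and using parabolic estimates, hence remains a diffeomorphism by the inverse function theorem (or by the quantitative statement in Lemma \ref{l: key lemma for regularity}).

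For part (2), I would observe that by DeTurck's trick the push-forward $h_t := (\chi_t^{-1})^* g_t' - g_t$ satisfies a Ricci--DeTurck-type equation: a strictly parabolic equation for the symmetric $2$-tensor $h_t$ whose coefficients depend on $g_t$ and $h_t$ itself, with inhomogeneous term controlled by $A$ and zero order term quadratic in $h_t$ together with lower order terms involving the curvatures of $g_t, g_t'$. Because $h_0 = g_0' - g_0$ has $\|h_0\|_{C^k} \le \eps \le \bar\eps$ and the equation has uniformly parabolic coefficients with $A$-bounded data on $[0,T]$, the standard parabolic Schauder/energy estimates (again \cite[Appendix A]{BK} or \cite{krylov}) propagate the smallness: a continuity/bootstrap argument shows $\|h_t\|_{C^k} \le C\eps$ on $[0,T]$ as long as $\bar\eps$ is small enough that the quadratic terms do not overwhelm the linear decay/boundedness. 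Shrinking $\bar\eps$ once more if necessary absorbs all the nonlinear corrections.

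The main obstacle — such as it is — is purely bookkeeping: ensuring that the ``constants depend only on $A,T,k$'' claim is honest, i.e. that the uniform curvature bounds for $m \le 10$ genuinely translate into the uniform $C^{k+2,\alpha}$-control of $g_t$ and $g_t'$ (in a suitable time-zero harmonic gauge) that the parabolic estimates consume, and that the time of existence from the fixed-point argument really is bounded below independently of the specific flows, only through $A$. None of this is deep; it is exactly the content of \cite[Appendix A]{BK}, and I would cite that reference for the technical estimates rather than reproduce them, presenting the above as the structural outline of why the lemma holds.
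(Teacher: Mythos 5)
Your proposal is essentially the same approach as the paper: both rely on the short-time existence and a priori estimates from \cite[Appendix A]{BK}, combined with a continuation argument. The one place where the paper is more precise --- and where your write-up is vaguer --- is the mechanism that closes the bootstrap. The paper does not appeal to a generic ``Schauder plus continuity'' story; it isolates the specific scalar inequality $\partial_t|h_t|^2 \le (g_t+h_t)^{ij}\nabla^2_{ij}|h_t|^2 + C_0|\Rm_{g_t}||h_t|^2$ from \cite[(A.7)]{BK}, deduces from the heat kernel the $C^0$ bound $\|h_t\|_{C^0}\le C\|h_0\|_{C^0}$ \emph{for as long as the flow exists}, then feeds that $C^0$ control into \cite[Proposition A.24]{BK} to extend the HMHF to $[0,T]$, and finally upgrades $C^0$ to $C^k$ via the local derivative estimates of \cite[Lemma A.14]{BK}. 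This ordering ($C^0$ maximum-principle estimate first, existence second, higher regularity last) is what makes the ``continuation/openness'' step honest; your version asserts that the existence time ``depends only on $A$ and $T$,'' which is only true because of this a priori $C^0$ bound, and you don't quite say so. Also, a small slip: the HMHF here runs between the time-dependent pair $(g'_t,g_t)$, not into the fixed metric $g'_0$.
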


\begin{proof}

Let $0<T^*\le T$ be the largest time so that the assertions hold true.
First, seeing that 
the RDTF perturbation $h_t$ satisfies (see \cite[(A.7)]{BK})  \[
\partial_t |h_t|^2 \leq (g_t + h_t)^{ij} \nabla^2_{ij} |h_t|^2 + C_0 |\mathrm{Rm}_{g_t}| \cdot |h_t|^2,
\]
for some dimensional constant $C_0>0$,
a standard heat kernel estimate implies
\begin{equation}\label{e:keep}
    \|h_t\|_{C^0}\le C\|h_0\|_{C^0}\le C\eps.
\end{equation}
Next,
by \cite[Proposition A.24]{BK} (the short-time existence of HMHF), choosing $\bar\eps$ sufficiently small, then \eqref{e:keep} guarantees that  
assertion (1) holds up to $T$, and the heat kernel estimate on $[0,T]$ implies $\|h_t\|_{C^0}\le C\eps$. 
Lastly, by \cite[Lemma A.14]{BK} (the local derivative estimates for RDTF), we have $\|h_t\|_{C^{k}}\le  C\eps$.
\end{proof}

Next, we prove a lemma which shows that the initial smallness of a metric perturbation is preserved by Ricci flow up to a multiple.

\begin{lemma}\label{lem:pseudolocality}
Let $(M,\bar g_\tau)$, $\tau\in[0,T]$, be a rescaled Ricci flow.
 For any $T>0$ and $3\le k\in\mathbb N$, there exist $C,\bar\eps>0$ with the following significance. Given any metric $g$ on $M$ with $\|\bar g_0-g\|_{C^{k+2}}\le \bar\eps$, the smooth rescaled Ricci flow $g_\tau$  with  $g_0=g$ and the HMHF $\chi_\tau$ between $g_\tau$ and $\bar g_\tau$ with $\chi_0=\id$ exist and stay smooth for $\tau \in [0,T]$.  Moreover, for all $\tau\in[0,T]$
 \[\|\exp^{-1}_{\id}\chi_\tau\|_{C^{k}}+\|g_\tau-\bar g_\tau\|_{C^{k}}\le C\|g-\bar g_0\|_{C^{k+2}},\]
 where the norms and derivatives are with respect to $\bar g_0$.
\end{lemma}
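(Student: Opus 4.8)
The plan is to combine short-time existence theory for Ricci flow (DeTurck's trick) with the short-time HMHF/RDTF estimates of Lemma \ref{l:shortHMHF}, together with a continuity (open--closed) argument in $\tau$. First I would note that since $\bar g_\tau$ is a \emph{fixed} smooth rescaled Ricci flow on the compact manifold $M\times[0,T]$, there is a uniform bound $|\nabla^m \Rm_{\bar g_\tau}|\le A$ for $m=0,\dots,10$ and all $\tau\in[0,T]$, with $A=A(\bar g_\tau,T)$. The goal is to run the Ricci flow $g_\tau$ starting from $g_0=g$ close to $\bar g_0$, and show it exists on all of $[0,T]$ while staying uniformly close to $\bar g_\tau$; the HMHF statement and the $C^k$-estimate will then follow from Lemma \ref{l:shortHMHF} applied on $[0,T]$ with the two flows being $g'_\tau=g_\tau$ and $g_\tau^{\mathrm{bg}}=\bar g_\tau$.

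The key steps, in order. \textbf{Step 1 (short-time existence with curvature control).} By the standard DeTurck--trick short-time existence theory (e.g.\ \cite{bamler2023compactness}-style local estimates, or equivalently the RDTF local theory in \cite[Appendix A]{BK}), for $\|g-\bar g_0\|_{C^{k+2}}\le\bar\eps$ small the rescaled Ricci flow $g_\tau$ with $g_0=g$ exists on a maximal interval $[0,T^*)$ with $T^*>0$, stays smooth, and on any compact subinterval where $\|g_\tau-\bar g_\tau\|_{C^2}$ stays bounded one gets uniform higher-order curvature bounds $|\nabla^m\Rm_{g_\tau}|\le 2A$ for $m=0,\dots,10$ (say), by Shi-type estimates comparing to $\bar g_\tau$; in particular $T^*>T$ as long as the $C^2$-closeness is maintained. \textbf{Step 2 (apply Lemma \ref{l:shortHMHF}).} On $[0,\min(T^*,T)]$ both $g_\tau$ and $\bar g_\tau$ satisfy the curvature hypothesis of Lemma \ref{l:shortHMHF} (with constant $2A$), and $h_0=g-\bar g_0$ satisfies $\|h_0\|_{C^{k}}\le\bar\eps$ small; that lemma gives the HMHF $\chi_\tau$ (a diffeomorphism, $\chi_0=\id$) and the bound $\|(\chi_\tau^{-1})^*g_\tau-\bar g_\tau\|_{C^k}\le C\bar\eps$. \textbf{Step 3 (transfer the estimate back to $g_\tau$).} Since $\chi_\tau$ is generated by the HMHF ODE whose right-hand side is $\Delta_{g_\tau,\bar g_\tau}\chi_\tau$, and the metrics are $C^k$-close, $\exp^{-1}_{\id}\chi_\tau$ is controlled: $\|\exp^{-1}_{\id}\chi_\tau\|_{C^k}\le C\|g-\bar g_0\|_{C^{k+2}}$ (this is exactly the kind of estimate in Lemma \ref{l:shortHMHF}(1) together with \cite[Lemma A.14]{BK}; one loses two derivatives going from the metric perturbation to the vector field, which is why the hypothesis is in $C^{k+2}$). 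Then $\|g_\tau-\bar g_\tau\|_{C^k}\le C\|(\chi_\tau^{-1})^*g_\tau-\bar g_\tau\|_{C^k}+C\|\exp^{-1}_{\id}\chi_\tau\|_{C^{k}}\le C\|g-\bar g_0\|_{C^{k+2}}$, by the diffeomorphism-pullback Taylor expansion \eqref{eq:third} of Lemma \ref{l: key lemma for regularity}. \textbf{Step 4 (continuity/bootstrap).} Let $T^{**}\le\min(T^*,T)$ be the largest time up to which the conclusions hold. If $T^{**}<T$, then on $[0,T^{**}]$ we have $\|g_\tau-\bar g_\tau\|_{C^2}\le C\bar\eps\ll1$, so by Step 1 the flow extends past $T^{**}$ with curvature still bounded by $2A$, and Steps 2--3 re-apply on a slightly larger interval, contradicting maximality. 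Hence $T^{**}=T$, proving the lemma. Choosing $\bar\eps$ small enough (depending on $T$, $k$, $\bar g_\tau$, and the constant $C$ from Lemma \ref{l:shortHMHF}) makes all smallness requirements consistent.

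The main obstacle I expect is \textbf{Step 1}: one needs genuine \emph{uniform-in-$\tau$} a priori curvature bounds for the perturbed flow $g_\tau$ on the \emph{fixed} time interval $[0,T]$, not just short-time existence. The clean way is to phase everything through the Ricci--DeTurck flow: pushing $g_\tau$ forward by the HMHF $\chi_\tau$ yields a \emph{strictly parabolic} RDTF $h_\tau=(\chi_\tau^{-1})^*g_\tau-\bar g_\tau$ whose $C^0$-bound follows from the maximum principle (the inequality $\partial_t|h_t|^2\le (g_t+h_t)^{ij}\nabla^2_{ij}|h_t|^2+C_0|\Rm|\,|h_t|^2$ quoted in the proof of Lemma \ref{l:shortHMHF}), and whose higher derivative bounds follow from interior parabolic Schauder/Shi estimates for RDTF (\cite[Lemma A.14]{BK}); the curvature bounds on $g_\tau$ are then recovered since $g_\tau$ and $\bar g_\tau$ are $C^k$-close. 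In other words, Lemma \ref{l:shortHMHF} essentially packages the hard analytic input, and the present lemma is its ``global on $[0,T]$'' upgrade obtained by the continuity argument above; the only care needed is that the constant $A$ bounding $\Rm_{g_\tau}$ does not degenerate, which is guaranteed precisely because the $C^2$-perturbation stays $\le C\bar\eps$ throughout.
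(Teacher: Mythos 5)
Your overall strategy---bound the RDTF perturbation $h_\tau$ via Lemma~\ref{l:shortHMHF}, then transfer back to $g_\tau$ through an estimate on $\chi_\tau$---matches the paper's proof. But Step~3 has a genuine gap: you assert $\|\exp^{-1}_{\id}\chi_\tau\|_{C^k}\le C\|g-\bar g_0\|_{C^{k+2}}$ and attribute it to ``Lemma~\ref{l:shortHMHF}(1) together with \cite[Lemma A.14]{BK},'' but Lemma~\ref{l:shortHMHF}(1) only asserts that $\chi_\tau$ \emph{exists and is a diffeomorphism} (no quantitative estimate on $\chi_\tau-\id$), and \cite[Lemma A.14]{BK} is a derivative estimate for the RDTF perturbation $h_\tau$, not for the HMHF map. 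The estimate you need must be derived, and the paper does it by writing $\partial_\tau\chi_\tau=(\Delta_{\bar g_\tau+h_\tau,\bar g_\tau}\id)\circ\chi_\tau$ and computing from the map-Laplacian formula \eqref{eq:Laplacian of map} that $(\Delta_{\bar g_\tau+h_\tau,\bar g_\tau}\id)^\gamma=(\bar g_\tau+h_\tau)^{ij}\big(-\Gamma^k_{ij}+\hat\Gamma^k_{ij}\big)$, a difference of Christoffel symbols and hence $O(\|h_\tau\|_{C^1})$ pointwise; this gives $\|\partial_\tau\chi_\tau\|_{C^{k+1}}\le C\|h_\tau\|_{C^{k+2}}$, and integrating $X^i_\tau=\int_0^\tau\partial_s\chi^i_s\,ds$, together with the comparison $\|X_\tau\|\sim\|\chi_\tau-\id\|$ from \eqref{eq:phi_X diffeo} of Lemma~\ref{l: key lemma for regularity}, yields $\|X_\tau\|_{C^{k+1}}\le C\eps$. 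The conclusion then follows from the triangle inequality $\|g_\tau-\bar g_\tau\|_{C^k}\le\|g_\tau-(\chi_\tau)_*g_\tau\|_{C^k}+\|h_\tau\|_{C^k}$. You flag ``one loses two derivatives,'' which is consistent with this accounting (one derivative in the map-Laplacian step, one in pushing forward by $\chi_\tau$), but you should carry out the computation rather than cite lemmas that do not contain it.

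Apart from this, your proposal is the same route as the paper, and your explicit Steps~1 and~4 (existence of $g_\tau$ with uniform curvature bounds on $[0,T]$, and the open--closed bootstrap) make precise something the paper's proof leaves implicit: Lemma~\ref{l:shortHMHF} takes as hypotheses that \emph{both} flows exist on $[0,T]$ with $|\nabla^m\Rm|\le A$, so one does need the Shi-type a priori argument you sketch to justify invoking it. That is a worthwhile clarification.
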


\begin{proof}
Let $\eps:=\|h_0\|_{C^{k+2}}$.
Note $h_\tau=(\chi_{\tau})_*g_\tau-\bar{g}_\tau$ satisfies the rescaled RDTFP equation \eqref{eq:rescaled RDTF Perturbation equation}. {By Lemma \ref{l:shortHMHF}, we have $\|h_\tau\|_{C^{k+2}}\le  C\eps$ for some $C(T)>0$.}

For each fixed $\tau$, by \eqref{eq:phi_X diffeo} in Lemma \ref{l: key lemma for regularity}, the vector field $X_\tau:=\exp_{\id}^{-1}\chi_\tau$ satisfies $C^{-1}\|\chi_\tau^i-x^i\|\leq \|X^i_\tau\|\leq  C\|\chi^i_\tau-x^i\|$  under local normal coordinates $\{x^i\}$ for any $C^{k}$-norm.
Since 
by map Laplacian equation \eqref{eq:Laplacian of map} we have 
\begin{equation}
 \begin{split}
 (\Delta_{\bar{g}_\tau+h_\tau,\bar{g}_\tau}\id)^{\gamma}&=(\bar g_\tau+h_\tau)^{ij}\big(-\Gamma^k_{ij}+\hat\Gamma^k_{ij}\big),
 \end{split}
\end{equation}
where $\Gamma,\widehat\Gamma$ are the Christoffel symbols of $\bar g_\tau+h_\tau,\bar g_\tau$, and using 
$$\partial_\tau\chi_{\tau}=\Delta_{g_\tau,\bar{g}_\tau} \chi_{\tau}=(\Delta_{\bar{g}_\tau+h_\tau,\bar{g}_\tau}\id)\circ \chi_{\tau},$$
it follows that $\|\partial_\tau \chi_\tau\|_{C^{k+1}}\le C\|h_\tau\|_{C^{k+2}}$.
Note also $X^i_\tau=\int_0^\tau \partial_s \chi^i_s\,ds$, 
we have
\begin{align*}
    \|X_\tau\|_{C^{k+1}}
    &\le C\int_0^\tau \|h_s\|_{C^{k+2}}\,ds
    \le C\eps.
\end{align*}
This implies $\|\exp^{-1}_{\id}\chi_\tau\|_{C^{k+1}}\le C\eps$, and together with $\|h_\tau\|_{C^{k+2}}\le  C\eps$ this implies 
$$\|g_\tau-\bar g_\tau\|_{C^{k}}\le \|g_\tau-(\chi_\tau)_*g_\tau\|_{C^{k}}+\|h_\tau\|_{C^{k}}\le C\eps,$$ and finishes the proof.
\end{proof}

{We define the following $H_W$-norm on $\textnormal{Sym}^2(M)$ by
\begin{equation}\label{def:H_W}
    \|h\|_{H_W}^2=\|h\| ^2-\langle \PP^-h, L \PP^- h\rangle :=\sum_{i=1}^\infty (1+ \lambda_i^+)\|\PP_i h\| ^2,
\end{equation}
where $\PP_i$ is the projection to the space spanned by $ h_i$ and $\lambda_i^+=\max\{\lambda_i,0\}$, where $\lambda_1,\lambda_2,\cdots$ are eigenvalues of $L$, see Section \ref{sec: existence}.
It is clear that 
\begin{equation}\label{eq:H^1_H_W}
    C^{-1}\|h\|_{H^1}\le \|h\|_{H_W}\le C\|h\|_{H^1}.
\end{equation}}

Recall that for a shrinker $(M,\bar g,f)$, $\xi_\tau$ denotes the flow of $\nabla f$ with $\xi_{0}=\id$, and $\xi^*_\tau \bar g$ is the rescaled Ricci flow starting from $\bar g.$

\begin{prop}\label{prop: unstable mode has certain portion}
Let $(M,\bar g,f)$ be a compact shrinker.
There exists $\underline T>0$ such that for any 
$T>\underline T$, there exist $\eps,\eps',C>0$ with the following significance.
Suppose $\{g_\tau\}_{\tau\in [0,T]}$ is a rescaled Ricci flow such that $h_0=g_0-\bar g$ satisfies
\begin{enumerate}
    \item\label{assump:i} $\|h_0\|_{C^{13}}\le\eps'$;
    \item\label{assump:ii} $\|\mathcal{P}_{\Lie}h_0\| ^2_{H_W}+\|\mathcal{P}^0 h_0\|^2_{H_W}\le \eps \|h_0\|^2  $;
    \item\label{assump:iii} $\mu(g_0,1)\ge\mu(\bar{g},1)$.
\end{enumerate}
Then there exists a diffeomorphism $\phi:M\to M$, with $h':=\phi^*(\xi^{-1}_{T})^*g_T-\bar{g}$ such that
\begin{enumerate}
    \item\label{eq:final time h is bounded by the entropy} $\|h'\|_{C^{13}}\leq C\min\{\mu(g_T,1)-\mu (\bar{g},1),\|h'\| \}$;
    \item\label{asser2} $\|\mathcal{P}_{\Lie}h'\|^2 _{H_W}+\|\mathcal{P}^0 h'\|^2_{H_W}\le \eps \|h'\|^2  $;
    \item\label{asser3} $\|h'\|\ge 100\|h_0\|$;
    \item\label{asser4} {$\|\exp_{\id}^{-1}\phi\|_{C^{11}}\le C\|h_0\|_{C^{13}}$.}
\end{enumerate}
\end{prop}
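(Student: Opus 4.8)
The plan is to run a one-step version of the iteration scheme described in the outline: starting from the good gauge at time $0$, flow the perturbation for a short fixed time $T$ using the short-time HMHF and the rescaled RDTFP equation, then correct the diffeomorphism at time $T$ so that the generic modes are killed again. Concretely, first apply Lemma \ref{l:shortHMHF} (together with Lemma \ref{lem:pseudolocality} to control the gauge vector field) to the Ricci flows $g_\tau$ and $\xi_\tau^*\bar g$ on $[0,T]$: by Assumption \eqref{assump:i} the initial perturbation $h_0$ is small in $C^{13}$, so there is a HMHF $\chi_\tau$ with $\chi_0=\id$, it stays a diffeomorphism, and the rescaled RDTFP $h_\tau := (\chi_\tau^{-1})^*g_\tau - \bar g_\tau$ (pulled back by $\xi_\tau$ to live over $\bar g$) satisfies $\|h_\tau\|_{C^{13}}\le C\|h_0\|_{C^{13}}\le C\eps'$ and $\|\exp_{\id}^{-1}\chi_\tau\|_{C^{11}}\le C\|h_0\|_{C^{13}}$ for all $\tau\in[0,T]$. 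Then apply Lemma \ref{l: cancel Lie by diffeomorphisms} with $\lambda$ chosen so that $\PP_\Lie^{\ge\lambda} = \PP_\Lie$ (i.e.\ $\lambda$ below the smallest generic eigenvalue, so we cancel all generic modes) to the tensor $(\xi_T^{-1})^*g_T - \bar g$, obtaining a diffeomorphism $\psi$; setting $\phi := \chi_T \circ \psi$ (or rather composing appropriately so that $h' = \phi^*(\xi_T^{-1})^*g_T - \bar g$ has $\PP_\Lie h' = 0$) gives Assertion \eqref{asser4} by combining the two $C^{11}$-bounds, using that $\|\exp^{-1}_{\id}\psi\|\le C\|\PP_\Lie h_T\|\le C\|h_0\|_{C^{13}}$.

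For Assertion \eqref{asser2}, note that $\PP_\Lie h' = 0$ by construction, so only the neutral part $\PP^0 h'$ must be controlled. The neutral modes of the rescaled RDTFP evolve at a bounded rate (the linearized operator $L$ vanishes on them, so their growth is quadratic/controlled by the nonlinearity times $\eps'$), hence over the fixed time $T$ one gets $\|\PP^0 h_T\|_{H_W} \le \|\PP^0 h_0\|_{H_W} + CT\eps'\|h_\tau\|$; combined with Assumption \eqref{assump:ii} and the smallness of $\eps'$ this yields $\|\PP^0 h'\|_{H_W}^2 \le \eps\|h'\|^2$ provided we also know $\|h'\|$ is comparable to $\|h_0\|$ from below — which is Assertion \eqref{asser3}. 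Assertion \eqref{asser3} is the place where the entropy enters: since the generic and neutral modes are small on all of $[0,T]$ (by the bounded-rate argument, they stay small if they start small), Lemma \ref{l: entropy controls unstable} applies at every time $\tau\in[0,T]$ using Assumption \eqref{assump:iii} (the entropy is monotone under Ricci flow, hence $\mu(g_\tau,1)\ge\mu(\bar g,1)$), giving that the essential unstable projection $\PP^+_\ess h_\tau$ dominates; the linearized equation for $\PP^+_\ess h_\tau$ then forces exponential growth at rate at least $\bar\lambda_{I_\ess} > 0$ up to lower-order corrections, so choosing $\underline T$ large enough (so that $e^{\bar\lambda_{I_\ess} T}$ exceeds, say, $200$) guarantees $\|h_T\| \ge \|\PP^+_\ess h_T\| \ge 100\|h_0\|$ after absorbing the nonlinear errors.

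Finally, Assertion \eqref{eq:final time h is bounded by the entropy}: the $C^{13}$-bound $\|h'\|\le\|h'\|_{C^{13}}\le C\eps'$ follows from the Schauder-type estimates already invoked, and the upper bound $\|h'\|_{C^{13}}\le C\min\{\mu(g_T,1)-\mu(\bar g,1),\|h'\|\}$ splits into two parts. The bound by $\|h'\|$ is interpolation/elliptic regularity for the RDTFP together with the uniform curvature bounds (all higher $C^k$-norms of a small RDTFP solution are controlled by its $C^0$ or $L^2$ norm via parabolic smoothing over the fixed time $T$, having used a couple of spare derivatives of the initial $C^{13}$-bound). The bound by $\mu(g_T,1)-\mu(\bar g,1)$ is exactly Lemma \ref{l: Upper bound by entropy}: at time $T$ the hypothesis \eqref{eq:assume of dominance} holds because $\PP_\Lie h' = 0$, $\PP^0_\ess h'$ is small by \eqref{asser2}, and $\langle h', L\PP^- h'\rangle$ is small relative to $\|h'\|^2$ since the essential unstable mode dominates (again from Lemma \ref{l: entropy controls unstable} at time $T$); one then uses diffeomorphism-invariance of $\mu$ to replace $\mu(\bar g + h',1)$ by $\mu(g_T,1)$.

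\textbf{Main obstacle.} The crux is Assertion \eqref{asser3}, the definite growth $\|h'\|\ge 100\|h_0\|$: this requires showing that the generic and neutral modes genuinely stay small throughout $[0,T]$ — not just at the endpoints — so that the entropy-dominance lemma (Lemma \ref{l: entropy controls unstable}) is applicable at every intermediate time, and then carefully propagating the resulting exponential lower bound on $\|\PP^+_\ess h_\tau\|$ forward while the nonlinear and cross terms ($E$-type terms of size $\eps'\|h_\tau\|$) are absorbed. Getting the bookkeeping right — that the smallness constant $\eps$ in \eqref{assump:ii} is preserved (with a fixed factor, say back to $\eps$) after the gauge correction at time $T$, using Lemma \ref{l: cancel Lie by diffeomorphisms}\eqref{eq:smaller conf} and the bounded-rate estimate on neutral modes — and choosing the constants $\underline T, T, \eps, \eps'$ in the correct order is where the real work lies.
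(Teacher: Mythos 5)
Your high-level plan (short-time HMHF, then gauge correction at time $T$, then verify assertions) is the same as the paper's, and your identification of the "main obstacle" (propagating the smallness of generic and neutral modes across $[0,T]$ so the entropy dominance applies) is accurate. But there is a concrete error in the gauge-correction step that undermines the rest.

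You propose to apply Lemma~\ref{l: cancel Lie by diffeomorphisms} with $\lambda$ "below the smallest generic eigenvalue, so we cancel all generic modes," and then use $\PP_\Lie h'=0$ in verifying Assertion~\eqref{asser2}. This cannot work. There are infinitely many negative generic eigenvalues (the eigenvalues of $L$ tend to $-\infty$), so $V^{\ge\lambda}_{\Lie}$ is finite-dimensional only for $\lambda>-\infty$ strictly, and $V_{\Lie}=V^{\ge-\infty}_{\Lie}$ is infinite-dimensional. Lemma~\ref{l: cancel Lie by diffeomorphisms} is proved by Brouwer's fixed point theorem on the finite-dimensional space $V^{\ge\lambda}_{\Lie}$, so it can only kill $\PP^{\ge\lambda}_\Lie h$ for a real $\lambda$. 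The paper takes $\lambda=0$, so that $\phi$ only cancels $\PP^{\ge 0}_\Lie h_T$, and then must control the infinite tail $\PP^-_\Lie h'$ by a completely different mechanism: a comparison ODE argument (Lemma~\ref{lem:B5}) propagating forward from the assumption at $\tau=0$ that $\PP_\Lie h_0$ is small relative to $\|h_0\|$, showing that $\|\PP^-_\Lie h_T\|_{H_W}^2$ remains a small multiple of $\|h_T\|^2$. Your plan has no substitute for this and Assertion~\eqref{asser2} is left unproved.

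A second, related issue: you propose to apply Lemma~\ref{l: entropy controls unstable} at every intermediate time $\tau\in[0,T]$, establishing its hypothesis (1) via a hand-waved "bounded-rate" argument for the neutral modes. But hypothesis (1) also requires $\|\PP_\Lie h_\tau\|_{H^1}^2\le\eps\|h_\tau\|_{H^1}^2$ at each $\tau$, and the unstable generic modes $\PP^+_\Lie h_\tau$ are not bounded-rate: they grow exponentially forward in time at rates up to $\lambda_1$, possibly faster than the unstable essential part, so this ratio is not automatically preserved. The paper resolves this by invoking Lemma~\ref{l: entropy controls unstable} only at $\tau=0$ and then carrying the dominance forward entirely via Lemma~\ref{lem:B5}, which shows that the ratio degrades only by a controllable factor $e^{CT}$ that can be absorbed by choosing $\eps$ small relative to $T$. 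You flag this as the crux but do not supply the argument; without a concrete propagation mechanism, Assertions~\eqref{asser3} and~\eqref{eq:final time h is bounded by the entropy} are not established either.
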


\begin{proof}
Let $\chi_\tau$ be the HMHF starting from $\chi_0=\id$ between $g_\tau$ and $\xi^*_\tau\bar g$, then $h_\tau=(\xi_\tau^{-1})^*(\chi_\tau^{-1})^*g_\tau-\bar g$ satisfies the modified rescaled RDTFP equation \eqref{eq:most important eq} with background $\bar g$. 
Moreover, the norms and derivatives are with respect to $\bar g$ in this proof.
From now on, we let $C>0$ denote a general constant that only depends on $\bar g$, and we will choose the constants $C,T,\eps,\eps'$ in the way that each only depends on the preceding ones, namely $C=C(\bar g)$, $T=T(\bar g, C)$, $\eps=\eps(\bar g,C,T)$, and $\eps'=\eps'(\bar g, C, T,\eps)$. To be specific,
\begin{enumerate}
    \item Take $T>0$ so that {$Ce^{-\lambda_+T}<\eps_0$}, where $\eps_0$ is smaller than the $\eps_0$ from Lemma \ref{l: entropy controls unstable}, \ref{l: Upper bound by entropy}, and \ref{l: cancel Lie by diffeomorphisms} (with $k=12$) and $\eps_0<10^{-4}$. We recall $\lambda_\pm$ from \eqref{eq:eigenvalues of L min}.  
    \item Choose $\eps$ small enough to get $C\eps e^{3\lambda^+_1 T}<\eps_0$, where $\lambda^+_1:=\max\{0,\lambda_1\}$ and $\lambda_1$ is the first eigenvalue  from \eqref{eq:eigenvalues of L}. 
    \item Let $\chi_\tau$ be the modified HMHF between $g_\tau$ and $\bar g$ with $\chi_0=\id$ and let $h_\tau=(\chi_{\tau}^{-1})^*g_\tau-\bar{g}$. By Lemma \ref{l:shortHMHF}, there is $\eps'>0$ such that $\chi_\tau$ exists for $[0,T]$, $\chi_\tau$ is a diffeomorphism, and $\|h_\tau\|_{C^{13}}\le  \eps_1<\eps$, where $\eps_1>0$ is chosen so that Lemma \ref{lem:general_V} is applicable, and Lemma \ref{lem:B5} is applicable for any $A\in(C^{-1}\eps,C\eps^{-1})$ and various invariant subspaces which will be clear below.
\end{enumerate}

First, by Assumption \eqref{assump:ii}\eqref{assump:iii} and Lemma \ref{l: entropy controls unstable} we have
\begin{align*}
\|\mathcal{P}^-h_0\|^2_{H_W}+ \eps^{-1}(\|\mathcal{P}_{\Lie}h_0\|^2_{H_W}+\|\mathcal{P}^0h_0\|^2 )\leq  C \|\mathcal{P}^+h_0\|^2 .  
\end{align*}
Thus, we can apply Lemma \ref{lem:B5} with a suitable $\delta=\delta(\bar g)>0$ so that
\begin{align}
    &Ce^{-\lambda_+\tau} \|\mathcal{P}^+h_\tau\|^2 \geq  e^{ \lambda_- \tau}\|\mathcal{P}^-h_\tau\|^2_{H_W} \label{eq:combo1}\\
   &C \eps e^{-\lambda_+\tau} \|\mathcal{P}^+h_\tau\|^2 \geq e^{ \lambda_- \tau}\|\mathcal{P}_{\Lie}^-h_\tau\|^2_{H_W} +e^{- 3\lambda^+_1 \tau}\|\mathcal{P}^+_\Lie h_\tau\|^2 +\|\mathcal{P}^0h_\tau\|^2.\label{eq:combo2}
\end{align}
In particular, these imply $\|h_\tau\|_{H_W}\le C\|\PP^+h_\tau\|$ for $\tau\in [0,T]$. So taking $V=V^+$ in Lemma \ref{lem:general_V} implies $\frac{d}{d\tau}\|\PP^+h_\tau\|_{H_W}^2\geq 0$, namely $\|\PP^+h_\tau\|_{H_W}^2$ monotone increases. Thus,
\begin{equation*}
\sup_{s \in [0,\tau]}\|h_s\|_{H_W} \leq C \|\PP^+h_\tau\|.
\end{equation*}
Hence, by standard parabolic estimates, we have
\[\|h_T\| _{C^{k}}\le  C_k\max_{\tau\in[T-1,T]}\|h_\tau\|\le C_k\|\PP^+h_T\|,\]
where $C_k$ only depends on $\bar g, k$. Furthermore, by integrating \eqref{eq:first1} we get
\begin{align}\label{eq:h_T 101 larger than h_0}
     2e^{\lambda_+ T}\|h_0\|^2 \leq  \|h_T\|^2.
\end{align}

Moreover, since $Ce^{-\lambda_+T}<\eps_0$ and $C\eps e^{3\lambda^+_1 T}<\eps_0$, by \eqref{eq:combo1}, \eqref{eq:combo2} we have
\[\|\mathcal{P}_{\Lie}^+h_T\|^2 +\|\PP^- h_T\|_{H_W}^2  +\|\mathcal{P}^0h_T\|  ^2\le\eps_0\|h_T\| ^2.\]
This enables us to apply Lemma \ref{l: Upper bound by entropy} at $T$ and obtain
\begin{equation}\label{eq:Ck-norm up bdd by entropy}
    \|h_T\| _{C^{13}}\le  C\|\PP^+h_T\|  \le C(\mu(g_T,1)-\mu(\bar{g},1)).
\end{equation}

Note \eqref{eq:combo2} implies $\|\PP^{\ge0}_\Lie h_T\|\le C\varepsilon^{\frac{1}{2}}  e^{\frac32\lambda^+_1 T} \|h_T\|\le C^{-1}\|h_T\|$. 
By Lemma \ref{l: cancel Lie by diffeomorphisms} for $h_T$ (with $k=12$), we can find a diffeomorphism $\psi$ on $M$ such that
\begin{equation}\label{eq:verify four}
\left\|\exp_{\id}^{-1}\psi\right\|_{C^{12}}\le C\left\|\mathcal{P}_{\Lie}^{\ge0}h_T\right\|\le C^{-1}\|h_T\|,
\end{equation}
such that $h':=\psi^*(\bar{g}+h_T)-\bar{g}$ satisfies $\mathcal{P}_{\Lie}^{\ge0}h'=0$ and 
\begin{equation}\label{eq:new Lie}
 {\left\|h'-h_T+\mathcal{P}_{\Lie}^{\ge0}h_T\right\|_{C^{12}}}\le C\|h_T\|_{C^{13}} \left\|\PP^{\ge0}_{\Lie}h_T\right\|\le C\eps\left\|h_T\right\|.
\end{equation}
Let $\phi=\big(\xi_{T}\circ\chi_T^{-1}\circ\xi_{T}^{-1}\big)\circ\psi$, then $h'
=\phi^* (\xi_T^{-1})^*g_T-\bar g$.
We verify that $h'$ and $\phi$ satisfy all assertions: First, note $h'-h_T=\psi^*(\bar{g}+h_T)-(\bar{g}+h_T)$, it follows from \eqref{eq:verify four} that 
\begin{equation}\label{eq:h'-h_T}
    \|h'-h_T\|_{C^{11}}\le C\|\exp_{\id}^{-1}\psi\|_{C^{12}} \le C^{-1}\|h_T\|. 
\end{equation}
This together with \eqref{eq:Ck-norm up bdd by entropy} implies $\|h'\|_{C^{11}}\le C(\mu(g_T,1)-\mu(\bar{g},1))$, which verifies Assertion \eqref{eq:final time h is bounded by the entropy}.
Next, by Lemma \ref{lem:pseudolocality}, there is $C_1(T)>0$ such that {$\|\exp_{\id}^{-1}\chi_T\|_{C^{11}}\le C_1 \|h_0\|_{C^{13}}$}, which together with \eqref{eq:verify four} and $\|h_T\|^2\le e^{3\lambda_+ T}\|h_0\|^2$ implies
$$\|\exp_{\id}^{-1}\phi\|_{C^{11}}\le C_1 \|h_0\|_{C^{13}}+C^{-1}\|h_T\|\le C_1 \|h_0\|_{C^{13}},$$ 
which verifies Assertion \eqref{asser4}.
Since $e^{\lambda_+ T}\ge 100$ and \eqref{eq:h'-h_T} implies
\begin{equation}\label{eq:h'_compare_h_T}
0.9\|h_T\|\leq \|h'\|\leq 1.1\|h_T\|,    
\end{equation}
it follows by \eqref{eq:h_T 101 larger than h_0} that $\|h'\|\ge e^{\frac12 \lambda_+ T}\|h_0\|$, which implies Assertion \eqref{asser3} since $e^{\frac12 \lambda_+ T}>100$.
For Assertion \eqref{asser2}, first, by \eqref{eq:combo2} and $Ce^{-\lambda_+T}<\tfrac12$ we have
\begin{equation}\label{eq:initial_h_T_com}
    \|\mathcal P^0h_T\|^2+ \| \PP^-_\Lie h_T\|_{H_W}^2\le \tfrac12\varepsilon \|h'\|^2.
\end{equation}
Taking $V=V^0_{\ess},V^-_\Lie$ orthogonal to $V^{\ge0}_\Lie$,
by \eqref{eq:new Lie} we have
\begin{align*}
    \| \PP_V (h'-h_T)\|_{H_W}^2&=\left\|\PP_V(h'-h_T+\mathcal{P}_{\Lie}^{\ge0}h_T)\right\|_{H_W}^2
    \le \left\|h'-h_T+\mathcal{P}_{\Lie}^{\ge0}h_T\right\|_{H_W}^2\le  \tfrac12 \eps\|h'\|^2,
\end{align*} 
which together with \eqref{eq:initial_h_T_com} implies
$$\|\mathcal P^0_{\ess}h'\|^2+ \| \PP^-_\Lie h'\|_{H_W}^2\le \varepsilon \|h'\|^2.$$
This together with $\mathcal{P}_{\Lie}^{\ge0}h'=0$ implies Assertion \eqref{asser2}.
\end{proof}

The following is the main theorem in this subsection. It establishes the exponential convergence rate of any ancient rescaled Ricci flow to their asymptotic shrinker, assuming the shrinker is compact integrable.

\begin{theorem}[Theorem \ref{thm:RF_exp}, ancient flow]\label{p: g_t converge to g_Sigma exponentially}
Let $(M,\bar{g},f)$ be a compact integrable shrinker, and $(M,g_\tau)$, $\tau\in(-\infty,0]$, be a rescaled Ricci flow asymptotic to $\bar{g}$.
Then there exist $\delta,C>0$ and a diffeomorphism $\psi:M\to M$ such that 
\begin{equation*}
    \|(\xi_{\tau}^{-1})^*g_\tau-\psi^*\bar{g}\|_{C^{2,\alpha}(\bar g)}\le C\,e^{\delta \tau}.
\end{equation*}
\end{theorem}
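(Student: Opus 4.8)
The plan is to run the one-step mechanism of Proposition~\ref{prop: unstable mode has certain portion} repeatedly on the intervals $[\tau_k+iT,\tau_k+(i+1)T]$ between a sequence $\tau_k\to-\infty$ and a \emph{fixed} later time $T_0$, and to convert the forced growth $\|h'\|\ge100\|h_0\|$ of Assertion~\eqref{asser3}, read backward in time, into backward exponential decay of the perturbation; a limit over $k$ then produces the fixed gauge $\psi$. To set up: since Perelman's entropy $\mu(\cdot,1)$ is monotone non-decreasing along the rescaled Ricci flow and $g_\tau$ is asymptotic to $\bar g$, we have $\mu(g_\tau,1)\nearrow$ in $\tau$ and $\mu(g_\tau,1)\to\mu(\bar g,1)$ as $\tau\to-\infty$, so $\mu(g_\tau,1)\ge\mu(\bar g,1)$ for all $\tau\le0$. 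Fix $\eps_0>0$ below the thresholds of Lemmas~\ref{l: entropy controls unstable}, \ref{l: Upper bound by entropy} and \ref{l: cancel Lie by diffeomorphisms}; let $T$ be a time from Proposition~\ref{prop: unstable mode has certain portion} (we may take $T$ so that $\delta:=(\ln100)/T>0$), fix the corresponding constants $C=C(\bar g),\eps,\eps'$, and shrink $\eps_0$ further so that $C\eps_0<\eps'$. Let $T_0\le0$ be the largest time with $\mu(g_{T_0},1)-\mu(\bar g,1)\le\eps_0$ (set $T_0=0$ if this holds at $\tau=0$); by monotonicity the gap is $\le\eps_0$ on all of $(-\infty,T_0]$. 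Choose $\tau_k\to-\infty$. Because $g_\tau$ is asymptotic to $\bar g$, for large $k$ the metric $(\xi_{\tau_k}^{-1})^*g_{\tau_k}$ is, after a diffeomorphism, $C^{13}$-close to $\bar g$, so by the generalized slice theorem (Theorem~\ref{t: slice theorem}) there are a nearby integrable shrinker $\bar g_k$ and a diffeomorphism $\phi_k$ close to $\id$ with $h_0^{(k)}:=\phi_k^*(\xi_{\tau_k}^{-1})^*g_{\tau_k}-\bar g_k$ satisfying $\mathcal P_{\Lie,\bar g_k}h_0^{(k)}=\mathcal P^0_{\bar g_k}h_0^{(k)}=0$. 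The entropy is constant on the connected moduli of integrable shrinkers (infinitesimal solitonic deformations are critical directions of $\mu$), so $\mu(\bar g_k,1)=\mu(\bar g,1)$; hence, with background $\bar g_k$, hypotheses~\eqref{assump:i}--\eqref{assump:iii} of Proposition~\ref{prop: unstable mode has certain portion} hold at $\tau_k$, and~\eqref{assump:ii} holds with the left side equal to $0$.

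\emph{Backward exponential decay up to $T_0$.} Apply Proposition~\ref{prop: unstable mode has certain portion} on $[\tau_k,\tau_k+T]$ (with background $\bar g_k$, after the appropriate time-shift of the $\nabla f$-flow), obtaining a diffeomorphism and a perturbation $h_1^{(k)}$ at $\tau_k+T$ again satisfying~\eqref{asser2}, with $\|h_1^{(k)}\|\ge100\|h_0^{(k)}\|$ and $\|h_1^{(k)}\|_{C^{13}}\le C\big(\mu(g_{\tau_k+T},1)-\mu(\bar g,1)\big)$. Iterating on $[\tau_k+iT,\tau_k+(i+1)T]$ for $i=0,\dots,i(k)$ with $\tau_k+i(k)T\le T_0<\tau_k+(i(k)+1)T$, we obtain $h_i^{(k)}$ with $\|h_i^{(k)}\|\ge100^{\,i}\|h_0^{(k)}\|$; the iteration is legitimate because $\tau_k+iT\le T_0$ forces, by Assertion~\eqref{eq:final time h is bounded by the entropy}, $\|h_i^{(k)}\|_{C^{13}}\le C\big(\mu(g_{\tau_k+iT},1)-\mu(\bar g,1)\big)\le C\eps_0<\eps'$, so~\eqref{assump:i} persists, and all constants stay uniform since the $\bar g_k$ are uniformly $C^{13,\alpha}$-close to $\bar g$. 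At the last step $\|h_{i(k)}^{(k)}\|\le C\eps_0$ uniformly in $k$, so $\|h_0^{(k)}\|\le100^{-i(k)}C\eps_0\le Ce^{\delta\tau_k}$; combining the geometric decay of the $\|h_i^{(k)}\|$ with the interior bound $\sup_{s\in[iT,(i+1)T]}\|h_s\|_{H_W}\le C\|\mathcal P^+h_{(i+1)T}\|$ from the proof of Proposition~\ref{prop: unstable mode has certain portion}, we get, in the cumulative gauge $\Psi_\tau^{(k)}$ assembled along the iteration, $\|(\Psi_\tau^{(k)})^*(\xi_\tau^{-1})^*g_\tau-\bar g_k\|_{H_W}\le Ce^{\delta\tau}$ for all $\tau\in[\tau_k,T_0]$ with $C$ independent of $k$, together with the uniform bound $\|(\Psi_\tau^{(k)})^*(\xi_\tau^{-1})^*g_\tau-\bar g_k\|_{C^{13}}\le C\eps_0$.

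\emph{Passing to the limit and upgrading regularity.} Each $\Psi_\tau^{(k)}$ is $\phi_k$ composed with the short-time HMHF maps (Lemmas~\ref{l:shortHMHF}, \ref{lem:pseudolocality}) and the Lie-reduction maps (Lemma~\ref{l: cancel Lie by diffeomorphisms}) on the successive intervals, and the total correction past $\phi_k$ is bounded by $\sum_{i\le i(k)}C\|h_i^{(k)}\|\le C\eps_0$; hence the $\Psi_\tau^{(k)}$ satisfy a uniform $C^{11}$ bound. By Arzel\`a--Ascoli (and a diagonal argument in $\tau$) we pass to a subsequence with $\Psi_\tau^{(k)}\to\Psi_\tau$ in $C^{10}$ for each $\tau\le T_0$ and $\bar g_k\to\bar g_\infty$, a compact integrable shrinker near $\bar g$; as $\bar g_\infty$ is isometric to $\bar g$ by the uniqueness of the tangent flow at infinity (Definition~\ref{def:asymptotic}) and the slice carries a unique representative of each isometry class, $\bar g_\infty=\bar g$. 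Letting $k\to\infty$ gives $\|\Psi_\tau^*(\xi_\tau^{-1})^*g_\tau-\bar g\|_{H_W}\le Ce^{\delta\tau}$ for $\tau\le T_0$; this perturbation solves the strictly parabolic rescaled RDTF perturbation equation while staying $C^{13}$-bounded, so interior parabolic Schauder estimates upgrade it to $\|\Psi_\tau^*(\xi_\tau^{-1})^*g_\tau-\bar g\|_{C^{2,\alpha}(\bar g)}\le Ce^{\delta\tau}$, and the same estimate forces $\partial_\tau\Psi_\tau$, hence $\Psi_\tau$, to converge exponentially to a fixed diffeomorphism $\Psi_\infty$ as $\tau\to-\infty$. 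Unwinding, $(\xi_\tau^{-1})^*g_\tau=(\Psi_\tau^{-1})^*\big(\bar g+O(e^{\delta\tau})\big)$ converges exponentially in $C^{2,\alpha}(\bar g)$ to $(\Psi_\infty^{-1})^*\bar g=:\psi^*\bar g$, so $\|(\xi_\tau^{-1})^*g_\tau-\psi^*\bar g\|_{C^{2,\alpha}(\bar g)}\le Ce^{\delta\tau}$ for $\tau\le T_0$; on the compact interval $[T_0,0]$ the estimate holds trivially after enlarging $C$.

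\emph{The main obstacle.} The delicate part is not a single estimate but the uniform-in-$k$ bookkeeping of the iteration: checking that every invocation of Proposition~\ref{prop: unstable mode has certain portion} uses constants depending only on $\bar g$ (ensured by the uniform closeness of the $\bar g_k$ to $\bar g$), that the smallness hypothesis~\eqref{assump:i} genuinely survives all the way to $T_0$ rather than degenerating, and that the cumulative gauges stay uniformly bounded so that the limit in the last step exists; identifying the limiting shrinker with $\bar g$ via the uniqueness of the tangent flow at infinity is the remaining point requiring care.
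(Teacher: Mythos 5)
Your proposal is correct and essentially coincides with the paper's proof (Theorem~\ref{p: g_t converge to g_Sigma exponentially}): the same slice-theorem/entropy-monotonicity setup, the same iteration of Proposition~\ref{prop: unstable mode has certain portion} from a sequence of times $\tau_k\to-\infty$ up to a fixed stopping time where the entropy gap becomes $\le\eps_0$, and the same conversion of the forced growth $\|h'\|\ge100\|h_0\|$ into backward exponential decay with $\delta=(\ln100)/T$. The only cosmetic difference is bookkeeping: you carry the cumulative gauge families $\Psi_\tau^{(k)}$ and take an Arzel\`a--Ascoli limit in $k$, whereas the paper absorbs the finite accumulated gauge into the slice shrinker by replacing $\bar g_1$ with $(\psi_1^{-1})^*\bar g_1$ and then shows directly that the shrinkers $\bar g_j$ obtained from the various starting times $\tau_j$ form a Cauchy sequence converging to $\bar g_\infty$ isometric to $\bar g$; your version has the minor wrinkle that $\tau\mapsto\Psi_\tau^{(k)}$ has jumps at the interval endpoints coming from the Lie-reduction maps, so ``$\partial_\tau\Psi_\tau$ converges exponentially'' should be read as the telescoping sum of these jumps plus the short-time HMHF velocities being exponentially small, which is indeed what your bound $\sum_i C\|h_i^{(k)}\|\lesssim\|h_{i(k)}^{(k)}\|$ gives.
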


\begin{proof}
In the proof we only work with the rescaled modified Ricci flow $(\xi_{\tau}^{-1})^*g_\tau$, so by abuse of notation we denote it by $g_\tau$.
Take $T$ with $e^{\frac12\lambda_+T}\ge100$, and let $C,\eps,\eps'$ be from Proposition \ref{prop: unstable mode has certain portion}, and we may further increase the value of $C$ or decrease $\eps'$ in the course of proof.
We fix a sufficiently negative time $\tau_0<0$ so that $\mu(g_{\tau_0+T},1)-\mu(\bar{g},1)\le C^{-1}\eps'$, and $\phi^*_\tau g_\tau\in C^{2,\alpha}_\eps(\bar{g})$ under a suitable diffeomorphism 
$\phi_\tau$ for all $\tau\le \tau_0$.
For any $\tau_1<\tau_0$, by Theorem \ref{t: slice theorem} there exists an integrable shrinker
$(M,\bar g_1)$ such that $h_0= g_{\tau_1}-\bar g_1$ satisfies $\|h_0\|_{C^{13}}\le\eps'$ and
\[\mathcal{P}_{\Lie,\bar g_1}( g_{\tau_1}-\bar g_1)=\mathcal{P}^0_{\ess,\bar g_1}( g_{\tau_1}-\bar g_1)=0.\]
So by applying Proposition \ref{prop: unstable mode has certain portion} inductively starting from $\tau_1$, we get a sequence of diffeomorphisms $\{\psi_k\}_{k=0}^N$ with $\psi_0=\id$ and $\{h_k:=\psi^*_kg_{\tau_1+kT}-\bar g_1\}_{k=0}^N$, such that $\tau_1+NT\ge \tau_0$ and for all $k\ge 1$, we have
  \begin{enumerate}
    \item $100\|h_{k-1}\|\le\|h_k\| $, and $\|h_k\|_{C^{11}}\le\|h_k\|$,
    \item $\|\exp^{-1}_{\psi_{k-1}}\psi_{k}\|_{C^{11}}\le C\|h_{k-1}\|_{C^{13}}\le \eps'$,
\end{enumerate}
where hereafter the norms and derivatives are taken with respect to $\bar g$.
These imply 
\begin{equation}
    \|\exp^{-1}_{\psi_1}\psi_{k}\|_{C^{11}}\le C\sum_{i=2}^k \|\exp^{-1}_{\psi_{i-1}}\psi_{i}\|_{C^{11}}\le C \sum_{i=1}^k\|h_{i}\|\le C\|h_{k}\|\le \tfrac{C}{100^{N-k}}\eps'.
\end{equation}
So by triangle inequality,
\begin{equation*}
    \| \psi_1^*g_{\tau_1+kT}-\bar g_1\|_{C^{10}}\le \| \psi_1^*g_{\tau_1+kT}-\psi_{k}^*g_{\tau_1+kT}\|_{C^{10}}+\|h_k\|_{C^{10}}\le \tfrac{C}{100^{N-k}}\eps'.
\end{equation*}
By Lemma \ref{lem:pseudolocality}, we may assume $\eps'$ sufficiently small such that this inequality persists to hold for $\|\psi_1^*g_\tau-\bar g_1\|_{C^8}$ on $[\tau_1+kT,\tau_1+(k+1)T]$ with a possibly larger $C$.
Let $\delta=\frac{\log(100)}{T}$ and replacing $\bar g_1$ by $(\psi_1^{-1})^*\bar g_1$, then for all $\tau\in[\tau_1+T,\tau_0-T]$ we have $$\|  g_{\tau}-\bar g_{1}\|_{C^{8}}\le C\eps'e^{\delta\tau}.$$

Choose decreasing $\tau_j\to-\infty$ and apply the same induction starting from $\tau_j$, we obtain a sequence of integrable shrinkers $\{(M,\bar g_j)\}_{j=1}^{\infty}$  with $\|g_{\tau}-\bar g_j\|_{C^{8}}\le C\eps'e^{\delta\tau}$ for all $\tau\in[\tau_j+T,\tau_0-T]$.
In particular, for any ${j_1}\le {j_2}$, this implies $$\|\bar g_{j_1}-\bar g_{j_2}\|_{C^{8}}\le C\eps'e^{\delta \tau_{j_1}}.$$ 
So $\bar g_j$ converge to an integrable shrinker $\bar g_{\infty}$, and
for all $\tau\in[\tau_j+T,\tau_0-T]$,
\[\|g_\tau-\bar{g}_\infty\|_{C^{8}}\le \|g_\tau-\bar g_{j}\|_{C^{8}}+\|\bar g_{j}-\bar g_{\infty}\|_{C^{8}}\le C\eps'e^{\delta\tau}+C\eps'e^{\delta \tau_{j}}\le C\eps'e^{\delta\tau}.\]
This implies $\bar g_\infty$ is isometric to $\bar g$, and the theorem holds by letting $\tau_j\to-\infty$.
\end{proof}

\subsection{Fast convergence of Ricci flows to singularities}
The main result in this subsection is Theorem \ref{p: g_t converge to g_Sigma exponentially_forward}, in which we show that the rescaled flow of a Ricci flow which develops a singularity at $t=0$ modelled on a compact integrable shrinker converges exponentially to the shrinker.
The key lemma is
Proposition \ref{prop: unstable mode has certain portion, forward}, which is the analog of Proposition \ref{prop: unstable mode has certain portion}. 

First, we need the following lemma that compares the $H_W$-norms.

\begin{lem}\label{lem:comparable_norms}
Let $(M,g_i,f_i)$, $i=1,2$, be two compact shrinkers satisfying the same assumptions in Lemma \ref{l: changing shrinker gauge}. Then for any symmetric 2-tensor $h$, we have
    \[0.9\|h\|_{H_{W,g_1}}\le\|h\|_{H_{W,g_2}}\le 1.1\|h\|_{H_{W,g_1}}.\]
\end{lem}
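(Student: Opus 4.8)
The plan is to prove the quantitative estimate
\[\bigl|\,\|h\|_{H_{W,g_2}}^2-\|h\|_{H_{W,g_1}}^2\,\bigr|\le C\,\|g_1-g_2\|_{C^{k,\alpha}_{g_1}}\,\|h\|_{H_{W,g_1}}^2\]
with $C$ depending only on $k,\alpha,C_0$ and $|\Rm_{g_1}|$; the two-sided bound with constants $0.9$ and $1.1$ then follows by shrinking the smallness threshold $\eps$ in the hypotheses of Lemma \ref{l: changing shrinker gauge} (which we are free to do) until $C\eps\le\tfrac16$, and taking a square root. Throughout, unsubscripted norms, connections, curvatures and inner products refer to $(g_1,f_1)$, and I write $\eta:=\|g_1-g_2\|_{C^{k,\alpha}}$, so $\eta\le\eps$ and $\|f_1-f_2\|_{C^{k,\alpha}}\le C_0\eta$. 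The first move is to split the $H_W$-norm: using \eqref{def:H_W} together with the facts that $L_g$ commutes with its spectral projections and annihilates the neutral modes, one gets
\[\|h\|_{H_{W,g}}^2=Q_g(h)+\langle h,L_g\,\PP^+_g h\rangle_{g,f},\]
where $Q_g$ is an \emph{explicit} geometric quadratic form (a combination of $\|h\|_{L^2_{g,f}}^2$ and, depending on the precise form of \eqref{def:H_W}, the Dirichlet term $\|\nabla_g h\|_{L^2_{g,f}}^2$ and a curvature term), comparable to either $\|h\|_{L^2_{g,f}}^2$ or $\|h\|_{H^1_{g,f}}^2$; and $\PP^+_g$ is the $L^2_{g,f}$-orthogonal projection onto the \emph{finite-dimensional} unstable eigenspace $V^+_g$. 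In either case $\|h\|_{H_{W,g_1}}^2$ dominates both $Q_{g_1}(h)$ and $\|h\|_{L^2_{g_1,f_1}}^2$ up to a uniform factor, by \eqref{eq:H^1_H_W}.

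Estimating $Q_g(h)$ is routine: it is assembled from the fibre metrics induced by $g$, the Levi-Civita connection of $g$, the curvature $\Rm_g$ and the weighted volume $e^{-f}\,dg$, all of which change by a multiplicative factor $1+O(\eta)$ in passing from $(g_1,f_1)$ to $(g_2,f_2)$ — here one uses $\|g_1-g_2\|_{C^{k,\alpha}}\le\eta$ with $k\ge2$ (to bound $\Rm_{g_2}-\Rm_{g_1}$ and the Christoffel difference in $C^0$) and $\|f_1-f_2\|_{C^0}\le C_0\eta$. Hence $|Q_{g_2}(h)-Q_{g_1}(h)|\le C\eta\,\|h\|_{H^1_{g_1}}^2\le C\eta\,\|h\|_{H_{W,g_1}}^2$.

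The main obstacle is the term $\langle h,L_g\PP^+_g h\rangle_{g,f}$, since the unstable spectral projection $\PP^+_g$ is not an explicit function of $g$; I would handle it by standard analytic perturbation theory. The operator $L_{g_1}$ has finitely many positive eigenvalues, separated from the rest of $\Spec(L_{g_1})$ and from $0$ by a gap $\delta_0=\delta_0(g_1)>0$, while $L_{g_2}-L_{g_1}$ is a second-order operator with coefficients $O(\eta)$ in $C^{k-2,\alpha}$, so $\|(L_{g_2}-L_{g_1})u\|_{L^2}\le C\eta\,\|u\|_{H^2}$. Fixing a circle $\Gamma\subset\mathbb C$ that encloses exactly the positive eigenvalues of $L_{g_1}$ and stays at distance $\ge\delta_0/2$ from $\Spec(L_{g_1})$, the resolvent identity gives $\|(z-L_{g_2})^{-1}-(z-L_{g_1})^{-1}\|_{L^2\to L^2}\le C\eta$ uniformly for $z\in\Gamma$, once $\eps$ is small enough that the part of $\Spec(L_{g_2})$ inside $\Gamma$ is still precisely its positive eigenvalues — this last point is exactly where $\dim V^0_{\ess,g_1}=\dim V^0_{\ess,g_2}$ is used, to forbid an eigenvalue from sliding across $0$ (plus the routine remark that the number of Killing fields is likewise unchanged for nearby shrinkers). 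Feeding this into the Riesz-projection formulas $\PP^+_{g_j}=\tfrac{1}{2\pi i}\oint_\Gamma(z-L_{g_j})^{-1}\,dz$ and $L_{g_j}\PP^+_{g_j}=\tfrac{1}{2\pi i}\oint_\Gamma z\,(z-L_{g_j})^{-1}\,dz$ yields $\|L_{g_2}\PP^+_{g_2}-L_{g_1}\PP^+_{g_1}\|_{L^2\to L^2}\le C\eta$; since $\|L_{g_1}\PP^+_{g_1}h\|_{L^2}\le\lambda_1\|h\|_{L^2}$ and the two inner products are $(1+O(\eta))$-comparable, we get $|\langle h,L_{g_2}\PP^+_{g_2}h\rangle_{g_2,f_2}-\langle h,L_{g_1}\PP^+_{g_1}h\rangle_{g_1,f_1}|\le C\eta\,\|h\|_{L^2}^2\le C\eta\,\|h\|_{H_{W,g_1}}^2$.

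A technical wrinkle worth flagging but not belaboring: $L_{g_2}$ is self-adjoint with respect to $\langle\cdot,\cdot\rangle_{g_2,f_2}$, not $\langle\cdot,\cdot\rangle_{g_1,f_1}$. This causes no trouble, because $L_{g_2}$ is still an elliptic operator with discrete real spectrum and uniformly bounded resolvent on $\Gamma$ when regarded on the fixed space $L^2_{g_1,f_1}$ (equivalently, one first conjugates $L_{g_2}$ by the square root of the positive operator intertwining the two inner products). Adding the estimates of the last two paragraphs gives the asserted quantitative bound, and the lemma follows.
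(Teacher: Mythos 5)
Your proof is correct and, if anything, more careful than the paper's, whose displayed argument controls only the integration-by-parts term $\langle h,L_1h\rangle_{g_1}-\langle h,L_2h\rangle_{g_2}$ and then simply invokes the comparability of $H^1$ and $H_W$. The decomposition $\|h\|^2_{H_{W,g}}=\bigl(\|h\|^2-\langle h,L_gh\rangle\bigr)+\langle h,L_g\PP^+_gh\rangle$ isolates the finite-rank piece which the paper's proof does not explicitly treat, and your Riesz-projection comparison of $L_{g_1}\PP^+_{g_1}$ and $L_{g_2}\PP^+_{g_2}$ is exactly what is needed to make the estimate rigorous; so you have supplied a step the paper elides. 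One small flag: the hypothesis $\dim V^0_{\ess,g_1}=\dim V^0_{\ess,g_2}$ does not by itself prevent a \emph{generic} zero eigenvalue from sliding to the positive side, and the blanket assertion that nearby shrinkers have the same number of Killing fields is not immediate, since isometries can be lost under perturbation. Neither is actually needed, however: any eigenvalue of $L_{g_2}$ near $0$ has magnitude $O(\eta)$, with $\eta=\|g_1-g_2\|_{C^{k,\alpha}}$, and so contributes only $O(\eta)$ to $L_{g_2}\PP^+_{g_2}=\max(L_{g_2},0)$. Hence $L_{g_2}\PP^+_{g_2}$ and $\tfrac{1}{2\pi i}\oint_\Gamma z(z-L_{g_2})^{-1}\,dz$ already agree up to a bounded-rank operator of norm $O(\eta)$, regardless of which side of $0$ the near-zero eigenvalues fall, and the rest of your argument goes through unchanged. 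Your remark about shrinking $\eps$ from Lemma \ref{l: changing shrinker gauge} is legitimate; that constant is freely adjustable downward.
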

\begin{proof}
Since $L_i=\Delta_{g_i,f_i}+2\Rm_{g_i}*$ and $\|L_1-L_2\|_{C^{m-2}}\le C_m\|g_1-g_2\|_{C^m}$, we can compute by integration by parts that
    \begin{equation}
        \begin{split}
            \big|\langle h,L_1 h\rangle_{g_1}-\langle h,L_2 h\rangle_{g_2}\big|
            &=\left|\int_M \big((g_1-g_2)*h *L_1h+ h *(L_1-L_2)h\big)  \,dg_1\right|\\
            &=\bigg|\sum_{k=0,1,2,\; i,j=0,1}\int_M \nabla^k(g_1-g_2)* \nabla^i h *\nabla^j h \,dg_1\bigg|\\
            &\le C\|g_1-g_2\|_{C^2}\|h\|_{H^1_{g_1}}^2.
        \end{split}
    \end{equation}
  This implies the assertion, noting that $H^1_{g_1}$ is comparable to $H_{W,g_1}$-norm by \eqref{eq:H^1_H_W}.
\end{proof}

Moreover, for a fixed shrinker $\bar g$,
we denote by $V^*_{\Lie},\PP^*_{\Lie}$ the subspace and the projection of generic eigentensors with eigenvalues larger than $-\tfrac12\lambda_{-,\Lie}$, where $-\lambda_{-,\Lie}<0$ is the largest negative generic eigenvalue of $\bar g$. Then $V_{\Lie}^{\ge0}=V_{\Lie}^*$, and for any shrinker $\bar g'$ sufficiently close to $\bar g$, we have $\dim V^*_{\Lie}=\dim V'^*_{\Lie}$ (note $\dim V'^*_{\Lie}$ may be larger than $\dim V'^{\ge0}_{\Lie}$), and
\[0.9\|\PP'^*_{\Lie} h\|\le \|\PP^*_{\Lie} h\|\le 1.1\|\PP'^*_{\Lie} h\|,\]
for any $h\in\Sym^2(M)$, where $\PP'$ denotes the projections with respect to $\bar g'$.
Similarly, fix $\lambda\notin \Spec(L_{\bar g})$, then we have $\lambda\notin \Spec(L_{\bar g'})$, $\dim V_{>\lambda}=\dim V'_{>\lambda}$, and
\[0.9\|\PP'_{>\lambda} h\|\le \|\PP_{>\lambda} h\|\le 1.1\|\PP'_{>\lambda} h\|.\]

\begin{prop}\label{prop: unstable mode has certain portion, forward}
Let $(M,\bar{g},f)$ be a compact integrable shrinker, and let $\lambda\notin\Spec(L_{\bar g})$, $\lambda<0$. 
There is $C,T,\eps,\eps'>0$ such that for any shrinker $(M,\bar{g}_1,f_1)$ with $\|\bar g_1-\bar g\|_{C^{13}}\le \eps'$, the following holds:
Let $\{g_\tau\}_{\tau\in [0,T]}$ be a rescaled Ricci flow, with $h_0=g_0-\bar g_1$ satisfying
\begin{enumerate}
\item\label{assump:small_h} $\|h_0\|_{C^{13}}\le\eps'$,
\item\label{assump:first_k_stable_dominates} $\|\PP_{>\lambda}h_0\|^2_{H_W}\ge  \|\PP_{ <\lambda}h_0\|^2_{H_W}$,
    \item\label{assump:small_Lie} $\|\mathcal{P}_{\Lie}^*h_0\|^2 +\|\mathcal{P}^0h_0\|^2 \le \eps \|h_0\|^2$,
\item \label{assump:entropy}$\mu(g_T,1)\le\mu(\bar{g},1)$,
\end{enumerate}
where the projections and norms are with respect to $\bar g_1$.

Then there exist a diffeomorphism $\phi$ and a shrinker $(M,\bar g_2,f_2)$, with $h'=\phi^*(\xi_{1,T}^{-1})^*g_T-\bar g_2$, where $\xi_{1,\tau}$ is the flow of $\nabla f_1$ with $\xi_{1,0}=\id$, such that
\begin{enumerate}
\item\label{assert:decay} $\big(\|h'\|_{C^{11} }+\|h'\|_{H_W}\big)\le \frac{1}{100}\|h_0\|_{H_W} $,\item\label{assert:k_dominance} $\| \PP_{>\lambda}h'\|^2_{H_W}\ge  \| \PP_{<\lambda}h'\|^2_{H_W}$,
    \item\label{assert:Lie_0_small} $\|\mathcal{P}_{\Lie}^*h'\|^2+\|\mathcal{P}^0h'\|^2\le \eps \|h'\|^2$,
    \item\label{assert:phi-id} ${\|\exp_{\id}^{-1}\phi\|_{C^{11} }}+\|\bar g_2-\bar g_1\|_{C^{13} }\le C\|h_0\|_{C^{13}}$,
\end{enumerate}
where the projections and norms are with respect to $\bar g_2$.
\end{prop}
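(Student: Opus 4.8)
The plan is to run the argument of Proposition~\ref{prop: unstable mode has certain portion} in the \emph{decaying} regime, the one genuinely new ingredient being that the background shrinker must be deformed within the soliton moduli space at the end of the step in order to eliminate the essential neutral modes, which do not decay. First, as in Proposition~\ref{prop: unstable mode has certain portion}, let $\chi_\tau$ be the HMHF with $\chi_0=\id$ between $g_\tau$ and $\xi_{1,\tau}^*\bar g_1$, so that $h_\tau:=(\xi_{1,\tau}^{-1})^*(\chi_\tau^{-1})^*g_\tau-\bar g_1$ solves the modified rescaled RDTF perturbation equation \eqref{eq:most important eq} with background $\bar g_1$; by Lemma~\ref{l:shortHMHF} (using $\|\bar g_1-\bar g\|_{C^{13}}\le\eps'$) this exists for $\tau\in[0,T]$ with $\chi_\tau$ a diffeomorphism and $\|h_\tau\|_{C^{13}}\le C\eps'$. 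Perelman's $\mu$-entropy being monotone nondecreasing along the rescaled flow, assumption~\eqref{assump:entropy} forces $\mu(g_\tau,1)\le\mu(\bar g,1)$ for all $\tau\in[0,T]$. The constants are chosen hierarchically: $C=C(\bar g)$; then $T=T(\bar g)$ large enough that $Ce^{\rho T}<10^{-3}$, where $\rho\in(\lambda,0)$ is the largest eigenvalue of $L_{\bar g}$ exceeding $\lambda$ (the slowest decay rate of the relevant stable modes); then $\eps=\eps(\bar g,T)$ small enough to dominate the exponential-in-$T$ factors appearing below; then $\eps'=\eps'(\bar g,T,\eps)$ small. Norms, derivatives and projections below are with respect to $\bar g_1$ unless noted.

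\emph{Dynamics on $[0,T]$.} Assumption~\eqref{assump:small_Lie}, the bound $\mu(g_0,1)\le\mu(\bar g,1)$, and Lemma~\ref{l:entropy_forward} show the stable modes of $h_0$ carry a definite fraction of $\|h_0\|_{H_W}$; combined with assumption~\eqref{assump:first_k_stable_dominates} this gives $\|\mathcal{P}^-_{>\lambda}h_0\|\ge c(\bar g)\|h_0\|$, i.e.\ the stable eigenmodes with eigenvalue in $(\lambda,0)$ dominate $h_0$. Feeding these relations into the ODE comparison (Lemma~\ref{lem:B5}), with weights $\delta=\delta(\bar g)$ chosen exactly as in the proof of Proposition~\ref{prop: unstable mode has certain portion} but with ``forward'' and ``backward'' interchanged and the threshold $\lambda$ inserted, propagates them over $[0,T]$: one obtains $\|h_\tau\|_{H_W}\ge c\,e^{\lambda\tau}\|h_0\|$, $\|\mathcal{P}_{<\lambda}h_\tau\|_{H_W}\le C\,e^{(\lambda-\rho)\tau}\|\mathcal{P}_{>\lambda}h_\tau\|_{H_W}$, and $\|\mathcal{P}^{*}_\Lie h_\tau\|^2+\|\mathcal{P}^0h_\tau\|^2\le C\eps\,e^{CT}\|h_\tau\|^2$ — the essential unstable modes being controlled throughout by Lemma~\ref{l:entropy_forward} applied at the appropriate time, just as Lemma~\ref{l: entropy controls unstable} is used in the ancient case, now yielding dominance of the stable modes rather than of the unstable ones. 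Choosing $T$ large and then $\eps$ small, at $\tau=T$ this gives $\|h_T\|_{C^{13}}\le C e^{\rho T}\|h_0\|\le\tfrac1{1000}\|h_0\|$ (parabolic estimates upgrading the $L^2$-decay), $\|\mathcal{P}_{<\lambda}h_T\|_{H_W}\le\tfrac1{10}\|\mathcal{P}_{>\lambda}h_T\|_{H_W}$, and $\|\mathcal{P}^{*}_\Lie h_T\|^2+\|\mathcal{P}^0h_T\|^2\le\eps\|h_T\|^2$; the smallness of $\eps$ is used a second time here to convert the absolute bound $\|\mathcal{P}^0_\ess h_\tau\|\lesssim\sqrt\eps\|h_0\|$ (a neutral mode grows only at a bounded rate from its small initial value) into one relative to the decayed $\|h_T\|$.

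\emph{Adjustment of gauge and shrinker, and verification.} By Lemma~\ref{l: cancel Lie by diffeomorphisms} applied to $h_T$ (with threshold $0$, so it removes $\mathcal{P}^{\ge0}_\Lie=\mathcal{P}^{*}_\Lie$), there is a diffeomorphism $\psi$ with $\|\exp_{\id}^{-1}\psi\|_{C^{12}}\le C\|\mathcal{P}^{*}_\Lie h_T\|\le C\sqrt\eps\|h_T\|$ such that $h'':=\psi^*(\bar g_1+h_T)-\bar g_1$ has $\mathcal{P}^{*}_\Lie h''=0$ and $\|h''-h_T+\mathcal{P}^{*}_\Lie h_T\|_{C^{12}}\le C\eps\|h_T\|$. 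Since $\bar g_1$ is integrable and $\|\mathcal{P}^0_\ess h''\|\le C\|h_T\|\le\eps'$, the integrability (as in the proof of Theorem~\ref{t: slice theorem}) yields an integrable shrinker $\bar g_2$, joined to $\bar g_1$ by a $C^1$ curve of shrinkers with $\|\bar g_2-\bar g_1\|_{C^{13}}\le C\|\mathcal{P}^0_\ess h''\|\le C\|h_0\|_{C^{13}}$, such that $h':=\psi^*(\bar g_1+h_T)-\bar g_2$ satisfies $\mathcal{P}^0_{\ess,\bar g_2}h'=0$. Then $\phi:=(\xi_{1,T}\circ\chi_T^{-1}\circ\xi_{1,T}^{-1})\circ\psi$ satisfies $h'=\phi^*(\xi_{1,T}^{-1})^*g_T-\bar g_2$; assertion~\eqref{assert:phi-id} follows from the bound on $\psi$, the bound on $\bar g_2-\bar g_1$, and Lemma~\ref{lem:pseudolocality} (which yields $\|\exp_{\id}^{-1}\chi_T\|_{C^{11}}\le C\|h_0\|_{C^{13}}$). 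Since $\|h'-h_T\|_{C^{11}}\le C\big(\|\exp_{\id}^{-1}\psi\|_{C^{12}}+\|\bar g_2-\bar g_1\|_{C^{11}}\big)+C\eps\|h_T\|\le\tfrac14\|h_T\|$, the decay of $\|h_T\|$ gives assertion~\eqref{assert:decay}. Finally, transferring the time-$T$ estimates from $\bar g_1$ to $\bar g_2$ by Lemmas~\ref{lem:comparable_norms} and~\ref{l: changing shrinker gauge} (applicable by Remark~\ref{re:re}, since $\bar g_2-\bar g_1$ and $\bar g_1-\bar g$ lie along short moduli curves) and by the comparable-projection estimates recorded just before the statement, the bounds $\|\mathcal{P}_{<\lambda}h_T\|_{H_W}\le\tfrac1{10}\|\mathcal{P}_{>\lambda}h_T\|_{H_W}$ and $\|h'-h_T\|\ll\|h_T\|$ give assertion~\eqref{assert:k_dominance}, while $\mathcal{P}^{*}_\Lie h''=0$, $\mathcal{P}^0_{\ess,\bar g_2}h'=0$ and the small errors from the two gauge changes give assertion~\eqref{assert:Lie_0_small}.

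\emph{Main obstacle.} The technical heart is the middle step: over the fixed interval $[0,T]$ one must simultaneously keep the stable modes above $\lambda$ dominant, keep the unstable--generic and neutral modes relatively small, and obtain genuine decay of $\|h_\tau\|$, all while the entropy inequality — which is what pins down the essential unstable modes through the second variation of $\mu$ — is only usable at the single time $T$. Closing the hierarchy $C\ll T^{-1}$, $\eps\ll e^{-CT}$, $\eps'\ll\eps$ requires the same delicate ODE-comparison bookkeeping as in the ancient Proposition~\ref{prop: unstable mode has certain portion}, now complicated both by the decaying (rather than growing) regime and by the need to carry along the non-decaying essential neutral modes until they are removed by the passage to $\bar g_2$.
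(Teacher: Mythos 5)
The overall architecture of your proposal matches the paper's proof — propagate the initial smallness conditions by the ODE comparison (Lemma~\ref{lem:B5}), invoke the entropy estimate (Lemma~\ref{l:entropy_forward}) to pin down the unstable essential modes, then eliminate the unstable-generic modes by a diffeomorphism (Lemma~\ref{l: cancel Lie by diffeomorphisms}) and the neutral essential modes by a soliton deformation, exactly as the paper does. The final gauge/shrinker adjustment and verification of the four assertions are essentially identical to the paper's.

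However, there is a genuine gap in the ``Dynamics on $[0,T]$'' step, and it is not a matter of bookkeeping. You apply Lemma~\ref{l:entropy_forward} at time $\tau=0$ (via $\mu(g_0,1)\le\mu(\bar g,1)$) to conclude ``the stable modes of $h_0$ carry a definite fraction of $\|h_0\|_{H_W}$,'' and you then try to push this forward ``just as Lemma~\ref{l: entropy controls unstable} is used in the ancient case.'' This fails because the propagation direction has the opposite sign here. In the ancient Proposition~\ref{prop: unstable mode has certain portion}, the entropy estimate at the \emph{initial} time gives dominance of \emph{unstable} modes, which \emph{grow} forward, so the dominance improves along the flow. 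Here, the entropy estimate gives dominance of \emph{stable} modes, which \emph{decay} forward while unstable modes grow, so the ratio $\|\PP^+h_\tau\|^2_{H_W}/\|\PP^-h_\tau\|^2_{H_W}$ \emph{increases} by roughly $e^{2(\lambda_1+\lambda_-)\tau}$ going forward. Moreover, Lemma~\ref{l:entropy_forward} only delivers $\|\PP^+h_0\|^2_{H_W}+\|\PP^0h_0\|^2_{H_W}\le C\|\PP^-h_0\|^2_{H_W}$ with a fixed constant $C=C(\bar g)$, not a small one (the $\eps^{-1}$ gain on the left side applies only to the generic and neutral parts, not to $\|\PP^+_\ess h\|^2$). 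So the differential inequality $\partial_\tau\|h_\tau\|^2_{H_W}\le 3\lambda_1(\|\PP^+h_\tau\|^2_{H_W}+\|\PP^0h_\tau\|^2_{H_W})-\lambda_-\|\PP^-h_\tau\|^2_{H_W}$ is \emph{not} guaranteed to be negative with this ratio, and the central claim $\|h_T\|_{C^{13}}\le Ce^{\rho T}\|h_0\|$ does not follow from the inputs you state.

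The paper resolves this by reversing the order: first it propagates Assumption~\eqref{assump:small_Lie} \emph{forward} by Lemma~\ref{lem:B5} to verify the hypotheses of Lemma~\ref{l:entropy_forward} at the \emph{terminal} time $T$, applies the entropy estimate at $\tau=T$, and then propagates the resulting stable-mode dominance \emph{backward} from $T$. Going backward, stable modes grow and unstable modes shrink, so the ratio improves by a factor $e^{-2(\lambda_++\lambda_-)(T-\tau)}$; taking $T$ large, this brings it below the explicit threshold $\tfrac{\lambda_-}{6\lambda_1+1}$ on $[0,T-C_1]$, which is exactly what is needed to close the decay differential inequality and obtain $\|h_T\|^2_{C^{12}}\le Ce^{-2\Lambda T}\|h_0\|^2_{H_W}$. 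This ``apply the entropy at the end, back-propagate'' step is the one genuinely new piece of the immortal case relative to the ancient one; your outline does not contain it, and your hedge ``applied at the appropriate time'' does not substitute for it.
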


\begin{proof}
Assume $\lambda\in(\lambda_{k+1},\lambda_k)$ for some $k\in\mathbb N$.
Let $\chi_\tau$ be the HMHF starting from $\chi_0=\id$ between $g_\tau$ and $\xi_{1,\tau}^*\bar g_1$.
Let $h_\tau=(\xi_{1,\tau}^{-1})^*(\chi_\tau^{-1})^*g_\tau-\bar g_1$. 
Let $C>0$ denote a general constant that only depends on $\bar g$ and $\lambda$, and we choose $T,\eps,\eps'>0$ in the following way (and will adjust their values) so that they only depend on preceding constants:
\begin{enumerate}
\item Let $T$ be large so that $ e^{\Lambda T}>C$ and $e^{(\lambda_{k}-\lambda_{k+1})T}>C$, where $\Lambda(\lambda_1,\lambda_-)>0$ will be clear later and $\lambda_->0$ is from \eqref{eq:eigenvalues of L min}.
    \item Let $\varepsilon<\eps_0$ from Lemma \ref{l: cancel Lie by diffeomorphisms} (with $k=12$), and $2\eps e^{(\lambda_1^+-\lambda_{k+1})T}<\eps_0$ from Lemma \ref{l:entropy_forward}, and satisfies $C\eps<\Lambda$;
    \item 
    By Lemma \ref{l:shortHMHF}, there is $\eps'>0$ such that $\chi_\tau$ is diffeomorphism on $[0,T]$ and $\|h_\tau\|_{C^{13}}\le  \eps_1<\eps$, where $\eps_1$ is such that Lemma \ref{lem:general_V} is applicable, and Lemma \ref{lem:B5} is applicable for any $A\in(C^{-1}\eps,C\eps^{-1})$ on $[0,T]$, and for various invariant subspaces which will be clear below.
\end{enumerate}

 First, Assumption \eqref{assump:first_k_stable_dominates} and Lemma \ref{lem:B5} imply 
 \begin{equation}\label{eq:A_h}
     \| \PP_{>\lambda}h_\tau\|^2_{H_W}\ge   e^{(\lambda_{k}-\lambda_{k+1})\tau}\| \PP_{<\lambda}h_\tau\|^2_{H_W},
 \end{equation}
 for all $\tau\in[0,T]$. Together with Assumption \eqref{assump:small_Lie}, Lemma \ref{lem:B5} also implies
 \begin{equation}\label{claim:Lie_neutral_stay_small}
     \big\|\mathcal{P}_{\Lie}^*h_\tau\big\|^2 +\|\mathcal{P}^0h_\tau\|^2 \le 2\eps e^{(\lambda_1^+-\lambda_{k+1})\tau}\| \PP_{>\lambda}h_\tau\|^2,
\end{equation}
noting that these subspaces are all finite-dimensional, thus the $H_W$-norms are bounded by the $L^2_f$-norms.
Since $2\eps e^{(\lambda_1^+-\lambda_{k+1})T}<\eps_0$, seeing  $\mu(g_T,1)\le\mu(\bar{g},1)$ we can apply the Lemma \ref{l:entropy_forward} at $T$ to obtain
\[\|\PP^+h_T\|^2_{H_W} +\|\PP^0h_T\|^2_{H_W} \le C\|\mathcal{P}^-h_T\|^2_{H_W}.\]
Then Lemma \ref{lem:B5} implies that
there is $C_1(C,\lambda_-,\lambda_1)>0$ such that
for all $\tau\in[0,T-C_1]$, 
\begin{equation}\label{eq:stable_dom}
    \|\PP^+h_\tau\|^2_{H_W} +\|\PP^0h_\tau\|^2_{H_W} \le \tfrac{\lambda_-}{6\lambda_1+1}\|\mathcal{P}^-h_\tau\|^2_{H_W}.
\end{equation}
Since by Lemma \ref{lem:general_V} we have 
\begin{align*}
    \partial_\tau \|h_\tau\|^2_{H_W}
    &\le 3\lambda_1(\|\PP^0h_\tau\|^2_{H_W}+\|\PP^+h_\tau\|^2_{H_W})+(-{\lambda_-})\|\PP^-h_\tau\|^2_{H_W},
\end{align*}
it follows from \eqref{eq:stable_dom} that for $\tau\in[0,T-C_1]$ we have $\Lambda(\lambda_1,\lambda_-)>0$ such that
\[\partial_\tau \|h_\tau\|^2_{H_W}\le -\tfrac{\lambda_-}{2}\|\PP^-h_\tau\|^2_{H_W} \le -2\Lambda(\lambda_1,\lambda_-)\|h_\tau\|^2_{H_W},\]
integrating which we obtain $\|h_{T-C_1}\|^2_{H_W}\le e^{-2\Lambda (T-C_1)}\|h_0\|^2_{H_W}$.
Integrating $\partial_\tau \|h_\tau\|^2_{H_W}\le C\|h_\tau\|^2_{H_W}$ on $[T-C_1,T]$, we obtain $\|h_\tau\|_{H_W}\le C\|h_{T-C_1}\|_{H_W}$ for all $\tau\in [T-C_1,T]$. So by standard parabolic estimate we have
\begin{equation}\label{eq:decay_h}
 \|h_T\|^2_{C^{12}}\le C\|h_{T-C_1}\|^2_{H_W}\le C e^{-2\Lambda T}\|h_0\|^2_{H_W}.
\end{equation}

Next, by Lemma \ref{l: cancel Lie by diffeomorphisms} we can find a diffeomorphism $\psi$ with
\begin{equation}\label{eq:closeness_of_phi}
    \|\exp_{\id}^{-1}\psi\|_{C^{12}}\le C\big\|\mathcal P_{\Lie}^*h_T\big\| ,
\end{equation}
such that $h':=\psi^* (\bar{g}+ h_T) - \bar{g}$ satisfies $\|h'-h_T\|_{C^{11}}\le C\|\PP_{\Lie}^*h_T\|$, $\PP_{\Lie}^*h'=0$, 
and 
$$\big\|h'-(h_T-\mathcal{P}_{\Lie}^*h_T)\big\|_{C^{12}}\le C\eps\big\|\mathcal{P}_{\Lie}^* h_T\big\| ,$$
where we used $\|h_\tau\|_{C^{13}}\le\eps$.
In particular, this implies $\|\PP^0_{\ess} (h_T-h')\| \le C\eps \big\|\mathcal{P}_{\Lie}^* h_T\big\|$, which by
\eqref{claim:Lie_neutral_stay_small} implies
\begin{equation}\label{eq:small_neutral}
    \|\PP^0_{\ess} h'\|^2\le C\eps e^{(\lambda_1^+-\lambda_{k+1})T}\|h_T\|^2 <C^{-1}\|h_T\|^2.
\end{equation}
Since $\bar g_1$ is integrable, there exists an integrable shrinker $(M,\bar g_2,f_2)$ such that 
\begin{equation}\label{eq:gprime}
    \|\bar g_1-\bar g_2+\mathcal{P}_{\ess}^0h'\|_{C^{13}}\le \Psi(\mathcal{P}_{\ess}^0h')\|\mathcal{P}_{\ess}^0h'\|,
\end{equation}
where $\Psi(\mathcal{P}_{\ess}^0h')\to0$ as $\mathcal{P}_{\ess}^0h'\to0$, thus by assuming $\eps'$  sufficiently small, we can assume $\Psi(\mathcal{P}_{\ess}^0h')Ce^{(\lambda_1^+-\lambda_{k+1})T}<\tfrac{1}{4}C^{-1}$. So we obtain
\begin{equation}\label{eq:closeness_of_shrinkers}
    \|\bar g_1-\bar g_2+\mathcal{P}_{\ess}^0h'\|^2_{C^{13}}\le \tfrac{1}{4}C^{-1}\eps \| h_T\|^2.
\end{equation}

Let $h''=\psi^* (\bar{g}+ h_T) - \bar g_2=h'+\bar g_1-\bar g_2$. Note that for $V=V^0_{\ess},V^*_\Lie$ we have
\[\PP_V h''=\PP_V (\bar g_1-\bar g_2+\PP^0_\ess h').\]
So \eqref{eq:closeness_of_shrinkers} implies
\begin{equation}\label{eq:neutral_small}
    \|\PP_{\ess}^0h''\|^2+\big\|\PP^*_\Lie h''\big\|^2\le C\|\bar g_1-\bar g_2+\mathcal{P}_{\ess}^0h'\|_{C^{13}}^2\le \tfrac{1}{4}\eps\| h_T\|^2.
\end{equation} 
Moreover, \eqref{eq:gprime} implies 
\begin{equation}\label{eq:g-gprime}
    \|\bar g_1-\bar g_2\|_{C^{13}}=\|h''-h'\|_{C^{13}}\le C\|\PP^0_{\ess}h'\|.
\end{equation}
Combining this with $\|h'-h_T\|_{C^{11}}\le C\|\PP_{\Lie}^*h_T\|$ we obtain
\begin{equation}\label{eq:h''-hT}
\begin{split}
    \|h''-h_T\|_{C^{11}}\le C\big(\|\PP^0_{\ess}h'\|+\big\|\PP_{\Lie}^*h_T\big\|\big)\le C^{-1}\|h_T\|.
\end{split}
\end{equation}

Let $\phi=\big(\xi_{1,T}\circ\chi_T^{-1}\circ\xi_{1,T}^{-1}\big)\circ\psi$, then $h''
=\phi^* g_T-\bar g_2$. 
Now we verify that $\bar g_2,\phi,h''$ satisfy all assertions:
First, Assertion \eqref{assert:decay} follows by combining \eqref{eq:h''-hT}\eqref{eq:decay_h} and taking $T$ large enough. Next, we write $\PP',V'$ for the projections and invariant subspaces with respect to $\bar g_2$ to distinguish from those with respect to $\bar g_1$. 
By the observation before Proposition \ref{prop: unstable mode has certain portion, forward}, the smallness of $\PP^*_\Lie h'',\PP_{\ess}^{0}h''$ in \eqref{eq:neutral_small} implies the smallness of  ${\PP'}^*_\Lie h'',{\PP'}_{\ess}^{0}h''$, that is,
\[\big\|{\PP'}_{\ess}^{0}h''\big\|^2_{L^2_{f_2}}+\big\|{\PP'}^*_\Lie h''\big\|^2_{L^2_{f_2}}\le \eps\| h''\|^2_{L^2_{f_2}}.\]
So Assertion \eqref{assert:Lie_0_small} holds. 
Since by Lemma \ref{lem:pseudolocality}, there is $C_2(T)>0$ such that $\|\exp_{\id}^{-1}\chi_T\|_{C^{11}}\le C_1 \|h_0\|_{C^{13}}$, combining which with \eqref{eq:closeness_of_phi} implies 
$\|\exp_{\id}^{-1}\phi\|_{C^{11}}\le C_2 \|h_0\|_{C^{13}}$. This 
and \eqref{eq:g-gprime} imply Assertion \eqref{assert:phi-id}.
Lastly, for Assertion \eqref{assert:k_dominance},  
using \eqref{eq:h''-hT}, the finite dimension of $V_{>\lambda}$, and the triangle inequality we have
\[\| \PP_{>\lambda}(h''-h_T)\|_{H_W}+\| \PP_{<\lambda}(h''-h_T)\|_{H_W}\le C^{-1}\| h_T\|,\]
which together with \eqref{eq:A_h} implies
\begin{equation}\label{eq:h''A}
     \| \PP_{>\lambda}h''\|_{H_W}\ge 4 \| \PP_{<\lambda}h''\|_{H_W}.
 \end{equation}
By the observation before Proposition \ref{prop: unstable mode has certain portion, forward}, we have
\[\|\PP_{>\lambda}h''-\PP'_{>\lambda}h''\|\le 0.1\|h''\|,\]
and by triangle inequality this implies the same estimate holds also for $\|\PP_{<\lambda}h''-\PP'_{<\lambda}h''\|$.
Combining these with \eqref{eq:h''A} we get 
\[\| \PP'_{>\lambda}h''\|_{H_W}\ge 2 \| \PP'_{<\lambda}h''\|_{H_W}.\]
Finally, using Lemma \ref{lem:comparable_norms} we can convert this estimate to the $H_W$-norm with respect to $\bar g_2$, which implies
Assertion \eqref{assert:k_dominance}.
\end{proof}

\begin{theorem}[Theorem \ref{thm:RF_exp}, immortal flow]\label{p: g_t converge to g_Sigma exponentially_forward}
Let $(M,\bar{g},f)$ be a compact integrable shrinker, and 
$g_\tau$, $\tau\in[0,\infty)$, be a rescaled Ricci flow asymptotic to $\bar{g}$.
Then there exist $\delta>0$ and a diffeomorphism $\psi:M\to M$ such that 
\begin{equation*}
    \|(\xi^{-1}_\tau)^*g_\tau-\psi^*\bar g\|_{C^{2,\alpha}(\bar g)}\le C\,e^{-\delta \tau}.
\end{equation*}
\end{theorem}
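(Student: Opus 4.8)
The plan is to mirror the structure of the ancient case (Theorem~\ref{p: g_t converge to g_Sigma exponentially}) but running the induction \emph{forward} using the immortal analogue Proposition~\ref{prop: unstable mode has certain portion, forward} in place of Proposition~\ref{prop: unstable mode has certain portion}. As in the ancient proof, we work only with the rescaled modified Ricci flow $(\xi_\tau^{-1})^*g_\tau$, which by abuse of notation we still denote $g_\tau$; since the flow is asymptotic to $\bar g$, there are diffeomorphisms putting $g_\tau$ in $C^{2,\alpha}_\eps(\bar g)$ for $\tau$ large, and by Theorem~\ref{t: slice theorem} we may replace $\bar g$ by a nearby integrable shrinker $\bar g_1$ at some initial time $\tau_1$ large so that, writing $h_0 = g_{\tau_1}-\bar g_1$, we have $\|h_0\|_{C^{13}}\le\eps'$ and $\mathcal P_{\Lie,\bar g_1}h_0 = \mathcal P^0_{\ess,\bar g_1}h_0 = 0$. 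To launch Proposition~\ref{prop: unstable mode has certain portion, forward} we must first arrange Assumption~\eqref{assump:first_k_stable_dominates}, i.e.\ that the modes with eigenvalue $>\lambda$ dominate. This is exactly the extra step mentioned in the introduction's outline: after the slice theorem kills the generic and neutral modes, we introduce a small perturbation along a stable \emph{generic} eigentensor (the Lie derivative of an eigenvector of $\mathfrak L$, using Lemma~\ref{t: unstable Lie derivative implies unstable vector_shrinker}) with eigenvalue in $(\lambda,0)$, so that Condition~(2) holds while Condition~(1), the smallness of $\mathcal P_\Lie^* h_0$ and $\mathcal P^0 h_0$, is only mildly disturbed. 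One must also note that $\mu(g_\tau,1)\le\mu(\bar g,1)$ holds for all $\tau$ by the entropy monotonicity (the immortal flow converges to $\bar g$ at $+\infty$, so its entropy is bounded above by $\mu(\bar g,1)$), supplying Assumption~\eqref{assump:entropy}.

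Next I would iterate Proposition~\ref{prop: unstable mode has certain portion, forward} by induction on $k\in\mathbb N$: from $(\bar g_k,\phi_k,h_{k}=\psi_k^*(\xi_{k,kT}^{-1})^*g_{\tau_1+kT}-\bar g_k)$ satisfying Conditions (1)--(2) with respect to $\bar g_k$, the Proposition produces a new shrinker $\bar g_{k+1}$, a diffeomorphism, and $h_{k+1}$ satisfying the same two conditions with respect to $\bar g_{k+1}$, and crucially Assertion~\eqref{assert:decay}: $\|h_{k+1}\|_{C^{11}} + \|h_{k+1}\|_{H_W} \le \tfrac1{100}\|h_k\|_{H_W}$. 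Assertion~\eqref{assert:phi-id} controls $\|\exp_{\id}^{-1}\phi_{k+1}\|_{C^{11}}$ and $\|\bar g_{k+1}-\bar g_k\|_{C^{13}}$ by $C\|h_k\|_{C^{13}}$, which by the geometric decay of $\|h_k\|$ is summable. Hence the shrinkers $\bar g_k$ form a Cauchy sequence in $C^{13}$, converging to an integrable shrinker $\bar g_\infty$, and the composed diffeomorphisms $\psi_1\circ\psi_2\circ\cdots$ converge as well. Using Lemma~\ref{lem:pseudolocality} to interpolate on each interval $[\tau_1+kT,\tau_1+(k+1)T]$, the $C^8$-distance from $g_\tau$ to $\bar g_k$ (hence to $\bar g_\infty$) decays like $e^{-\delta\tau}$ with $\delta = \tfrac{\log 100}{T}$; since $g_\tau\to\bar g$ and $\bar g_\infty$ is a shrinker close to $\bar g$, $\bar g_\infty$ is isometric to $\bar g$, giving the diffeomorphism $\psi$ and the claimed estimate $\|(\xi_\tau^{-1})^*g_\tau - \psi^*\bar g\|_{C^{2,\alpha}(\bar g)}\le C e^{-\delta\tau}$. (One applies the argument once at a fixed large $\tau_1$, or equivalently notes the estimate improves as $\tau_1\to\infty$, to cover all sufficiently large $\tau$; small $\tau$ are then absorbed into the constant $C$.)

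The main obstacle I anticipate is \emph{propagating the two conditions through the gauge and shrinker changes at the end of each step}, which is where the immortal case genuinely differs from the ancient one. Unlike the ancient induction, where the essential unstable modes dominate automatically by entropy monotonicity and one only needs to kill unstable generic modes, here the dominating family is a \emph{stable} mode whose leading eigenvalue must stay above $\lambda$ after (i) pushing forward by the HMHF over $[0,T]$, (ii) correcting by a diffeomorphism to kill $\mathcal P_\Lie^*h$, and (iii) deforming the background shrinker to kill $\mathcal P^0_{\ess}h$. Step (iii) changes the eigenspace decomposition itself, so one needs the norm-comparison estimates of Lemma~\ref{lem:comparable_norms} and Lemma~\ref{l: changing shrinker gauge}, together with the spectral-stability observation recorded just before Proposition~\ref{prop: unstable mode has certain portion, forward} (that $\dim V_{>\lambda}$ and $\dim V^*_\Lie$ are locally constant and the projections change by at most a small multiple), to transfer Conditions (1)--(2) from $\bar g_k$ to $\bar g_{k+1}$ without losing the dominance. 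Fortunately all of this is already packaged inside Assertions~\eqref{assert:k_dominance}--\eqref{assert:Lie_0_small} of Proposition~\ref{prop: unstable mode has certain portion, forward}, so at the level of this proof the work reduces to checking that the output hypotheses of one step match the input hypotheses of the next and that the error terms in Assertion~\eqref{assert:phi-id} sum to something finite — both of which follow from the geometric decay in Assertion~\eqref{assert:decay}.
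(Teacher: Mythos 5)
Your proposal is correct and follows essentially the same route as the paper: time-shift and apply the slice theorem to kill generic and neutral modes, perturb by a diffeomorphism corresponding to the stable generic eigentensor of largest negative eigenvalue to create the dominance of $V_{>\lambda}$ modes (with $\lambda$ placed just below that eigenvalue, outside the spectrum), note $\mu(g_\tau,1)\le\mu(\bar g,1)$ by monotonicity, then iterate Proposition~\ref{prop: unstable mode has certain portion, forward} and pass to the Cauchy limit of the shrinkers and diffeomorphisms. You correctly identify the subtle new step over the ancient case (seeding a stable generic mode to launch the induction) and correctly note that all the hard gauge/shrinker-change bookkeeping is packaged inside Assertions~\eqref{assert:k_dominance}--\eqref{assert:phi-id}.
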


\begin{proof} 
Let $\lambda_{\Lie}<0$ be the largest negative generic eigenvalue. Assume $\lambda_{\Lie}=\lambda_k$ for some  $k\ge 1$, and let $\lambda=\frac{\lambda_{k+1}+\lambda_k}{2}$, and let $C,T,\eps,\eps'$ be from Proposition \ref{prop: unstable mode has certain portion, forward}. 
By a time-shifting of the flow $g_\tau$, we may assume $\|g_0-\bar g\|_{C^{13}}\le \eps''\ll\eps'$, for some $\eps''>0$ to be chosen later.
\begin{claim}
  There exists a shrinker $\bar g_1$ with $\|\bar g_1-\bar g\|_{C^{13}}\le C^{-1}\eps'$ such that $h_1= g_0-\bar g_1$ satisfies all assumptions in Proposition \ref{prop: unstable mode has certain portion, forward} with $\lambda$.
\end{claim}

\begin{proof}
First, by the entropy monotonicity we have $\mu(g_T,1)\ge\mu(\bar g,1)$.
   Next, by Theorem \ref{t: slice theorem} there exists an integrable shrinker
$(M,\bar g_0)$ with $\|\bar g_0-\bar g\|_{C^{13}}\le C\eps''$ such that $h_0= g_{0}-\bar g_0$ satisfies $\|h_0\|_{C^{13}}\le C\eps''$, and
\[\mathcal{P}_{\Lie,\bar g_0}( g_{0}-\bar g_0)=\mathcal{P}^0_{\ess,\bar g_0}( g_{0}-\bar g_0)=0.\]
{Let $\LL_X\bar g_0$ be an eigentensor of $L_{\bar g_0}$ with eigenvalue $\lambda_{\Lie}$, where $X$ is orthogonal to all Killing fields, and $\|X\|=C_1\|h_0\|_{C^{13}}$ for some large $C_1(C)\gg C$ which will be determined later. 
Let $\phi_X$ be the diffeomorphism generated by $X$, then by Lemma \ref{l: key lemma for regularity} we have
\begin{equation}\label{eq:qua}
    \|\phi_X^*\bar g_0-\bar g_0-\LL_X \bar g_0\|_{C^{13}}\le C\|X\|^2.
\end{equation}
Let $h_1=g_{0}-\phi^*_X\bar g_0=h_0+(\bar g_0-\phi^*_X\bar g_0)$, this implies 
\begin{align*}
 \|\PP_{>\lambda}h_1+\LL_X \bar g_0\|_{H_W}&\le C\big(\|X\|^2+\|h_0\|_{C^{13}}\big)\le C\|h_0\|_{C^{13}},\\
 \|\PP_{<\lambda}h_1\|_{H_W}&\le C\big(\|X\|^2+\|h_0\|_{C^{13}}\big)\le C\|h_0\|_{C^{13}}.
\end{align*}
Seeing also $C\|h_0\|_{C^{13}}\le 0.1\|\LL_X\bar g_0\|_{H_W}$ by taking $C_1$ sufficiently large, this implies
$$\|\PP_{>\lambda}h_1\|_{H_W}\ge  \|\PP_{ <\lambda}h_1\|_{H_W}.$$
Note that \eqref{eq:qua} also implies $\|h_1\|_{C^{13}}\in (C^{-1}\|X\|,C\|X\|)$.
Let $\bar g_1=\phi_X^*\bar g_0$, then by the orthogonality of $V^*_{\Lie},V^0_\ess$ with $\LL_X \bar g_0$, \eqref{eq:qua} implies
\[\|\mathcal{P}^*_{\Lie,\bar g_1}h_1\|+\|\mathcal{P}^0_{\ess,\bar g_1}h_1\|\le C\|X\|^2.\]
Taking $\eps''$ sufficiently small, then $\|X\|\le (CC_1)^{-1}\eps$ and this implies
\[\|\mathcal{P}^*_{\Lie,\bar g_1}h_1\|+\|\mathcal{P}^0_{\ess,\bar g_1}h_1\|\le \eps \|h_1\|.\]
Lastly, we also have $\|h_1\|_{C^{13}}<\eps'$,
thus all assumptions in Proposition \ref{prop: unstable mode has certain portion, forward} hold.
} 
\end{proof}

By abuse of notation, we denote $(\xi_\tau^{-1})^*g_\tau$ by $g_\tau$.
Then, by the above claim, we can
apply Proposition \ref{prop: unstable mode has certain portion, forward} inductively at $\{kT\}_{k=0}^\infty$, and get a sequence of diffeomorphisms $\{\psi_k\}_{k=0}^\infty$ with $\psi_0=\id$, integrable shrinkers $\{\bar g_k\}_{k=1}^\infty$, and $\{h_{k+1}:=\psi^*_kg_{kT}-\bar g_{k+1}\}_{k=0}^\infty$, such that 
  \begin{enumerate}
    \item $(\|h_{k+1}\|_{C^{11}}+\|h_{k+1}\|_{H_W})\le\tfrac{1}{100}\|h_k\|_{H_W} $, and $\|h_{k+1}\|_{C^{11}}\le\|h_{k+1}\|$,
    \item $\|\exp^{-1}_{\psi_{k}}\psi_{k+1}\|_{C^{11}}+\|\bar g_{k+2}-\bar g_{k+1}\|_{C^{13}}\le C\|h_{k}\|_{C^{13}}$.
\end{enumerate}
Note that $\|\bar g_{k}-\bar g\|_{C^{13}}\le \eps'$ and thus Proposition \ref{prop: unstable mode has certain portion, forward} can be repeated for all $k$.
As in Theorem \ref{p: g_t converge to g_Sigma exponentially}, these imply $\|h_k\|_{H_W}$ decays exponentially, and $\psi_k$ and $\bar g_k$ are Cauchy sequences, thus converge to a diffeomorphism $\psi_\infty$ and an integrable shrinker $\bar g_\infty$. It follows that $\psi_\infty^*g_\tau$ converge exponentially to $\bar g_{\infty}$ as $\tau\to\infty$, and $\bar g_{\infty}$ is isometric to $\bar g$.
\end{proof}

\begin{remark}
    By Theorem \ref{p: g_t converge to g_Sigma exponentially} and \ref{p: g_t converge to g_Sigma exponentially_forward}, for any rescaled modified ancient or immortal Ricci flow $g_\tau$ asymptotic to the compact integrable shrinker $\bar g$, there exists a diffeomorphism $\psi$ such that $g_\tau\to(\psi^{-1})^*\bar g$. So by replacing $\bar g$ by $(\psi^{-1})^*\bar g$ we have $g_\tau\to\bar g$ as $\tau\to\mp\infty$.
\end{remark}

\begin{proof}[Proof of Theorem \ref{t:uniqueness_of_tangent_flow}]
  By Theorem \ref{thm:RF_exp}, the modified rescaled Ricci flow $g_\tau$ converges exponentially to $\bar g$. So the theorem follows from Theorem \ref{t:existence_theorem_HMHF}.
\end{proof}

\section{Classification and optimal convergence rate}\label{sec: uniqueness}

In this section, we apply the global gauge of HMHF to classify the ancient flows (Theorem \ref{t:uniqueness}) and obtain the sharp asymptotic behavior near singularities (Theorem \ref{thm:gap}) that are asymptotic to compact integrable shrinkers.

\subsection{Classification of ancient flows}
The main result in this subsection is Theorem \ref{t:existence_of_a}.
It states that any ancient Ricci flow exponentially converging to a compact shrinker (possibly non-integrable) is one of the flows in Theorem \ref{t:existence_theorem_intro}.

\begin{theorem}\label{t:existence_of_a}
    Let $(M,\bar g)$ be a compact shrinker, and $(M,g_\tau)$ be an ancient rescaled modified Ricci flow asymptotic to $(M,\bar{g})$. If there exists $\delta>0$ such that $\|g_\tau-\bar g\|_{C^{2,\alpha}}=O(e^{\delta\tau})$ as $\tau\to-\infty$, then there exist a modified HMHF $\chi_\tau:(M,g_\tau)\to(M,\bar{g})$, $\tau\in(-\infty,\tau_0]$ for some $\tau_0\le0$ which converges exponentially to $\id$, and a vector $\mathbf{a}\in \mathbb{R}^{I_{\ess}}$ such that 
\begin{equation}\label{eq:limit_phi_a}
(\chi_\tau^{-1})^*g_\tau-\bar{g}=S(\mathbf{a}).
\end{equation}
\end{theorem}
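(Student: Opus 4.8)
### Proof proposal for Theorem~\ref{t:existence_of_a}

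\textbf{Setup and strategy.} The plan is to produce the modified HMHF $\chi_\tau$ between the given ancient flow $g_\tau$ and the fixed background $\bar g$ via Theorem~\ref{t:existence_theorem_HMHF} (the hypothesis $\|g_\tau-\bar g\|_{C^{2,\alpha}}=O(e^{\delta\tau})$ is exactly the exponential-convergence assumption needed there), then to show that the resulting pushed-forward RDTF perturbation $h_\tau:=(\chi_\tau^{-1})^*g_\tau-\bar g$ is one of the solutions $S(\mathbf a)$ built in Theorem~\ref{t: existence theorem}. Since both $h_\tau$ and every $S(\mathbf a)$ solve the \emph{same} rescaled modified RDTF perturbation equation \eqref{eq:most important eq}, which is semilinear parabolic with linearization $L$ at $0$, this is a matching problem for two exponentially-decaying ancient solutions of a fixed semilinear equation: I will match their unstable-mode data at $-\infty$ and then invoke a uniqueness/rigidity statement for ancient solutions.

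\textbf{Key steps.} First, apply Theorem~\ref{t:existence_theorem_HMHF} to get $\chi_\tau$, $\tau\le\tau_0$, a modified HMHF converging exponentially to $\id$; set $h_\tau=(\chi_\tau^{-1})^*g_\tau-\bar g$, which then also decays exponentially, say $\|h_\tau\|_{C^k}\le C_k e^{\delta'\tau}$. Second, analyze the asymptotics of $h_\tau$ under the linearized flow: because the equation is $\partial_\tau h=Lh+\text{(quadratic)}$ and $h$ decays exponentially, a standard Merle--Zaag / dominant-mode argument shows that $h_\tau$ is asymptotic as $\tau\to-\infty$ to $e^{\lambda\tau}$ times an eigentensor combination, and more precisely one extracts a well-defined ``unstable leading data'' — a vector $\mathbf a\in\mathbb R^{I_{\ess}}$ recording the coefficients of $h_\tau$ along the essential unstable eigentensors $\vec h_j$, together with the fact that the generic-unstable and neutral leading coefficients must vanish (the generic ones are killed because $\chi_\tau$ is a \emph{HMHF}, i.e. the best parabolic gauge; the neutral ones would otherwise violate exponential decay, or are removed by the slice theorem / integrability, cf.\ the mechanism in Theorem~\ref{p: g_t converge to g_Sigma exponentially}). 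Third, with this $\mathbf a$ in hand, form $S(\mathbf a)$ from Theorem~\ref{t: existence theorem}; by construction $S(\mathbf a)$ has the \emph{same} leading unstable data as $h_\tau$, i.e.\ $e^{\lambda_j'\tau}[h_\tau-S(\mathbf a)]\to 0$ stratum by stratum as in \eqref{eq:construction strata}. Fourth — the rigidity step — show that two ancient solutions of \eqref{eq:most important eq} that decay exponentially and have identical leading unstable data must coincide: this is the contraction-mapping uniqueness packaged in Theorem~\ref{l: contraction mapping RDTF} (the fixed point is unique for given unstable data), so $h_\tau\equiv S(\mathbf a)$, which is \eqref{eq:limit_phi_a}.

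\textbf{Main obstacle.} The delicate point is the third/fourth steps: one must be careful that the ``leading data'' $\mathbf a$ is read off in a gauge compatible with the one used to define $S$, and that no generic-unstable or essential-neutral leading term survives in $h_\tau$ — if a neutral mode survived, $S(\mathbf a)$ (which has exponential-order corrections only) could not match $h_\tau$, and indeed for non-integrable $\bar g$ this is precisely where counterexamples like $\mathbb{CP}^n$ live; the hypothesis forces exponential decay, which rules this out, but the bookkeeping (choosing $\tau_0$ negative enough, peeling the strata $\lambda_1'>\dots>\lambda_{I'}'$ one at a time, and at each stage subtracting the matched part of $S$ and re-entering the contraction-mapping uniqueness on the remainder) is where the real work is. The HMHF gauge choice is what guarantees the \emph{generic} unstable modes do not appear as leading terms, so invoking the harmonicity of $\chi_\tau$ at the right moment is essential; I expect verifying that compatibility to be the technical crux.
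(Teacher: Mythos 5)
Your proposal correctly identifies the two endpoints of the argument (apply Theorem~\ref{t:existence_theorem_HMHF} to get an exponentially-converging modified HMHF, then match $h_\tau$ to some $S(\mathbf a)$) and correctly anticipates that a Merle--Zaag/dominant-mode analysis and the fixed-point machinery of Theorem~\ref{l: contraction mapping RDTF} are the relevant tools. However, there is a genuine gap in the mechanism you invoke for eliminating generic unstable modes, and the one-shot matching structure you propose is missing the iteration that makes the argument close.

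The gap: you assert that ``the generic ones are killed because $\chi_\tau$ is a HMHF (the best parabolic gauge),'' i.e.\ that harmonicity of $\chi_\tau$ forces the leading generic-unstable coefficients of $h_\tau$ to vanish. This is false. There is no single distinguished HMHF --- Theorem~\ref{t:existence_theorem_HMHF} in fact produces an $\textnormal{index}(\mathfrak L)$-parameter family of ancient modified HMHFs between $g_\tau$ and $\bar g$, and different members of this family give perturbations $h_\tau$ with \emph{different} generic leading modes. An arbitrary HMHF produced by the existence theorem will generically still exhibit a dominant $\LL_X\bar g$-mode. The paper's proof handles this by explicitly changing the HMHF: Lemma~\ref{l: change of base point} shows that if the dominant mode of $w_{\chi,\mathbf a}=(\chi_\tau^{-1})^*g_\tau-\bar g - S(\mathbf a)$ is $e^{\lambda\tau}\LL_{\bar X}\bar g$, then, using the correspondence between generic eigentensors of $L$ and eigenvectors of $\mathfrak L$ (Lemma~\ref{t: unstable Lie derivative implies unstable vector_shrinker}) and the contraction-mapping existence result for the HMHF perturbation equation, one can build a \emph{new} modified HMHF $\psi_\tau$ for which $w_{\psi,\mathbf a}=o(e^{\lambda\tau})$.

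The second issue is structural. The actual proof is an iteration over pairs $(\chi,\mathbf a)$: consider $w_{\chi,\mathbf a}$; if it is not identically zero, Proposition~\ref{prop:diff.asymp} (applied to the difference of two solutions of the modified RDTFP equation) gives a dominant eigentensor $h_{\chi,\mathbf a}$ with eigenvalue $\Lambda_{\chi,\mathbf a}>0$; then Lemma~\ref{l: limit is not only generic mode} (if $h_{\chi,\mathbf a}$ has an essential part, update $\mathbf a$) and Lemma~\ref{l: change of base point} (if $h_{\chi,\mathbf a}$ is purely generic, update $\chi$) together produce a new pair with strictly larger dominant eigenvalue, or with $w\equiv0$. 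Since there are only finitely many distinct positive eigenvalues, the iteration terminates with $w_{\chi_k,\mathbf a_k}\equiv0$. Your proposed ``rigidity step'' --- that two exponentially-decaying ancient solutions with the same leading unstable data coincide by contraction-mapping uniqueness --- does not follow directly from Theorem~\ref{l: contraction mapping RDTF} (which is an existence result and controls the solution only relative to a reference and a prescribed leading mode); the difference $h_\tau - S(\mathbf a)$ could a priori retain a nonzero higher-eigenvalue mode, which is exactly what the iteration removes one stratum at a time. Proposition~\ref{prop:diff.asymp} is the key substitute: it says the difference of two solutions is asymptotic to a single eigentensor with positive eigenvalue unless the solutions coincide, and this is what drives the induction.
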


If $(M,\bar g)$ is integrable, then by Theorem \ref{p: g_t converge to g_Sigma exponentially} we see that any ancient rescaled modified Ricci flow asymptotic to $(M,\bar{g})$ converges exponentially to it. So Theorem \ref{t:uniqueness} is a corollary of Theorem \ref{t:existence_of_a}.

We fix the conditions in Theorem \ref{t:existence_of_a} throughout this subsection: Let $(M,\bar{g},f)$ be a compact shrinker, and $L=\Delta_f+2\Rm*$. Let $(M,g_\tau)$ be an ancient rescaled modified Ricci flow asymptotic to $(M,\bar{g})$ that converges exponentially to $(M,\bar g)$ as $\tau\to-\infty$.

\begin{proposition}\label{prop:diff.asymp}
Let $h^1_\tau$ and $h^2_\tau$ be ancient solutions to the modified RDTF Perturbation equation \eqref{eq:most important eq} with background $(M,\bar g,f)$. Suppose $\|h^1_\tau\| +\|h^2_\tau\|  =O(e^{\delta \tau})$ for some $
\delta>0$. Then there is an eigentensor $\varphi$ of $L$ with eigenvalue $\lambda>0$ such that 
\begin{equation}\label{eq:limit.eigenfunction}
    \lim_{\tau\to -\infty} e^{-\lambda \tau}(h^1_\tau-h^2_\tau)= \varphi,
\end{equation}
holds in $L^2_f$-norm, unless  $h^1_\tau \equiv h^2_\tau$.
\end{proposition}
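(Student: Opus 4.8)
The plan is to reduce to the linear parabolic equation satisfied by the difference $w_\tau:=h^1_\tau-h^2_\tau$ and to extract its leading asymptotics from the spectral decomposition of $L$. Since the modified RDTF Perturbation equation \eqref{eq:most important eq} is quasilinear with linearization $L$, subtracting the two equations gives
\[\partial_\tau w_\tau=L w_\tau+\mathcal E_\tau,\qquad \mathcal E_\tau:=\mathcal Q(h^1_\tau)-\mathcal Q(h^2_\tau)=\int_0^1 D\mathcal Q\bigl(h^2_\tau+s w_\tau\bigr)[w_\tau]\,ds,\]
where $\mathcal Q$ collects the super-linear remainder; thus $w_\tau$ solves a linear equation $\partial_\tau w_\tau=\tilde L_\tau w_\tau$ with $\tilde L_\tau$ a uniformly parabolic operator equal to $L$ up to lower-order terms whose coefficients are built from $h^1,h^2$ and their first derivatives. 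First I would upgrade the hypothesis: by standard parabolic smoothing applied to the equations for $h^i$ one gets $\|h^i_\tau\|_{C^k}=O(e^{\delta\tau})$ for every $k$, and combining this with the interior estimate $\|w_\tau\|_{H^2}\le C\sup_{s\in[\tau-1,\tau]}\|w_s\|$ yields
\[\|\mathcal E_\tau\|\ \le\ C\,e^{\delta\tau}\sup_{s\in[\tau-1,\tau]}\|w_s\|.\]
The decisive point, used throughout, is that $\mathcal E_\tau$ is smaller than $w_\tau$ by the \emph{fixed} exponential factor $e^{\delta\tau}$. If $w\equiv0$ there is nothing to prove, so assume $w\not\equiv0$; then (confirmed a posteriori by the log-convexity step below) $\|w_\tau\|>0$ for all $\tau$.

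Write $c_j(\tau)=\langle w_\tau,h_j\rangle$, so $c_j'=\lambda_j c_j+\langle \mathcal E_\tau,h_j\rangle$, and set
\[S=\bigl\{\mu\in\mathbb R:\ \textstyle\sup_{\tau\le 0}e^{-\mu\tau}\|w_\tau\|<\infty\bigr\},\qquad \lambda:=\sup S\ \ (\ge\delta>0).\]
The first genuine task, and the main obstacle I anticipate, is to show $\lambda<\infty$. Here I would run an Agmon--Nirenberg/Poon-type argument: for the unperturbed flow the Rayleigh quotient $N(\tau):=\langle -Lw_\tau,w_\tau\rangle/\|w_\tau\|^2$ is nonincreasing (from $\tfrac{d}{d\tau}\langle -Lw,w\rangle=-2\|Lw\|^2$ and Cauchy--Schwarz), and with the error term one obtains a differential inequality forcing $N$ to remain bounded on $(-\infty,0]$, since the error contributions are integrable ($O(e^{\delta\tau})$). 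Consequently $\tfrac{d}{d\tau}\log\|w_\tau\|=-N(\tau)$ is bounded below, so $\|w_\tau\|\ge c\,e^{C\tau}$ for some finite $C$, i.e.\ $\lambda\le C<\infty$ (this also gives $\|w_\tau\|>0$ for all $\tau$). This is the soft substitute for Kotschwar's Carleman estimate; the delicate point is to control the derivative $\mathcal E_\tau'$ of the error (which involves $\partial_\tau h^i$ and $\partial_\tau w=Lw+\mathcal E$) against $\|w_\tau\|$ and $\|Lw_\tau\|$, but this bookkeeping is by now routine in this setting.

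With $0<\lambda<\infty$ fixed, choose $\eta\in(0,\delta)$ small and let $V_>,V_\lambda,V_<$ be the spans of eigentensors of $L$ with eigenvalue $>\lambda$, $=\lambda$, $<\lambda$ respectively (the first two finite-dimensional, $V_\lambda$ possibly trivial), with largest eigenvalue $\lambda^{--}<\lambda$ on $V_<$ and smallest $\lambda^{++}>\lambda$ on $V_>$; decompose $w_\tau=w^>_\tau+w^\lambda_\tau+w^<_\tau$. Since $\lambda-\eta<\lambda=\sup S$ we have $\|w_\tau\|=O(e^{(\lambda-\eta)\tau})$, whence $\|\mathcal E_\tau\|\le Ce^{(\lambda-\eta+\delta)\tau}$. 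Duhamel's formula on $V_<$ integrated from $-\infty$, using $\|e^{tL}|_{V_<}\|\le e^{t\lambda^{--}}$ for $t\ge0$, gives $w^<_\tau=\int_{-\infty}^\tau e^{(\tau-s)L}\mathcal P_{V_<}\mathcal E_s\,ds$ and $\|w^<_\tau\|\le C e^{(\lambda-\eta+\delta)\tau}=O(e^{(\lambda+\gamma)\tau})$ with $\gamma=\delta-\eta>0$; Duhamel on the finite-dimensional $V_>$ integrated from $0$, using $\|e^{tL}|_{V_>}\|=e^{t\lambda^{++}}$ for $t\le0$, likewise gives $\|w^>_\tau\|=O(e^{(\lambda+\gamma')\tau})$ for some $\gamma'>0$. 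On $V_\lambda$ one has $(e^{-\lambda\tau}w^\lambda_\tau)'=e^{-\lambda\tau}\mathcal P_{V_\lambda}\mathcal E_\tau$, whose norm is $\le Ce^{(\delta-\eta)\tau}$ and hence integrable on $(-\infty,0]$; so $e^{-\lambda\tau}w^\lambda_\tau$ converges in $L^2_f$ to some $\varphi_0\in V_\lambda$ with $\|e^{-\lambda\tau}w^\lambda_\tau-\varphi_0\|=O(e^{(\delta-\eta)\tau})$. Summing the three pieces, $e^{-\lambda\tau}w_\tau\to\varphi_0$ in $L^2_f$, and in fact $\|w_\tau-e^{\lambda\tau}\varphi_0\|=O(e^{(\lambda+\gamma'')\tau})$ for some $\gamma''>0$.

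It remains to see $\varphi_0\neq0$: if $\varphi_0=0$ then $\|w_\tau\|=O(e^{(\lambda+\gamma'')\tau})$, so $\lambda+\gamma''\in S$, contradicting $\lambda=\sup S$. In particular $V_\lambda\neq\{0\}$, so $\lambda\in\Spec(L)$ with $\lambda\ge\delta>0$, and $\varphi:=\varphi_0$ is an eigentensor of $L$ with eigenvalue $\lambda$ satisfying \eqref{eq:limit.eigenfunction}. Beyond elementary ODE and semigroup estimates, the only inputs are the parabolic-regularity facts of the first paragraph and the backward-uniqueness/log-convexity argument of the second, so the proof is complete modulo those.
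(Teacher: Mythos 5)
Your proposal takes a genuinely different route from the paper. The paper reduces everything to its Lemma~\ref{lem:diff.dominace} (dominance of the unstable modes), proved by running the $H_W$-norm ODE comparison of Lemma~\ref{lem:general_V} on $w_\tau=h^1_\tau-h^2_\tau$, combined with the integration-by-parts inner-product estimates \eqref{eq:error.exp}--\eqref{eq:IBP} that exploit the divergence form of $Q$; once $\PP^+w_\tau$ is shown to dominate, the remaining extraction of the limiting eigentensor is a finite-dimensional ODE argument as in [CS20, Section 6.2]. You instead set $\lambda=\sup S$ and attack finiteness of $\lambda$ head-on via an Agmon--Nirenberg/Poon frequency-function argument. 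Both approaches are in the same family of ideas, but they are technically distinct, and the frequency-function route does not exploit the divergence structure of $Q$ the way the $H_W$-inner-product estimates do.

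The genuine gap is in your Step~3, and it is not mere bookkeeping. Your error bound is $\|\mathcal E_\tau\|\le Ce^{\delta\tau}\sup_{s\in[\tau-1,\tau]}\|w_s\|$, i.e.\ an $L^2_f$ bound on $\mathcal E_\tau$ involving a time-interval supremum (because $\mathcal E$ contains terms such as $h^2*\nabla^2 w$, so you must go through $\|w\|_{H^2}$ via parabolic smoothing). To feed this into the differential inequality for $N(\tau)$ you need to compare $\sup_{s\in[\tau-1,\tau]}\|w_s\|$ with $\|w_\tau\|$; a forward Harnack bound $\sup_{[\tau-1,\tau]}\|w_s\|\le C\|w_\tau\|$ is false in general for parabolic equations on a compact manifold (the high-frequency modes violate it badly) and is essentially equivalent to the boundedness of the frequency you are trying to prove, so the argument as sketched is circular. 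One must either absorb the second-order part of $\mathcal E$ into a time-dependent elliptic operator and redo the frequency-function monotonicity with care (this is genuinely delicate), use a Carleman estimate as in Kotschwar, or replace the $L^2$-norm Rayleigh quotient by the paper's $H_W$-inner-product estimates which, via integration by parts, only need $\|w\|_{H^1}$ and thus avoid the time-interval supremum altogether. There is also a small direction slip in your log-convexity deduction: to conclude $\|w_\tau\|\ge ce^{C\tau}$ you need $\tfrac{d}{d\tau}\log\|w_\tau\|=-N(\tau)$ bounded \emph{above} (equivalently $N$ bounded \emph{below}, which, in the clean case, follows from $N$ being nonincreasing); you wrote ``bounded below,'' which gives only the unneeded upper bound on $\|w_\tau\|$. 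Steps~4 and 5 (Duhamel decomposition into $V_>,V_\lambda,V_<$ and the nontriviality of $\varphi_0$) are fine once $\lambda<\infty$ is in hand.
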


\begin{proof}
Let $w_\tau=h^1_\tau-h^2_\tau$. Our theorem is analogue to \cite[Theorem 6.4]{CS20}, and we follow the proofs in \cite[Section 6.2]{CS20} to obtain the desired result. To this end, by using {the structure of $Q$ in \eqref{eq:Q(h)}} we have 
\begin{equation}
 |\langle Q(h^1_\tau)-Q(h^2_\tau),w_\tau\rangle | \leq Ce^{\delta\tau}\|w_\tau\|_{H_W}^2.
\end{equation}
Together with the following 
Lemma \ref{lem:diff.dominace}, we get
\begin{equation}\label{eq:error.exp}
 |\langle Q(h^1_\tau)-Q(h^2_\tau),w_\tau\rangle | \leq Ce^{\delta\tau}\|\PP^+w_\tau\|_{H_W}^2\leq Ce^{\delta\tau}\|\PP^+w_\tau\| ^2,
\end{equation}
which can substitute \cite[Lemma 6.5]{CS20}. Then, we can obtain \cite[Lemma 6.6]{CS20} by using the same proof. Also, given any symmetric 2-tensor $\varphi$, by using integration by parts we get
\begin{equation}\label{eq:IBP}
 |\langle Q(h^1_\tau)-Q(h^2_\tau),\varphi\rangle | \leq C(\|h^1_\tau\|_{C^2}+\|h^1_\tau\|_{C^2})\|\varphi\|_{H_W}\|w_\tau\|_{H_W}\leq C \|\varphi\|_{H_W}\|w_\tau\|_{H_W}.
\end{equation}
Thus, we can obtain \cite[Lemma 6.7]{CS20} as well. Now, by following the rest of arguments in \cite[Section 6.2]{CS20} with the same proofs, we can complete the proof.
\end{proof}

The following lemma shows the dominance of the unstable modes in $w_\tau$.

\begin{lemma}\label{lem:diff.dominace}
We recall $h^1_\tau,h^2_\tau$ in Proposition \ref{prop:diff.asymp}. Then $w_\tau=h^1_\tau-h^2_\tau$ satisfies
\begin{equation}
    \|\PP^{\le0}w_\tau\|_{H_W}=o(e^{\delta' \tau})\|\PP^+w_\tau\|_{H_W},
\end{equation}
for any $\delta' \in (0,\delta)$.
\end{lemma}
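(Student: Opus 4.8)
The plan is to view $w_\tau:=h^1_\tau-h^2_\tau$ as an ancient solution of the \emph{linear} equation $\partial_\tau w_\tau=Lw_\tau+E_\tau$ with a small error $E_\tau:=Q(h^1_\tau)-Q(h^2_\tau)$, and to run a Merle--Zaag-type spectral argument. First I would record the analytic inputs. By the bilinear structure of $Q$ in \eqref{eq:Q(h)} together with interior parabolic estimates for the ancient flows $h^i_\tau$, which upgrade the hypothesis $\|h^i_\tau\|=O(e^{\delta\tau})$ to $\|h^i_\tau\|_{C^2}=O(e^{\delta\tau})$, integration by parts yields the weak error bound
\[
\big|\langle E_\tau,\varphi\rangle\big|\le Ce^{\delta\tau}\,\|w_\tau\|_{H_W}\,\|\varphi\|_{H_W}\qquad\text{for all }\varphi\in\Sym^2(M),
\]
together with the a priori decay $\|w_\tau\|_{H_W}\le Ce^{\delta\tau}\to0$ coming from \eqref{eq:H^1_H_W}. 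If $w_\tau\equiv0$ there is nothing to prove, so assume $w_\tau\not\equiv0$.

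Next I split $w=\PP^+w+\PP^{\le0}w$ with $\PP^{\le0}:=\PP^0+\PP^-$, and set $s(\tau):=\|\PP^+w_\tau\|_{H_W}$, $r(\tau):=\|\PP^{\le0}w_\tau\|_{H_W}$. Testing the evolution against the eigenprojections, using that $L$ is self-adjoint with $L\ge\lambda_+$ on $V^+$ and $L\le0$ on $V^{\le0}$, that $V^+$ is finite-dimensional, and the error bound above, one obtains the coupled differential inequalities
\[
\tfrac{d}{d\tau}s^2\ \ge\ 2\lambda_+s^2-Ce^{\delta\tau}s(s+r),\qquad
\tfrac{d}{d\tau}r^2\ \le\ -2\lambda_-\|\PP^-w\|_{H_W}^2+Ce^{\delta\tau}r(s+r)\ \le\ Ce^{\delta\tau}r(s+r).
\]

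The crux is then a Merle--Zaag ODE dichotomy, as in \cite[\S6.2]{CS20}: applied to these inequalities it produces a time $\tau_0$ such that, on $(-\infty,\tau_0]$, \emph{either} $s\le\eta r$ throughout \emph{or} $r\le\eta s$ throughout, with $\eta>0$ as small as we like. I rule out the first alternative directly: there $\|w_\tau\|_{H_W}\le 2r(\tau)$, so the second inequality collapses to $\tfrac{d}{d\tau}\log r^2\le Ce^{\delta\tau}$; since $e^{\delta\tau}$ is integrable on $(-\infty,\tau_0]$, integrating backward forces $r(\tau)\ge c_1>0$ for all $\tau\le\tau_0$, contradicting $r(\tau)\le\|w_\tau\|_{H_W}\to0$. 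Hence $r\le\eta s$ on $(-\infty,\tau_0]$.

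It remains to bootstrap to the sharp rate. With $r\le\eta s$ one has $\|w_\tau\|_{H_W}\le 2s(\tau)$, so the error terms improve to $Ce^{\delta\tau}s$, giving $\tfrac{d}{d\tau}s^2\ge(2\lambda_+-Ce^{\delta\tau})s^2$ and $\tfrac{d}{d\tau}r^2\le Ce^{\delta\tau}(r^2+rs)$. For the ratio $\sigma:=r/s$ a short computation then gives $\dot\sigma\le-\tfrac{\lambda_+}{2}\sigma+Ce^{\delta\tau}$ once $\tau$ is sufficiently negative; multiplying by $e^{\lambda_+\tau/2}$ and integrating from $-\infty$ (legitimate since $\sigma$ is bounded and $\lambda_+>0$) yields $\sigma(\tau)\le Ce^{\delta\tau}$. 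Therefore $\|\PP^{\le0}w_\tau\|_{H_W}=r(\tau)\le Ce^{\delta\tau}\|\PP^+w_\tau\|_{H_W}=o(e^{\delta'\tau})\|\PP^+w_\tau\|_{H_W}$ for every $\delta'\in(0,\delta)$, which is the assertion. I expect the Merle--Zaag dichotomy to be the main obstacle --- separating the regime in which the unstable projection dominates from the one in which it does not; by contrast the differential inequalities preceding it and the ODE bootstrap following it are routine, and I would import the dichotomy from \cite{CS20}, adapting its constants to the time-decaying error coefficient $Ce^{\delta\tau}$.
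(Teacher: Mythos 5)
Your proposal is correct and follows essentially the same strategy as the paper's: derive coupled differential inequalities for $\|\PP^+w_\tau\|_{H_W}^2$ and $\|\PP^{\le0}w_\tau\|_{H_W}^2$ with a decaying error coefficient $Ce^{\delta\tau}$ (from the quadratic structure of $Q$, integration by parts, and $\|h^i_\tau\|_{C^2}=O(e^{\delta\tau})$), then rule out the regime where the non-unstable part dominates by integrating $\tfrac{d}{d\tau}\log\|\PP^{\le0}w_\tau\|_{H_W}^2\le Ce^{(\delta-\delta')\tau}$, which contradicts $w_\tau\to 0$. The cosmetic difference is that the paper compresses everything into the single auxiliary quantity $a(\tau)=e^{\delta'\tau}\|\PP^+w_\tau\|_{H_W}^2-\|\PP^{\le0}w_\tau\|_{H_W}^2$ and a zero-crossing argument, whereas you invoke a Merle--Zaag dichotomy as a black box and then bootstrap via $\dot\sigma\le-\tfrac{\lambda_+}{2}\sigma+Ce^{\delta\tau}$; your bootstrap is in fact a mild improvement, since it directly yields $r\le Ce^{\delta\tau}s$ and hence the full claimed range $\delta'\in(0,\delta)$, whereas $a>0$ as written only gives the ratio of \emph{squared} norms $<e^{\delta'\tau}$, i.e.\ an exponent $\delta'/2$ on the unsquared ratio. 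Two small points to tighten: the dichotomy ``with $\eta$ arbitrarily small'' is not what comes out directly --- one gets a zero-crossing dichotomy at a \emph{fixed} $\eta$ (which suffices to rule out the bad case and enter the bootstrap), and smallness of $\eta$ is a consequence of the bootstrap, not a hypothesis; and, as the paper records in \eqref{eq:analyticity_time_slice}, you should explicitly invoke strong parabolicity (backward uniqueness) to ensure $w_{\tau'}\not\equiv 0$ for \emph{every} sufficiently negative $\tau'$, not just for some $\tau$, so that the ratio $\sigma=r/s$ stays well-defined once $r\le\eta s$ is established.
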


\begin{proof}
We suppose $w_\tau \not \equiv 0$. Remembering \eqref{eq:most important eq}, we can see that $w_\tau$ is a solution to a uniformly parabolic linear equation. Hence, we have
\begin{equation}\label{eq:analyticity_time_slice}
    w_{\tau'} \not \equiv 0,
\end{equation}
for every $\tau'\ll -1$. We recall $\lambda_I>0$ is the smallest positive eigenvalue. Using $\|h^i_\tau\|_{C^3}=O(e^{\delta\tau})$, we apply Lemma \ref{lem:general_V} so that we can get
\begin{align}\label{eq:mode_evol}
    &\tfrac{d}{d\tau}e^{-2\lambda_I\tau}\|\PP^+w_\tau\|_{H_W}^2\geq -Ce^{ (\delta-2\lambda_I) \tau}\|w_\tau\|_{H_W}^2,
    &\tfrac{d}{d\tau}\|\PP^{\le0}w_\tau\|_{H_W}^2\leq Ce^{\delta\tau}\|w_\tau\|_{H_W}^2.
\end{align}
Therefore, $a(\tau):=e^{\delta'\tau}\|\PP^+w_\tau\|_{H_W}^2-\|\PP^{\le0}w_\tau\|_{H_W}^2$ with $\delta'\in (0,\delta)$ satisfies
\begin{align*}
a'&\geq (\delta' +2\lambda_I-Ce^{(\delta -\delta')\tau})e^{\delta'\tau}\|\PP^+w_\tau\|_{H_W}^2 -Ce^{\delta\tau}\|\PP^{\leq 0}w_\tau\|_{H_W}^2.
\end{align*}
Hence, there is $\tau_0\ll -1$ depending on $\bar g$ such that $a'(\tau')> 0$ holds if $a(\tau')=0$ with $\tau'\leq \tau_0$. Here, we used the fact that 
$\|\PP^{\leq 0}w_{\tau'}\|_{H_W}=0$ and $a(\tau')=0$ imply \eqref{eq:analyticity_time_slice}. Towards contradiction, we assume there is $\tau_1\leq \tau_0$ such that $a(\tau_1)\leq 0$. Then, we have $a< 0$ for all $\tau<\tau_1$. Then, by \eqref{eq:analyticity_time_slice} we have $\|\PP^{\le0}w_\tau\|_{H_W}^2>0$ for all $\tau\leq \tau_1$. Now, feeding $a< 0$ back to \eqref{eq:mode_evol} yields
\begin{equation}
\tfrac{d}{d\tau}\log \|\PP^{\le0}w_\tau\|_{H_W}^2\leq Ce^{(\delta-\delta')\tau}.
\end{equation}
Integrating from $\tau_1$ to $-\infty$ gives us 
\begin{equation}
\lim_{\tau\to-\infty}\log \|\PP^{\le0}w_\tau\|_{H_W}^2\geq \log \|\PP^{\le0}w_{\tau_1}\|_{H_W}^2 -Ce^{(\delta-\delta')\tau_1}>-\infty.
\end{equation}
This contradicts $w_\tau\to 0$ as $\tau\to -\infty$. Therefore, we have $a(\tau)>0$ for all $\tau\leq \tau_0$. This completes the proof.
\end{proof}

Recall the map $S(\cdot)$ from Theorem \ref{t: existence theorem}, which maps any $\mathbf{a}\in \mathbb{R}^{I_{\ess}}$ to a rescaled modified RDFT $S(\mathbf{a})$, and we have $\|S(\mathbf{a})\|_{C^k}=O(e^{\lambda_+\tau})$ for each $k\in \mathbb N$.
Thus, 
for any $\mathbf{a}\in \mathbb{R}^{I_{\ess}}$ and an ancient modified HMHF $\chi_\tau$ between $g_\tau$ and $\bar{g}$ which converges exponentially to $\id$, we define 
\begin{equation}
w_{\chi,\mathbf{a}}:=(\chi^{-1}_\tau)^*g_\tau-\bar{g}-S(\mathbf{a}).
\end{equation}
By Proposition \ref{prop:diff.asymp}, if $w_{\chi,\mathbf{a}}\not \equiv 0$, there is an eigentensor $ h_{\chi,\mathbf{a}}$ with eigenvalue $\Lambda_{\chi,\mathbf{a}}>0$ such that
\begin{equation}\label{eq:definition of lambda a}
\lim_{\tau\to -\infty}e^{-\Lambda_{\chi,\mathbf{a}}\tau}w_{\chi,\mathbf{a}}= h_{\chi,\mathbf{a}} \not \equiv 0.
\end{equation}
 The following lemma shows that if the dominating eigentensors $ h_{\chi,\mathbf{a}}$ is not generic, then there is another $S(\mathbf{b})$ which is either closer to $h_\tau=(\chi^{-1}_\tau)^*g_\tau-\bar{g}$, or $ h_{\chi,\mathbf{b}}$ is generic. 

\begin{lemma}\label{l: limit is not only generic mode}
Suppose $w_{\chi,\mathbf{a}}\not \equiv 0$ and the limit eigentensor
    \begin{equation*}
    \lim_{\tau\to -\infty} e^{-\Lambda_{\chi,\mathbf{a}}\tau}w_{\chi,\mathbf{a}}= h_{\chi,\mathbf{a}}
    \end{equation*}
    is not generic. Then, there is $\mathbf{b}\in \mathbb{R}^I$ such that either $\Lambda_{\chi,\mathbf{b}} > \Lambda_{\chi,\mathbf{a}}$, or $\Lambda_{\chi,\mathbf{b}}= \Lambda_{\chi,\mathbf{a}}$ and
    \begin{equation}
    \lim_{\tau\to -\infty} e^{\Lambda_{\mathbf{a}}\tau}w_{\chi,\mathbf{b}}=\mathcal{P}_{\Lie}  h_{\chi,\mathbf{a}} \neq 0.
    \end{equation}
\end{lemma}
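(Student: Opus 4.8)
The plan is to cancel the essential part of the dominating eigentensor $h_{\chi,\mathbf a}$ by retuning a single stratum of the parameter $\mathbf a$, and then to match exponential rates. Decompose $h_{\chi,\mathbf a}=\mathcal{P}_{\ess}h_{\chi,\mathbf a}+\mathcal{P}_{\Lie}h_{\chi,\mathbf a}$. Since $h_{\chi,\mathbf a}$ is not generic, $\mathcal{P}_{\ess}h_{\chi,\mathbf a}\neq0$; and because $h_{\chi,\mathbf a}$ is an $L$-eigentensor of eigenvalue $\Lambda:=\Lambda_{\chi,\mathbf a}>0$ while $\Ker\,\Di_f$ is $L$-invariant, $\mathcal{P}_{\ess}h_{\chi,\mathbf a}$ is a nonzero essential eigentensor of the same eigenvalue. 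Hence $\Lambda$ must equal one of the distinct positive essential eigenvalues, say $\Lambda=\lambda'_j$ with $1\le j\le I'$, and $\mathcal{P}_{\ess}h_{\chi,\mathbf a}$ lies in the span of the components of $\vec h_j$, so there is a unique $\vec d_j\in\mathbb R^{m_j}$, $\vec d_j\neq0$, with $\vec d_j\cdot\vec h_j=\mathcal{P}_{\ess}h_{\chi,\mathbf a}$. This is the one place the hypothesis ``not generic'' enters: it forces $\Lambda$ to be an essential eigenvalue, which is exactly what allows a block of the construction $S$ to absorb the leading essential term.

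Define $\mathbf b$ by $\vec b_i=\vec a_i$ for $i\neq j$ and $\vec b_j=\vec a_j+\vec d_j$. Then $\mathbf a$ and $\mathbf b$ agree on all strata of index $>j$ and differ on the $j$-th, so Theorem \ref{t: existence theorem}\,\eqref{eq:construction strata} applies and yields, in every $C^k$-norm and hence in $L^2_f$,
\begin{equation*}
S(\mathbf a)-S(\mathbf b)=-e^{\Lambda\tau}\,\mathcal{P}_{\ess}h_{\chi,\mathbf a}+O\bigl(e^{(\Lambda+\delta)\tau}\bigr)\qquad\text{as }\tau\to-\infty,
\end{equation*}
the error being $o(e^{\Lambda\tau})$ in $L^2_f$ since $\delta>0$. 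As $w_{\chi,\mathbf b}=w_{\chi,\mathbf a}+\bigl(S(\mathbf a)-S(\mathbf b)\bigr)$ and, by \eqref{eq:definition of lambda a}, $e^{-\Lambda\tau}w_{\chi,\mathbf a}\to h_{\chi,\mathbf a}$ in $L^2_f$, we obtain
\begin{equation*}
\lim_{\tau\to-\infty}e^{-\Lambda\tau}w_{\chi,\mathbf b}=h_{\chi,\mathbf a}-\mathcal{P}_{\ess}h_{\chi,\mathbf a}=\mathcal{P}_{\Lie}h_{\chi,\mathbf a}\qquad\text{in }L^2_f.
\end{equation*}

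To finish, note that $(\chi^{-1}_\tau)^*g_\tau-\bar g$ and $S(\mathbf b)$ are exponentially decaying ancient solutions of the modified RDTF Perturbation equation (Theorem \ref{t: existence theorem}), so Proposition \ref{prop:diff.asymp} applies to $w_{\chi,\mathbf b}$. If $w_{\chi,\mathbf b}\equiv0$ there is nothing to do. Otherwise $e^{-\Lambda_{\chi,\mathbf b}\tau}w_{\chi,\mathbf b}\to h_{\chi,\mathbf b}\neq0$ for some eigenvalue $\Lambda_{\chi,\mathbf b}>0$. If $\mathcal{P}_{\Lie}h_{\chi,\mathbf a}\neq0$, then both displayed limits are nonzero, and since a nonzero $L^2_f$-limit determines the exponential rate uniquely, $\Lambda_{\chi,\mathbf b}=\Lambda=\Lambda_{\chi,\mathbf a}$ and $h_{\chi,\mathbf b}=\mathcal{P}_{\Lie}h_{\chi,\mathbf a}$, which is the second alternative. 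If $\mathcal{P}_{\Lie}h_{\chi,\mathbf a}=0$, then $e^{-\Lambda\tau}w_{\chi,\mathbf b}\to0$, so $w_{\chi,\mathbf b}$ decays strictly faster than $e^{\Lambda\tau}$, which forces $\Lambda_{\chi,\mathbf b}>\Lambda_{\chi,\mathbf a}$, the first alternative.

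The main thing to get right is the stratified bookkeeping: one must check that retuning exactly the $j$-th block leaves every slower-decaying stratum of $S(\mathbf a)$ untouched, so that \eqref{eq:construction strata} can be invoked with leading coefficient precisely $-\mathcal{P}_{\ess}h_{\chi,\mathbf a}$; once that is in place the remainder is the elementary matching-of-rates step combined with Proposition \ref{prop:diff.asymp}. A minor point is that the $O(e^{(\Lambda+\delta)\tau})$ term in \eqref{eq:construction strata}, a priori a $C^k$-bound, is $o(e^{\Lambda\tau})$ in $L^2_f$; this is immediate from $\delta>0$ and the uniform $C^k$-control on $S$.
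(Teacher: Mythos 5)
Your proof is correct and follows essentially the same route as the paper: you absorb the essential part $\mathcal P_{\ess}h_{\chi,\mathbf a}$ by shifting only the $j$-th stratum of $\mathbf a$, invoke Theorem \ref{t: existence theorem}\,\eqref{eq:construction strata} to compute $\lim_{\tau\to-\infty}e^{-\Lambda\tau}\bigl(S(\mathbf a)-S(\mathbf b)\bigr)$, and then split cases via Proposition \ref{prop:diff.asymp}. The only difference is that you spell out more carefully why the $L$-invariance of $\Ker\,\Di_f$ puts $\Lambda$ in the essential spectrum and why a nonzero limit pins down the rate, steps the paper leaves implicit.
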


\begin{proof}
We recall $I',\lambda_j',\vec{ h}_j$ in Section \ref{sec: existence}. Since $\PP_{\ess} h_{\chi,\mathbf{a}}\neq 0$, we have $\lambda_j'=\Lambda_{\chi,\mathbf{a}}$ for some $j\leq I'$, and moreover $\mathcal{P}_{\ess}  h_{\chi,\mathbf{a}}=\vec a'_j \cdot\vec{ h}_j:=\mathbf{a}'\cdot \vec h$ for some $\vec a'_j\neq0$. Thus, by Theorem \ref{t: existence theorem} we have
\[\lim_{\tau\to -\infty} e^{-\Lambda_{\mathbf{a}}\tau}(S(\mathbf{a}+\mathbf{a}')-S(\mathbf{a}))=\mathbf{a}'\cdot\vec h.\]
So $\mathbf{b}=\mathbf{a}+\mathbf{a}'$ satisfies
$\lim_{\tau\to -\infty} e^{-\Lambda_{\mathbf{a}}\tau}w_{\chi,\mathbf{b}}=\mathcal{P}_{\Lie}  h_{\chi,\mathbf{a}}$. This finishes the proof if $\mathcal{P}_{\Lie}  h_{\chi,\mathbf{a}}\neq0$, otherwise, Proposition \ref{prop:diff.asymp} implies $\Lambda_{\chi,\mathbf b}>\lambda_{\chi,\mathbf a}$.
\end{proof}

If $ h_{\chi,\mathbf{a}}$ is generic, then the next lemma finds a new modified HMHF $\{\psi_\tau\}$ such that the new $w_{\psi,\mathbf a}$ has a faster exponential decay. 

\begin{lem}\label{l: change of base point}
    Let $\chi_\tau$, $\tau\in(-\infty,0]$, be a modified HMHF between $g_\tau$ and $\bar{g}$ {which converges exponentially to $\id$} and $\mathbf{a}\in\mathbb R^I$ such that $ \PP_{\ess} h_{\chi,\mathbf{a}}= 0$. Then there exists a modified HMHF $\psi_\tau$, $\tau\in(-\infty,0]$,  between $g_\tau$ and $\bar{g}$ {which converges exponentially to $\id$} such that
\begin{equation*}
        \lim_{t\to{-\infty}}e^{-\Lambda_{\chi,\mathbf{a}} \tau}w_{\psi,\mathbf{a}}=0.
    \end{equation*}
In particular, we have either $w_{\psi,\mathbf{a}}\equiv0$ or $\Lambda_{\psi,\mathbf a}>\lambda_{\chi,\mathbf a}$.
\end{lem}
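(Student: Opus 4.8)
The plan is to build $\psi_\tau$ from $\chi_\tau$ by solving the modified HMHF perturbation equation at $\chi_\tau$ with a prescribed leading unstable mode chosen to cancel the dominant term of $w_{\chi,\mathbf{a}}$. Since $\PP_{\ess}h_{\chi,\mathbf{a}}=0$, the limit eigentensor $h_{\chi,\mathbf{a}}$ of $w_{\chi,\mathbf{a}}$ is generic with eigenvalue $\Lambda:=\Lambda_{\chi,\mathbf{a}}>0$, so Lemma \ref{t: unstable Lie derivative implies unstable vector_shrinker} furnishes a vector field $X$ with $\mathfrak L X=\Lambda X$ and $\mathcal L_X\bar g=h_{\chi,\mathbf{a}}$; in particular $X$ is an unstable eigenvector of $\mathfrak L$. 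Because $\chi_\tau\to\id$ and $g_\tau\to\bar g$ exponentially, the equation \eqref{eq:HMHF perturbation sec 7_2} governing the difference $Y_\tau=\exp_{\chi_\tau}^{-1}\mathcal Y_\tau$ of $\chi_\tau$ with any other modified HMHF $\mathcal Y_\tau$ between $g_\tau$ and $\bar g$ has linearization $\mathfrak L$, no inhomogeneous term (since $Y\equiv0$ solves it), and exponentially decaying lower-order coefficients. I would apply Theorem \ref{l: contraction mapping RDTF} in the ancient regime to this equation with trivial base solution $Y^0\equiv 0$, eigenvalue $\lambda=\Lambda$, and prescribed leading unstable coefficient $X$, producing on $(-\infty,\tau_0]$ an ancient solution $Y_\tau=e^{\Lambda\tau}X+O(e^{(\Lambda+\delta)\tau})$ for some $\delta>0$ (the class of exponentially decaying solutions forces the neutral, i.e.\ Killing, part of $Y_\tau$ to vanish). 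Then $\psi_\tau:=\exp_{\chi_\tau}(Y_\tau)$ is a modified HMHF between $g_\tau$ and $\bar g$, and since $\|\exp_{\id}^{-1}\chi_\tau\|$ and $\|Y_\tau\|$ decay exponentially, $\psi_\tau$ converges exponentially to $\id$.

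Next I would compute $w_{\psi,\mathbf{a}}$ directly. Writing $\Theta_\tau=\psi_\tau\circ\chi_\tau^{-1}$ one checks $\psi_\tau^{-1}=\chi_\tau^{-1}\circ\Theta_\tau^{-1}$, hence $(\psi_\tau^{-1})^*g_\tau=(\Theta_\tau^{-1})^*\big[(\chi_\tau^{-1})^*g_\tau\big]=(\Theta_\tau^{-1})^*\big[\bar g+S(\mathbf{a})+w_{\chi,\mathbf{a}}\big]$, and from the definition of $\psi_\tau$ one has $\Theta_\tau=\phi_{\,Y_\tau\circ\chi_\tau^{-1}}$, so $\exp_{\id}^{-1}\Theta_\tau=Y_\tau\circ\chi_\tau^{-1}=e^{\Lambda\tau}X+o(e^{\Lambda\tau})$ (the correction from $\chi_\tau^{-1}$ versus $\id$ is $O(\|Y_\tau\|\,\|\exp_{\id}^{-1}\chi_\tau\|)=o(e^{\Lambda\tau})$). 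Applying \eqref{eq:second} and \eqref{eq:third} of Lemma \ref{l: key lemma for regularity} to the diffeomorphism $\Theta_\tau^{-1}$ gives
\[(\Theta_\tau^{-1})^*\big[\bar g+S(\mathbf{a})+w_{\chi,\mathbf{a}}\big]=\bar g+S(\mathbf{a})+w_{\chi,\mathbf{a}}-\mathcal L_{\exp_{\id}^{-1}\Theta_\tau}\bar g+o(e^{\Lambda\tau}),\]
since $\|S(\mathbf{a})\|_{C^k}=O(e^{\lambda_+\tau})$, $\|w_{\chi,\mathbf{a}}\|_{C^k}=O(e^{\Lambda\tau})$ and $\|\exp_{\id}^{-1}\Theta_\tau\|_{C^k}=O(e^{\Lambda\tau})$ render every remaining contribution of order $e^{(\Lambda+\lambda_+)\tau}$ or $e^{2\Lambda\tau}$. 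Therefore $w_{\psi,\mathbf{a}}=w_{\chi,\mathbf{a}}-e^{\Lambda\tau}\mathcal L_X\bar g+o(e^{\Lambda\tau})=\big(w_{\chi,\mathbf{a}}-e^{\Lambda\tau}h_{\chi,\mathbf{a}}\big)+o(e^{\Lambda\tau})$, and by \eqref{eq:definition of lambda a} — upgraded from $L^2_f$ to $C^{2,\alpha}$ by interior parabolic estimates, as $w_{\chi,\mathbf{a}}$ solves a uniformly parabolic equation — the bracket is $o(e^{\Lambda\tau})$, so $e^{-\Lambda\tau}w_{\psi,\mathbf{a}}\to0$. For the last assertion: if $w_{\psi,\mathbf{a}}\not\equiv 0$, then $(\psi_\tau^{-1})^*g_\tau-\bar g$ and $S(\mathbf{a})$ are both exponentially decaying ancient solutions of the modified RDTF Perturbation equation \eqref{eq:most important eq} (the latter by Theorem \ref{t: existence theorem}), so Proposition \ref{prop:diff.asymp} yields $\Lambda_{\psi,\mathbf{a}}>0$ with $e^{-\Lambda_{\psi,\mathbf{a}}\tau}w_{\psi,\mathbf{a}}$ converging to a nonzero eigentensor; combined with $e^{-\Lambda\tau}w_{\psi,\mathbf{a}}\to 0$ this forces $\Lambda_{\psi,\mathbf{a}}>\Lambda_{\chi,\mathbf{a}}$.

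I expect the main obstacle to be in the construction step: confirming that the modified HMHF perturbation equation at $\chi_\tau$ indeed satisfies the structural hypotheses of Theorem \ref{l: contraction mapping RDTF} (it is of the schematic form $\partial_\tau Y=\mathfrak L Y+Q(Y)+E(Y)$ with $Q$ quadratic and $E$ linear in $Y$ with coefficients tied to $g_\tau-\bar g$ and $\exp_{\id}^{-1}\chi_\tau$, hence exponentially small), and in checking that prescribing the single unstable coefficient $X$ of eigenvalue $\Lambda$ genuinely produces a solution with no slower unstable mode and trivial Killing part — this is exactly what makes the cancellation exact at order $e^{\Lambda\tau}$ and guarantees $\psi_\tau\to\id$. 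The accompanying bookkeeping — tracking the $O(e^{2\Lambda\tau})$ and $O(e^{(\Lambda+\lambda_+)\tau})$ errors from Lemma \ref{l: key lemma for regularity}, the discrepancy between $\exp_{\id}^{-1}\Theta_\tau$ and $Y_\tau$, and the interior-estimate upgrade of \eqref{eq:definition of lambda a} from $L^2_f$ to $C^{2,\alpha}$ — is routine but must be carried out with some care.
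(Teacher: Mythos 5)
Your proof is correct and follows essentially the same route as the paper: identify the dominant generic eigentensor $h_{\chi,\mathbf{a}}=\LL_{\bar X}\bar g$ with $\bar X$ an eigenvector of $\mathfrak L$ via Lemma \ref{t: unstable Lie derivative implies unstable vector_shrinker}, solve the modified HMHF perturbation equation at $\chi_\tau$ by Theorem \ref{l: contraction mapping RDTF} with prescribed leading mode $e^{\Lambda\tau}\bar X$, and check that the resulting gauge change cancels the dominant part of $w_{\chi,\mathbf{a}}$, invoking Proposition \ref{prop:diff.asymp} for the final dichotomy. Your explicit bookkeeping through $\Theta_\tau=\psi_\tau\circ\chi_\tau^{-1}$ and Lemma \ref{l: key lemma for regularity} just spells out the estimate $(\psi_\tau^{-1})^*g_\tau-(\chi_\tau^{-1})^*g_\tau=-\LL_{X_\tau}\bar g+o(e^{\Lambda\tau})$ that the paper states more tersely, so the two proofs are the same in substance.
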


\begin{proof}
By Lemma \ref{t: unstable Lie derivative implies unstable vector},
there exists an eigenvector $\bar X$ with eigenvalue $\lambda:=\Lambda_{\chi,\mathbf{a}}$ of the operator
$\mathfrak{L}_{\bar g}=\Delta_{\bar{g},f}+\tfrac12$ 
such that $ h_{\chi,\mathbf{a}}=\LL_{\bar X} \bar{g}$.
So by Theorem \ref{l: contraction mapping RDTF}, there exist $C>0$ and a solution $X _\tau$ for $t\in(-\infty,0]$ to the modified HMHF Perturbation equation \eqref{eq:HMHF perturbation sec 7_2} at $\chi_\tau$ (or equation \eqref{eq:HMHF perturbation sec 7} if $\bar g$ is positive Einstein), such that $\psi_\tau=\exp_{\id}(X_\tau+\exp_{\id}^{-1}\chi_\tau)$, $\tau\in(-\infty,0]$, is a modified HMHF between $(M,g_\tau)$ and $(M,\bar{g})$, and $X_\tau$ satisfies
\begin{equation}\label{eq:fast decay}
    X _\tau-e^{\lambda \tau}\bar X=o(e^{\lambda \tau}),
\end{equation}
where hereafter $o(F),O(F)$ denotes vectors or tensors whose $C^{2,\alpha}$-norms are arbitrarily smaller than or bounded by $F$ as $\tau\to-\infty$.
This implies $X_\tau=O(e^{\lambda \tau})$ and moreover $$(\psi_\tau^{-1})^*g_\tau-(\chi_\tau^{-1})^*g_\tau=-\LL_{X_\tau}g_\tau+o(e^{\lambda \tau})=-\LL_{X_\tau}\bar{g}+o(e^{\lambda \tau}),$$
which by \eqref{eq:fast decay} implies
\begin{equation*}
    \begin{split}
        (\psi_\tau^{-1})^*g_\tau-(\chi_\tau^{-1})^*g_\tau+e^{\lambda \tau }\LL_{\bar X} \bar{g}
        &= o(e^{\lambda \tau}).
    \end{split}
\end{equation*}
So using $w_{\chi,\mathbf{a}}=e^{\lambda \tau}\LL_{\bar X} \bar{g}+o(e^{\lambda \tau})$ we obtain
\begin{align*}
    \omega_{\psi,\mathbf{a}}&=(\psi_\tau^{-1})^*g_\tau-(\chi_\tau^{-1})^*g_\tau+\omega_{\chi,\mathbf{a}}=o(e^{\lambda \tau}).
\end{align*}
This implies the assertion
$\lim_{\tau\to{-\infty}}e^{\Lambda_{\chi,\mathbf{a}} \tau}w_{\psi,\mathbf{a}}=0$, and by Proposition \ref{prop:diff.asymp}
we have either $w_{\psi,\mathbf{a}}\equiv0$ or $\Lambda_{\psi,\mathbf a}>\lambda_{\chi,\mathbf a}$.
\end{proof}
 
Now we prove Theorem \ref{t:existence_of_a}:

\begin{proof}[Proof of Theorem \ref{t:existence_of_a}]
By Theorem \ref{t:existence_theorem_HMHF}, we have a modified HMHF $\chi_{0,\tau}$ between $g_\tau$ and $\bar g$ converging to $\id$ exponentially.
We consider $w_{\chi_0,\textbf{0}}=(\chi_{0,\tau}^{-1})^*g_\tau-\bar{g}-S(\textbf{0})$. If $w_{\chi_0,\textbf{0}} \neq 0$, then by Proposition \ref{prop:diff.asymp} we have $\Lambda_{\chi_0,\textbf{0}}$ from \eqref{eq:definition of lambda a}. By using Lemma \ref{l: limit is not only generic mode} and Lemma \ref{l: change of base point}, we can find a modified HMHF $\chi_{1,\tau}$ and a vector $\mathbf{a}_1 \in \mathbb{R}^{I_{\ess}}$ such that either $w_{\chi_1,\mathbf{a}_1}\equiv 0$, or $\Lambda_{\chi_1,\mathbf{a}_1}>\lambda_{\chi_0,\textbf{0}}$.
We can repeat this process and get an increasing sequence of eigenvalues $\{\Lambda_{\chi_i,\mathbf{a}_i}\}$. Since $\Lambda_{\chi_0,\mathbf{a}_0}\ge\lambda_+$ and there are at most $I$ distinct positive eigenvalues, after at most $I$-times we find $\chi_{k,\tau}$ and $\mathbf{a}_k$ such that $w_{\chi_k,\mathbf{a}_k}\equiv 0$. This proves the theorem.
\end{proof}

Now consider the space
\begin{align*}
    \mathcal{M}(\bar g)&:=\big\{\{g_\tau\}_{\tau\in(-\infty,0]}: g_\tau \textnormal{ is a rescaled Ricci flow exponentially converging to }\bar g\big\}.
\end{align*}
By Theorem \ref{t:existence_of_a} and
\eqref{eq:limit_phi_a}, we can define a map $\pi: \mathcal{M}(\bar g)\to \mathbb R^I$
which maps a flow $\{g_\tau\}\in \mathcal M(\bar g)$ to a vector $\mathbf a\in \mathbb R^I$.
In the rest of this subsection, we assume $\bar g$ is \textit{positive Einstein}, and establish that $\chi_\tau$ and $\mathbf a$ in \eqref{eq:limit_phi_a} are uniquely determined by $g_\tau$, thus the map $\pi$ is well-defined.
We also show $\pi$ is a bijection, so $\mathcal{M}(\bar g)$ can be parametrized by $\mathbb R^I$.

First, the following lemma shows that  $(\chi_{1,\tau}^{-1})^*g_\tau-(\chi_{2,\tau}^{-1})^*g_\tau$ is dominated by a generic unstable mode.

\begin{lem}\label{l: dominated by a generic unstable mode}
   Let $k\in \mathbb N$. For any two   HMHFs $\chi_{i,\tau}$, $i=1,2$, between $g_\tau$ and $\bar{g}$, which converge exponentially to $\id$ as $\tau \to -\infty$, there exist a generic unstable eigenvector $\bar Z$ of $\mathfrak L_{\bar g}$ with eigenvalue $\lambda>0$ 
   such that
    \begin{equation}
        \|(\chi_{1,\tau}^{-1})^*g_\tau-(\chi_{2,\tau}^{-1})^*g_\tau-e^{\lambda \tau}\LL_{\bar Z} \bar{g}\|_{C^k}\le o( e^{\lambda \tau}),
    \end{equation}
    unless $\chi_{1,\tau}\equiv \chi_{2,\tau}$.
\end{lem}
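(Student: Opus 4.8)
The plan is to obtain the asymptotics of $w_\tau:=(\chi_{1,\tau}^{-1})^*g_\tau-(\chi_{2,\tau}^{-1})^*g_\tau$ from Proposition~\ref{prop:diff.asymp}, then to show that the limiting eigentensor is \emph{generic}, and finally to promote it to an eigenvector of $\mathfrak L_{\bar g}$ via Lemma~\ref{t: unstable Lie derivative implies unstable vector_shrinker}. First I would set $h^i_\tau:=(\chi_{i,\tau}^{-1})^*g_\tau-\bar g$ for $i=1,2$; since $\chi_{i,\tau}$ is a modified HMHF between $g_\tau$ and $\bar g$, each $h^i_\tau$ is an ancient solution of the modified RDTF Perturbation equation \eqref{eq:most important eq}, and since $\chi_{i,\tau}$ converges exponentially to $\id$ the associated RDTF perturbation satisfies $\|h^i_\tau\|=O(e^{\delta\tau})$ for some $\delta>0$. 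Hence Proposition~\ref{prop:diff.asymp} applies to $w_\tau=h^1_\tau-h^2_\tau$: either $w_\tau\equiv0$, or there exist $\lambda>0$ and an eigentensor $\varphi$ of $L$ with $L\varphi=\lambda\varphi$ such that $e^{-\lambda\tau}w_\tau\to\varphi\neq0$ in $L^2_f$.

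The alternative $w_\tau\equiv0$ is exactly the ``unless'' case, and I would check it forces $\chi_{1,\tau}\equiv\chi_{2,\tau}$. Pulling the identity $(\chi_{1,\tau}^{-1})^*g_\tau=(\chi_{2,\tau}^{-1})^*g_\tau$ back by $\chi_{1,\tau}$ shows that $\varphi_\tau:=\chi_{2,\tau}^{-1}\circ\chi_{1,\tau}$ is an isometry of $(M,g_\tau)$ for every $\tau$. Writing $\chi_{1,\tau}=\chi_{2,\tau}\circ\varphi_\tau$, differentiating in $\tau$, and using that $\Delta_{g_\tau,\bar g}(F\circ\varphi_\tau)=(\Delta_{g_\tau,\bar g}F)\circ\varphi_\tau$ when $\varphi_\tau$ is an isometry of $(M,g_\tau)$, the HMHF equations for $\chi_{1,\tau}$ and $\chi_{2,\tau}$ reduce to $(D\chi_{2,\tau})[\partial_\tau\varphi_\tau]=0$; since $\chi_{2,\tau}$ is a diffeomorphism this gives $\partial_\tau\varphi_\tau\equiv0$, and since $\varphi_\tau\to\id$ as $\tau\to-\infty$ we conclude $\varphi_\tau\equiv\id$, i.e. $\chi_{1,\tau}\equiv\chi_{2,\tau}$. (The modification by $\xi_\tau$ is common to both flows and does not affect this computation.)

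Now assume $w_\tau\not\equiv0$, so $e^{-\lambda\tau}w_\tau\to\varphi\neq0$. Writing $\chi_{1,\tau}=\sigma_\tau\circ\chi_{2,\tau}$ with $\sigma_\tau:=\chi_{1,\tau}\circ\chi_{2,\tau}^{-1}\to\id$, one has $(\chi_{1,\tau}^{-1})^*g_\tau=(\sigma_\tau^{-1})^*\big((\chi_{2,\tau}^{-1})^*g_\tau\big)$, hence $w_\tau=(\sigma_\tau^{-1})^*(\bar g+h^2_\tau)-(\bar g+h^2_\tau)$. Writing $\sigma_\tau^{-1}=\phi_{V_\tau}$ as in Lemma~\ref{l: key lemma for regularity} with $V_\tau$ orthogonal to the Killing fields of $\bar g$, the estimates \eqref{eq:second}, \eqref{eq:third}, \eqref{eq:F function} give $\|w_\tau-\LL_{V_\tau}\bar g\|_{C^k}\le C\big(\|V_\tau\|_{C^{k+1}}^2+\|V_\tau\|_{C^{k+1}}\|h^2_\tau\|_{C^{k+2}}\big)$; since $\LL_{V_\tau}\bar g=-2\Di^*V_\tau$ is bounded below by $V_\tau$ on the orthogonal complement of the Killing fields (Lemma~\ref{l: standard elliptic estimates}), the quadratic error can be absorbed for $\tau$ sufficiently negative, yielding $\|V_\tau\|\le C\|w_\tau\|$ and therefore $\|w_\tau-\LL_{V_\tau}\bar g\|=o(\|w_\tau\|)$. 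As $\LL_{V_\tau}\bar g\in\Ima\,\Di^*$, it follows that $\PP_{\ess}w_\tau=\PP_{\ess}(w_\tau-\LL_{V_\tau}\bar g)=o(\|w_\tau\|)$, and since $e^{-\lambda\tau}\|w_\tau\|\to\|\varphi\|$ we get $\PP_{\ess}\varphi=0$, so $\varphi$ is a generic eigentensor of $L$. Lemma~\ref{t: unstable Lie derivative implies unstable vector_shrinker} then produces a vector field $\bar Z$ with $\mathfrak L_{\bar g}\bar Z=\lambda\bar Z$ and $\LL_{\bar Z}\bar g=\varphi$; as $\lambda>0$ and $\varphi\neq0$, $\bar Z$ is a generic unstable eigenvector. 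To upgrade $e^{-\lambda\tau}w_\tau\to\LL_{\bar Z}\bar g$ to the $C^k$-topology, note $w_\tau$ solves $\partial_\tau w_\tau=Lw_\tau+(Q(h^1_\tau)-Q(h^2_\tau))$, a uniformly parabolic linear equation in $w_\tau$ with coefficients of size $O(e^{\delta\tau})$; consequently $u_\tau:=e^{-\lambda\tau}w_\tau-\LL_{\bar Z}\bar g$ solves $\partial_\tau u_\tau=(L-\lambda)u_\tau+F_\tau$ with $\|F_\tau\|_{C^k}=O(e^{\delta\tau})\to0$, and since $\|u_\tau\|\to0$, interior parabolic Schauder estimates on the intervals $[\tau-1,\tau]$ give $\|u_\tau\|_{C^k}\to0$, i.e. $\|w_\tau-e^{\lambda\tau}\LL_{\bar Z}\bar g\|_{C^k}=o(e^{\lambda\tau})$.

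The main obstacle is the genericity step: showing that $w_\tau$ is, to leading order, the Lie derivative $\LL_{V_\tau}\bar g$ of a single vector field. The delicate point there is the bound $\|V_\tau\|\le C\|w_\tau\|$, which requires the lower bound for $\Di^*$ modulo Killing fields together with absorbing the quadratic term $\|V_\tau\|^2$ into $\|V_\tau\|$ as $\tau\to-\infty$; the $\tau$-independence of the isometry $\varphi_\tau$ in the ``unless'' case is the other point demanding care.
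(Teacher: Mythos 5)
Your route is genuinely different from the paper's. You work at the tensor level: apply Proposition~\ref{prop:diff.asymp} to $w_\tau=h^1_\tau-h^2_\tau$ to produce an eigentensor $\varphi$ of $L$, argue $\varphi$ is generic, and pass to $\bar Z$ via Lemma~\ref{t: unstable Lie derivative implies unstable vector_shrinker}. The paper instead runs the dominance argument directly on the vector field $Z_\tau=X_{1,\tau}-X_{2,\tau}$, which satisfies the HMHF perturbation equation linearized by $\mathfrak L_{\bar g}$, obtains $e^{-\lambda\tau}Z_\tau\to\bar Z$ with $\lambda>0$ directly, and then finishes with the Lie-derivative estimate \eqref{eq:Lie dominates}.

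Your genericity step has a genuine gap. You set $\sigma_\tau^{-1}=\phi_{V_\tau}$ ``with $V_\tau$ orthogonal to the Killing fields,'' but $V_\tau=\exp_{\id}^{-1}\sigma_\tau^{-1}$ is uniquely determined by $\sigma_\tau^{-1}$ and will in general carry a Killing component $V_\tau^K$; you cannot simply declare it away. Since $\LL_{V_\tau}\bar g=\LL_{V_\tau^\perp}\bar g$, the lower bound from Lemma~\ref{l: standard elliptic estimates} yields only $\|w_\tau\|\gtrsim\|V_\tau^\perp\|-C\|V_\tau\|\bigl(\|V_\tau\|+\|h^2_\tau\|\bigr)$, which controls $\|V_\tau^\perp\|$ but not $\|V_\tau^K\|$. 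If $V_\tau^K$ dominates $V_\tau^\perp$, the quadratic errors $\|V_\tau\|^2$ and $\|V_\tau\|\|h^2_\tau\|$ are not $o(\|w_\tau\|)$, and $\mathcal{P}_{\ess}\varphi=0$ does not follow. Factoring $\sigma_\tau^{-1}=\theta_\tau\circ\phi_{V'_\tau}$ with $\theta_\tau$ an isometry of $\bar g$ and $V'_\tau\perp\,$Killing does not help either, because the resulting term $\theta_\tau^*h^2_\tau-h^2_\tau$ is again of the uncontrolled size $\|V_\tau^K\|\,\|h^2_\tau\|$. Showing $V_\tau^K$ is subdominant is precisely what the paper's vector-field argument delivers for free: $Z_\tau$ is governed by $\mathfrak L_{\bar g}$, Killing fields lie in $\Ker\,\mathfrak L_{\bar g}$, and the dominance argument produces an eigenvector with \emph{positive} eigenvalue, hence automatically orthogonal to Killing fields. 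Once you need that input, the detour through $w_\tau$ and the tensor-level Proposition~\ref{prop:diff.asymp} buys you nothing. (A minor aside: your ``unless'' reduction to $\partial_\tau\varphi_\tau=0$ is correct for the unmodified HMHF, which is the relevant case since the lemma is stated under the standing positive-Einstein assumption; for a general shrinker the modified equation \eqref{eq:M_HMHF} leaves a residual $\nabla f\circ\varphi_\tau-D\varphi_\tau(\nabla f)$, so the claim that the $\xi_\tau$-modification ``does not affect this computation'' is not correct in general.)
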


\begin{proof}
    Suppose $X_{i,\tau}=\exp_{\id}^{-1}\chi_{i,\tau}$, for $i=1,2$, and let $Z_\tau=X_{1,\tau}-X_{2,\tau}$. We have 
    \begin{equation}\label{eq:Lie dominates}
        \|(\chi_{1,\tau}^{-1})^*g_\tau-(\chi_{2,\tau}^{-1})^*g_\tau-\LL_{Z_\tau} g_\tau\|_{C^k}\le C\|Z_\tau\|^2_{C^{k+1}}.
    \end{equation}
Since each $X_{i,\tau}$ satisfies \eqref{eq:HMHF Perturbation}, it follows that 
    \begin{equation}
        \partial_\tau Z_\tau=\mathfrak L_\tau Z_\tau + Q(X_{1,\tau})-Q(X_{2,\tau}),
    \end{equation}
    where $\mathfrak L_\tau=\Delta_{\bar g}+\Ric_{\bar g}+E_\tau(\cdot,{g_\tau-\bar g})$.
    Since we have 
    \begin{equation}
        |Q_\tau(X_{1,\tau})-Q_\tau(X_{2,\tau})|\le C e^{\delta \tau}\|X_{1,\tau}-X_{2,\tau}\|_{C^2}=C e^{\delta \tau}\|Z_\tau\|_{C^2},
    \end{equation}
we can deduce
\begin{equation}
\|\partial_\tau Z_\tau-\mathfrak{L}_{\bar g}Z_\tau\|_{C^1}\leq Ce^{\delta \tau}\|Z_\tau\|_{C^3}
\end{equation}    
for some $\delta>0$ and $C\geq 1$. Since $\|Z_\tau\|_{C^3}\leq Ce^{\delta'\tau}$ for some $\delta'>0$, there exists an unstable eigenvector $\bar Z\neq 0$ of $\mathfrak L_{\bar g}$ with eigenvalue $\lambda>0$ such that
    \begin{equation}
    \lim_{\tau\to -\infty}   e^{-\lambda \tau} Z_\tau=\bar Z,
    \end{equation}
    smoothly as in the proof of Proposition \ref{prop:diff.asymp}. Hence, combining with \eqref{eq:Lie dominates} completes the proof of this lemma.
\end{proof}

\begin{lem}\label{lem:well-defined_map}
Let $\chi_{i,\tau}$, $i=1,2$, be two   HMHFs between $g_\tau$ and $\bar{g}$, which converge exponentially to $\id$ as $\tau \to -\infty$.
    Suppose for $\mathbf a_i\in \mathbb R^I$ that 
    \begin{equation}\label{eq:assumption of unique}
        (\chi_{i,\tau}^{-1})^*g_\tau-\bar{g}=S(\mathbf{a}_i).
    \end{equation}
    Then we have
\begin{equation}
    \chi_{1,\tau}=\chi_{2,\tau}\quad\textnormal{and}\quad \mathbf{a}_1=\mathbf{a}_2.
\end{equation}
\end{lem}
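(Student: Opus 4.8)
The plan is to show that the two harmonic map heat flows must coincide by analyzing their difference via the dominance lemma just established. Set $Z_\tau = \exp_{\id}^{-1}\chi_{1,\tau} - \exp_{\id}^{-1}\chi_{2,\tau}$. Suppose, for contradiction, that $\chi_{1,\tau}\not\equiv\chi_{2,\tau}$. By Lemma~\ref{l: dominated by a generic unstable mode}, there is a generic unstable eigenvector $\bar Z$ of $\mathfrak L_{\bar g}$ with eigenvalue $\lambda>0$ such that
\[
\big\|(\chi_{1,\tau}^{-1})^*g_\tau-(\chi_{2,\tau}^{-1})^*g_\tau - e^{\lambda\tau}\LL_{\bar Z}\bar g\big\|_{C^k}\le o(e^{\lambda\tau}).
\]
On the other hand, by the hypothesis \eqref{eq:assumption of unique},
\[
(\chi_{1,\tau}^{-1})^*g_\tau-(\chi_{2,\tau}^{-1})^*g_\tau = S(\mathbf a_1)-S(\mathbf a_2).
\]
If $\mathbf a_1=\mathbf a_2$ this difference is identically zero, so the dominance estimate forces $e^{\lambda\tau}\LL_{\bar Z}\bar g = o(e^{\lambda\tau})$, i.e. $\LL_{\bar Z}\bar g = 0$. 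But $\LL_{\bar Z}\bar g$ is a nonzero generic eigentensor of $L$ with eigenvalue $\lambda > 0$ (by Corollary~\ref{l: commute} / Theorem~\ref{t: unstable Lie derivative implies unstable vector}, $-2\Di^*$ maps $\Ker(\mathfrak L-\lambda)$ isomorphically onto $\Ker(L-\lambda)\cap\Ima\,\Di^*$, and $\bar Z\neq 0$ gives $\LL_{\bar Z}\bar g\neq 0$), a contradiction. Hence $\chi_{1,\tau}\equiv\chi_{2,\tau}$ in this case, and then \eqref{eq:assumption of unique} immediately gives $S(\mathbf a_1)=S(\mathbf a_2)$, so $\mathbf a_1=\mathbf a_2$ by the injectivity of $S$ coming from Theorem~\ref{t: existence theorem}\,(2) applied to the first index where $\vec a_j$ and $\vec b_j$ differ.

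It remains to rule out $\mathbf a_1\neq\mathbf a_2$. Here I would use the asymptotic expansion of $S(\mathbf a_1)-S(\mathbf a_2)$ from Theorem~\ref{t: existence theorem}\,(2): letting $j\le I'$ be the largest index with $\vec a_{1,j}\neq\vec a_{2,j}$, we have
\[
\lim_{\tau\to-\infty} e^{\lambda_j'\tau}\big(S(\mathbf a_1)-S(\mathbf a_2)\big) = (\vec a_{1,j}-\vec a_{2,j})\cdot\vec h_j \neq 0,
\]
where $\vec h_j$ are \emph{essential} eigentensors. Comparing with the dominance estimate from Lemma~\ref{l: dominated by a generic unstable mode}: if $\lambda = \lambda_j'$, then taking the $e^{-\lambda\tau}$-rescaled limit of $(\chi_{1,\tau}^{-1})^*g_\tau - (\chi_{2,\tau}^{-1})^*g_\tau$ in two ways gives $(\vec a_{1,j}-\vec a_{2,j})\cdot\vec h_j = \LL_{\bar Z}\bar g$; the left side is essential, the right side is generic, and both are nonzero — contradicting the $L^2_f$-orthogonality of $V_{\ess}$ and $V_{\Lie}$. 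If $\lambda\neq\lambda_j'$, then comparing the exponential decay rates of the two expressions for $(\chi_{1,\tau}^{-1})^*g_\tau-(\chi_{2,\tau}^{-1})^*g_\tau$ — one decaying like $e^{\lambda\tau}$ with nonzero limit coefficient, the other like $e^{\lambda_j'\tau}$ with nonzero limit coefficient — is impossible since a fixed $\tau$-family of tensors cannot have two distinct leading exponential orders. Hence $\mathbf a_1 = \mathbf a_2$, and we are back to the previous paragraph.

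The main obstacle I anticipate is bookkeeping the interplay between the two descriptions of the difference $(\chi_{1,\tau}^{-1})^*g_\tau-(\chi_{2,\tau}^{-1})^*g_\tau$: one coming from the HMHF side (Lemma~\ref{l: dominated by a generic unstable mode}, which produces a \emph{generic} leading term) and one coming from the RDTF side (Theorem~\ref{t: existence theorem}, whose leading term is \emph{essential}). The cleanest argument is to observe that this single tensor-valued function of $\tau$ has a well-defined leading exponential order and leading coefficient (as in the proof of Proposition~\ref{prop:diff.asymp}), and that this coefficient is simultaneously forced to lie in $V_{\Lie}$ and in $V_{\ess}$; since $V_{\Lie}\cap V_{\ess}=0$, the coefficient vanishes, forcing both leading terms to vanish and hence $\chi_{1,\tau}\equiv\chi_{2,\tau}$ and $\mathbf a_1=\mathbf a_2$. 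One subtlety to handle carefully: Lemma~\ref{l: dominated by a generic unstable mode} a priori only applies when $\chi_{1,\tau}\not\equiv\chi_{2,\tau}$, so the argument should be organized as: first show $\chi_{1,\tau}\equiv\chi_{2,\tau}$ (by deriving a contradiction from the generic-vs-essential dichotomy whenever it fails), then deduce $\mathbf a_1=\mathbf a_2$ from injectivity of $S$.
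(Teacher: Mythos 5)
Your argument is essentially the paper's: derive two asymptotic expansions of the single tensor-family $(\chi_{1,\tau}^{-1})^*g_\tau-(\chi_{2,\tau}^{-1})^*g_\tau$ --- one from the HMHF side via Lemma~\ref{l: dominated by a generic unstable mode} (a generic leading mode), one from the RDTF side via Theorem~\ref{t: existence theorem}(2) (an essential leading mode) --- and use the $L^2_f$-orthogonality $V_{\ess}\perp V_{\Lie}$ together with the uniqueness of the leading exponential order to force both leading coefficients to vanish, yielding $\chi_1\equiv\chi_2$ and $\mathbf a_1=\mathbf a_2$. The paper runs the contradiction in a single pass (assume $\mathbf a_1\neq\mathbf a_2$ to get a nonzero essential leading term, which forces $\chi_1\neq\chi_2$, which then supplies a nonzero generic leading term for the same family), whereas your case split leaves the sub-case $\chi_1\equiv\chi_2$ with $\mathbf a_1\neq\mathbf a_2$ implicit; this sub-case is immediate (the left side then vanishes identically while the essential leading coefficient is nonzero), and your closing paragraph already flags the organizational subtlety, so the content is complete.
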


\begin{proof}
    By \eqref{eq:assumption of unique} we have 
    \begin{equation}\label{eq:difference}
        (\chi_{1,\tau}^{-1})^*g_\tau-(\chi_{2,\tau}^{-1})^*g_\tau=S(\mathbf{a}_1)-S(\mathbf{a}_2).
    \end{equation}
   Note that $\mathbf{a}_1=\mathbf{a}_2$ if and only if $S(\mathbf{a}_1)=S(\mathbf{a}_2)$ by Theorem \ref{t: existence theorem}. Moreover, if $\mathbf{a}_1\neq\mathbf{a}_2$, then by the property of $S$ there is an essential unstable eigentensor $h_1$ of eigenvalue $\mu_1>0$ such that
    \begin{equation}
        S(\mathbf{a}_1)-S(\mathbf{a}_2)=e^{\mu_1 \tau}h_1+o(e^{\mu_1 \tau}).
    \end{equation}
    By Lemma \ref{l: dominated by a generic unstable mode}, if $\chi_{1,\tau}\not\equiv\chi_{2,\tau}$, there is a generic unstable eigentensor $h_2$ of eigenvalue $\mu_2>0$ such that 
    \begin{equation}
        (\chi_{1,\tau}^{-1})^*g_\tau-(\chi_{2,\tau}^{-1})^*g_\tau=e^{\mu_2 \tau}h_2+o(e^{\mu_2 \tau}).
    \end{equation}
    Since $\langle h_1,h_2\rangle=0$, the equality \eqref{eq:difference} implies $\chi_{1,\tau}=\chi_{2,\tau}$ and $S(\mathbf{a}_1)=S(\mathbf{a}_2)$.
\end{proof}

So by Lemma \ref{lem:well-defined_map} we see $\pi: \mathcal{M}(\bar g)\to \mathbb R^I$ is well-defined.
The following lemma shows that $S(\mathbf a)$ uniquely determines the ancient   Ricci flow and the HMHF.

\begin{lemma}\label{lem:injective}
    Suppose that two rescaled   Ricci flows $g_{1,\tau}$, $g_{2,\tau}$ satisfy $((\chi_{i,\tau})^{-1})^*g_{i,\tau} = \bar{g} + S(\mathbf a)$ for some $\mathbf{a}\in \R^I$, where $\chi_{i,\tau}$ is a {HMHF} between $g_{i,\tau}$ and $\bar{g}$ satisfying $\chi_{i,\tau}\to\id$ as $\tau\to-\infty$, for each $i=1,2$. Then
    \begin{equation}
        g_{1,\tau}=g_{2,\tau}\quad\textnormal{and}\quad \chi_{1,\tau}=\chi_{2,\tau}.
    \end{equation}
\end{lemma}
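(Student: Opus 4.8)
The plan is to set up a backward uniqueness argument for the coupled system consisting of the Ricci–DeTurck flow and the harmonic map heat flow, exploiting that both flows have the \emph{same} pushed-forward metric $\bar g + S(\mathbf a)$. Write $g_{i,\tau}$ for the two rescaled Ricci flows and $\chi_{i,\tau}$ for the two HMHFs, so that $(\chi_{i,\tau}^{-1})^*g_{i,\tau} = \bar g + S(\mathbf a)$ for $i=1,2$. The first step is to convert the hypothesis into a relation between the flows: since the pushed-forward metrics agree, we have $g_{1,\tau} = (\chi_{1,\tau})^*(\bar g + S(\mathbf a))$ and $g_{2,\tau} = (\chi_{2,\tau})^*(\bar g + S(\mathbf a))$, so $g_{1,\tau}$ and $g_{2,\tau}$ differ only by the diffeomorphism $\eta_\tau := \chi_{2,\tau}\circ \chi_{1,\tau}^{-1}$. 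The goal becomes showing $\eta_\tau \equiv \id$. Both $g_{1,\tau}$ and $g_{2,\tau}$ are rescaled Ricci flows asymptotic to $\bar g$, and from $\chi_{i,\tau}\to \id$ we get $\eta_\tau \to \id$ exponentially as $\tau\to-\infty$ (using Lemma \ref{lem:pseudolocality}-type estimates to transfer the decay of $\exp_{\id}^{-1}\chi_{i,\tau}$ to $\eta_\tau$, together with the exponential decay of $S(\mathbf a)$, $g_{i,\tau}-\bar g$ from Theorem \ref{t: existence theorem}).

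The second step is to combine the two HMHF equations into a flow equation for $\eta_\tau$. Since $\partial_\tau\chi_{i,\tau} = \Delta_{g_{i,\tau},\bar g}\chi_{i,\tau}$ and $g_{2,\tau} = \eta_\tau^* g_{1,\tau}$, a direct computation using the diffeomorphism invariance of the map Laplacian shows that $\eta_\tau$ itself satisfies a harmonic map heat flow type equation: one writes $\chi_{2,\tau} = \eta_\tau\circ\chi_{1,\tau}$ and differentiates, obtaining a quasilinear parabolic equation for $\eta_\tau$ (in the form $\partial_\tau\eta_\tau = \Delta_{g_{1,\tau}}\eta_\tau + (\text{lower order})$) that admits $\id$ as the stationary solution. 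Equivalently, letting $W_\tau := \exp_{\id}^{-1}\eta_\tau$, one gets a uniformly parabolic linear-in-$W$ equation of Ornstein–Uhlenbeck type,
\begin{equation*}
  \partial_\tau W_\tau = \mathfrak L_\tau W_\tau + Q(W_\tau),
\end{equation*}
where $\mathfrak L_\tau = \Delta_{\bar g} + \Ric_{\bar g} + E_\tau(\cdot, g_{1,\tau}-\bar g)$ and $Q$ is quadratic, exactly as in the HMHF perturbation equation appearing in Lemma \ref{l: dominated by a generic unstable mode}. Since the right-hand side vanishes to second order at $W=0$ and $W_\tau \to 0$ exponentially, the Lojasiewicz/eigenvalue-gap machinery behind Proposition \ref{prop:diff.asymp} and Lemma \ref{l: dominated by a generic unstable mode} applies: if $W_\tau \not\equiv 0$, then there is an unstable eigenvector $\bar Z$ of $\mathfrak L_{\bar g}$ with eigenvalue $\lambda > 0$ such that $e^{-\lambda\tau}W_\tau \to \bar Z \neq 0$.

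The third step is to derive a contradiction from the existence of such a nonzero asymptotic eigenvector. The key point is that $\eta_\tau$ is an \emph{honest} diffeomorphism of $M$ of the form $\chi_{2,\tau}\circ\chi_{1,\tau}^{-1}$ for two HMHFs \emph{into the fixed metric $\bar g$}, and the target Ricci flow on both sides is generated from the \emph{same} $\bar g + S(\mathbf a)$; thus there is no room for a nontrivial generic mode. More precisely, one feeds the asymptotic expansion $W_\tau = e^{\lambda\tau}\bar Z + o(e^{\lambda\tau})$ back into the constraint $g_{2,\tau} = \eta_\tau^* g_{1,\tau}$, which forces $\LL_{\bar Z}(\bar g + S(\mathbf a))$ to equal $g_{2,\tau} - g_{1,\tau}$ to leading order; but by hypothesis $g_{1,\tau}$ and $g_{2,\tau}$ are \emph{both} equal to $(\chi_{i,\tau})^*(\bar g + S(\mathbf a))$ with $\chi_{i,\tau}$ HMHFs converging to $\id$, and running the uniqueness argument of Lemma \ref{lem:well-defined_map} in reverse (using that a nonzero generic unstable mode cannot arise in the difference of two flows sharing the same $S(\mathbf a)$) yields $\bar Z$ orthogonal to itself, hence $\bar Z = 0$, a contradiction. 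Therefore $W_\tau \equiv 0$, so $\eta_\tau \equiv \id$, giving $g_{1,\tau} = g_{2,\tau}$ and then $\chi_{1,\tau} = \chi_{2,\tau}$.

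The main obstacle I expect is the \emph{third step}: cleanly ruling out a nonzero generic unstable mode $\bar Z$. The backward-uniqueness/asymptotics part (steps one and two) is essentially a repackaging of Proposition \ref{prop:diff.asymp} and Lemma \ref{l: dominated by a generic unstable mode}, but the closing contradiction requires carefully tracking how the single data $S(\mathbf a)$ constrains \emph{both} the Ricci flow and the gauge simultaneously — one must ensure that the ``generic'' mode produced by the abstract asymptotics really is incompatible with the normalization $\chi_{i,\tau}\to\id$ and the equality of the pushed-forward metrics, rather than merely being absorbable by a further gauge change. The cleanest route is probably to observe that $\eta_\tau$, being a composition of two maps each asymptotic to $\id$ along HMHFs into $\bar g$, must itself be (asymptotically) a HMHF between $g_{1,\tau}$ and $\bar g$; then Lemma \ref{l: change of base point} applied to the ancient flow $g_{1,\tau}$ with its HMHF $\chi_{1,\tau}$ shows that the only such deformation with $w_{\psi,\mathbf a}\equiv 0$ fixed is trivial, forcing $\eta_\tau\equiv\id$.
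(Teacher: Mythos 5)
Your proposal misses the key structural observation that makes this lemma essentially a one-line ODE argument, and as a consequence both your step~2 and step~3 go astray.

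The hypothesis $(\chi_{i,\tau}^{-1})^*g_{i,\tau}=\bar g+S(\mathbf a)$ means $g_{i,\tau}=\chi_{i,\tau}^*(\bar g+S(\mathbf a))$, and the map Laplacian transforms under pre-composition by an isometry of the domain as $\Delta_{\psi^*g,h}(\phi\circ\psi)=(\Delta_{g,h}\phi)\circ\psi$. Applied with $\phi=\id$ and $\psi=\chi_{i,\tau}$, this converts the HMHF equation into
\[
\partial_\tau\chi_{i,\tau}=\Delta_{g_{i,\tau},\bar g}\chi_{i,\tau}=\big(\Delta_{\bar g+S(\mathbf a),\bar g}\,\id\big)\circ\chi_{i,\tau},
\]
i.e.\ both $\chi_{1,\tau}$ and $\chi_{2,\tau}$ are integral curves (pointwise in $p\in M$) of the \emph{same} time-dependent vector field $V_\tau:=\Delta_{\bar g+S(\mathbf a),\bar g}\,\id$, which depends only on $\mathbf a$. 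This is an ODE on the diffeomorphism group, not a PDE. ODE uniqueness then gives $\chi_{2,\tau}=\chi_{1,\tau}\circ\chi_{1,0}^{-1}\circ\chi_{2,0}$; letting $\tau\to-\infty$ and using $\chi_{i,\tau}\to\id$ forces $\chi_{1,0}=\chi_{2,0}$, hence $\chi_{1,\tau}\equiv\chi_{2,\tau}$ and $g_{1,\tau}\equiv g_{2,\tau}$.

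Concretely, your step~2 is incorrect: if you carry out the computation you sketch, the map Laplacian contributions from $\chi_{1,\tau}$ and $\chi_{2,\tau}$ cancel exactly (because they are generated by the same $V_\tau$), and you do not obtain a uniformly parabolic equation of HMHF-perturbation type for $W_\tau=\exp_{\id}^{-1}\eta_\tau$; the ``equation'' is degenerate precisely because the evolution is only first order in $\tau$. There is also a bookkeeping error in the definition of $\eta_\tau$: with $\eta_\tau=\chi_{2,\tau}\circ\chi_{1,\tau}^{-1}$ and $\chi_{2,\tau}=\eta_\tau\circ\chi_{1,\tau}$, the relation you need is not $g_{2,\tau}=\eta_\tau^*g_{1,\tau}$ (that would require $\eta_\tau=\chi_{1,\tau}^{-1}\circ\chi_{2,\tau}$); what you actually get is $g_{2,\tau}=(\chi_{1,\tau}^{-1}\circ\eta_\tau\circ\chi_{1,\tau})^*g_{1,\tau}$, a conjugation which does not simplify the way you assume. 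Your step~3 is, as you acknowledge, the real gap: the appeal to Lemma~\ref{lem:well-defined_map} ``in reverse'' or to Lemma~\ref{l: change of base point} does not by itself rule out a nonzero generic mode $\bar Z$, since Theorem~\ref{t:existence_theorem_HMHF} produces an entire $\textnormal{index}(\mathfrak L)$-parameter family of HMHFs for a fixed Ricci flow. The missing ingredient is exactly the ODE reduction above: it bypasses the entire Merle--Zaag / dominant-mode machinery and closes the proof directly.
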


\begin{proof}    
Note that
    \begin{equation}
{\partial_\tau\chi_{i,\tau}=\Delta_{g_{i,\tau},\bar{g}}\chi_{i,\tau}=\Delta_{\bar{g} + S(\mathbf a),\bar{g}}\id\circ \chi_{i,\tau}.}
    \end{equation}
By the uniqueness of the initial value problem of the ODE, we have $\chi_{2,\tau}=\chi_{1,\tau}\circ \chi_{1,0}^{-1}\circ \chi_{2,0}$. Then it follows by the assumption that $\chi_{i,\tau}\to\id$ that $\chi_{1,0}= \chi_{2,0}$ and $\chi_{1,\tau}= \chi_{2,\tau}$.
\end{proof}

It is clear that $\pi:\mathcal{M}(\bar g)\to \mathbb R^I$ is surjective, and by Lemma \ref{lem:injective} it is injective. Therefore, we can conclude:
\begin{cor}
Let $\bar g$ be a positive Einstein manifold. Then \eqref{eq:limit_phi_a} gives a bijection between $\mathbb R^I$ and the space of all rescaled Ricci flows exponentially converging to $\bar g$.
\end{cor}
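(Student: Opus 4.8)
The plan is to assemble the three facts already established in this subsection and to supply the one remaining ingredient, surjectivity of $\pi$. First, $\pi$ is defined on all of $\mathcal M(\bar g)$: by Theorem~\ref{t:existence_of_a} every rescaled Ricci flow $\{g_\tau\}$ exponentially converging to $\bar g$ admits a modified HMHF $\chi_\tau\to\id$ and a vector $\mathbf a$ with $(\chi_\tau^{-1})^*g_\tau-\bar g=S(\mathbf a)$, and by Lemma~\ref{lem:well-defined_map} the resulting $\mathbf a$ (indeed the pair $(\chi_\tau,\mathbf a)$) is uniquely determined by $\{g_\tau\}$, so $\pi$ is single-valued. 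Injectivity is exactly Lemma~\ref{lem:injective}: if $\pi(\{g_{1,\tau}\})=\pi(\{g_{2,\tau}\})=\mathbf a$, then realizing each $g_{i,\tau}$ as $\chi_{i,\tau}^*(\bar g+S(\mathbf a))$ with $\chi_{i,\tau}\to\id$ forces $g_{1,\tau}=g_{2,\tau}$.

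For surjectivity I would quote the construction in the proof of Theorem~\ref{t:existence_theorem_intro}. Given $\mathbf a\in\mathbb R^I$, take $S(\mathbf a)$ from Theorem~\ref{t: existence theorem}; since $\bar g$ is positive Einstein the soliton flow $\xi_\tau$ is the identity, so $\bar g+S(\mathbf a)$ is already a rescaled RDTF over the static background $\bar g$. Solving $\partial_\tau\chi_\tau=\Delta_{\bar g+S(\mathbf a),\bar g}\id\circ\chi_\tau$ with $\chi_0$ chosen, as in that proof, so that $\chi_\tau\to\id$ exponentially, the metric $g_\tau:=\chi_\tau^*(\bar g+S(\mathbf a))$ is a rescaled Ricci flow with $(\chi_\tau^{-1})^*g_\tau-\bar g=S(\mathbf a)$. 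Since $\|S(\mathbf a)\|_{C^k}=O(e^{\lambda_+\tau})$ and $\chi_\tau\to\id$ exponentially, $g_\tau\to\bar g$ exponentially, so $\{g_\tau\}\in\mathcal M(\bar g)$ and $\pi(\{g_\tau\})=\mathbf a$. Combining this with the two preceding observations shows $\pi$ is a bijection, which is the assertion; equivalently, \eqref{eq:limit_phi_a} parametrizes $\mathcal M(\bar g)$ by $\mathbb R^I$.

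The only step requiring more than bookkeeping is the surjectivity argument: one must ensure the HMHF produced by the ODE limits to $\id$ rather than to some other isometry of $\bar g$, and this is where the positive-Einstein hypothesis is essential — it trivializes $\xi_\tau$ and leaves the initial datum $\chi_0$ free to be tuned — together with the verification that the resulting flow converges to $\bar g$ at the correct exponential rate, which is immediate from the decay of $S(\mathbf a)$. Beyond this I expect no obstacle, the corollary being a direct synthesis of Theorem~\ref{t:existence_of_a}, Lemma~\ref{lem:well-defined_map}, Lemma~\ref{lem:injective}, and the construction in Theorem~\ref{t:existence_theorem_intro}.
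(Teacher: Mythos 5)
Your proof is correct and matches the paper's approach exactly: well-definedness of $\pi$ follows from Theorem~\ref{t:existence_of_a} together with Lemma~\ref{lem:well-defined_map}, injectivity is Lemma~\ref{lem:injective}, and surjectivity comes from the construction in the proof of Theorem~\ref{t:existence_theorem_intro}. The paper itself dismisses surjectivity as ``clear''; your elaboration of it (taking $S(\mathbf a)$, solving the ODE for $\chi_\tau$ with $\chi_\tau\to\id$ exponentially, forming $g_\tau=\chi_\tau^*(\bar g+S(\mathbf a))$, and verifying exponential convergence) is the intended argument, and your observation that the positive-Einstein hypothesis is what makes $\xi_\tau\equiv\id$ and lets $\chi_0$ be tuned is exactly the point made in the proof of Theorem~\ref{t:existence_theorem_intro}. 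One small inherited issue, not a fault of yours: the paper writes $\mathbb R^I$ in the definition of $\pi$ and in this corollary, while $S$ is defined on $\mathbb R^{I_{\ess}}$ in Theorem~\ref{t: existence theorem} and the vector produced by Theorem~\ref{t:existence_of_a} lives in $\mathbb R^{I_{\ess}}$; you follow the paper's notation, but strictly one should read $\mathbb R^{I_{\ess}}$ here.
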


\subsection{Rate of convergence to singularity}
In this subsection, we study immortal rescaled Ricci flows asymptotic to a compact shrinkers, and prove Theorem \ref{thm:gap} and \ref{thm:Carleman}.
These flows arise from rescalings at singularities.

We first prove Theorem \ref{thm:gap}.

\begin{proof}[Proof of Theorem \ref{thm:gap}]
Denote by $\Lambda_{\ess}<0$ the largest negative essential eigenvalue.
Let $C>0$ denote a general constant.
By abuse of notation, let $g_\tau$ be the rescaled modified Ricci flow of $g_t$. 
    By Theorem \ref{t:existence_theorem_HMHF}, there are $\delta>0$ and a modified HMHF $\chi_{0,\tau}$ between $g_\tau$ and $\bar g$ such that $ h_{0,\tau}=(\chi^{-1}_{0,\tau})^*g_\tau-\bar g$ satisfies 
    \[\|h_{0,\tau}\|_{C^k}\le  Ce^{-\delta\tau}.\]
    Then by the similar argument as in the ancient flow case in Proposition \ref{prop:diff.asymp}, if $h_{0,\tau}\not\equiv0$, we can find a eigentensor $\bar\varphi_0$ with eigenvalue $\Lambda_0<0$ such that 
    \begin{equation*}
    \lim_{\tau\to \infty} e^{-\Lambda_0\tau}h_{0,\tau}=\bar\varphi_0,
\end{equation*} 
If the eigentensor $\bar\varphi_0$ contains an essential component, then $\Lambda_0\le \Lambda_{\ess}$, which proves \eqref{eq:gap} in the theorem.
So we may assume $\bar\varphi_0$ is a generic eigentensor, then by the same argument in Lemma \ref{l: change of base point} using Theorem \ref{l: contraction mapping RDTF}, we can find another HMHF $\chi_{1,\tau}$ between $g_\tau$ and $\bar g$ such that $h_{1,\tau}=(\chi^{-1}_{1,\tau})^*g_\tau-\bar g$ satisfies 
    \[\lim_{\tau\to \infty} e^{-\Lambda_0\tau}h_{1,\tau}=0.\]
So there exist an eigenvalue $\Lambda_1<\Lambda_0$ and an eigentensor $\bar\varphi_1$ with eigenvalue $\Lambda_1$ such that 
\begin{equation*}
    \lim_{\tau\to \infty} e^{-\Lambda_1\tau}h_{1,\tau}=\bar\varphi_1.
\end{equation*}
Repeating this until $\Lambda_i\le \Lambda_{\ess}$. This proves \eqref{eq:gap} in the theorem.

For positive Einstein manifold, the assertion \eqref{eq:gap_2} follows from Lemma \ref{l:same_decay_rate} below.
\end{proof}

\begin{lem}\label{l:same_decay_rate}
    Let $(M,\bar g,f)$ be a positive Einstein manifold, and $\bar g_t=(-t)\bar g$ be the associated Ricci flow. Let $g_t$
    be a Ricci flow which develops a singularity modelled on $\bar g$ as $t\to0$.
    Suppose there is a   HMHF $\chi_\tau$ between the rescaled   Ricci flow $\widehat{g}_\tau:=e^{\tau}g_{-e^{-\tau}}$ and $\bar g$ with $\chi_\tau\to\id$ as $\tau\to\infty$, such that
    $$\|(\chi^{-1}_\tau)^*\widehat g_\tau-\bar g\|_{C^{2,\alpha}(\bar g)}\le  C_0 e^{-\lambda\tau},$$ 
    for some $\lambda>0$ and $C_0>0$.
    Then there exists $C>0$ such that 
    \[\|\widehat{g}_\tau-\bar g\|_{C^{2,\alpha }(\bar g)}\le Ce^{-\lambda\tau}.\]
    Equivalently, the unrescaled   Ricci flow $g_{t}$ satisfies 
    $$\|g_{t}-(-t)\bar g \|_{C^{2,\alpha }(\bar g)}\le C(-t)^{1+\lambda}.$$
\end{lem}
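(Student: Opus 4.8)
The plan is to transfer the exponential decay of the Ricci--DeTurck perturbation $h_\tau:=(\chi_\tau^{-1})^*\widehat g_\tau-\bar g$, which holds by hypothesis, onto the Ricci flow metric $\widehat g_\tau$ itself; the only discrepancy between the two is the diffeomorphism $\chi_\tau$, and since $\chi_\tau\to\id$ this should cost nothing. Writing $X_\tau:=\exp_{\id}^{-1}\chi_\tau$ and $\widehat g_\tau=\chi_\tau^*(\bar g+h_\tau)$, one has
\[
\widehat g_\tau-\bar g=(\chi_\tau^*\bar g-\bar g)+\chi_\tau^* h_\tau .
\]
By Lemma \ref{l: key lemma for regularity} (estimate \eqref{eq:third}), $\|\chi_\tau^*\bar g-\bar g\|_{C^{2,\alpha}}\le C\bigl(\|X_\tau\|_{C^{3,\alpha}}+\|X_\tau\|_{C^{3,\alpha}}^2\bigr)$, while $\|\chi_\tau^* h_\tau\|_{C^{2,\alpha}}\le C\|h_\tau\|_{C^{2,\alpha}}\le CC_0e^{-\lambda\tau}$ once $\chi_\tau$ is $C^{3}$-close to $\id$ (which holds for $\tau$ large, since $\chi_\tau\to\id$). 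Hence the whole statement reduces to proving $\|X_\tau\|_{C^{3,\alpha}}\le Ce^{-\lambda\tau}$.

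The key observation, exactly the computation already carried out in the proof of Lemma \ref{lem:pseudolocality}, is that in the background gauge $\chi_\tau$ is simply the flow of a small time-dependent vector field. Using $\widehat g_\tau=\chi_\tau^*(\bar g+h_\tau)$ and the invariance of the tension field under precomposition with isometries, the harmonic map heat flow equation $\partial_\tau\chi_\tau=\Delta_{\widehat g_\tau,\bar g}\chi_\tau$ becomes
\[
\partial_\tau\chi_\tau=\bigl(\Delta_{\bar g+h_\tau,\bar g}\id\bigr)\circ\chi_\tau=:W_\tau\circ\chi_\tau ,
\]
where $W_\tau^\gamma=(\bar g+h_\tau)^{ij}(-\Gamma^k_{ij}+\widehat\Gamma^k_{ij})$ is a first-order expression in $h_\tau$ that vanishes at $h_\tau=0$. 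Since $(\chi_\tau^{-1})^*\widehat g_\tau$ is a rescaled Ricci--DeTurck flow, $h_\tau$ solves a quasilinear uniformly parabolic equation, so the hypothesis $\|h_\tau\|_{C^{2,\alpha}}\le C_0e^{-\lambda\tau}$ upgrades, by interior parabolic Schauder estimates on time intervals of unit length, to $\|h_\tau\|_{C^{k,\alpha}}\le C_ke^{-\lambda\tau}$ for every $k$ and all large $\tau$; consequently $\|W_\tau\|_{C^{k}}\le C_ke^{-\lambda\tau}$.

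It then remains to bootstrap on the ODE $\partial_\tau\chi_\tau=W_\tau\circ\chi_\tau$. Because $\chi_\tau\to\id$, for $\tau$ large $\chi_\tau$ stays in a fixed $C^{k}$-neighbourhood of $\id$ (by parabolic regularity for the map heat flow), so differentiating the ODE $\ell$ times in space ($\ell\le 4$) produces a linear equation for $\partial^\ell\chi_\tau$ with coefficient $O(e^{-\lambda\tau})$ and inhomogeneity a bounded polynomial expression in $\partial^{\le\ell}W_\tau$ and $\partial^{<\ell}\chi_\tau$, hence again $O(e^{-\lambda\tau})$. Integrating $\partial_\tau(\partial^\ell\chi_\tau)$ from $\tau$ to $+\infty$, where $\partial^\ell\chi_\tau\to\partial^\ell\id$, and using $\int_\tau^\infty e^{-\lambda s}\,ds=\lambda^{-1}e^{-\lambda\tau}$, yields by induction on $\ell$ that $\|\partial^\ell(\chi_\tau-\id)\|_{C^0}\le Ce^{-\lambda\tau}$, and a parallel argument on difference quotients handles the Hölder seminorm; thus $\|X_\tau\|_{C^{3,\alpha}}\le Ce^{-\lambda\tau}$. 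Combined with the first display (and enlarging $C$ to absorb the bounded contribution of the compact initial time-interval on which the flow is smooth), this gives $\|\widehat g_\tau-\bar g\|_{C^{2,\alpha}(\bar g)}\le Ce^{-\lambda\tau}$ — indeed $\|\widehat g_\tau-\bar g\|_{C^{k}(\bar g)}\le C_ke^{-\lambda\tau}$ for every $k$. The equivalent statement for the unrescaled flow follows from the substitution $t=-e^{-\tau}$, $g_t=(-t)\widehat g_\tau$, which turns $e^{-\lambda\tau}$ into $(-t)^\lambda$ and $\widehat g_\tau-\bar g$ into $(-t)^{-1}(g_t-(-t)\bar g)$.

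I expect the step requiring the most care to be this ODE bootstrap: one must justify the a priori $C^{k}$-closeness of $\chi_\tau$ to $\id$ that makes the coefficients in the differentiated ODE uniformly bounded (this uses $\chi_\tau\to\id$ together with the parabolic smoothing of $h_\tau$), and one must bookkeep carefully the spatial derivatives of the composition $W_\tau\circ\chi_\tau$. The remaining ingredients — the pullback expansions and estimates of Lemma \ref{l: key lemma for regularity} and the tension-field identity of Lemma \ref{lem:pseudolocality} — are already available.
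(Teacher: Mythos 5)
Your proposal is correct and follows the same strategy as the paper: exploit the fact that in the background gauge the HMHF is the flow of a time-dependent vector field $W_\tau=\Delta_{\bar g+h_\tau,\bar g}\mathrm{id}$ that vanishes when $h_\tau=0$, so the hypothesis $\|h_\tau\|_{C^{2,\alpha}}\le C_0e^{-\lambda\tau}$ forces $\|\partial_\tau\chi_\tau\|\le Ce^{-\lambda\tau}$, and integrating from $\tau$ to $\infty$ (where $\chi_\tau\to\id$) gives $\|\chi_\tau-\id\|_{C^{3,\alpha}}\le Ce^{-\lambda\tau}$, whence the triangle inequality (or your equivalent decomposition $\widehat g_\tau-\bar g=(\chi_\tau^*\bar g-\bar g)+\chi_\tau^*h_\tau$) concludes. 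The paper states this very tersely; you have simply spelled out the two steps it leaves implicit — the parabolic Schauder bootstrap that upgrades the $C^{2,\alpha}$ decay of $h_\tau$ to higher-order decay so that $W_\tau$ and its derivatives also decay, and the ODE bootstrap for the derivatives of $\chi_\tau-\id$ — which are both needed and both handled correctly.
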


\begin{proof}
Let $C>0$ denote a general constant.
 Assume $\widetilde{g}_\tau=(\chi^{-1}_\tau)^*\widehat{g}_\tau$ where $\chi_\tau$ is a HMHF between the rescaled Ricci flows $\widehat{g}_\tau$ and $\bar g$ with $\chi_\tau\to\id$ as $\tau\to\infty$. Then 
\begin{equation}\label{eq:flow of diffeomorhisms under ODE}
\partial_\tau\chi_\tau=\Delta_{\widetilde{g}_\tau,\bar g}i.d.\circ\chi_\tau.
\end{equation}
So it follows by the assumption, $\bar g=\bar g$, and integrating from $-\infty$ to $\tau$ that 
$$\|\chi_\tau-\id\|_{C^{3,\alpha}(\bar g)}\le  C e^{-\lambda\tau}.$$
The assumption also implies $\|(\chi^{-1}_\tau)^*\widehat{g}_\tau-\bar g\|_{C^{2,\alpha}(\bar g)}\le  C e^{-\lambda\tau}$.
Therefore, we have
\[\|\widehat{g}_\tau -\bar g\|_{C^{2,\alpha}(\bar g)}\le\|\widehat{g}_\tau-(\chi^{-1}_\tau)^*\widehat{g}_\tau\|_{C^{2,\alpha}(\bar g)} +\|(\chi^{-1}_\tau)^*\widehat{g}_\tau-\bar g\|_{C^{2,\alpha}(\bar g)}\le  C e^{-\lambda\tau},\]
which proves the lemma by using $t=-e^{-\tau}$ and the definition of rescaled flows.
\end{proof}

Lastly, we prove Theorem \ref{thm:Carleman}, which works for all compact shrinkers and does not need the integrability condition.

\begin{proof}[Proof of Theorem \ref{thm:Carleman}]
By abuse of notation we denote the rescaled modified Ricci flows of $g_t$ by $g_\tau$.
  By Lemma \ref{l: existence to time-dependent, inhomogeneous equation-2}, we can find a modified HMHF $\chi_\tau$ between $g_\tau$ and $\bar g_\tau$ such that for any $\lambda>0$ there is $C>0$ such that $h_{\tau}=(\chi^{-1}_{\tau})^*g_\tau-\bar g$ satisfies
  \[\|h_{\tau}\|_{C^{2,\alpha}}\le C e^{-\lambda\tau}.\]
  Then by applying \cite[Proposition 4.1]{choi2024thom}, it follows that $h_\tau\equiv0$, and thus $(\chi^{-1}_{\tau})^*g_\tau=\bar g$.
  If $f=0$ and $\bar g$ is positive Einstein, then $\chi_\tau$ is a HMHF, and 
  by \eqref{eq:flow of diffeomorhisms under ODE} this implies $\partial_\tau\chi_\tau\equiv0$, $\chi_\tau\equiv\id$ and $g_\tau\equiv\bar g_\tau$.
\end{proof}

\appendix

\section{Second variation of the entropy at a shrinker}\label{subsec:stability}
In this section, we give the second variation formula of the $\mu$-entropy at a shrinker, and several equivalent definitions of linear stability.
Let $(M,g)$ be a compact Riemannian manifold.
In \cite{Pel1}, Perelman defined the $\mathcal W$-functional
\[\mathcal{W}(g,f,\tau)=\tfrac{1}{(4\pi\tau)^{n/2}}\int_M\big[\tau(|\nabla f|^2+R_g)+f-n\big]\,e^{-f}\textnormal{dvol},\]
and the $\mu$-entropy
\[\mu(g,\tau)=\inf\bigg\{\mathcal{W}(g,f,\tau)\bigg| f\in C^\infty(M),\tfrac{1}{(4\pi\tau)^{n/2}}\int_Me^{-f}\textnormal{dvol}=1\bigg\},\]
and the $\nu$-entropy
\[\nu(g)=\inf\{\mu(g,\tau)\,|\,\tau>0\}.\]
A shrinking soliton $\Ric+\nabla^2 f=\tfrac{1}{2\tau}g$ is a critical point of the $\nu$-entropy, and $\nu(g)=\mathcal W(g,f,\tau)$. 
Cao-Hamilton-Ilmanen \cite[Theorem 2.1]{cao2004gaussian} computed the second variation of the $\nu$-entropy at positive Einstein manifolds, and Cao-Zhu \cite{cao2012second} extended it to shrinking
solitons:
\begin{equation}
    D^2\nu(g)[h,h]=\frac{\tau}{(4\pi\tau)^{n/2}}\int_M\tfrac12 \langle h, N h\rangle\,e^{-f}\textnormal{dvol}_g,
\end{equation}
for all symmetric 2-tensor $h\in S^2M$, where
\[
N h := \Delta_f h + 2\Rm(h, \cdot) + 2\operatorname{div}_f^* \operatorname{div}_f h - 2\nabla^2 v_h -  \frac{\int_M \langle \Ric, h \rangle e^{-f}}{\int_M Re^{-f}}\Ric,
\]
and $v_h=Df-\tfrac12\tr h$ is the unique solution to \[
(\Delta_f+\tfrac{1}{2\tau}) v_h = -\tfrac12\operatorname{div}_f \operatorname{div}_f h,
\]
and the operator
$N$ satisfies
\begin{equation}\label{eq:N_second}
N h=
    \begin{cases}
        \Delta_f h+2\Rm*h-\frac{2\int_M\langle\Ric,h\rangle\,e^{-f}\,\textnormal{dvol}}{\int_M R\,e^{-f}\,\textnormal{dvol}}\,\Ric,\qquad &h\in\Ker\,\Di_f;\\
        0,\qquad & h\in \Ima\,\Di^*. 
    \end{cases}
\end{equation}

A shrinking soliton $\Ric+\nabla^2 f=\tfrac{1}{2}g$ is a critical point of the $\mu(\cdot,1)$-entropy with a fixed $\tau=1$, and $\mu(g)=\mathcal W(g,f,1)$. 
By computation, we obtain that the second variation of the $\mu(\cdot,1)$-entropy at a shrinking soliton is given by
\begin{align*}
    D^2\mu(g,1)[h,h]
    &=(4\pi)^{-n/2}\int_M\tfrac12\langle h,\widetilde N h\rangle e^{-f}\,\textnormal{dvol},
\end{align*}
for all symmetric 2-tensor $h\in S^2M$, where 
\[\widetilde N h=\Delta_f h+2\Rm*h+2\Di^*\circ\Di_f h-2\nabla^2v_h,\]
and $v_h=Df-\tfrac12\tr h$ is the unique solution to
\begin{equation}\label{eq:v_h}
    (\Delta_f +\tfrac12) v_h=-\tfrac{1}{2}\Di_f\circ \Di_fh,
\end{equation}
and the operator $\widetilde N$ satisfies
\begin{equation}\label{eq:second derivative}
\widetilde N h=
    \begin{cases}
        \Delta_f h+2\Rm*h=:L ,\qquad &h\in\Ker\,\Di_f;\\
        0,\qquad & h\in \Ima\,\Di^*. 
    \end{cases}
\end{equation}

\begin{defn}
    We say a compact Ricci shrinking soliton $(M,g,f)$ with $\Ric+\nabla^2 f=\tfrac{1}{2}g$ is \textit{linearly stable} if one of the following equivalent conditions is true:
\begin{enumerate}
\item $D^2\nu(g)[h,h]\le0$ for all $h\in S^2M$.
        \item $\langle N h,h\rangle\le0$ for all $h\in S^2M$;
        \item $\langle L h,h\rangle\le0$  for all $h\in S^2M$ such that $\Di_{f}(h)=0$ and $\int_{M}\langle 
    \Ric_g,h\rangle\,e^{-f}\textnormal{dvol}_g=0$.
    \end{enumerate}
The equivalence follows from \eqref{eq:N_second}\eqref{eq:second derivative}, and the observation $N (\Ric)=0$ and $L( \Ric)=\Ric$. 
\end{defn}

\section{Modified rescaled Ricci flows}\label{sec:RRF}

In this section, we recall some basic facts about Ricci-DeTurck flows, Ricci flows, and rescaled flows. Moreover, for a given shrinker, we define the modified Ricci flows and Ricci-DeTurck flows.

First, let $(M,g_t)$, $t\in(t_0,0)$, be a Ricci flow satisfying
\[\frac{\partial}{\partial t}g_t=-2\Ric(g_t).\]
We introduce a dilated time variable $\tau:=-\log(-t)\in(-\log(-t_0),\infty)$, equivalently $t=-e^{-\tau}$, 
Then the flow $\widetilde g_\tau=e^{\tau}g_{-e^{-\tau}}$
satisfies the \textbf{rescaled Ricci flow equation}:
\begin{equation}\label{eq:Einstein stationary solution}
    \frac{\partial}{\partial \tau}\widetilde g_\tau=-2\Ric(\widetilde g_\tau)+ \widetilde g_\tau.
\end{equation}
Note that positive Einstein manifolds $\Ric=\tfrac12 g$ are stationary solutions to \eqref{eq:Einstein stationary solution}.

Next, let $g_t $ and $\bar{g}_t$ be two Ricci flows.
Let $\chi_t$ be a HMHF by diffeomorphisms between $g_t $ and $\bar{g}_t$, 
then $\widetilde g_t =(\chi_t^{-1})^*(g_t )$ is \textbf{Ricci Deturck flow (RDTF) with background $\bar{g}_t$}, which satisfies
\begin{equation}\label{eq:RDTF equation}
    \partial_t \td g_t = - 2 \Ric_{\td g_t} -  \mathcal{L}_{\pt\chi_t\circ\chi_t^{-1} } \td g_t.
\end{equation}
Conversely, if $\tilde{g}_t$ is a RDTF with background $\bar{g}_t$, and $\{\chi_t\}$ is a family of diffeomorphisms $\chi_t$ by the following ODE
$$\partial_t\chi_t=\Delta_{\tilde{g}_t,\bar{g}_t}\id\circ\chi_t.$$
Then $g_t =\chi_t^*\tilde{g}_t$ satisfies the Ricci flow equation.

Then $h_t=(\chi_t^{-1})^*g_t -\bar{g}_t$
satisfies the \textbf{Ricci Deturck flow Perturbation equation} 
\begin{equation}\label{eq:RDTFP}
\partial_th=\Delta_{L,t}h+Q(h),
\end{equation}
where the Lichnerowizs Laplacian is 
$\Delta_{L,t}h=\Delta_{\bar g_t } h+2\Rm_{\bar g_t }*h-2\Ric_{\bar g_t }(h)$, 
and the quadratic term is, see e.g. \cite[Appendix A]{Bamler_Kleiner_2022},
\begin{equation}\label{eq:Q(h)}
    Q(h)=h*\nabla^2 h+\nabla h*\nabla h+h*\nabla h+h*h,
\end{equation}
where the derivatives are taken with respect to $\bar g_t $.
The linearization of \eqref{eq:RDTFP} is the \textbf{Linearized Ricci DeTurck flow equation}
\begin{equation}\label{eq:linear-unrescaled-perturb}
    \partial_th={\Delta_{L,t}} h.
\end{equation}
Let $\widehat{g}_{\tau}:=e^{\tau}(\chi_{-e^{-\tau}})_*g_{-e^{-\tau}}$ be the \textbf{rescaled Ricci DeTurck flow}, and let
\begin{align}
 \widehat{h}_{\tau}=\widehat{g}_{\tau}-e^{\tau}\bar{g}_{-e^{-\tau}}=e^{\tau}h_{-e^{-\tau}} .
\end{align}
Then $\widehat{h}_{\tau}$ satisfies the \textbf{rescaled RDTF perturbation equation} with background $\bar g_t$,
\begin{equation}\label{eq:rescaled nonlinear}
\begin{split}
\partial_{\tau}\widehat{h}
&=\widehat{h}+\Delta_{L,\tau}\widehat{h}+{Q}(\widehat{h}),
\end{split}
\end{equation}
where $\Delta_{L,\tau}\widehat{h}$ and $Q(\widehat{h})$ are with respect to the rescaled Ricci flow $e^{\tau}\bar{g}_{-e^{-\tau}}$.

Next, let $(M,\bar g,X)$ be a compact Ricci shrinker, where $X$ is a vector field such that
\[\Ric(\bar g) +\tfrac12\LL_X\bar g=\tfrac12 \bar g,\]
and let $\xi_t$ be a 1-parameter family of diffeomorphisms generated by $(-t)^{-1}X$ with $\xi_{-1}=\id$. Then $\bar{g}_t:=\frac{1}{(-t)}\xi_{t}^*\bar{g}$, $t\in(-\infty,0)$ is the Ricci flow associated to the shrinker.
{Let $g_\tau$ be a Ricci flow, then $\widetilde g_{\tau}=e^{\tau}(\xi_{-e^{-\tau}}^{-1})^*g_{-e^{-\tau}}$ 
satisfies the \textbf{rescaled and modified Ricci flow equation} with respect to $\bar g$:
\begin{equation}\label{eq:shrinker stationary solution}
    \frac{\partial}{\partial_\tau}\widetilde g_\tau=-2\Ric(\widetilde g_\tau)-\LL_X\widetilde g_\tau+ \widetilde g_\tau.
\end{equation}
In particular, $\bar g$ is a stationary solution to \eqref{eq:shrinker stationary solution}.}
For any $h_t$ which solves the RDTF Perturbation equation \eqref{eq:rescaled nonlinear} with background $\bar g_t$, let 
$$\bar{h}_t=\xi_{t*}h_t=(\xi_{t}^{-1})^*h_t=(\xi_{t}^{-1})^*((\chi_t^{-1})^*g_t )-\tfrac{1}{(-t)}\bar{g}.$$
Then $\bar h_t$ satisfies the \textbf{modified RDTF Perturbation equation} with background $\bar g$,
\begin{equation*}
\begin{split}
\partial_t\bar{h}_t&=\partial_t((\xi_{t}^{-1})^*h_t)=(\xi_{t}^{-1})^*(\partial_th_t)+\mathcal{L}_{\xi_{(1+t)*}\circ\partial_t\xi^{-1}_{t}}(\xi_{t}^{-1})^*h_t\\
&=(\xi_{t}^{-1})^*(\partial_th_t)-\mathcal{L}_{\partial\xi_{t}\circ \xi_{t}^{-1}}(\xi_{t}^{-1})^*h_t\\
&=(\xi_{t}^{-1})^*(\partial_th_t)-\mathcal{L}_X\bar{h}_t\\
&=\Delta_{L,\bar{g}}\bar{h}_t+Q_{\bar{g}}(\bar{h}_t)-\mathcal{L}_X\bar{h}_t.
\end{split}
\end{equation*}
where we used the formula $\partial_t(\xi_t^*h)=\mathcal{L}_{\xi^{-1}_{t*}X}\xi_t^*h=\xi_t^*(\mathcal{L}_X h)$ for any flow $\xi_t$ of vector field $X$, and also the observation $\chi_t\circ \chi_t^{-1}=i.d.$ which implies 
$(\chi_t^{-1})^*\circ \partial_t\chi_t^{-1}+\partial_t\chi_t\circ \chi_t^{-1}=0$.
Moreover, $\widehat{h}_\tau:=e^{\tau}\bar h_{-e^{-\tau}}$ satisfies 
\begin{equation*}
\begin{split}
\partial_\tau\widehat{h}_\tau
&=\Delta_{L,\bar{g}}\widehat{h}_\tau+\widehat{h}_\tau+Q_{\bar{g}}(\widehat{h}_\tau)-\mathcal{L}_X\widehat{h}_\tau,
\end{split}
\end{equation*}
of which the linearization is
\begin{equation*}    \partial_{\tau}\widehat{h}=\Delta_{\bar{g}}\widehat{h}+2\Rm_{\bar{g}}*\widehat{h}-2\Ric_{\bar{g}}(\widehat{h})+\widehat{h}-\mathcal{L}_{X}\widehat{h}.
\end{equation*}
Now assume the shrinker is gradient, which means $X=\nabla f$ for some smooth function $f$, then using $\mathcal{L}_{\nabla f}h=\nabla_{\nabla f}h+2\nabla^2f*h$ and $\Ric+\nabla^2f=\tfrac12 \bar g$, we have
\begin{equation*}
    \mathcal{L}_{\nabla f}h+2\Ric(h)-h=\nabla_{\nabla f}h.
\end{equation*}
So for a gradient shrinker, $\widehat{h}_\tau$ satisfies the 
\textbf{rescaled modified RDTF Perturbation equation} with background $\bar g$:
\begin{equation}\label{eq:most important eq}
    \partial_{\tau}\widehat{h}=\Delta_{\bar{g},f}\widehat{h}+2\Rm_{\bar{g}}*\widehat{h}+Q_{\bar{g}}(\widehat{h})=:L\widehat{h}+Q_{\bar{g}}(\widehat{h}),
\end{equation}
where $Q_{\bar{g}}(\widehat{h})$ satisfies \eqref{eq:Q(h)} and the derivatives are taken with respect to $\bar g$. The flow $\bar g+\widehat{h}$ is called a \textbf{rescaled modified RDTF} with background $\bar g$.
Note by \eqref{eq:second derivative} that $L$ is equal to the second variation operator of the $\mu(\cdot,1)$-entropy restricted on divergence-free tensors.

\section{Harmonic map heat flow perturbation}\label{sec:HMHF}
In this section, we recall some facts of the harmonic map heat flow (HMHF). We introduce the HMHF Perturbation equation, and show that the \textit{difference} of two HMHFs satisfies the equation. For a given Ricci shrinker, we define the modified HMHF and its perturbation. This modification enables us to study HMHFs between a Ricci flow and the shrinker’s Ricci flow while fixing the background metric to be the shrinker, facilitating spectral analysis.
\subsection{Linearization of the map laplacian}
The Laplacian of a smooth map $f:(M,g)\to(N,h)$, under  two local coordinates $\{x^i\}$ and $\{x^{\gamma}\}$, respectively, is a vector field defined by $\Delta_{g,h}f=(\Delta_{g,h}f^{\gamma})\frac{\partial}{\partial x^{\gamma}}$. Denote the Christoffel symbols for $g_t$ and $h_t$ respectively by $\Gamma$ and $\hat\Gamma$, then we have
\begin{equation}\label{eq:Laplacian of map}
 \begin{split}
 (\Delta_{g,h}f)^{\gamma}&=g^{ij}\left(\frac{\partial^2f^{\gamma}}{\partial x^i\partial x^j}-\Gamma^k_{ij}\frac{\partial f^{\gamma}}{\partial x^k}+\hat\Gamma^{\gamma}_{\alpha\beta}(f(x))\frac{\partial f^{\alpha}}{\partial x^i}\frac{\partial f^{\beta}}{\partial x^j}\right)\\
 &=(\Delta_gf)^{\gamma}+g^{ij}\left(\hat\Gamma^{\gamma}_{\alpha\beta}(f(x))\frac{\partial f^{\alpha}}{\partial x^i}\frac{\partial f^{\beta}}{\partial x^j}\right).
 \end{split}
\end{equation}

Let $(M,g)$ be a Riemannian manifold, we consider a family of smooth maps $f^s:(M,g)\to(M,g)$, which is parametrized by $s\in[-\epsilon,\epsilon]$, with $f^0=\id$.
Let $X=\ds f$.
Let $\{x^i\}_{i=1}^n$ be a local coordinate, and assume the maps $f_s$ have the component $\{f^{\gamma}\}_{\gamma=1}^n$, where we omit the dependence of $f$ on $s$.
Then
$\ds f^{\gamma}=X^{\gamma}$, $f^{\gamma}|_{s=0}=x^{\gamma}$, and $\left.\frac{\partial f^{\gamma}}{\partial x^k}\right|_{s=0}=\delta^{\gamma}_k$.
Taking derivative on \eqref{eq:Laplacian of map} with respect to $s$ and restricting on $s=0$, we obtain
\begin{equation}\label{eq:X equation RHS}\begin{split}
\ds(\Delta_{g,g}f)^{\gamma}&=(\Delta_gX)^{\gamma}+g^{ij}\ds\left(\Gamma^{\gamma}_{\alpha\beta}(f(x))\frac{\partial f^{\alpha}}{\partial x^i}\frac{\partial f^{\beta}}{\partial x^j}\right)\\
&=(\Delta_gX)^{\gamma}+g^{ij}\left(\ds\Gamma^{\gamma}_{ij}(f(x))+\Gamma^{\gamma}_{\alpha j}(x)\frac{\partial X^{\alpha}}{\partial x^i}+\Gamma^{\gamma}_{i \beta}(x)\frac{\partial X^{\beta}}{\partial x^j}\right)\\
&=(\Delta_gX)^{\gamma}+g^{ij}\left(\frac{\partial\Gamma^{\gamma}_{ij}}{\partial x^u}X^u+\Gamma^{\gamma}_{\alpha j}\frac{\partial X^{\alpha}}{\partial x^i}+\Gamma^{\gamma}_{i \beta}\frac{\partial X^{\beta}}{\partial x^j}\right).
\end{split}
\end{equation}
On the other hand, we have
\begin{equation}\label{eq:Delta X}
    (\Delta_g X)^{\gamma}=\Delta_g X^{\gamma}+g^{ij}\left(\frac{\partial X^{k}}{\partial x^j}\Gamma^{\gamma}_{ik}-\frac{\partial X^{k}}{\partial x^i}\Gamma^{\gamma}_{jk}+X^k\Gamma^{p}_{jk}\Gamma^{\gamma}_{ip}-X^p\Gamma^{\gamma}_{kp}\Gamma^{k}_{ij}+\frac{\partial\Gamma^{\gamma}_{jk}}{\partial x^i}X^k\right).
\end{equation}
Using 
$R_{ijk}^{\;\;\;\ell}=\frac{\partial\Gamma^{\ell}_{ik}}{\partial x^j}-\frac{\partial\Gamma^{\ell}_{jk}}{\partial x^i}+\left(\Gamma^{p}_{ik}\Gamma^{\ell}_{jp}-\Gamma^{p}_{jk}\Gamma^{\ell}_{ip}\right)$,
we obtain
\begin{equation}\label{eq:Delta_Ric}
    (\Delta_{g}X+\Ric_{g}X)^\gamma=g^{ij} \left( \frac{\partial^2 X^\gamma}{\partial x^i \partial x^j} 
- \Gamma^k_{ij} \frac{\partial X^\gamma}{\partial x^k} + \Gamma^\gamma_{j\beta} \frac{\partial X^\beta}{\partial x^i}
+ \Gamma^\gamma_{i\alpha} \frac{\partial X^\alpha}{\partial x^j} 
+ \frac{\partial \Gamma^\gamma_{ij}}{\partial x^\nu} X^\nu \right),
\end{equation}
which implies by \eqref{eq:X equation RHS} that
\begin{equation}
    \ds(\Delta_{g,g}f)^{\gamma}=(\Delta_g X+\Ric(X))^{\gamma}.
\end{equation}

\subsection{Harmonic map heat flow perturbation}
We say $\{f_t\}$ is a HMHF between $(M,g_t)$ and $(N,h_t)$, if $f_t:(M,g_t)\rightarrow (N,h_t)$ is a time-dependent family of smooth maps solving 
\begin{equation}\label{eq:HMHF}
    \partial_tf_t=\Delta_{g_t,h_t}f_t.
\end{equation}
Suppose $M$ is compact, and
assume under any normal coordinates we have $\|g_t-h_t\|_{C^{2,\alpha}}+\|f_t-\id\|_{C^{2,\alpha}}<\tfrac{1}{100}\min\{\textnormal{inj}_{(M,h_t)},1\}$, where the norms and derivatives are measured with respect to $h_t$. 
Then we define the \textit{difference} between $f_t$ and $\id$ to be
the vector field 
$$X_t:=\exp_{h_t,\id}^{-1}(f_t),$$ which is a section of the pull-back tangent bundle of $N$. That is, $X_t(p)=\exp_{h_t,p}^{-1}(f_t(p))$.

Recall by \eqref{eq:phi_X diffeo} and Lemma \ref{l: key lemma for regularity} we have
\begin{equation*}
    P^i(x):=f^i(x)-x^i- X^i(x)=\int_0^1 \frac{\partial^2 \exp^{i}}{\partial v^j \partial v^k}(x,sX(x))X^j(x)X^k(x)\,ds, 
\end{equation*}
which satisfies $\|P^i\|_{C^{k}}\le C\|X^i\|^2_{C^{k}}$, and the exponential map is with respect to $h_t$.
Substituting $f^i$ by $x^i+X^i+P^i$ in the equation \eqref{eq:Laplacian of map},
we obtain
\begin{multline*}
    \partial_{t} f^\gamma = g^{ij} \Bigg(\bigg( \frac{\partial^2 X^\gamma}{\partial x^i \partial x^j} 
- \Gamma^k_{ij} \frac{\partial X^\gamma}{\partial x^k} + \Gamma^\gamma_{j\beta} \frac{\partial X^\beta}{\partial x^i}
+ \Gamma^\gamma_{i\alpha} \frac{\partial X^\alpha}{\partial x^j} 
+ \frac{\partial \Gamma^\gamma_{ij}}{\partial x^\nu} X^\nu \bigg)\\
+ \left(\hat\Gamma^{\gamma}_{j\beta}   - 
\Gamma^{\gamma}_{j\beta}\right)
\frac{\partial X^\beta}{\partial x^i}
+ \bigg( \hat\Gamma^\gamma_{i\alpha}  - \Gamma^\gamma_{i\alpha} \bigg) 
\frac{\partial X^\alpha}{\partial x^j}+ \frac{\partial P^\gamma}{\partial x^i \partial x^j}-\Gamma^k_{ij}\frac{\partial P^\gamma}{\partial x^k}+\hat\Gamma^\gamma_{\alpha\beta}\frac{\partial X^\alpha}{\partial x^i}\frac{\partial X^\beta}{\partial x^j}\\
+ \bigg( \hat\Gamma^\gamma_{ij} - \Gamma^\gamma_{ij} - \frac{\partial \Gamma^\gamma_{ij}}{\partial x^\nu} X^\nu \bigg)+\hat\Gamma^\gamma_{\alpha\beta}\frac{\partial P^\alpha}{\partial x^i}\frac{\partial f^\beta}{\partial x^j}+\hat\Gamma^\gamma_{\alpha\beta}\frac{\partial f^\alpha}{\partial x^i}\frac{\partial P^\beta}{\partial x^j}\Bigg),
\end{multline*}
where $\hat\Gamma$ and $\Gamma$ are evaluated at $f=x + X + P$ and $x$ respectively.
Note \eqref{eq:Delta_Ric} implies the first term is $(\Delta_{g}X+\Ric_{g}X)^\gamma$, 
and $\partial_t X^\gamma=\partial_t f^\gamma-\partial_t P^\gamma$, it follows that $X_t$ satisfies the following \textbf{HMHF Perturbation equation at $\id$}
\begin{equation}\label{eq:HMHF Perturbation}
    \partial_t X_t=(\Delta_{h_t}+\Ric_{h_t}) X_t+E_t(X_t,{g_t-h_t})+Q_t(X_t)+G_t,
\end{equation}
where $G_t$ is independent of $X$, and
$E_t(X+Y,g-h)=E_t(X,g-h)+E_t(Y,g-h)$, $Q_t(X)=\sum_{i+j\le 2}Q_{ij}(t,X,\nabla X,\nabla^2X)\nabla ^iX*\nabla^j X$, and there is $C(g_t,h_t)>0$ such that 
\begin{equation}\label{eq:Q and E}
\begin{split}
\|E_t(X)\|_{C^{k-2,\alpha}}&\le C\|g-h\|_{C^{k,\alpha}}\|X\|_{C^{k,\alpha}},\\
    \|G_t\|_{C^{k-2,\alpha}}&\le C\|g-h\|_{C^{k,\alpha}},\\
     \|Q_{ij}\|_{C^{2,\alpha}}&\le C.
\end{split}
\end{equation}

Next, let $ f_{i,t}:(M,g_t)\to(M,h_t)$ be two HMHFs between $g_t$ and $h_t$, $t\in[0,T]$, $i=1,2$, such that the vector field $X_{i,t}:=\exp_{h_t, \id}^{-1} f_{i,t}$ is well-defined and satisfies $\|X_{i,t}\|_{C^{2,\alpha}}\le\tfrac{1}{100}\textnormal{inj}_{h_t}$. Then \eqref{eq:HMHF Perturbation} implies that $X_t:=X_{2,t}-X_{1,t}$ satisfies the following \textbf{HMHF Perturbation equation at $ f_{1,t}$},
\begin{equation}\label{eq:HMHF perturbation sec 7}
    \partial_t X_t=\Delta_{h_t} X_t+\Ric_{h_t} X_t+E_t(X_t,g_t-h_t)+Q_t(X_t),
\end{equation}
where $E_t,Q_t$ satisfy the same properties above.

\subsection{Modified HMHF and its Perturbation}

We fix a shrinker $(M,\bar g,f)$, and assume $\xi_\tau$ satisfies $\partial_\tau\xi_\tau=\nabla f\circ\xi_\tau$ with $\xi_0=\id$, and $\xi_\tau^*\bar g$ is the rescaled Ricci flow of the shrinker. Let $g_\tau$, $\tau\in[0,T]$, be a rescaled Ricci flow.
Let $\chi_\tau$ be a HMHF between the two rescaled Ricci flows $g_\tau$ and $\xi_\tau^*\bar g$.
Then $\psi_{\tau}:=\xi_{\tau}\circ{\chi}_{\tau}\circ\xi_{\tau}^{-1}$ is the \textbf{modified HMHF between the modified rescaled Ricci flow $(\xi^{-1}_{\tau})^*g_{\tau}$ and $\bar g$}, and satisfies
\begin{equation}\label{eq:M_HMHF}
\begin{split}
\partial_\tau\psi_\tau
&=\partial_\tau(\xi_{\tau}\circ{\chi}_{\tau}\circ\xi_{\tau}^{-1})\\
&=\partial_\tau(\xi_\tau\circ{\chi}_\tau)\circ\xi_\tau^{-1}+(\xi_\tau\circ{\chi}_\tau)_*(\partial_{\tau}\xi_\tau^{-1})\\
 &=(\xi_{\tau})_*(\partial_{\tau}{\chi}_{\tau})\circ\xi_{\tau}^{-1}+(\partial_\tau\xi_\tau\circ{\chi}_\tau)\circ\xi_\tau^{-1}+(\xi_\tau\circ{\chi}_\tau)_*(\partial_{\tau}\xi_\tau^{-1})\\
 &=(\xi_{\tau})_*(\Delta_{g_{\tau},\xi^*_{\tau}\bar g}{\chi}_{\tau})\circ{\xi}_{\tau}^{-1}+(\partial_\tau\xi_\tau\circ{\chi}_\tau)\circ\xi_\tau^{-1}+(\xi_\tau\circ{\chi}_\tau)_*(\partial_{\tau}\xi_\tau^{-1})\\
&=\Delta_{(\xi^{-1}_{\tau})^*g_{\tau},\bar g}\psi_{\tau}+\nabla f\circ\psi_\tau-(\psi_\tau)_*(\nabla f),
\end{split}
\end{equation}
where in the last equation we used $\partial_\tau\xi_\tau=\nabla f\circ\xi_\tau$, and $\partial_\tau\xi_\tau^{-1}=-(\xi_\tau^{-1})_*(\nabla f)$. 
Suppose $X_\tau:=\exp_{\id}^{-1}\psi_\tau$ is well-defined and $\|X_\tau\|_{C^{2,\alpha}}\le\frac{1}{100}\textnormal{inj}_{\bar g}$. Then by the same computation as \eqref{eq:HMHF Perturbation}, we see that $X_\tau$ satisfies the \textbf{modified HMHF Perturbation equation at $\id$}:
\begin{equation}\label{eq:modified_HMHF_perturbation}
    \partial_\tau X_\tau=\mathfrak L_{\bar g} X_\tau+E_\tau(X_\tau,{(\xi^{-1}_{\tau})^*g_{\tau}-\bar g})+Q_\tau(X_\tau)+G_\tau,
\end{equation}
where $\mathfrak L_{\bar g} X=\Delta_{\bar g,f} X+\tfrac12 X=\Delta_{\bar g}+\Ric_{\bar g}-[\nabla f,\cdot]$, and $E_\tau,Q_\tau,G_\tau$ satisfy the same properties as in \eqref{eq:HMHF Perturbation}. 

Moreover, let $\psi^1_\tau,\psi^2_\tau:(M,(\xi^{-1}_{\tau})^*g_{\tau})\to(M,\bar g)$ be two modified HMHFs between $(\xi^{-1}_{\tau})^*g_{\tau}$ and $\bar g$, $t\in[0,T]$, such that the vector field $X_\tau:=\exp_{\bar g,\psi^1_\tau}^{-1}\psi^2_\tau$ is well-defined and $\|X_\tau\|_{C^{2,\alpha}}\le\tfrac{1}{100}\textnormal{inj}_{\bar g}$. Then by the same computation for \eqref{eq:HMHF perturbation sec 7} we see that $X_\tau$ satisfies the \textbf{modified HMHF Perturbation equation at $\psi^1_\tau$},
\begin{equation}\label{eq:HMHF perturbation sec 7_2}
    \partial_\tau X_\tau=\mathfrak L_{\bar g} X_\tau+E_\tau(X_\tau,(\xi^{-1}_{\tau})^*g_{\tau}-\bar g)+Q_\tau(X_\tau),
\end{equation}
where $E,Q$ satisfy the same properties as in \eqref{eq:HMHF perturbation sec 7}. In particular, the linearization of this equation is $\partial_\tau X_\tau=\mathfrak L_{\bar g} X_\tau$.

Lastly, we explain the relation between the modified HMHF, Ricci flow, and Ricci DeTurck flow:
Let $g_t$ be a Ricci flow, and denote by $\bar g_t$ the Ricci flow of the shrinker $\bar g$.
Let $\phi_{t}$ be a HMHF between $g_{t}$ and $\bar g_{t}$.
Since for any $\lambda,\mu>0$ the map laplacian scales by $\Delta_{\lambda^2g,\mu^2h}f=\lambda^{-2}\Delta_{g,h}f$, it follows that $\chi_\tau:=\phi_{-e^{-\tau}}$ is a HMHF between the two corresponding rescaled Ricci flows $e^{\tau}g_{-e^{-\tau}}$ and $e^{\tau}\bar g_{-e^{-\tau}}$. 
Then $\widetilde\chi_\tau= \xi_\tau\circ\chi_\tau\circ \xi^{-1}_\tau$ is a modified HMHF between $(\xi_\tau)_*e^{\tau}g_{-e^{-\tau}}$ and $(\xi_\tau)_*e^{\tau}\bar g_{-e^{-\tau}}=\bar g$.
In particular, we have the following commutative diagram:
\[\begin{tikzcd}
e^{\tau}g_{-e^{-\tau}} \arrow[r, "\chi_\tau"] \arrow[d, "\xi_\tau"]
& e^{\tau}\bar g_{-e^{-\tau}} \arrow[d, "\xi_\tau"] \\
(\xi_\tau)_*e^{\tau}g_{-e^{-\tau}}  \arrow[r, "\widetilde\chi_\tau"]
&  \bar g
\end{tikzcd}
\]
Moreover, $h_\tau:=(\chi_\tau)_*e^{\tau}g_{-e^{-\tau}}-e^{\tau}\bar g_{-e^{-\tau}}$ satisfies the rescaled Ricci DeTurck flow perturbation equation, and $\widetilde h_\tau:=(\xi_\tau)_*h_\tau=(\widetilde\chi_\tau)_*(\xi_\tau)_*e^{\tau}g_{-e^{-\tau}}-\bar g$ satisfies the modified rescaled Ricci DeTurck flow perturbation equation.

\section{Dynamics of ODEs}
\label{appendix:ODE}
In this section, let $(M,\bar{g},f)$ be a compact shrinker, and we consider \textbf{the rescaled and modified RDTF Perturbation equation}
\begin{equation}\label{eq:rescaled RDTF Perturbation equation}
    \pt h_t=L  h_t+Q(h_t),
\end{equation}
where $L=\Delta_f+2\Rm*$ and $Q(h)=\nabla h*\nabla h+\nabla^*(h*\nabla h)$. Note that instead of the $H^1$-norm, we will consider the $H_W$-norm defined in \eqref{def:H_W} as
\begin{equation}
    \|h\|_{H_W}^2=\|h\|_{L^2_f}^2-\langle \PP^-h, L \PP^- h\rangle :=\sum_{i=1}^\infty (1+ \lambda_i^-)\|\PP_i h\|_{L^2_f}^2,
\end{equation}
where $\lambda_i^-=\max\{0,-\lambda_i\}$. Moreover, for easy of notation, we frequently denote $\|\cdot \|_{L_f^2}$ and $\langle \cdot,\cdot\rangle_{L^2_f}$ by $\|\cdot\|$ and $\langle \cdot,\cdot\rangle$, respectively.

\begin{lem}\label{prop:error.est}
For any symmetric 2-tensor $\varphi$, we have
\begin{equation}\label{eq:error estimate L2}
    \big|\langle \varphi,Q(h) \rangle_{L^2_f}\big| \leq C \|\varphi\|_{L^2_f}\|h\|_{C^2}\| h\|_{H_W}.
\end{equation}
\end{lem}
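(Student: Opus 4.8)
The plan is to estimate the pairing $\langle \varphi, Q(h)\rangle_{L^2_f}$ term by term according to the algebraic structure of $Q(h)=\nabla h * \nabla h + \nabla^*(h*\nabla h)$ given in \eqref{eq:rescaled RDTF Perturbation equation}. The first summand $\nabla h * \nabla h$ is already in divergence-free form, so I would simply apply the pointwise Cauchy--Schwarz inequality to get $|\langle\varphi,\nabla h*\nabla h\rangle_{L^2_f}|\le C\|\varphi\|_{L^2_f}\|\,|\nabla h|^2\,\|_{L^2_f}\le C\|\varphi\|_{L^2_f}\|\nabla h\|_{L^\infty}\|\nabla h\|_{L^2_f}\le C\|\varphi\|_{L^2_f}\|h\|_{C^2}\|\nabla h\|_{L^2_f}$. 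Since $\|\nabla h\|_{L^2_f}\le C\|h\|_{H^1}\le C\|h\|_{H_W}$ by \eqref{eq:H^1_H_W}, this term is controlled by the right-hand side of \eqref{eq:error estimate L2}.

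For the second summand I would integrate by parts to move the divergence operator $\nabla^*$ onto $\varphi$, using that the adjoint of $\nabla^*$ with respect to the weighted measure $e^{-f}\,dg$ is (up to sign and lower-order $\nabla f$-terms) the gradient: $\langle\varphi,\nabla^*(h*\nabla h)\rangle_{L^2_f}=\langle\nabla_f\varphi,\,h*\nabla h\rangle_{L^2_f}$, where $\nabla_f=\nabla-(\nabla f)\cdot$ picks up only a bounded zeroth-order correction because $\bar g$ is compact. Then pointwise Cauchy--Schwarz gives $|\langle\nabla_f\varphi,h*\nabla h\rangle_{L^2_f}|\le C(\|\nabla\varphi\|_{L^2_f}+\|\varphi\|_{L^2_f})\,\|h\|_{L^\infty}\|\nabla h\|_{L^2_f}$. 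Here, however, the appearance of $\|\nabla\varphi\|_{L^2_f}$ rather than $\|\varphi\|_{L^2_f}$ would be a problem, since the statement only allows $\|\varphi\|_{L^2_f}$ on the right.

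The way around this is to avoid putting a derivative on $\varphi$: instead of integrating by parts I would keep $\varphi$ undifferentiated and recognize that $\langle\varphi,\nabla^*(h*\nabla h)\rangle_{L^2_f}$, after integration by parts, must be reorganized so that the derivative falls on the factor $h*\nabla h$ when possible, or — more robustly — observe that $Q(h)$ is of the schematic form $\nabla(h*\nabla h)+h*\nabla h+\nabla h*\nabla h$ up to bounded coefficients, and for the genuinely-divergence piece write $\langle\varphi,\nabla(h*\nabla h)\rangle_{L^2_f}=-\langle\operatorname{div}_f\varphi,h*\nabla h\rangle_{L^2_f}$; this still costs one derivative of $\varphi$. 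The cleanest fix, which I expect is the intended one, is to interpret $\|\varphi\|_{L^2_f}$ in the statement as already being a norm strong enough for this — but reading the ambient usage (e.g. in the proof of Lemma~\ref{prop:error.est}'s application, $\varphi$ ranges over eigentensors of $L$, for which $\|\nabla\varphi\|_{L^2_f}\le C\|\varphi\|_{L^2_f}$ on each fixed finite eigenspace, and more generally $\|\nabla\varphi\|\le C(1+\lambda)\|\varphi\|$) one should simply prove the estimate with $\|\varphi\|_{C^1}$ or $\|\varphi\|_{H^1}$ on the right and note these reduce to $\|\varphi\|_{L^2_f}$ in every application. I would therefore present the proof as: integrate by parts once on the divergence term, bound everything by $C\|\varphi\|_{H^1}\|h\|_{C^2}\|h\|_{H_W}$, and remark that in all uses $\varphi$ is smooth with $\|\varphi\|_{H^1}\le C\|\varphi\|_{L^2_f}$.

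\textbf{Main obstacle.} The only real subtlety is the derivative-count on $\varphi$: naively integrating by parts spends a derivative on the test tensor, which the stated inequality does not budget for. Resolving this cleanly — either by a careful integration-by-parts that keeps $\varphi$ undifferentiated, or by restricting attention to the (finite-dimensional, or spectrally-controlled) classes of $\varphi$ that actually occur — is the step requiring care; all the remaining estimates are routine Cauchy--Schwarz combined with \eqref{eq:H^1_H_W} and compactness of $(M,\bar g)$.
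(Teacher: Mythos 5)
Your proposal has a genuine gap: you treat the divergence term $\nabla^*(h*\nabla h)$ as something that must be integrated by parts onto $\varphi$, and then you correctly observe this costs a derivative of $\varphi$ that the stated inequality does not budget for. But the paper never integrates by parts at all. The point you miss is that $Q(h)$ can be bounded \emph{pointwise} by expanding the divergence: schematically, $\nabla^*(h*\nabla h)$ produces terms of the form $\nabla h*\nabla h$, $h*\nabla^2 h$, and $h*\nabla h*\nabla f$, all of which satisfy $|Q(h)| \le C\|h\|_{C^2}\big(|h|+|\nabla h|\big)$, because the $\|h\|_{C^2}$-factor absorbs one copy of $h$, $\nabla h$, or $\nabla^2 h$ from each product (and $\nabla f$ is bounded on the compact manifold). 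Once you have this pointwise bound, $\|Q(h)\|_{L^2_f}\le C\|h\|_{C^2}\|h\|_{H^1}\le C\|h\|_{C^2}\|h\|_{H_W}$, and a single $L^2_f$-Cauchy--Schwarz against $\varphi$ gives exactly \eqref{eq:error estimate L2} for arbitrary $\varphi$, with no spectral restriction or weakening of the statement.

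Your proposed ``fix'' of restricting to eigentensors $\varphi$ and using $\|\nabla\varphi\|\le C(1+\lambda)\|\varphi\|$ would change the content of the lemma and introduce a $\lambda$-dependence that would propagate into later estimates (e.g.\ Lemma~\ref{lem:general_V}), so it is not an acceptable substitute. The correct resolution is simply to avoid integration by parts: since $\|h\|_{C^2}$ already controls two derivatives of $h$, there is no need to spend a derivative on $\varphi$.
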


\begin{proof}
Since $|\big(\nabla h*\nabla h +\nabla^*(h*\nabla h)| \leq \|h\|_{C^2}(|h|+|\nabla h|)$, we have
\begin{equation}
    \big|\langle \varphi,Q(h) \rangle\big| \leq C\|\varphi\|\|Q(h)\|\leq C \|\varphi\|\|h\|_{C^2}\| h\|_{H_W}.
\end{equation}
\end{proof}

\begin{lem}\label{lem:general_V}
Let $\bar g$ be a compact shrinker. Let $V$ be an invariant subspace. For any $\delta \in (0,1)$, there exist $C,\eps>0$ only depending on $\bar g,\delta$ such that the following holds:
Suppose $h_\tau$, $\tau\in[0,T]$, is a smooth solution to \eqref{eq:rescaled RDTF Perturbation equation}, and satisfies $\|h_\tau\|_{C^2}\leq \varepsilon$. Let $\lambda_{\min}\in [-\infty,\infty)$ and $\lambda_{\max}\in(-\infty,\infty)$ be the minimal and maximal eigenvalues of $V$. Then
    \begin{align}
      \tfrac{d}{d\tau}e^{-2[(1+\delta)\lambda_{\min}-\delta\lambda_{\min}^+]\tau}\|\PP_Vh_\tau\|^2_{H_W}&\geq  -C\varepsilon e^{-2[(1+\delta)\lambda_{\min}-\delta\lambda_{\min}^+]\tau}\|h_\tau\|^2_{H_W},\label{eq:first1}\\              
      \tfrac{d}{d\tau}e^{-2[(1-\delta) \lambda_{\max}+\delta\lambda_{\max}^+]\tau}\|\PP_Vh_\tau\|^2_{H_W}&\leq  C\varepsilon e^{-2[(1-\delta) \lambda_{\max}+\delta\lambda_{\max}^+]\tau}\|h_\tau\|^2_{H_W},\label{eq:second2}
    \end{align}
    where $\lambda_{\max}^+=\max\{0,\lambda_{\max}\}$ and $\lambda_{\min}^+=\max\{0,\lambda_{\min}\}$.  Note that the first inequality is vacuously true when  $\lambda_{\min}=-\infty$ and $\tau>0$. 
    \end{lem}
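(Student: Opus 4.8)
The plan is to derive both differential inequalities from a single energy-type computation for $\frac{d}{d\tau}\|\PP_V h_\tau\|_{H_W}^2$, then absorb the nonlinear error using Lemma \ref{prop:error.est}, and finally choose the exponential weights so that the linear term has a favourable sign. First I would note that since $V$ is an invariant subspace of $L$, the projection $\PP_V$ commutes with $L$ (and with the heat semigroup), so setting $h_\tau^V := \PP_V h_\tau$ we have $\partial_\tau h^V_\tau = L h^V_\tau + \PP_V Q(h_\tau)$. Differentiating the $H_W$-norm and using that $H_W$ is built from the spectral decomposition of $L$, one gets
\begin{equation*}
    \tfrac{d}{d\tau}\|h^V_\tau\|_{H_W}^2 = 2\langle L h^V_\tau, h^V_\tau\rangle_{H_W} + 2\langle \PP_V Q(h_\tau), h^V_\tau\rangle_{H_W}.
\end{equation*}
On the spectral component spanned by an eigentensor of eigenvalue $\lambda_i$, the operator $L$ acts as multiplication by $\lambda_i$ while the $H_W$-weight is $(1+\lambda_i^-)$; since $L$ is self-adjoint for $L^2_f$, the quadratic form $\langle L h^V, h^V\rangle_{H_W}$ equals $\sum_i (1+\lambda_i^-)\lambda_i \|\PP_i h^V\|^2$. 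The key elementary observation is that for eigenvalues $\lambda_i$ in the range $[\lambda_{\min},\lambda_{\max}]$, one has pointwise bounds $(1+\lambda_i^-)\lambda_i \ge [(1+\delta)\lambda_{\min} - \delta\lambda_{\min}^+](1+\lambda_i^-)$ and similarly $\le [(1-\delta)\lambda_{\max} + \delta\lambda_{\max}^+](1+\lambda_i^-)$; here the $\delta$-corrections account for the discrepancy between $\lambda_i$ and $\lambda_i(1+\lambda_i^-)/(1+\lambda_i^-)$ — more precisely they handle the crossover between positive and negative eigenvalues where the weight changes, and one checks these inequalities by splitting into the cases $\lambda_i \ge 0$ and $\lambda_i < 0$. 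This gives
\begin{equation*}
    2\langle L h^V_\tau, h^V_\tau\rangle_{H_W} \ge 2[(1+\delta)\lambda_{\min} - \delta\lambda_{\min}^+]\,\|h^V_\tau\|_{H_W}^2,
\end{equation*}
and the reverse inequality with $\lambda_{\max}$.

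For the nonlinear term, I would apply Lemma \ref{prop:error.est}: since the $H_W$-inner product differs from $L^2_f$ only by bounded spectral weights on $\PP^-$ (and $\PP_V$ is a bounded operator), $|\langle \PP_V Q(h_\tau), h^V_\tau\rangle_{H_W}| \le C\|h^V_\tau\|_{H_W}\|Q(h_\tau)\|_{H_W}$ — but one must be slightly careful because $\|Q(h_\tau)\|_{H_W}$ involves the $\PP^-$-weights on $Q(h_\tau)$, which are only finitely many eigenvalues when restricted to any fixed band, yet the stable spectrum is unbounded; the cleanest route is to pair directly in $L^2_f$ against a suitably reweighted test tensor and invoke \eqref{eq:error estimate L2}, yielding $|\langle \PP_V Q(h_\tau), h^V_\tau\rangle_{H_W}| \le C\|h_\tau\|_{C^2}\|h_\tau\|_{H_W}\|h^V_\tau\|_{H_W} \le C\varepsilon \|h_\tau\|_{H_W}^2$, using $\|h^V_\tau\|_{H_W}\le \|h_\tau\|_{H_W}$. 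Combining, $\frac{d}{d\tau}\|h^V_\tau\|_{H_W}^2 \ge 2[(1+\delta)\lambda_{\min}-\delta\lambda_{\min}^+]\|h^V_\tau\|_{H_W}^2 - C\varepsilon\|h_\tau\|_{H_W}^2$, and multiplying through by the integrating factor $e^{-2[(1+\delta)\lambda_{\min}-\delta\lambda_{\min}^+]\tau}$ gives \eqref{eq:first1}; the estimate \eqref{eq:second2} follows symmetrically. When $\lambda_{\min}=-\infty$ the subspace $V$ contains eigentensors of arbitrarily negative eigenvalue, so $\lambda_{\min}^+=0$ and the claimed inequality reads $\frac{d}{d\tau}(e^{+\infty\cdot\tau}\|h^V_\tau\|_{H_W}^2) \ge -\infty$, which is vacuous for $\tau>0$ as stated — I would simply remark this case away.

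The main obstacle I anticipate is the bookkeeping in the spectral inequality $(1+\lambda_i^-)\lambda_i \gtrless [(1\pm\delta)\lambda_{\text{ext}} \mp \delta\lambda_{\text{ext}}^+](1+\lambda_i^-)$ across the sign change of the eigenvalues, since the $H_W$-weight is $1$ for positive eigenvalues but grows like $|\lambda_i|$ for negative ones, so the effective "rate" $\lambda_i$ and the weight are decoupled in a way that forces the extra $\delta$-terms; getting the exact form of the exponents with $\lambda_{\min}^+$ and $\lambda_{\max}^+$ requires checking a handful of sign cases carefully but is ultimately elementary. A secondary technical point is justifying that the formal term-by-term differentiation of $\|h^V_\tau\|_{H_W}^2 = \sum_i (1+\lambda_i^-)\|\PP_i h^V_\tau\|^2$ is legitimate — this follows from parabolic smoothing, which guarantees $h_\tau$ is smooth and the series converges in every Sobolev norm uniformly on compact time intervals, so dominated convergence applies. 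Everything else is a routine integration of the resulting scalar ODE inequality.
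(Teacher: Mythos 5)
Your linear-term estimate is fine: for each eigenvalue $\lambda_i$ occurring in $V$ one has $\lambda_i\le\lambda_{\max}\le(1-\delta)\lambda_{\max}+\delta\lambda_{\max}^+$ and $\lambda_i\ge\lambda_{\min}\ge(1+\delta)\lambda_{\min}-\delta\lambda_{\min}^+$, and multiplying by the weight $(1+\lambda_i^-)$ and summing gives the claimed spectral bound (in fact you show more, since the $\delta$-correction is slack here). The gap is in the nonlinear term. You assert that "the $H_W$-inner product differs from $L^2_f$ only by bounded spectral weights on $\PP^-$", but this is false: the weight $1+\lambda_i^-$ is unbounded as $\lambda_i\to-\infty$. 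Unwinding your "pair directly in $L^2_f$" step, one finds (as you essentially note yourself) that $\langle\PP_VQ(h),\PP_Vh\rangle_{H_W}=\langle Q(h),\PP_Vh-L\PP_V^-h\rangle_{L^2_f}$, and Lemma \ref{prop:error.est} then gives a bound proportional to $\|\PP_Vh-L\PP_V^-h\|_{L^2_f}\cdot\|h\|_{C^2}\|h\|_{H_W}$. But $\|L\PP_V^-h\|^2_{L^2_f}=\sum\lambda_i^2\|\PP_ih\|^2$ is \emph{not} controlled by $\|h\|_{H_W}^2=\sum(1+\lambda_i^-)\|\PP_ih\|^2$ — the ratio $\lambda_i^2/(1-\lambda_i)$ diverges — and cannot be, uniformly, since the lemma requires $C$ to depend only on $\bar g,\delta$ (not on $V$). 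So the claimed bound $|\langle\PP_VQ(h),\PP_Vh\rangle_{H_W}|\le C\eps\|h\|_{H_W}^2$ does not follow, and this is not a cosmetic issue.

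This is precisely why the paper's proof introduces the $\delta$: writing $\PP_Vh-L\PP_V^-h=\bigl(\PP_Vh-(1-\delta)L\PP_V^-h\bigr)-\delta L\PP_V^-h$, the $-\delta L\PP_V^-h$ piece, paired against $Lh$, produces a term $-\delta\|L\PP_V^-h\|^2_{L^2_f}$, which then absorbs the $C\eps\|L\PP_V^-h\|^2_{L^2_f}$ coming from applying Young's inequality to the quadratic-error pairing, once $\eps\ll\delta$. So the $\delta$ in the exponent of the lemma is the price paid for absorbing the unbounded piece $\|L\PP_V^-h\|$, not a bookkeeping artifact of "crossover between positive and negative eigenvalues" as your commentary suggests. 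You correctly flag the obstacle but do not actually resolve it; to fix the proof you need the $\delta$-weighted splitting of the test tensor and the $-\delta\|L\PP_V^-h\|^2$ absorption, as in the paper.
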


\begin{proof}
For simplicity, we let $\PP_V^-:=\PP_V\PP^-$.
Note that we have 
    \begin{align*}
        &\langle \PP_V h,Lh\rangle\le \lambda_{\max}\|\PP_V h\|^2, &&
        -\langle L\PP^-_V h,Lh\rangle\le (\lambda_{\max}-\lambda_{\max}^+)\langle -\PP^-_V h,L\PP^-_Vh\rangle.
    \end{align*}
Hence, 
\begin{equation*}
\langle \PP_V h-(1-\delta) L\PP^-_V h,Lh\rangle\le  \delta\lambda_{\max}\|\PP_Vh\|^2+(1-\delta)\lambda_{\max}\|\PP_V h\|^2_{H_W}.
\end{equation*}
Also, by Lemma \ref{prop:error.est} we have
\begin{align*}
    \langle \PP_V h-L\PP^-_V h,Q(h)\rangle&\le \eps(\|\PP_V h\|+\|L\PP^-_V h\|)\|h\|_{H_W}\le C\eps (\|L\PP^-_V h\|^2+\|h\|^2_{H_W}).
\end{align*}
Hence, combining the above estimates with $\eps\ll\delta \leq \frac{1}{2}$, we obtain
\begin{align*}
\tfrac{1}{2} \tfrac{d}{d\tau}\|\PP_Vh \|_{H_W}^2&=\langle \PP_Vh {-L\PP_V^- h }, \partial_\tau h \rangle=\langle \PP_V h -L\PP_V^- h , L h +Q(h )\rangle\\
    &\le \langle \PP_Vh -(1-\delta) L\PP_V^- h ,   L h \rangle-\delta\|L\PP_V^- h\|^2+\langle \PP_V h -L\PP_V^- h , Q(h )\rangle\\
    &\le \langle \PP_Vh -(1-\delta) L\PP_V^- h ,   L h \rangle-\delta\|L\PP_V^- h\|^2+C\eps (\|L\PP^-_V h\|^2+\|h\|^2_{H_W}) \\
    &\le \delta\lambda_{\max}\|\PP_Vh\|^2+(1-\delta) \lambda_{\max}\|\PP_Vh\|^2_{H_W} +C\eps\|h\|^2_{H_W}.
\end{align*}
This implies \eqref{eq:second2}. We can obtain \eqref{eq:first1} in the same manner by observing
\begin{equation*}
\langle \PP_V h-(1+\delta) L\PP^-_V h,Lh\rangle\ge -\delta\lambda_{\min}\|\PP_Vh\|^2+(1+ \delta)\lambda_{\min} \|\PP_V^-h\|_{H_W}^2.
\end{equation*}
\end{proof}

For a non-zero RDTF Perturbation $h_\tau$,
the following lemma shows that the ratio between the $H_W$-norms of the projections of $h_\tau$ into two invariant subspaces will be preserved almost as in the linear case, if $h_\tau$ is sufficiently small and the denominator has a certain portion in the overall $H_W$-norm.

\begin{lem}\label{lem:B5}
Let $\bar g$ be a compact shrinker. Let $h_\tau$, $\tau\in[0,T]$, be a smooth non-zero RDTF Perturbation with background metric $\bar g$.
Let $V_1,V_2$ be two invariant subspaces with $\dim V_2<\infty$. Then, given $A,C_0>0$ and $\delta \in (0,1)$, there exists $\eps(A,C_0,\delta,V_1,V_2,T)>0$ such that the following holds: Suppose that $\|h_{\tau}\|_{C^2}\leq \varepsilon$ holds for $\tau\in [0,T]$, and also
\begin{align*}
&\|h_0\|_{H_W}^2\le C_0\|\PP_{V_2}h_0\|_{H_W}^2, &&  \|\PP_{V_1}h_0\|_{H_W}^2\le A\|\PP_{V_2}h_0\|_{H_W}^2.  
\end{align*}
Then 
  \[\|\PP_{V_1}h_\tau\|_{H_W}^2\le Ae^{2[(1-\delta)\lambda_{V_1,\max}+\delta \lambda_{V_1,\max}^+-(1+\delta)\lambda_{V_2,\min}+\delta \lambda_{V_2,\min}^++\delta]\tau}\|\PP_{V_2}h_\tau\|_{H_W}^2,\]
where $\lambda_{V_1,\max},\lambda_{V_2,\min}$ are the largest and smallest eigenvalues of $V_1,V_2$, respectively, and $\lambda_{V_1,\max}^+=\max\{0,\lambda_{V_1,\max}\}$, and $\lambda_{V_2,\min}^+=\max\{0,\lambda_{V_2,\min}\}$. 
\end{lem}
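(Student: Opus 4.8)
The plan is to track the evolution of the ratio
\[
\rho(\tau):=\frac{\|\PP_{V_1}h_\tau\|_{H_W}^2}{\|\PP_{V_2}h_\tau\|_{H_W}^2}
\]
and show that, up to the nonlinear error controlled by $\eps$, it grows no faster than the linear rate $2[(1-\delta)\lambda_{V_1,\max}+\delta\lambda_{V_1,\max}^+-(1+\delta)\lambda_{V_2,\min}+\delta\lambda_{V_2,\min}^+]\tau$ with an extra $\delta$ slack to absorb the error. First I would apply Lemma \ref{lem:general_V} to the invariant subspace $V_1$ with parameter $\delta$, giving an upper bound of the form
\[
\tfrac{d}{d\tau}\,\big(e^{-2[(1-\delta)\lambda_{V_1,\max}+\delta\lambda_{V_1,\max}^+]\tau}\|\PP_{V_1}h_\tau\|^2_{H_W}\big)\le C\eps\,e^{-2[(1-\delta)\lambda_{V_1,\max}+\delta\lambda_{V_1,\max}^+]\tau}\|h_\tau\|^2_{H_W},
\]
and to $V_2$, giving the matching lower bound with $\lambda_{V_2,\min}$ (here $\dim V_2<\infty$ guarantees $\lambda_{V_2,\min}>-\infty$, so the estimate is non-vacuous). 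Integrating these two differential inequalities from $0$ to $\tau$ gives, schematically,
\[
\|\PP_{V_1}h_\tau\|^2_{H_W}\le e^{2a\tau}\Big(\|\PP_{V_1}h_0\|^2_{H_W}+C\eps\!\int_0^\tau\! e^{-2as}\|h_s\|^2_{H_W}\,ds\Big),\qquad
\|\PP_{V_2}h_\tau\|^2_{H_W}\ge e^{2b\tau}\Big(\|\PP_{V_2}h_0\|^2_{H_W}-C\eps\!\int_0^\tau\! e^{-2bs}\|h_s\|^2_{H_W}\,ds\Big),
\]
where $a=(1-\delta)\lambda_{V_1,\max}+\delta\lambda_{V_1,\max}^+$ and $b=(1+\delta)\lambda_{V_2,\min}-\delta\lambda_{V_2,\min}^+$.

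The second step is to control $\|h_s\|_{H_W}$ in the error integrals by $\|\PP_{V_2}h_s\|_{H_W}$. This is where the hypotheses are used: a standard continuity/bootstrap argument on $[0,T]$ shows that as long as the claimed ratio bound holds, one also has $\|h_s\|^2_{H_W}\le C(C_0,A,T)\,e^{-2bs}\cdot(\text{something})\cdot\|\PP_{V_2}h_s\|^2_{H_W}$ times a harmless exponential in $T$; more precisely, since $\|h_0\|^2_{H_W}\le C_0\|\PP_{V_2}h_0\|^2_{H_W}$, Lemma \ref{lem:general_V} applied once more to the full space (or summed over the spectral pieces) gives a Gr\"onwall-type bound $\|h_\tau\|^2_{H_W}\le e^{(2\Lambda+C\eps)T}\|h_0\|^2_{H_W}$ with $\Lambda$ the top eigenvalue, so every occurrence of $\|h_s\|_{H_W}$ in the error terms is bounded by $C(C_0,T)\|\PP_{V_2}h_0\|_{H_W}$, and then by $\|\PP_{V_2}h_s\|_{H_W}$ after dividing by the (bounded-below) denominator evolution. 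Feeding this back, the error contributions to $\rho(\tau)$ are bounded by $C(A,C_0,T)\,\eps\,e^{2(a-b)\tau}$, which is absorbed into the $\delta$-slack $e^{2\delta\tau}$ upon choosing $\eps=\eps(A,C_0,\delta,V_1,V_2,T)$ small enough; combining with the linear main term $A\,e^{2(a-b)\tau}\|\PP_{V_2}h_\tau\|^2_{H_W}$ closes the induction and yields the stated inequality.

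The main obstacle is the bootstrap: the bound $\|\PP_{V_1}h_\tau\|^2_{H_W}\le A\,e^{2[(a-b)+\delta]\tau}\|\PP_{V_2}h_\tau\|^2_{H_W}$ is precisely what we want to prove, yet it is needed to control the error integrals, so one must run a careful continuity argument. Concretely I would let $\tau^*\le T$ be the supremum of times on which the bound holds with $A$ replaced by, say, $2A$ (or with an extra factor); on $[0,\tau^*]$ the denominator $\|\PP_{V_2}h_\tau\|_{H_W}$ cannot collapse because the lower-bound differential inequality for $V_2$ plus the assumption $\|h_0\|^2_{H_W}\le C_0\|\PP_{V_2}h_0\|^2_{H_W}$ forces $\|\PP_{V_2}h_\tau\|^2_{H_W}\ge c(C_0,T)e^{2b\tau}\|\PP_{V_2}h_0\|^2_{H_W}$ once $\eps$ is small; then the error estimate above shows the bound actually holds on $[0,\tau^*]$ with $A$ (not $2A$), so by openness $\tau^*=T$. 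The only delicate point is that $\lambda_{V_1,\max}$ may be $+\infty$ is impossible since $L$ has discrete spectrum bounded above, and $\lambda_{V_2,\min}=-\infty$ is excluded by $\dim V_2<\infty$; these two finiteness facts are exactly why the statement is formulated with $\dim V_2<\infty$, and they guarantee all the exponential factors above are genuine finite constants depending only on the listed data.
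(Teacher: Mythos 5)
Your overall strategy is sound and runs parallel to the paper's: apply Lemma \ref{lem:general_V} to the full space and to $V_1,V_2$, establish that $\|h_\tau\|_{H_W}$ is controlled by $\|\PP_{V_2}h_\tau\|_{H_W}$ (using $\|h_0\|_{H_W}^2\le C_0\|\PP_{V_2}h_0\|_{H_W}^2$), and then absorb the $O(\eps)$ nonlinear errors into the $\delta$-slack. Two differences from the paper are worth flagging.

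First, the paper avoids the integration-then-divide bookkeeping and the continuity argument entirely by introducing the two auxiliary quantities
\[
a(\tau)=C_0\,e^{-2[(1+\delta)\Lambda_2-\delta\Lambda_2^+]\tau}\|\PP_{V_2}h_\tau\|_{H_W}^2-e^{-2(\lambda_1+\delta)\tau}\|h_\tau\|_{H_W}^2,\qquad
b(\tau)=A\,e^{-2\Lambda_2'\tau}\|\PP_{V_2}h_\tau\|_{H_W}^2-e^{-2\Lambda_1'\tau}\|\PP_{V_1}h_\tau\|_{H_W}^2,
\]
and showing each is non-decreasing on $[0,T]$ (the built-in $\delta$ gives a positive $2\delta$-term in $a'$ and $b'$ which dominates the $C\eps$ error once $\eps$ is small on a bounded time interval). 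Since $a(0)\ge 0$ and $b(0)\ge 0$ by the two hypotheses, the conclusion is immediate, with the constants $C_0$ and $A$ preserved exactly rather than picking up a multiplicative loss. This also shows your continuity/bootstrap step is unnecessary: both the Gr\"onwall upper bound on $\|h_\tau\|_{H_W}$ and the lower bound on $\|\PP_{V_2}h_\tau\|_{H_W}$ come directly from the hypotheses and Lemma \ref{lem:general_V} without any a priori control on $\PP_{V_1}$, so there is nothing to bootstrap.

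Second, and more substantively, the error absorption as you state it does not close. You claim that the error contribution to $\rho(\tau)$ is bounded by $C\eps\,e^{2(a-b)\tau}$ and that this is ``absorbed into the $\delta$-slack $e^{2\delta\tau}$.'' But an additive error of size $C\eps\,e^{2(a-b)\tau}$ on top of the main term $A\,e^{2(a-b)\tau}$ gives $(A+C\eps)e^{2(a-b)\tau}$, which exceeds $A\,e^{2(a-b+\delta)\tau}$ for $\tau$ near $0$, no matter how small $\eps$ is, because $e^{2\delta\tau}\to 1$. The argument is salvageable only because the error in fact \emph{vanishes} at $\tau=0$: integrating the differential inequalities from $0$ to $\tau$ produces errors of the form $C\eps\int_0^\tau(\cdots)\,ds$, so the multiplicative correction to the ratio is of the form $(1+\eps g_1(\tau))/(1-\eps g_2(\tau))$ with $g_i(0)=0$ and $g_i(\tau)\lesssim C(T)\tau$. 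This grows at rate $O(\eps)$ per unit time, which is then dominated by the slack's slope $\ge 2\delta$ once $\eps$ is chosen small relative to $\delta$ and $C(T)$. That $\tau$-linear vanishing of the error at $\tau=0$ is the crucial structural fact your write-up suppresses; the paper's monotone $a,b$ formulation makes it automatic.
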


\begin{proof}
For simplicity, we write $\PP_{V_i}=\PP_i$ for each $i=1,2$, and write $\lambda_{V_1,\min}=\Lambda_1$ and $\lambda_{V_2,\max}=\Lambda_2$. We also recall the first eigenvalue $\lambda_1>0$ of the entire $L^2_f$. We consider
\begin{equation*}
a(t):=C_0 e^{-2[(1+\delta)\Lambda_2-\delta \Lambda_2^+] \tau}\|\PP_2 h_\tau\|_{H_W}^2-e^{-2(\lambda_1+\delta)\tau}\|h_\tau\|_{H_W}^2.
\end{equation*}
Then, by applying Lemma \ref{lem:general_V} for $L^2_f$ and $V_2$ to get
\begin{equation*}
a'\geq 2\delta e^{-2(\lambda_1+\delta)\tau}\|h_\tau\|_{H_W}^2-C\varepsilon (C_0e^{-2[(1+\delta)\Lambda_2-\delta \Lambda_2^+] \tau}+e^{-2(\lambda_1+\delta) \tau}) \|h_\tau\|_{H_W}^2.
\end{equation*}
Hence, given $T,\delta,\Lambda_2,C_0$, there is small $\varepsilon>0$ satisfying $a'\geq 0$ for $\tau\in [0,T]$. Since  we know $a(0)\geq 0$ by assumption,  for $\tau \in [0,T]$ we have
\begin{equation}\label{eq:growth_control}
e^{-2(\lambda_1+\delta)\tau}\|h_\tau\|_{H_W}^2 \leq C_0 e^{-2[(1+\delta)\Lambda_2-\delta \Lambda_2^+] \tau}\|\PP_2 h_\tau\|_{H_W}^2.
\end{equation}
Next, we consider
\begin{equation*}
b(t):=A e^{-2\Lambda_2' \tau}\|\PP_2 h_\tau\|_{H_W}^2-e^{-2\Lambda_1'\tau}\|\PP_1 h_\tau\|_{H_W}^2,
\end{equation*}
where $\Lambda_1'=(1-\delta)\Lambda_1+\delta \Lambda_1^+$ and $\Lambda_2'=(1+\delta)\Lambda_2-\delta \Lambda_2^+-\delta$.
Then,
\begin{align*}
b'\geq 2A\delta   e^{-2\Lambda_2' \tau}\|\PP_2 h_\tau\|_{H_W}^2
-C\varepsilon(e^{-2\Lambda_2'\tau}+ e^{-2\Lambda_1'\tau})\|h_\tau\|_{H_W}^2.
\end{align*}
Thus, using \eqref{eq:growth_control},  we can obtain $b'\geq 0$ in $\tau\in[0,T]$ for small $\varepsilon$. Therefore, together with the assumption $b(0)\geq 0$ we finish the proof.
\end{proof}

\section{Existence of solutions with exponential decay}
\label{appendix:existence}
In this section, we prove several lemmas of showing the existence of solutions that decay exponentially to to ancient or immortal PDEs. Throughout this section, $L$ denotes a general elliptic operator which acts on the sections of $(r,s)$-tensors $T^r_s M$ on a Riemannian manifold $(M,\bar g)$, and $L$ is symmetric under the weighted $L^2_f$-norm for some smooth function $f$ on $M$. We use $X_\tau$ or $h_\tau$ to denote the unknown $(r,s)$-tensor fields.
We assume the spectrum $\Spec(L)$ of $L$ has the following decomposition:
$$\lambda_1 \geq \lambda_2 \geq \cdots\ge\lambda_I>0=\cdots= \lambda_{I+k}=0 > \lambda_{I+k+1}\ge\cdots,$$ for some $I,k \geq 0$, and $L Y_i+\lambda_i Y_i=0$ where $\|Y_i\|_{L^2_f}=1$. 
In this paper, we apply results in this section to a compact shrinker $(M,\bar g,f)$, and we choose $L$ either as $\Delta_f+\tfrac12$ acting on vector fields, or $L=\Delta_f+2\Rm*$ acting on symmetric 2-tensor fields.

The following lemma shows the existence of an ancient solution to an inhomogeneous parabolic equation $(\partial_\tau-L)X_\tau=G_\tau$, where $G_\tau$ decays exponentially fast as $\tau \to - \infty$.

\begin{lem}\label{bl: inhomogeneous solution existence with asymptotic}
We consider the equation $(\partial_\tau-L)X_\tau=G_\tau$ on $(-\infty,0]$ with
\begin{equation*}
\mathcal{G}:=\sup_{\tau\leq 0}e^{-\mu  \tau}\|G_\tau\|_{C^\alpha}<\infty,
\end{equation*}
for some $\mu>0$.
Then, there is an ancient solution $X_\tau$ for $\tau\in (-\infty,0]$ such that
\begin{align*}
  \|X_\tau\|_{C^{2,\alpha}}\leq C_\theta \mathcal{G}e^{\theta  \tau},
\end{align*}
holds for any $\theta\in (0,\mu)\setminus {\Spec}(L)$, where $C_\theta$ depends only on $\mu-\theta$ and $\min_{i\in \mathbb{N}}| \theta-\lambda_i|$.
\end{lem}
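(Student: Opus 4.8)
The plan is to construct the ancient solution by solving initial value problems on truncated intervals $[-j,0]$ with zero initial data at $\tau=-j$, obtaining uniform estimates independent of $j$, and then passing to a subsequential limit by parabolic compactness. First I would fix $\theta \in (0,\mu)\setminus\Spec(L)$ and decompose the spectrum relative to $\theta$: let $P_{>\theta}$ (resp.\ $P_{<\theta}$) be the $L^2_f$-orthogonal projection onto the span of eigentensors with eigenvalue $>\theta$ (resp.\ $<\theta$); since $\theta\notin\Spec(L)$ and $\lambda_i\to-\infty$, the space $E_{>\theta}:=\Ima P_{>\theta}$ is finite-dimensional. For the truncated problem on $[-j,0]$, I would solve forward in $\tau$ the linear inhomogeneous equation $(\partial_\tau-L)X^{(j)}_\tau = G_\tau$ with $X^{(j)}_{-j}=0$; short-time existence and the spectral (Fourier) decomposition along the eigenbasis $\{Y_i\}$ give a solution for all $\tau\in[-j,0]$, with each Fourier coefficient $c_i(\tau):=\langle X^{(j)}_\tau,Y_i\rangle$ solving the scalar ODE $c_i' = -\lambda_i c_i + \langle G_\tau,Y_i\rangle$.

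The key step is the mode-by-mode estimate. For the \emph{stable-relative-to-$\theta$} part (eigenvalues $\lambda_i<\theta$), I would integrate the ODE forward from $-j$: using $\|G_\tau\|\le \mathcal G\, e^{\mu\tau}$ and $-\lambda_i>-\theta$, Duhamel's formula gives $|c_i(\tau)|\le \mathcal G\, e^{-\lambda_i\tau}\int_{-j}^\tau e^{(\lambda_i+\mu)s}\,ds \le \frac{\mathcal G}{\mu-\lambda_i}\,e^{\mu\tau}$, and summing with the usual Weyl-law decay of $\frac1{\mu-\lambda_i}$ (after trading $e^{\mu\tau}$ against $e^{\theta\tau}$, which is legitimate since $\mu>\theta$ and $\tau\le 0$) controls $\|P_{<\theta}X^{(j)}_\tau\|$ by $C\,\mathcal G\, e^{\theta\tau}$; the constant depends only on $\mu-\theta$ and the spectral gaps $\min_i|\theta-\lambda_i|$. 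For the \emph{unstable-relative-to-$\theta$} part ($\lambda_i>\theta$), forward integration from $-j$ is unstable, so instead I would solve the ODE \emph{backward from $\tau=0$}: but the truncated problem already fixes $c_i(-j)=0$, which forces a specific value $c_i^{(j)}(0)$. Here one must be slightly careful: write the general bounded-on-$(-\infty,0]$ solution of the homogeneous-plus-forcing ODE for an unstable mode as the Duhamel integral $\tilde c_i(\tau):=-\int_{-\infty}^\tau e^{-\lambda_i(\tau-s)}\langle G_s,Y_i\rangle\,ds$, which converges because $\lambda_i>\theta>0$ and $\mu>\theta$, and satisfies $|\tilde c_i(\tau)|\le \frac{\mathcal G}{\mu-\theta}\,e^{\mu\tau}$ — wait, more carefully one needs $\lambda_i<\mu$ is \emph{not} assumed, so the integral $\int_{-\infty}^\tau e^{(\mu-\lambda_i)s}\,ds$ converges only if $\mu>\lambda_i$; if $\lambda_i\ge\mu$ one integrates $\int_{-\infty}^\tau e^{-\lambda_i(\tau-s)}e^{\mu s}ds=\frac{1}{\lambda_i-\mu}e^{\mu\tau}$ instead, again bounded. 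Since $E_{>\theta}$ is finite-dimensional, both cases are handled by a finite constant depending on the gaps. I would then define $X_\tau$ on $[-j,0]$ as the solution of the truncated problem but with the \emph{corrected} initial data $X^{(j)}_{-j}=0$ replaced in the analysis by comparing to $\sum_{\lambda_i>\theta}\tilde c_i(\tau)Y_i + (\text{stable part})$; the difference is a solution of the homogeneous equation whose unstable-mode coefficients at $-j$ are $O(\mathcal G\,e^{\theta(-j)}\cdot e^{\text{(something)}})$ — this needs the estimate that $|c_i^{(j)}(-j)-\tilde c_i(-j)|$ is small, which follows since $\tilde c_i(-j)=O(e^{\mu(-j)})$ or $O(e^{\text{const}\cdot(-j)})$ and $c_i^{(j)}(-j)=0$, and the homogeneous unstable mode grows like $e^{-\lambda_i(\tau+j)}$, so the product stays bounded by $C\mathcal G\,e^{\theta\tau}$ uniformly in $j$.

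Combining the two regimes yields $\|X^{(j)}_\tau\|_{L^2_f}\le C_\theta\,\mathcal G\, e^{\theta\tau}$ uniformly in $j$ for $\tau\in[-j,0]$; interior parabolic Schauder estimates (the local derivative estimates, as used in Lemma~\ref{l:shortHMHF}) then upgrade this to $\|X^{(j)}_\tau\|_{C^{2,\alpha}}\le C_\theta\,\mathcal G\, e^{\theta\tau}$ on, say, $[-j+1,0]$. A diagonal/Arzel\`a--Ascoli argument extracts a subsequence $X^{(j)}_\tau\to X_\tau$ in $C^{2}_{loc}$ on $(-\infty,0]$, and the limit solves $(\partial_\tau-L)X_\tau=G_\tau$ with the claimed bound. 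I expect the main obstacle to be the bookkeeping for the unstable modes: one must show that the choice of zero initial data at $-j$ does not cause the unstable coefficients to blow up as $j\to\infty$, which amounts to checking that the bounded ancient solution $\tilde c_i$ is the ``attractor'' and the correction decays — a soft but fiddly estimate that hinges on $\lambda_i>\theta$ and the finite-dimensionality of $E_{>\theta}$, together with keeping all constants dependent only on $\mu-\theta$ and $\min_i|\theta-\lambda_i|$ as stated.
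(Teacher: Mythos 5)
Your plan has the right structural idea---decompose relative to $\theta$, integrate eigenmodes with $\lambda_i<\theta$ forward and those with $\lambda_i>\theta$ backward---but the implementation through zero-data truncated initial value problems fails. With the sign convention consistent with the rest of the paper (where $L\Ric=\Ric$, so an eigenvalue-$1$ mode grows like $e^{\tau}$, i.e.\ $LY_i=\lambda_i Y_i$ and the scalar ODE is $c_i'=\lambda_i c_i+g_i$), the truncated solution with $c_i^{(j)}(-j)=0$ does \emph{not} converge as $j\to\infty$. For any eigenvalue $\lambda_i>\mu$ (for instance $\lambda=1$ for $\Ric$ when $\mu<1$), the exact bounded ancient coefficient satisfies $c_i^{\infty}(-j)=O(\mathcal{G}\,e^{-\mu j})$, so the homogeneous correction introduced by zeroing the data at $-j$ evolves forward as
\[
O(\mathcal{G}\,e^{-\mu j})\,e^{\lambda_i(\tau+j)}=O\big(\mathcal{G}\,e^{(\lambda_i-\mu)j}\,e^{\lambda_i\tau}\big),
\]
which blows up with $j$ for every fixed $\tau$. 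Your estimate that the product ``stays bounded by $C\mathcal{G}e^{\theta\tau}$ uniformly in $j$'' tacitly uses the convention $LY_i=-\lambda_iY_i$ stated in the appendix (a typo, incompatible with $L\Ric=\Ric$ being unstable), under which the ``unstable'' modes decay forward and truncation is harmless; once the sign is corrected the truncation genuinely diverges. The proposed formula $\tilde c_i(\tau)=-\int_{-\infty}^{\tau}e^{-\lambda_i(\tau-s)}g_i(s)\,ds$ is also not a solution of $c_i'=\lambda_i c_i+g_i$ (it solves $c_i'=-\lambda_i c_i-g_i$).

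There is a second, independent gap in the stable-mode summation. A pointwise bound $|c_i(\tau)|\le C\mathcal{G}(\mu-\lambda_i)^{-1}e^{\mu\tau}$ summed over $i$ needs $\sum_i(\mu-\lambda_i)^{-2}<\infty$; by Weyl asymptotics this fails for symmetric $2$-tensors on manifolds of dimension $\ge4$, and even when it holds the constant involves the full spectrum, not only $\mu-\theta$ and $\min_i|\theta-\lambda_i|$ as the statement requires. The paper avoids both problems by simply writing down the ancient solution mode-by-mode---$\langle X_\tau,Y_i\rangle=-\int_{\tau}^{0}e^{-\lambda_i(s-\tau)}g_i(s)\,ds$ when $\lambda_i>\theta$ and $\langle X_\tau,Y_i\rangle=\int_{-\infty}^{\tau}e^{-\lambda_i(s-\tau)}g_i(s)\,ds$ when $\lambda_i<\theta$---and summing via Cauchy--Schwarz, so the mode sum collapses to $\int\|G_s\|^2_{L^2_f}e^{-2\theta s}\,ds$ and the spectral constant enters only as the supremum of a single kernel integral, giving exactly the dependence $C_\theta=C(\mu-\theta,\min_i|\theta-\lambda_i|)$. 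No truncation, compactness argument, or Weyl-law input is needed. To salvage the truncation scheme you would have to impose $\PP_{>\theta}X_{\tau}\big|_{\tau=0}=0$ rather than zero data at $-j$ on the unstable modes, which is in effect the direct construction.
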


\begin{proof}
Let $J\in \{0,\cdots,I\}$ be the number satisfying  $ \theta \in (\lambda_{J+1},  \lambda_J)$ considering $\lambda_0=+\infty$. Then, we define $X_\tau$ by
\begin{equation}
\begin{cases}
  \langle  X_\tau,Y_i\rangle_{L^2_f}  = -\int_{\tau}^{0}  e^{-\lambda_i(s- \tau)} \langle  G_s,Y_i\rangle_{L^2_f}  ds\qquad &\textnormal{for}\quad  i =1,\cdots, J,\\
   \langle  X_\tau,Y_i\rangle_{L^2_f}  = \int_{-\infty}^\tau e^{-\lambda_i(s- \tau)} \langle  G_s,Y_i\rangle_{L^2_f} ds\qquad &\textnormal{for}\quad  i\geq J+1.
\end{cases}
\end{equation}
Since $0\leq \lambda_{J+1}< \theta <\min\{\mu,\lambda_{J}\}$, we have
 \begin{equation*}
    \sum_{i=1}^{J}\langle  X_\tau,Y_i\rangle_{L^2_f}^2 \leq \left(\int^{0}_\tau  e^{-2\lambda_J(s-\tau)}e^{2\theta  s} ds\right) \left(\int^{0}_\tau \|G_s\|_{L^2_f}^2e^{-2\theta  s} ds\right)\leq C \mathcal{G}^2e^{2\theta  \tau},
 \end{equation*}
 where here and below $C>0$ denotes a constant depending on $\mu-\theta ,\lambda_J-\theta,\theta-\lambda_{J+1}$. Similarly, remembering $\theta-\lambda_i > 0$ for $i\geq J+1$, we obtain
 \begin{equation*}
     \sum_{i\ge J+1} \langle  X_\tau,Y_i\rangle_{L^2_f}^2 \leq \left(\int^{\tau}_{-\infty} e^{-2\lambda_{J+1}(s-\tau)} e^{2\theta  s} ds\right) \left( \int^{\tau}_{-\infty} \|G_s\|_{L^2_f}^2e^{-2\theta  s} ds\right) \leq C\mathcal{G}^2 e^{2\mu   \tau}.
 \end{equation*}
Thus, we have $\|X_\tau\| _{L^2_f}\leq C\mathcal{G}e^{\theta  \tau}$, and the interior estimates yield $\|X_\tau\|_{C^{2,\alpha}}\leq C\mathcal{G} e^{\theta \tau}$.
\end{proof}

The next lemma shows the existence of such an immortal solution.

\begin{lem}\label{bl: inhomogeneous solution existence with asymptotic II}
 We consider the equation $(\partial_\tau-L)X_\tau=G_\tau$ with 
\begin{equation*}
\mathcal{G}:=\sup_{\tau\geq 0}e^{\mu \tau}\|G_\tau\|_{C^\alpha}<\infty.
\end{equation*}
for some $\mu >0$. Then, there is an immortal solution $X_\tau$ for $\tau \in [0,\infty)$ such that 
\begin{equation*}
\|X_\tau\|_{C^{2,\alpha}}\leq C_\theta \mathcal{G}e^{-\theta\tau},
\end{equation*}
holds for any $\theta \in (0,\mu)\setminus {\Spec}(L)$, where $C_\theta$ depends only on $\mu-\theta$, and $\min\{|\theta-\lambda_i|\}$.
\end{lem}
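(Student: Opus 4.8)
The statement is the forward-time (immortal) analogue of Lemma \ref{bl: inhomogeneous solution existence with asymptotic}, so the plan is to mimic that proof with the roles of $\pm\infty$ reversed. Write $J\in\{0,\dots,I\}$ for the index with $\theta\in(\lambda_{J+1},\lambda_J)$ (set $\lambda_0=+\infty$), so that $\lambda_1,\dots,\lambda_J$ are the eigenvalues exceeding $\theta$ and $\lambda_{J+1},\lambda_{J+2},\dots$ are those below $\theta$ (including the zero eigenvalues and all negative ones). The idea is that for the ``fast'' modes (eigenvalue $>\theta$, equivalently the homogeneous solutions $e^{-\lambda_i\tau}$ decaying faster than $e^{-\theta\tau}$) we integrate the Duhamel formula forward from $+\infty$, picking up the unique bounded-at-$+\infty$ particular solution; for the ``slow'' modes (eigenvalue $\le\theta$) we integrate forward from $0$, starting at $0$.

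\textbf{Key steps.} First, decompose $G_\tau$ spectrally, $G_\tau=\sum_i g_i(\tau)Y_i$ with $g_i(\tau)=\langle G_\tau,Y_i\rangle_{L^2_f}$, and define
\[
\langle X_\tau,Y_i\rangle_{L^2_f}=
\begin{cases}
\displaystyle -\int_\tau^\infty e^{-\lambda_i(s-\tau)}\,g_i(s)\,ds,& i=1,\dots,J,\\[2ex]
\displaystyle \int_0^\tau e^{-\lambda_i(s-\tau)}\,g_i(s)\,ds,& i\ge J+1.
\end{cases}
\]
Second, check convergence and decay. For $i\le J$ one has $\lambda_i\ge\lambda_J>\theta$, and using $|g_i(s)|\le \mathcal G e^{-\mu s}\le \mathcal G e^{-\theta s}$ together with Cauchy--Schwarz,
\[
\sum_{i=1}^J \langle X_\tau,Y_i\rangle_{L^2_f}^2
\le\Big(\int_\tau^\infty e^{-2\lambda_J(s-\tau)}e^{2\theta s}\,ds\Big)\Big(\int_\tau^\infty\|G_s\|_{L^2_f}^2 e^{-2\theta s}\,ds\Big)\le C\,\mathcal G^2 e^{-2\theta\tau},
\]
the first integral being finite because $\lambda_J>\theta$, with $C$ depending only on $\lambda_J-\theta$ and $\mu-\theta$. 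For $i\ge J+1$ one has $\lambda_i\le\lambda_{J+1}<\theta<\mu$, so
\[
\sum_{i\ge J+1}\langle X_\tau,Y_i\rangle_{L^2_f}^2
\le\Big(\int_0^\tau e^{-2\lambda_{J+1}(s-\tau)}e^{2\theta s}\,ds\Big)\Big(\int_0^\tau\|G_s\|_{L^2_f}^2 e^{-2\theta s}\,ds\Big)\le C\,\mathcal G^2 e^{-2\theta\tau},
\]
since $e^{2\theta\tau}\int_0^\tau e^{-2(\theta-\lambda_{J+1})(\tau-s)}\,ds\le C e^{2\theta\tau}$; here I use $\mu>\theta$ to bound the second factor, and $C$ depends on $\theta-\lambda_{J+1}$. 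Hence $\|X_\tau\|_{L^2_f}\le C_\theta\mathcal G e^{-\theta\tau}$. Third, verify $X_\tau$ solves $(\partial_\tau-L)X_\tau=G_\tau$: this is immediate mode-by-mode by differentiating the two formulas (the boundary terms from Leibniz produce exactly $g_i(\tau)Y_i$). Fourth, upgrade the $L^2_f$ bound to $C^{2,\alpha}$ via interior parabolic Schauder estimates applied on unit time-intervals $[\tau-1,\tau]$, using $\|G_\tau\|_{C^\alpha}\le\mathcal G e^{-\mu\tau}$ and the already-established $L^2_f$ decay, giving $\|X_\tau\|_{C^{2,\alpha}}\le C_\theta\mathcal G e^{-\theta\tau}$.

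\textbf{Main obstacle.} There is no genuine obstacle; the only points needing care are (a) the convergence of the improper integral from $+\infty$ for the fast modes, which is exactly where the hypothesis $\lambda_i>\theta$ for $i\le J$ is used, and correspondingly the finiteness of $C_\theta$ degenerates as $\theta\to\lambda_J^-$ or $\theta\to\lambda_{J+1}^+$ — this is why the constant is allowed to depend on $\min_i|\theta-\lambda_i|$; and (b) making sure the slow-mode integral started at $0$ does not spoil the decay, which works because $\theta-\lambda_{J+1}>0$ makes the Duhamel convolution contract. One should also note, as in the ancient case, that the solution is not unique — one may add any solution of the homogeneous equation spanned by the slow modes — but the statement only asserts existence of \emph{one} solution with the stated decay, which the construction above provides.
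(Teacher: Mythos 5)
There is a genuine gap: the slow-mode estimate fails, and this reflects an error in the choice of which modes to integrate from $0$ versus from $+\infty$. Your Cauchy--Schwarz bound for $i\ge J+1$ actually yields exponential growth, not decay. Indeed the first factor equals
\[
\int_0^\tau e^{-2\lambda_{J+1}(s-\tau)}e^{2\theta s}\,ds
= e^{2\lambda_{J+1}\tau}\int_0^\tau e^{2(\theta-\lambda_{J+1})s}\,ds
\le \frac{e^{2\theta\tau}}{2(\theta-\lambda_{J+1})},
\]
exactly the $Ce^{2\theta\tau}$ you record in your intermediate step, while the second factor $\int_0^\tau\|G_s\|_{L^2_f}^2e^{-2\theta s}\,ds$ is merely bounded, so their product is $\le C\mathcal G^2 e^{+2\theta\tau}$. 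Writing this as $\le C\mathcal G^2 e^{-2\theta\tau}$ is a sign error, and it cannot be repaired by a better choice of Cauchy--Schwarz weight.

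The underlying issue is that the split index $J$, transcribed verbatim from the ancient-case Lemma, is the wrong one for the immortal case. The Duhamel kernel $e^{-\lambda_i(s-\tau)}=e^{\lambda_i(\tau-s)}$ that you (and the paper) write down corresponds to the mode ODE $\dot x_i=\lambda_i x_i+g_i$, whose homogeneous solution is $e^{\lambda_i\tau}$ --- not $e^{-\lambda_i\tau}$ as your remark on the ``fast'' modes asserts; that sign convention is inconsistent with the kernel. As $\tau\to+\infty$ the homogeneous solution $e^{\lambda_i\tau}$ decays faster than $e^{-\theta\tau}$ if and only if $\lambda_i<-\theta$, so the modes that \emph{must} be integrated from $+\infty$ are those with $\lambda_i>-\theta$, not those with $\lambda_i>\theta$. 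In the ancient case the cutoff $\theta$ is correct because $e^{\lambda_i\tau}$ decays as $\tau\to-\infty$ precisely when $\lambda_i>\theta$; the immortal case is not its mirror image under $\tau\mapsto-\tau$ unless you also reflect the spectral cutoff. Concretely, any $\lambda_i\in(-\theta,\theta)$ causes trouble (this range is nonempty here: $\lambda_{J+1}\in[0,\theta)$ since $J\le I$ forces $\lambda_{J+1}\ge0$). Taking $g_i(s)=e^{-\mu s}$ gives
\[
x_i(\tau)=\int_0^\tau e^{\lambda_i(\tau-s)}e^{-\mu s}\,ds=\frac{e^{\lambda_i\tau}-e^{-\mu\tau}}{\lambda_i+\mu},
\]
which grows when $\lambda_i>0$, converges to the nonzero constant $1/(\lambda_i+\mu)$ when $\lambda_i=0$, and decays strictly slower than $e^{-\theta\tau}$ when $\lambda_i\in(-\theta,0)$; none of these obeys $|x_i(\tau)|\le C_\theta\mathcal Ge^{-\theta\tau}$.

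The fix is to move the cutoff to $-\theta$: integrate from $+\infty$ for every $\lambda_i>-\theta$ (these integrals converge since $-\theta>-\mu$), and from $0$ only for $\lambda_i<-\theta$; correspondingly the constant $C_\theta$ should depend on the spectral gap at $-\theta$, i.e.\ $\min_i|\theta+\lambda_i|$, rather than at $\theta$. For what it is worth, the paper's own proof of this lemma is a one-line sketch reusing the ancient-case formulas with the same $J$ and so appears to share the defect; but that does not rescue the argument as you have written it.
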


\begin{proof}
As the proof of the previous lemma \eqref{bl: inhomogeneous solution existence with asymptotic}, we define
\begin{equation*}
\begin{cases}
    \langle  X_\tau,Y_i\rangle_{L^2_f}  =   -\int_{\tau}^{\infty} e^{-\lambda_i(s- \tau)}\langle  G_s,Y_i\rangle_{L^2_f}  ds\qquad &\textnormal{for}\quad  i \leq  J,\\
    \langle  X_\tau,Y_i\rangle_{L^2_f}  = \int_{0}^{\tau}  e^{-\lambda_i(s- \tau)} \langle  G_s,Y_i\rangle_{L^2_f}  ds\qquad &\textnormal{for}\quad  i\geq J+1.
\end{cases}
\end{equation*}
Then, we can directly compute to get the desired result as in the proof of lemma \eqref{bl: inhomogeneous solution existence with asymptotic}.
\end{proof}

The next lemma gives the existence of an ancient or immortal solution to a more general PDE with quadratic terms and an inhomogenous term.

\begin{lem}\label{lem:existence_modified_HMHF}
We consider the equation for $\tau \in I_0$, 
    \begin{equation*}
    (\partial_\tau- L)X_\tau=E_\tau(X_\tau)+Q_\tau(X_\tau)+G_\tau,
\end{equation*}
where $I_a=(-\infty,a]$  $($resp. $[a,\infty)$$)$ for $a\in \mathbb{R}$, $E_\tau(X+Y)=E_\tau(X)+E_\tau(Y)$ and
\begin{align*}
&\|E_\tau(X)\|_{C^\alpha}\le  C_0 e^{\mu \tau}\|X\|_{C^{2,\alpha}}, \qquad
 \|G_\tau\|_{C^\alpha}\leq C_0 e^{\mu\tau},\\
    &\|Q_\tau(X)-Q_\tau(Y)\|_{C^\alpha}\le C_0(\|X\|_{C^{2,\alpha}}+\|Y\|_{C^{2,\alpha}})\|X-Y\|_{C^{2,\alpha}},
\end{align*}
for some $C_0\geq 1$ and $\mu>0$  $($resp. $\mu<0$$)$. Then, given $  \theta \in (0,\mu)$ $($resp. $\theta \in (\mu,0)$$)$,  there is $T \leq 0$ $($resp. $T\geq 0$$)$ such that there is a solution $X_\tau$ defined for $\tau \in I_T$ satisfying 
\begin{equation*}
\|X_\tau\|_{C^{2,\alpha}}\le C e^{\theta\tau}
\end{equation*}
 for $\tau \in I_T$, where $C$ depends only on $\mu,\theta,\bar g$.
\end{lem}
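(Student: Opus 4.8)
The plan is to solve the equation by a contraction mapping argument in a weighted Hölder space, inverting $\partial_\tau-L$ by means of the linear inhomogeneous solvers of Lemma \ref{bl: inhomogeneous solution existence with asymptotic} (ancient case) and Lemma \ref{bl: inhomogeneous solution existence with asymptotic II} (immortal case). I will describe the ancient case $I_0=(-\infty,0]$ with $\mu>0$ in detail; the immortal case is identical after reversing the time direction.

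First I would fix a hierarchy of exponents. Since $\Spec(L)$ is discrete and $\theta<\mu$, choose $\vartheta_1\in(\theta,\mu)\setminus\Spec(L)$, set $\nu:=\min\{\mu,2\vartheta_1\}$ (so $\vartheta_1<\nu$, using $\vartheta_1<\mu$ and $\vartheta_1>0$), and choose $\vartheta_2\in(\vartheta_1,\nu)\setminus\Spec(L)$. For $T\le 0$, work in the Banach space $\mathcal{B}_T$ of continuous $(r,s)$-tensor fields on $I_T$ with norm $\|X\|_{\mathcal{B}_T}=\sup_{\tau\le T}e^{-\vartheta_1\tau}\|X_\tau\|_{C^{2,\alpha}}$, and let $B_T$ be its closed unit ball. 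For $X\in B_T$ one has $\|X_\tau\|_{C^{2,\alpha}}\le e^{\vartheta_1\tau}$, so using the hypotheses on $E_\tau,G_\tau$ and the fact that $Q_\tau(0)=0$ (hence $\|Q_\tau(X_\tau)\|_{C^\alpha}\le C_0\|X_\tau\|_{C^{2,\alpha}}^2$), every term of $F_\tau:=E_\tau(X_\tau)+Q_\tau(X_\tau)+G_\tau$ decays at least like $e^{\nu\tau}$, with $\sup_{\tau\le T}e^{-\nu\tau}\|F_\tau\|_{C^\alpha}\le 3C_0$ uniformly in $T\le 0$ (each of $\|G_\tau\|_{C^\alpha}\le C_0e^{\mu\tau}$, $\|E_\tau(X_\tau)\|_{C^\alpha}\le C_0e^{(\mu+\vartheta_1)\tau}$, $\|Q_\tau(X_\tau)\|_{C^\alpha}\le C_0e^{2\vartheta_1\tau}$ carries an exponent $\ge\nu$). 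Define $\Phi(X)$ to be the solution of $(\partial_\tau-L)\Phi(X)=F_\tau$ produced by the linear solver with decay exponent $\vartheta_2\in(0,\nu)\setminus\Spec(L)$ (the solver is stated on $(-\infty,0]$, but its proof works verbatim on $(-\infty,T]$, or one translates time). Then $\|\Phi(X)_\tau\|_{C^{2,\alpha}}\le 3C_0\,C_{\vartheta_2}\,e^{\vartheta_2\tau}$, so
\[
\|\Phi(X)\|_{\mathcal{B}_T}=\sup_{\tau\le T}e^{-\vartheta_1\tau}\|\Phi(X)_\tau\|_{C^{2,\alpha}}\le 3C_0\,C_{\vartheta_2}\,e^{(\vartheta_2-\vartheta_1)T}\longrightarrow 0\quad(T\to-\infty),
\]
since $\vartheta_2>\vartheta_1$; hence $\Phi(B_T)\subset B_T$ once $T$ is negative enough. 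The contraction estimate is analogous: $\Phi(X)-\Phi(X')$ solves $(\partial_\tau-L)(\Phi(X)-\Phi(X'))=E_\tau(X_\tau-X'_\tau)+Q_\tau(X_\tau)-Q_\tau(X'_\tau)$, whose right-hand side is bounded by $3C_0e^{\nu\tau}\|X-X'\|_{\mathcal{B}_T}$ (using the Lipschitz bound on $Q_\tau$ and $\|X_\tau\|_{C^{2,\alpha}}+\|X'_\tau\|_{C^{2,\alpha}}\le 2e^{\vartheta_1\tau}$), so applying the solver again yields $\|\Phi(X)-\Phi(X')\|_{\mathcal{B}_T}\le 3C_0\,C_{\vartheta_2}\,e^{(\vartheta_2-\vartheta_1)T}\|X-X'\|_{\mathcal{B}_T}$, a $\tfrac12$‑contraction for $T$ sufficiently negative. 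The Banach fixed point theorem then gives $X\in B_T$ with $\Phi(X)=X$, i.e.\ a solution of the equation on $I_T$, and $\|X_\tau\|_{C^{2,\alpha}}\le e^{\vartheta_1\tau}\le e^{\theta\tau}$ for $\tau\le T\le 0$ (as $\vartheta_1\ge\theta$); the constant depends only on $\theta,\mu,C_0$ and the spectral data of $L$, i.e.\ on $\bar g$.

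For the immortal case $I_0=[0,\infty)$ with $\mu<0$ and $\theta\in(\mu,0)$, the argument is the same with time reversed: work on $[T,\infty)$ with the weight $e^{-\vartheta_1\tau}$ for the reversed hierarchy $\nu<\vartheta_2<\vartheta_1<\theta$, where $\vartheta_1\in(\mu,\theta)$ (with $-\vartheta_1\notin\Spec(L)$ in the sense of Lemma \ref{bl: inhomogeneous solution existence with asymptotic II}), $\nu:=\max\{\mu,2\vartheta_1\}$, and $\vartheta_2\in(\nu,\vartheta_1)$ with $-\vartheta_2\notin\Spec(L)$; one invokes Lemma \ref{bl: inhomogeneous solution existence with asymptotic II} in place of Lemma \ref{bl: inhomogeneous solution existence with asymptotic}, and the smallness $e^{(\vartheta_2-\vartheta_1)T}\to 0$ is now gained by letting $T\to+\infty$.

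I do not expect a deep difficulty here — the scheme is standard — but the point requiring care is the organization of the exponents $\theta<\vartheta_1<\vartheta_2<\nu$ so that simultaneously: (i) the linear solver is applied off‑resonance (all exponents avoid $\Spec(L)$); (ii) every term of the right-hand side decays strictly faster than the weight $e^{\vartheta_1\tau}$ of the function space — this is what lets one extract the contraction/self-map smallness from $T\to\mp\infty$ rather than from a small coefficient; and (iii) the output exponent $\vartheta_2$ is still $>\theta$ (resp.\ $<\theta$), so the final bound reads $e^{\theta\tau}$. The quadratic structure $Q_\tau(0)=0$ and the exact linearity of $E_\tau$ are exactly what is needed to keep the right-hand side inside the weighted class, and the uniformity of the solver constants $C_{\vartheta_2}$ over the admissible exponent range keeps all the estimates under control.
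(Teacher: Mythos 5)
Your proof is correct and is essentially the paper's own argument, packaged as an application of the Banach fixed point theorem instead of an explicit Picard iteration: both invert $\partial_\tau-L$ via the linear solvers of Lemma \ref{bl: inhomogeneous solution existence with asymptotic} and Lemma \ref{bl: inhomogeneous solution existence with asymptotic II}, and both obtain the needed smallness from the gap between the decay rate of the right-hand side and the ambient weight, so that letting $T\to\mp\infty$ makes the solution operator a self-map and a contraction. The paper simply unrolls the iterates $X^0,X^1,X^2,\dots$ and telescopes $Q_\tau(\bar X^{k-1})-Q_\tau(\bar X^{k-2})$, which is exactly what the proof of the fixed point theorem does internally, so the two presentations are interchangeable.
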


\begin{proof}
We provide a proof only for $\tau \leq 0$, since the proof for the other case $\tau \geq 0$ is identical by replacing Lemma \ref{bl: inhomogeneous solution existence with asymptotic} with Lemma \ref{bl: inhomogeneous solution existence with asymptotic II}.
By Lemma \ref{bl: inhomogeneous solution existence with asymptotic}, there is  $X_{\tau}^0$ solving $(\partial_{\tau}-L)X_{\tau}^0=G_{\tau}$ for $\tau\in (-\infty,0]$ 
\begin{equation}\label{eq:X0_esti}
    \|X_{\tau}^0\|_{C^{2,\alpha}}\leq  C_0C_1 e^{\theta'  \tau},
\end{equation}
where $\theta'\in [\theta,\mu)\setminus \text{Spec}(L)$ and $C_1=(\mu,\theta,\theta',\bar g)\geq 1$ . Then by the assumptions and \eqref{eq:X0_esti}  we have
\begin{equation*}
    \|E_\tau(X_{\tau}^0)\|_{C^\alpha}\leq  C_0^2C_1  e^{(\mu+\theta')  \tau}, \qquad \|Q_\tau(X^0_{\tau})\|_{C^\alpha}\leq   C_0^3C_1^2  e^{2\theta'  \tau}.
\end{equation*} 
Let $\theta''=\min\{\mu,\theta'\}$, and we consider $T<0$ satisfying
\begin{equation*}
 16C_0^2C_1^2   e^{\theta'' T} \leq 1.
\end{equation*}
Then, we apply Lemma \ref{bl: inhomogeneous solution existence with asymptotic}, with replacing $(-\infty,0]$ and $\mu$ by $(-\infty,T]$ and $\theta'+\theta''$ respectively, so that we have a solution $X^1_{\tau}$ to
\begin{equation}\label{eq:X^1_evol}
(\partial_\tau-L)X_\tau^1=E_\tau(X_{\tau}^0)+Q_\tau(X^0_\tau)
\end{equation}
for $\tau\in (-\infty,T]$ such that for $\tau\leq T$
\begin{equation*}
    \|X^1_\tau\|_{C^{2,\alpha}} \leq C_1 \big[\sup_{s\leq 0} 2C_0^3C_1^2 e^{-\theta' s }e^{(\theta'+\theta'')(s+T)}\big] e^{\theta'(\tau-T)}=2^{-1}C_0C_1e^{\theta' \tau}.
\end{equation*}
This implies $\|X^1_\tau+X^0_\tau\|_{C^{2,\alpha}}+\|X^0_\tau\|_{C^{2,\alpha}}\leq 4C_0C_1 e^{\theta'\tau}$. Then, by the assumptions we have
\begin{align*}
    \|E_\tau(X_{\tau}^1)\|_{C^\alpha} \leq 2^{-1}C_0^2C_1 e^{(\mu+\theta')  \tau}, \qquad \|Q_\tau(X^1_\tau+X^0_\tau)-Q_\tau(X^0_\tau)\|_{C^\alpha}\leq  2C_0^3C_1^2 e^{2\theta'  \tau}.
\end{align*}
Then, again we consider a solution $X^2_\tau$ to
\begin{equation*}
(\partial_\tau-L)X^2_\tau=E_\tau(X_{\tau}^1)+Q_\tau(X^1_\tau+X^0_\tau)-Q_\tau(X^0_\tau),
\end{equation*} 
which satisfies $\|X^2_\tau\|_{C^{2,\alpha}}\leq  2^{-2}C_0C_1 e^{\theta'\tau}$ for $\tau \leq T$. Then, we have
\begin{align*}
    \|E_\tau(X_{\tau}^2)\|_{C^\alpha} \leq 2^{-2}C_0^2C_1 e^{(\mu+\theta')  \tau}, \qquad \|Q_\tau(\bar X^2_\tau)-Q(\bar X^1_\tau)\|_{C^\alpha}\leq   C_0^3C_1^2 e^{2\theta'  \tau},
\end{align*}
where $\bar X^k_\tau:=X^0_\tau+\cdots X^k_\tau$, so that we get $\|X^3_\tau\|_{C^{2,\alpha}}\leq 2^{-3}C_0C_1e^{\theta'\tau}$ for $\tau \leq T$. By iterating this process, we obtain a sequence $\{X^k_\tau\}_{k=1}^\infty$ solving 
\begin{equation}\label{eq:X^k_evol}
(\partial_\tau-L)X^k_\tau=E_\tau(X^{k-1}_\tau)+Q_\tau(\bar X^{k-1}_\tau)-Q_\tau(\bar X^{k-2}_\tau),
\end{equation}
and $\|X^{k}_\tau\|_{C^{2,\alpha}} \leq 2^{-k}C_0C_1e^{\theta' \tau}$ for $\tau \leq T$. Then, $\bar X^k_\tau$ converges to $X_\tau$ in the $C^{2,\alpha}$ sense, which is a solution to $(\partial_\tau-L)X_\tau=E_\tau(X_\tau)+Q_\tau(X_\tau)+G_\tau$ satisfying the desired assertions on $I_T=(-\infty,T]$.
\end{proof}

The next lemma says that if the error terms decay arbitrarily fast, there is a solution that has the same convergence rate.
In this paper, we apply this lemma to the HMHF Perturbation equation \eqref{eq:HMHF Perturbation} between a fast decay Ricci flow and a shrinker, and it produces a fast convergent HMHF.

\begin{lem}\label{l: existence to time-dependent, inhomogeneous equation-2}
We consider the equation on $[0,\infty)$, 
    \begin{equation*}
    (\partial_\tau- L)X_\tau=E_\tau(X_\tau)+Q_\tau(X_\tau)+G_\tau.
\end{equation*}
Suppose $E_\tau(X+Y)=E_\tau(X)+E_\tau(Y)$, and for any $\lambda>0$ there exist $C_0(\lambda),T(\lambda)>0$ such that 
\begin{align*}
&\|E_\tau(X)\|_{C^\alpha}\le  C_0 e^{-\lambda \tau}\|X\|_{C^{2,\alpha}}, \qquad
 \|G_\tau\|_{C^\alpha}\leq C_0 e^{-\lambda\tau},\\
    &\|Q_\tau(X)-Q_\tau(Y)\|_{C^\alpha}\le C_0(\|X\|_{C^{2,\alpha}}+\|Y\|_{C^{2,\alpha}})\|X-Y\|_{C^{2,\alpha}},
\end{align*}
for all $\tau\ge T(\lambda)$. Then there exist $T_0>0$ and a solution $X_\tau$ on $[T_0,\infty)$ satisfying 
\begin{equation*}
     \|X_\tau\|_{C^{2,\alpha}}= O(e^{-\lambda \tau}).
 \end{equation*}
as $\tau \to \infty$ for every $\lambda>0$.
\end{lem}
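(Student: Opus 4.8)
The plan is to first extract one solution with a fixed exponential decay from Lemma~\ref{lem:existence_modified_HMHF}, and then bootstrap its decay rate up to any prescribed value. Applying the hypotheses with $\lambda=1$ produces $C_0(1),T(1)$ so that on $[T(1),\infty)$ all the bounds of Lemma~\ref{lem:existence_modified_HMHF} hold with $\mu=-1$; after a time shift, that lemma (immortal case) gives $T_0\ge T(1)$ and a solution $X_\tau$ on $[T_0,\infty)$ with $\|X_\tau\|_{C^{2,\alpha}}\le Ce^{-\nu_0\tau}$ for some $\nu_0\in(0,1)\setminus\Spec(L)$.

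\textbf{Improvement step.} The heart of the argument is to show that if $\|X_\tau\|_{C^{2,\alpha}}\le C_je^{-\nu_j\tau}$ on some $[T_j,\infty)$ with $\nu_j\notin\Spec(L)$, then $\|X_\tau\|_{C^{2,\alpha}}\le C_{j+1}e^{-\nu_{j+1}\tau}$ on a slightly shorter interval with $\nu_{j+1}>\nu_j$. I would regard $X_\tau$ as a solution of the linear equation $(\partial_\tau-L)X_\tau=F_\tau$ with $F_\tau:=E_\tau(X_\tau)+Q_\tau(X_\tau)+G_\tau$. Since $Q_\tau(0)=0$, the quadratic bound gives $\|Q_\tau(X_\tau)\|_{C^\alpha}\le C_0\|X_\tau\|_{C^{2,\alpha}}^2\le C_0C_j^2e^{-2\nu_j\tau}$, and taking $\lambda=2\nu_j$ in the hypotheses makes $\|E_\tau(X_\tau)\|_{C^\alpha}+\|G_\tau\|_{C^\alpha}=O(e^{-2\nu_j\tau})$; hence $\|F_\tau\|_{C^\alpha}\le C_j'e^{-2\nu_j\tau}$ for $\tau$ large. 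Writing $x_i(\tau)=\langle X_\tau,Y_i\rangle_{L^2_f}$, which solves $x_i'=\lambda_i x_i+\langle F_\tau,Y_i\rangle$: for $\lambda_i\ge0$ the condition $x_i(\tau)\to0$ forces $x_i(\tau)=-\int_\tau^\infty e^{\lambda_i(\tau-s)}\langle F_s,Y_i\rangle\,ds=O(e^{-2\nu_j\tau})$; for stable $\lambda_i$ with $|\lambda_i|<\nu_j$ the a~priori bound forces $e^{|\lambda_i|\tau}x_i(\tau)\to0$, so $x_i$ equals the now convergent backward-from-infinity integral and is again $O(e^{-2\nu_j\tau})$; and for stable $\lambda_i$ with $|\lambda_i|\ge\nu_j$ variation of constants gives $x_i(\tau)=O(e^{-\min(|\lambda_i|,2\nu_j)\tau})$. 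Summing over $i$ using the spectral decay of $\langle F_\tau,\cdot\rangle$ together with interior Schauder estimates, exactly as in Lemmas~\ref{bl: inhomogeneous solution existence with asymptotic} and~\ref{bl: inhomogeneous solution existence with asymptotic II}, yields the $C^{2,\alpha}$ bound with
\[
\nu_{j+1}=\min\bigl\{\,2\nu_j,\ \inf\{|\lambda_i|:\lambda_i\in\Spec(L),\ \lambda_i<0,\ |\lambda_i|>\nu_j\}\,\bigr\}>\nu_j .
\]
Iterating, and choosing each $\nu_j\notin\Spec(L)$ (possible by discreteness of the spectrum), at every step $\nu_j$ either doubles or jumps past the next negative eigenvalue of $L$, and since there are only finitely many of these below any bound we get $\nu_j\to\infty$; hence $\|X_\tau\|_{C^{2,\alpha}}=O(e^{-\lambda\tau})$ for every $\lambda>0$, which is the assertion.

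\textbf{The main obstacle} is the passage through the negative eigenvalues of $L$: for a stable mode $Y_i$ whose eigenvalue magnitude exceeds the current rate $\nu_j$, the mode ODE does not by itself force the fast-decaying (backward-from-infinity) form, and a generic decaying solution could be pinned to rate $|\lambda_i|$ on that mode. One therefore has to ensure, already when constructing $X_\tau$ — either by running the fixed-point scheme behind Lemma~\ref{lem:existence_modified_HMHF} with $\mu$ chosen increasingly negative, or by imposing $\lim_{\tau\to\infty}e^{|\lambda_i|\tau}x_i(\tau)=0$ inside the fixed-point space — that these slow stable components are the backward-from-infinity ones; this is legitimate precisely because once $X_\tau$ decays at rate $\nu_j$ the inhomogeneity $F_\tau$ decays at rate $2\nu_j>|\lambda_i|$ for the relevant $i$, making the integrals converge. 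Granting this, the bootstrap above closes. The remaining ingredients — the mode-by-mode ODE integration and the passage from $L^2_f$-estimates to $C^{2,\alpha}$-estimates — are routine and parallel the earlier lemmas of this appendix.
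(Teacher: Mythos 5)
You correctly identify the crux of the matter: after an a priori decay rate $\nu_j$ is established, the bootstrap can only improve the rate up to, but not past, the next negative eigenvalue $\lambda_i$ of $L$ with $|\lambda_i|>\nu_j$, because on that mode the forward variation-of-constants solution carries a genuine $e^{\lambda_i\tau}$-contribution that is immune to the faster decay of $F_\tau$. With $\nu_{j+1}=\min\{2\nu_j,\inf\{|\lambda_i|:\lambda_i<0,\,|\lambda_i|>\nu_j\}\}$ the sequence $(\nu_j)$ does not pass the first such threshold, so the bootstrap stalls there; your own last paragraph acknowledges this. The two proposed repairs are, however, not closed. Re-running the scheme of Lemma~\ref{lem:existence_modified_HMHF} with $\mu$ increasingly negative produces, in general, a \emph{different} solution for each $\mu$ (on a different half-line $[T(\mu),\infty)$), since the decaying stable manifold is infinite-dimensional and the half-line initial-value problem has no uniqueness for $X$; nothing in the bootstrap lets you pass the superexponential decay of one of these solutions to the one you started with. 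Imposing $\lim_{\tau\to\infty}e^{|\lambda_i|\tau}x_i(\tau)=0$ inside the fixed-point space is also only stated, not verified: one must check that the iteration preserves that subspace and that the resulting map is still a contraction in an appropriate weighted norm, which is essentially the whole content of the lemma.

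The paper takes a different and cleaner route, which avoids the bootstrap entirely. It constructs a single solution $X_\tau=\sum_{k\ge0}X^k_\tau$ by the iteration of Lemma~\ref{lem:existence_modified_HMHF} with one fixed rate ($\mu=-2$, $\theta'\in(-2,-1]$) to secure convergence of the series, and then observes that each iterate $X^k_\tau$ is given mode-by-mode by the explicit integral formulas of Lemma~\ref{bl: inhomogeneous solution existence with asymptotic II}. Because $G_\tau$ and the coefficient of $E_\tau$ decay faster than any exponential, these integrals are convergent with all modes taken in the backward-from-infinity form, so the \emph{same} formulas can be re-estimated at an arbitrarily fast rate $\lambda'$ on $\tau\ge T_{\lambda'}$. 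In other words, the slow stable modes never appear as forward solutions in the first place, precisely because the explicit construction avoids them; this is what your hand-wavy fix was groping for. If you want to repair your bootstrap, the concrete missing step is to build the solution so that every stable coordinate $x_i$ with $|\lambda_i|$ larger than your initial rate is the backward-from-infinity particular solution from the outset, and then prove that this normalization is preserved under the contraction; at that point the bootstrap is no longer needed, which is why the paper simply re-estimates.
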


\begin{proof} 
As in the proof of Lemma \ref{lem:existence_modified_HMHF} with $\mu=-2$ and $\theta'\in (-2,-1]\setminus \text{Spec}(L)$, there are $T_0 >0$ and a sequence $X^i_\tau$ such that 
\begin{equation*}
\|X^i_\tau\|_{C^{2,\alpha}}\leq 2^{-i}C_0C_1e^{\theta'\tau}
\end{equation*}
for $\tau \geq T_0$ where $C_1=(\theta,\bar g)$, and moreover $(\partial_\tau-L)X^0_\tau =G_\tau$, \eqref{eq:X^1_evol}, and \eqref{eq:X^k_evol} hold. Then, the solution $X_\tau=\sum_{k=0}^\infty \bar X^k_\tau$ exists for $\tau \geq T_0$. 

Here, we notice that each $X^k_\tau=\sum_i \langle X^k_\tau,Y_i\rangle_{L^2_f}Y_i$ is explicitly constructed in the proof of Lemma \ref{bl: inhomogeneous solution existence with asymptotic II}. In particular, the constructed solution $X^k_\tau$ are independent of the decay rates $\lambda$. Remembering this construction, given $\lambda >2$ we choose $\lambda' \in [\lambda-1,\lambda)\setminus \text{Spec}(L)$. Then, there is $T_{\lambda'}$ such that
\begin{equation*}
\|X^i_\tau\|_{C^{2,\alpha}}\leq 2^{-i}C_0C' e^{-\lambda'\tau}
\end{equation*}
holds for $\tau \geq T_{\lambda'}$, where $C'$ depends only on $\lambda',\bar g$. Hence, the solution $X_\tau=\lim X^i_\tau$ satisfies $\|X_\tau\|_{C^{2,\alpha}}=O(e^{-\lambda'\tau})$ as $\tau \to +\infty$. Since $\lambda \leq \lambda'-1$ is arbitrarily large, this completes the proof.
\end{proof}

The next lemma studies the equation $(\partial_\tau- L)X_\tau=E_\tau(X_\tau)+Q_\tau(X_\tau)$, such as \eqref{eq:most important eq} and \eqref{eq:HMHF perturbation sec 7}.
For any given solution $X^0_\tau$ to this equation, it finds another solution whose difference from $X^0_\tau$ is dominated by $a e^{-\lambda_i\tau}Y_i$ for some eigenvalue $\lambda_i$ and eigenvector $Y_i$. 
We use this lemma in Section \ref{sec: existence} to construct both ancient and immortal RDTFs and HMHFs with prescribed asymptotics.

\begin{theorem}\label{l: contraction mapping RDTF}
Consider the equation
\begin{equation}\label{eq:both_RDTF_HMHF}
    (\partial_\tau- L)X_\tau=E_\tau(X_\tau)+Q_\tau(X_\tau),
\end{equation}
where 
$E_\tau(X+Y)=E_\tau(X)+E_\tau(Y)$ and for some $C_0>0$ that 
\begin{align*}
\|E_\tau(X)\|_{C^\alpha}&\le  C_0 e^{\mu \tau}\|X\|_{C^{2,\alpha}},\\ 
    \|Q_\tau(X)-Q_\tau(Y)\|_{C^\alpha}&\le C_0(\|X\|_{C^{2,\alpha}}+\|Y\|_{C^{2,\alpha}})\|X-Y\|_{C^{2,\alpha}}.
\end{align*}
Suppose $X^0_\tau$ is a solution to \eqref{eq:both_RDTF_HMHF} on $I=(-\infty,0]$ $($resp. $ [0,\infty)$$)$ satisfying 
\begin{equation*}
 \|X_\tau^0\|_{C^{2,\alpha}}\leq C_1 e^{\delta\tau},
\end{equation*}
for some $C_1>0$ and  $\delta>0$ $($resp. $\delta<0$$)$.
Then, given $a\in \mathbb{R}$ and a unit eigentensor $Y_i$ of $L$ with eigenvalue $\lambda_i>0$ $($resp. $\lambda_i<0$$)$ and $\|Y_i\|_{L^2_f}=1$, there is a solution $X_\tau$ to \eqref{eq:both_RDTF_HMHF} such that  
    \begin{equation}\label{eq:contraction strata on base}
\|X_\tau-X^0_\tau -ae^{\lambda_i \tau} Y_i \|_{C^{2,\alpha}}=  O(e^{(\delta'+\lambda_i) \tau}),
    \end{equation}
holds as $\tau\to -\infty$ $($resp. $\tau\to \infty$$)$ for some $\delta'>0$ $($resp. $\delta'<0$$)$.
\end{theorem}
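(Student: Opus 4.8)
The plan is a stable/unstable manifold construction: build $X_\tau$ as a perturbation of $X^0_\tau$ whose leading order as $\tau\to-\infty$ is prescribed to be the homogeneous mode $a e^{\lambda_i\tau}Y_i$. (I describe the ancient case $I=(-\infty,0]$, $\delta,\lambda_i>0$; the immortal case is identical with Lemma \ref{bl: inhomogeneous solution existence with asymptotic} replaced by Lemma \ref{bl: inhomogeneous solution existence with asymptotic II} and all signs reversed.) Since $LY_i=\lambda_i Y_i$, the tensor $\Psi_\tau:=a e^{\lambda_i\tau}Y_i$ satisfies $(\partial_\tau-L)\Psi_\tau=0$; writing $X_\tau=X^0_\tau+\Psi_\tau+Z_\tau$ and using that $X^0_\tau$ solves \eqref{eq:both_RDTF_HMHF} and that $E_\tau$ is linear, the correction $Z_\tau$ must solve
\[ (\partial_\tau-L)Z_\tau = \mathcal G_\tau + E_\tau(Z_\tau) + \mathcal Q_\tau(Z_\tau), \qquad \mathcal G_\tau := E_\tau(\Psi_\tau)+Q_\tau(X^0_\tau+\Psi_\tau)-Q_\tau(X^0_\tau), \]
where $\mathcal Q_\tau(Z):=Q_\tau(X^0_\tau+\Psi_\tau+Z)-Q_\tau(X^0_\tau+\Psi_\tau)$ vanishes at $Z=0$.

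The heart of the matter is the exponential bookkeeping. With $\nu:=\min\{\mu,\delta,\lambda_i\}>0$, the hypotheses on $E_\tau,Q_\tau$ together with $\|X^0_\tau\|_{C^{2,\alpha}}\le C_1 e^{\delta\tau}$ and $\|\Psi_\tau\|_{C^{2,\alpha}}\le|a|e^{\lambda_i\tau}$ give
\[ \|\mathcal G_\tau\|_{C^\alpha}\le C\big(e^{(\mu+\lambda_i)\tau}+e^{(\delta+\lambda_i)\tau}+e^{2\lambda_i\tau}\big)\le C e^{(\lambda_i+\nu)\tau}, \]
so the forcing decays strictly faster than $e^{\lambda_i\tau}$ — here the \emph{quadratic} structure of $Q_\tau$ and the decay $\|X^0_\tau\|_{C^{2,\alpha}},\|\Psi_\tau\|_{C^{2,\alpha}}\to 0$ are essential. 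Likewise $\|E_\tau(Z)\|_{C^\alpha}\le C_0 e^{\mu\tau}\|Z\|_{C^{2,\alpha}}$, and
\[ \|\mathcal Q_\tau(Z)-\mathcal Q_\tau(Z')\|_{C^\alpha}\le C_0\big(2\|X^0_\tau+\Psi_\tau\|_{C^{2,\alpha}}+\|Z\|_{C^{2,\alpha}}+\|Z'\|_{C^{2,\alpha}}\big)\|Z-Z'\|_{C^{2,\alpha}}, \]
whose ``linear'' coefficient $\|X^0_\tau+\Psi_\tau\|_{C^{2,\alpha}}\le C_1 e^{\delta\tau}+|a|e^{\lambda_i\tau}$ again decays. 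Thus the $Z$--equation has exactly the structure treated in the proof of Lemma \ref{lem:existence_modified_HMHF}: an inhomogeneous term of rate $e^{(\lambda_i+\nu)\tau}$ plus linear and quadratic nonlinearities carrying exponentially decaying prefactors.

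I would then fix $\delta'\in(0,\nu)$ with $\lambda_i+\delta'\notin\Spec(L)$ — possible since $\Spec(L)$ is discrete above $\lambda_i$, so $(\lambda_i,\lambda_i+\delta']$ avoids all eigenvalues for $\delta'$ small — and run the Picard iteration of Lemma \ref{lem:existence_modified_HMHF} holding the target exponent equal to $\lambda_i+\delta'$ throughout: take $Z^0_\tau$ to be the solution of $(\partial_\tau-L)Z^0_\tau=\mathcal G_\tau$ given by Lemma \ref{bl: inhomogeneous solution existence with asymptotic} with target exponent $\lambda_i+\delta'$, and inductively let $Z^k_\tau$ solve the linear equation with right-hand side $E_\tau(Z^{k-1}_\tau)+\mathcal Q_\tau(\bar Z^{k-1}_\tau)-\mathcal Q_\tau(\bar Z^{k-2}_\tau)$, where $\bar Z^k:=Z^0+\cdots+Z^k$. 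At each step the forcing decays strictly faster than $e^{(\lambda_i+\delta')\tau}$ (by a factor $e^{\mu\tau}$, $e^{\delta\tau}$, or $e^{(\lambda_i+\delta')\tau}$), so Lemma \ref{bl: inhomogeneous solution existence with asymptotic} again yields a solution of rate $\lambda_i+\delta'$; choosing the endpoint $T$ sufficiently negative forces the constants to contract geometrically, $\|Z^k_\tau\|_{C^{2,\alpha}}\le 2^{-k}C e^{(\lambda_i+\delta')\tau}$ on $(-\infty,T]$, so $Z_\tau:=\sum_k Z^k_\tau$ converges in $C^{2,\alpha}$ and solves the $Z$--equation there. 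Then $X_\tau:=X^0_\tau+\Psi_\tau+Z_\tau$ solves \eqref{eq:both_RDTF_HMHF} on $(-\infty,T]$ and $\|X_\tau-X^0_\tau-a e^{\lambda_i\tau}Y_i\|_{C^{2,\alpha}}=\|Z_\tau\|_{C^{2,\alpha}}=O(e^{(\lambda_i+\delta')\tau})$, which is \eqref{eq:contraction strata on base}. Smoothness and the fact that $X_\tau$ is a classical solution follow from parabolic Schauder estimates on the iterates together with the uniform bounds.

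The main obstacle is precisely this exponent accounting: one must check at once that $\mathcal G_\tau$ decays strictly below the rate $e^{\lambda_i\tau}$ (so that $ae^{\lambda_i\tau}Y_i$ is genuinely the leading term and, since the $Y_i$--mode lies in the ``lower'' spectral branch of Lemma \ref{bl: inhomogeneous solution existence with asymptotic} for the exponent $\lambda_i+\delta'$, the $Y_i$--component of $Z$ is subleading and does not perturb the prescribed coefficient $a$), that the linear-in-$Z$ terms arising from expanding $Q_\tau$ about $X^0_\tau+\Psi_\tau$ carry exponentially small prefactors, and that $\lambda_i+\delta'$ avoids $\Spec(L)$ so that the elementary ODE lemmas apply. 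Beyond that the argument is a routine fixed-point iteration requiring no analytic input past Lemmas \ref{bl: inhomogeneous solution existence with asymptotic}, \ref{bl: inhomogeneous solution existence with asymptotic II} and \ref{lem:existence_modified_HMHF}.
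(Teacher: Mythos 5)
Your proposal is correct and follows essentially the same route as the paper: you peel off the homogeneous mode $\Psi_\tau = a e^{\lambda_i\tau}Y_i$, observe that the resulting forcing decays at rate $e^{(\lambda_i+\nu)\tau}$ with $\nu=\min\{\mu,\delta,\lambda_i\}$, pick a non-resonant target exponent in $(\lambda_i,\lambda_i+\nu)$, and close by the Picard iteration of Lemma \ref{lem:existence_modified_HMHF} built on Lemma \ref{bl: inhomogeneous solution existence with asymptotic} (resp.\ Lemma \ref{bl: inhomogeneous solution existence with asymptotic II}). The paper does exactly this, taking $Z^0_\tau=\Psi_\tau$ as the zeroth iterate rather than separating it out, and uses $\theta'=\nu$ and $\theta=\lambda_i+\delta'$ in your notation; the content is identical.
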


\begin{proof} We only prove for $\tau\leq 0$. Then, the other case $\tau\geq 0$ can be proven in the same manner by replacing Lemma \ref{bl: inhomogeneous solution existence with asymptotic} with Lemma \ref{bl: inhomogeneous solution existence with asymptotic II}.

Instead of $X_\tau$, we consider a solution $Z_\tau=X_\tau-X^0_\tau$ to the equation
\begin{equation*}
(\partial_\tau-L)Z_\tau=E_\tau(Z_\tau)+ Q'_\tau(Z_\tau),
\end{equation*}
where $Q'_\tau(X)=Q_\tau(X+X^0_\tau)-Q_\tau(X^0_\tau)$. Let $Z_{\tau}^0:=ae^{\lambda_i \tau} Y_i$. Then, 
\begin{equation*}
(\partial_{\tau}-L)Z_{\tau}^0=0, \qquad    \|Z_{\tau}^0\|_{C^{2,\alpha}}\leq C_2 e^{ \lambda_i  \tau},
\end{equation*}
for some $C_2\geq \max\{1,C_0,C_1\}$ depending on $|a|,Y_i$. Hence, by the assumptions
\begin{equation*}
\| E_\tau(Z_{\tau}^0)\|_{C^\alpha}\leq   C_2^2  e^{(\mu+\lambda_i)  \tau}, \qquad \|Q'_\tau(Z_{\tau})\|_{C^\alpha}\leq  C_2^3 (e^{2\lambda_i  \tau}+2e^{( \delta+\lambda_i)  \tau})
\end{equation*}
for $\tau \leq T$. Now, we consider $ \theta \in (\lambda_i, \theta'+\lambda_i)\setminus \text{Spec}(L)$, where $\theta':=\min\{\mu,\delta, \lambda_i\}$. Then, by Lemma \ref{bl: inhomogeneous solution existence with asymptotic}, given $T<0$ there is a solution $Z^1_\tau$ to
\begin{equation*}
(\partial_\tau-L)Z^1_\tau=E_\tau(Z^0_\tau)+Q'_\tau(Z^0_\tau)
\end{equation*}
for $\tau \leq T$, satisfying 
\begin{equation*}
\|Z^1_\tau\|_{C^{2,\alpha}}\leq C_\theta e^{\theta (\tau-T)}\sup_{s\leq T}e^{-(\theta'+\lambda_i)(s-T)}\|E_\tau(Z^0_s)+Q'_\tau(Z^0_s)\|_{C^\alpha},
\end{equation*}
where $C_\theta$ only depends on $\theta'+\lambda_i-\theta$ and $\min_{i\in \mathbb{N}}|\theta-\lambda_i|$. We assume 
\begin{equation*}
100C_2^2C_\theta e^{(\theta'+\lambda_i-\theta)T}\leq 1. 
\end{equation*}
Then, $\|Z^1_\tau\|_{C^{2,\alpha}} \leq 2^{-1}C_2e^{\theta\tau}$.

Next, as in the proof of Lemma \ref{lem:existence_modified_HMHF}, for $k\geq 2$ we inductively consider a solution $Z^k_\tau$ to
\begin{equation*}
(\partial_\tau-L)Z^k_\tau=E_\tau(Z^k_\tau)+Q'_\tau(\bar Z^{k-1}_\tau)-Q'_\tau(Z^{k-2}_\tau),
\end{equation*}
where $\bar Z^k_\tau=Z^0_\tau+\cdots+Z^k_\tau$. Then, we can obtain $\|Z^k_\tau\|_{C^{2,\alpha}}\leq 2^{-k}C_2e^{\theta\tau}$ so that we get a desired solution $X_\tau:=X^0_\tau+Z^0_\tau+\sum_{k=1}^\infty Z^k_\tau$ satisfying
\begin{equation*}
\|X_\tau-X^0_\tau-ae^{\lambda_i\tau}Y_i\|_{C^{2,\alpha}}=\|\sum_{k=1}^\infty Z^k_\tau\|_{C^{2,\alpha}}=O(e^{\theta\tau}).
\end{equation*}
This completes the proof.
\end{proof}

\bibliography{ref.bib}
\bibliographystyle{abbrv}

\end{document}